\newcommand{\C}{{\mathbb C}}
\newcommand{\R}{{\mathbb R}}
\newcommand{\Z}{{\mathbb Z}}
\newcommand{\N}{{\mathbb N}}
\newcommand{\del}{\partial}
\newcommand{\m}{^{-1}}
\newcommand{\im}{\operatorname{im}}
\renewcommand{\phi}{\varphi}
\newcommand{\eps}{\varepsilon}
\renewcommand{\div}{\operatorname{div}}
\newcommand{\dom}{\operatorname{dom}}
\newcommand{\Bad}{B^\mathrm{ad}}
\newcommand{\BAPS}{B_\mathrm{APS}}
\newcommand{\ad}{\mathrm{ad}}
\newcommand{\ind}{\operatorname{ind}}
\newcommand{\<}{\left\langle}       
\renewcommand{\>}{\right\rangle}
\newcommand{\id}{{\operatorname{id}}}
\newcommand{\supp}{\operatorname{supp}}
\newcommand{\dM}{{\partial M}}
\newcommand{\Dnorm}[1]{\| #1 \|_{D}}
\newcommand{\LzdM}[1]{\| #1 \|_{L^2(\dM)}}
\newcommand{\Lz}[2]{\| #1 \|_{L^2(#2)}}
\newcommand{\LzM}[1]{\| #1 \|_{L^2(M)}}
\newcommand{\LzMp}[2]{( #1 , #2 )_{L^2(M)}}
\newcommand{\LzZq}[1]{\| #1 \|_{L^2(Z_{[0,\rho]})}}
\newcommand{\HsdM}[1]{\| #1 \|_{H^s(\dM)}}
\newcommand{\HkM}[1]{\| #1 \|_{H^k(M)}}
\newcommand{\HlD}[1]{\| #1 \|_{H^1_D(M)}}
\newcommand{\HhdM}[1]{\| #1 \|_{H^{1/2}(\dM)}}
\newcommand{\HmhdM}[1]{\| #1 \|_{H^{-1/2}(\dM)}}
\newcommand{\HcdM}[1]{\| #1 \|_{\check{H}(A)}}
\newcommand{\HddM}[1]{\| #1 \|_{\hat{H}(A)}}
\newcommand{\Hlloc}{H^1_\mathrm{loc}}
\newcommand{\Hkloc}{H^k_\mathrm{loc}}
\newcommand{\Cuc}{C^\infty_c}
\newcommand{\Cucc}{C^\infty_{cc}}
\newcommand{\Cu}{C^\infty}
\newcommand{\D}{\mathcal D}
\newcommand{\Dmax}{D_{\mathrm{max}}}
\newcommand{\nor}{\mathrm{nor}}
\newcommand{\abs}{\mathrm{abs}}
\newcommand{\rel}{\mathrm{rel}}
\newcommand{\Hfin}{H^{\operatorname{fin}}}
\newcommand{\loc}{\operatorname{loc}}
\newcommand{\Zrq}{Z_{[0,\rho]}}
\newtheorem{thm}{Theorem}[section]
\newtheorem{lemma}[thm]{Lemma}
\newtheorem{prop}[thm]{Proposition}
\newtheorem{cor}[thm]{Corollary}
\newtheorem{add}[thm]{Addendum}
\theoremstyle{definition}
\newtheorem{remark}[thm]{Remark}
\newtheorem{remarks}[thm]{Remarks}
\newtheorem{definition}[thm]{Definition}
\newtheorem{notation}[thm]{Notation}
\newtheorem{example}[thm]{Example}
\newtheorem{examples}[thm]{Examples}
\newtheorem{stase}[thm]{Standard Setup}
\newtheorem{facts}[thm]{Facts}
\newcommand{\dref}[1]{Definition~\ref{#1}}
\newcommand{\cref}[1]{Corollary~\ref{#1}}
\newcommand{\lref}[1]{Lemma~\ref{#1}}
\newcommand{\pref}[1]{Proposition~\ref{#1}}
\newcommand{\tref}[1]{Theorem~\ref{#1}}
\newcommand{\sref}[1]{Section~\ref{#1}}
\newcommand{\eref}[1]{Example~\ref{#1}}
\long\def\symbolfootnote[#1]#2{\begingroup%
\def\thefootnote{\fnsymbol{footnote}}\footnote[#1]{#2}\endgroup} 
\begin{document}

%%%%%%%%%%%%%%%%%%%%%%%%%%%%%%%%%%%%%%%%%%%%%%

\title
[Boundary Value Problems]
{Boundary Value Problems for Elliptic Differential Operators of First Order}

\author{Christian B{\"a}r}
\author{Werner Ballmann}

\address{
Universit{\"a}t Potsdam\\
Institut f\"ur Mathematik\\
Am Neuen Palais 10\\
Haus 8\\
14469 Potsdam\\
Germany
}
\address{
Max Planck Institute for Mathematics\\
Vivatsgasse 7\\
53111 Bonn\\
Germany
}
 
\email{baer@math.uni-potsdam.de} 
\email{hwbllmnn@mpim-bonn.mpg.de}

\subjclass[2010]{35J56, 58J05, 58J20, 58J32}

\keywords{Elliptic differential operators of first order, elliptic boundary conditions, completeness, coercivity at infinity, boundary regularity, index theory,
decomposition theorem, relative index theorem, cobordism theorem}

\thanks{}

\date{\today}

\begin{abstract}
We study boundary value problems for linear elliptic differential operators of order one.
The underlying manifold may be noncompact, but the boundary is assumed to be compact.
We require a symmetry property of the principal symbol of the operator along the boundary.
This is satisfied by Dirac type operators, for instance.

We provide a selfcontained introduction to (nonlocal) elliptic boundary conditions,
boundary regularity of solutions, and index theory.
In particular, we simplify and generalize the traditional theory
of elliptic boundary value problems for Dirac type operators.
We also prove a related decomposition theorem, a general version of Gromov and Lawson's relative index theorem and a generalization of the cobordism theorem.
\end{abstract}

\maketitle

%%%%%%%%%%%%%%%%%%%%%%%%%%%%%%%%%%%%%%%%%%%%%%

\newpage
\tableofcontents

%%%%%%%%%%%%%%%%%%%%%%%%%%%%%%%%%%%%%%%%%%%%%%

\newpage
\section{Introduction}
\label{secIntro}

%%%%%%%%%%%%%%%%%%%%%%%%%%%%%%%%%%%%%%%%%%%%%%

In their attempt to generalize Hirzebruch's signature theorem to compact manifolds
with boundary, Atiyah, Patodi, and Singer arrived at a boundary condition
which is nonlocal in nature and involves the spectrum
of an associated selfadjoint differential operator on the boundary.
In fact, they obtained an index theorem for a certain class of first order elliptic differential 
operators on compact manifolds with boundary \cite[Theorem 3.10]{APS}.
For the standard differential operators of first order encountered in Riemannian geometry, 
their assumption means that a sufficiently small collar about the boundary is cylindrical,
that is, isometric to the Riemannian product of an interval times the boundary.
In many applications, this is a completely satisfactory assumption.
They also discuss the $L^2$-theory for the natural extension of the operator
to the noncompact manifold which is obtained
by extending the cylinder beyond the boundary to a one-sided infinite cylinder.
The work of Atiyah, Patodi, and Singer lies at the heart of many investigations
concerning boundary value problems and $L^2$-index theory
for first order elliptic differential operators, and this includes the present article.

The original motivation for our present studies came from the relative index theorem
of Gromov and Lawson, see \cite[Thm. 4.18]{GL} or \tref{relind} below.
After the decomposition theorem in \cite[Thm.~23.3]{BW} and \cite[Thm.~4.3]{BL1}
was used in \cite{BB1} and \cite{BB2} to obtain index theorems for first order
geometric differential operators on certain noncompact Riemannian manifolds,
we observed that the decomposition theorem could also be used
for a short proof of the relative index theorem.
The drawback of this argument is, however, that the proof of the decomposition theorem in the above references \cite{BW} and \cite{BL1} involves a heavy technical machinery so that, as a whole, the proof of the relative index theorem would not be simplified.
Therefore, our first objective is a simplification of the theory of boundary value problems
for (certain) first order elliptic differential operators,
and the main result of this endeavor is formulated in \tref{tell}
(together with Addendum~\ref{addbad}) and \tref{treg} below.
Another objective is the $L^2$-index theory for noncompact manifolds, see \tref{fred}.
We arrive, finally, at simple proofs of the decomposition and relative index theorems.

To formulate our main results, we start by fixing the setup for our investigations.
We consider Hermitian vector bundles $E,F$ over a manifold $M$
with compact boundary $\partial M$ and a differential operator $D$ from $E$ to $F$.
We do not equip $M$ with a Riemannian metric
but we assume that $M$ is endowed with a smooth volume element $\mu$.
Then functions can be integrated over $M$ and the spaces $L^2(M,E)$ and $L^2(M,F)$
of square-integrable sections of $E$ and $F$ are defined.
By $D_{cc}$ we denote $D$, considered as an unbounded operator
from $L^2(M,E)$ to $L^2(M,F)$ with domain $C_{cc}^\infty(M,E)$,
the space of smooth sections of $E$ with compact support
in the interior of $M$\symbolfootnote[2]{
Throughout the article, the indices $c$ and $cc$ indicate compact support in $M$
and in the interior of $M$, respectively.
In the index at the end of the article, the reader finds much of the standard
and all of the nonstandard notation.}.

We denote by $D^*$ the formal adjoint of $D$.
The {\em maximal extension}\index{maximal extension}
$D_{\max}$\index{1Dmax@$D_{\max}$} of $D$ is the adjoint of $D^*_{cc}$
in the sense of functional analysis.
That is, $\dom(D_{\max})$ is the space of all $\Phi\in L^2(M,E)$
such that there is a section $\Psi\in L^2(M,F)$
with $(\Phi,D^*\Xi)_{L^2(M)}=(\Psi,\Xi)_{L^2(M)}$
for all $\Xi\in C^\infty_{cc}(M,F)$.
Then we set $D_{\max}\Phi:=\Psi$.
The {\em graph norm}\index{graph norm} $\|\cdot\|_{D_{\max}}$ of $D_{\max}$,
defined by \index{1normgraph@$\Dnorm{\cdot}$}
\[
  \|\Phi\|_D^2 := \LzM{\Phi}^2 + \LzM{D_{\max}\Phi}^2 ,
\]
turns $\dom(D_{\max})$ into a Hilbert space.
Clearly, $C^\infty_c(M,E)\subset\dom(D_{\max})$.

\begin{definition}\label{complete}\index{complete operator}
We say that $D$ is {\em complete} if the subspace
of compactly supported sections in $\dom(D_{\max})$ is dense in $\dom(D_{\max})$
with respect to the graph norm of $D_{\max}$.
\end{definition}

By definition, completeness holds if $M$ is compact.
In the noncompact case, completeness signifies that square integrability
is a decent boundary condition at infinity for $D$.
In practice, completeness can often be checked using the following theorem:

\begin{thm}\label{cherwolf}
Suppose that $M$ carries a complete Riemannian metric
with respect to which the principal symbol $\sigma_D$ of $D$ satisfies an estimate
\[
  |\sigma_D(\xi)| \le C(\mathrm{dist}(x,\dM))\cdot|\xi|
\]
for all $x\in M$ and $\xi\in T_x^*M$,
where $C:[0,\infty)\to\R$ is a positive monotonically increasing continuous function with 
\[
  \int_0^\infty \frac{dr}{C(r)} = \infty .
\]
Then $D$ and $D^*$ are complete.
\end{thm}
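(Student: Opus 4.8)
The plan is to establish completeness of $D_{\max}$ by exhibiting a sequence of cutoff functions with controlled gradients, the classical ``Gaffney–Hörmander'' style argument, adapted to the fact that the weight $C$ in the symbol estimate depends on the distance to the boundary. First I would fix $\Phi\in\dom(D_{\max})$ and, since $C^\infty_c(M,E)$ is dense in $\dom(D_{\max})$ among \emph{all} $L^2$ sections that lie in the domain (this is a standard consequence of interior elliptic regularity plus the definition of $D_{\max}$, and needs only that one may approximate in the interior), it suffices to approximate $\Phi$ by compactly supported elements of $\dom(D_{\max})$ in the graph norm; multiplying $\Phi$ by a suitable cutoff produces such elements, so the whole problem reduces to choosing the cutoffs so that $\|\chi_k\Phi - \Phi\|_D \to 0$.

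Next I would build the cutoffs from the distance function to the boundary. Since the Riemannian metric is complete and $\partial M$ is compact, the function $\rho(x) := \mathrm{dist}(x,\partial M)$ is a proper, $1$-Lipschitz exhaustion function on $M$ (away from a set of measure zero it is differentiable with $|d\rho|\le 1$, and this is all we need). Define a new function $s(x) := \int_0^{\rho(x)} \frac{dr}{C(r)}$; by hypothesis $s$ takes all values in $[0,\infty)$, and where $\rho$ is differentiable we get $|ds| = |d\rho|/C(\rho(x)) \le 1/C(\rho(x))$. Now pick a fixed smooth $\psi:\R\to[0,1]$ with $\psi\equiv 1$ on $(-\infty,1]$, $\psi\equiv 0$ on $[2,\infty)$, $|\psi'|\le 2$, and set $\chi_k := \psi(s/k)$. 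Then $\chi_k\Phi$ is compactly supported (because $s$ is proper), $\chi_k\Phi\to\Phi$ in $L^2$ by dominated convergence, and the key point is the gradient term.

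The heart of the matter, and the step I expect to be the main obstacle, is controlling $D_{\max}(\chi_k\Phi) - \chi_k D_{\max}\Phi$, which by the Leibniz rule for first-order operators equals $\sigma_D(d\chi_k)\Phi$ pointwise a.e. Now $|d\chi_k| = |\psi'(s/k)|\,|ds|/k \le (2/k)\cdot \bigl(1/C(\rho(x))\bigr)$ on the region $k \le s(x) \le 2k$, and precisely on that region the symbol estimate gives $|\sigma_D(d\chi_k)| \le C(\rho(x))\cdot|d\chi_k| \le 2/k$. Hence $\|\sigma_D(d\chi_k)\Phi\|_{L^2(M)} \le (2/k)\,\|\Phi\|_{L^2(M)} \to 0$, which forces $\chi_k\Phi \to \Phi$ in the graph norm and proves $D$ complete; applying the same argument to $D^*$, whose principal symbol satisfies the same bound up to sign, yields completeness of $D^*$. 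The technical care needed here is (i) justifying the Leibniz identity for $\Phi\in\dom(D_{\max})$ rather than smooth $\Phi$ — one does this by the distributional definition of $D_{\max}$, testing against $\Xi\in C^\infty_{cc}$ and noting that $\chi_k\Xi$ is again admissible since $\chi_k$ is Lipschitz with bounded support, so $\chi_k\Phi\in\dom(D_{\max})$ with $D_{\max}(\chi_k\Phi) = \chi_k D_{\max}\Phi + \sigma_D(d\chi_k)\Phi$ in $L^2$; and (ii) handling the non-smoothness of $\rho$ (hence of $s$ and $\chi_k$) — it is enough that $\chi_k$ is Lipschitz, so $d\chi_k\in L^\infty$ exists almost everywhere, and the pairing identities above involve only a.e.-defined $L^2$ functions; alternatively one smooths $\rho$ slightly, at the cost of a harmless factor $(1+\eps)$ in the Lipschitz constant. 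Completeness of $M$ enters exactly to guarantee that $s$ (equivalently $\rho$) is a genuine proper exhaustion, so that each $\chi_k\Phi$ has compact support.
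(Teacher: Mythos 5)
Your proof is correct, and it takes a genuinely different route from the paper's, though the two are close cousins. The paper first proves the special case of a uniformly bounded principal symbol (Theorem~\ref{comsup}), using the cutoffs $\chi_m=\rho(\mathrm{dist}(\cdot,\dM)/m)$ directly, and then deduces Theorem~\ref{cherwolf} by a conformal change of metric: it picks a smooth $f$ comparable to $C(\mathrm{dist}(\cdot,\dM))$, sets $h=f^{-2}g$, observes that $\int_0^\infty dr/C(r)=\infty$ makes $h$ complete, and notes that $\sigma_D$ is uniformly bounded with respect to $h$, so the already-proved special case applies. You instead bake the weight directly into the exhaustion function, replacing the distance $\rho$ by $s=\int_0^{\rho}dr/C(r)$, and using $\chi_k=\psi(s/k)$. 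The chain rule then gives $|ds|\le 1/C(\rho(x))$ a.e., which exactly cancels the allowed growth of the symbol, so $|\sigma_D(d\chi_k)|\le 2/k$ on the support of $d\chi_k$; properness of $s$ follows from completeness of $g$ plus the divergent integral, just as completeness of $h$ does in the paper. The two arguments encode the same mechanism — the $h$-distance to $\dM$ is, up to bounded factors, your $s$ — but yours is more direct, avoiding the intermediate geometric step; the paper's modularizes the argument into a clean constant-symbol lemma plus a metric trick. Both rely on the same product rule for Lipschitz cutoffs on $\dom(D_{\max})$ (the paper's Lemma~\ref{product}), which you also sketch correctly.

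One small stylistic point: your opening reduction via ``$C^\infty_c(M,E)$ is dense in $\dom(D_{\max})$'' is unnecessary and slightly misstated — completeness in the sense of Definition~\ref{complete} asks only that \emph{compactly supported} (not smooth) sections in $\dom(D_{\max})$ be dense, and $\chi_k\Phi$ already has compact support, so no further approximation is required. The core of your argument is unaffected.
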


This theorem applies, for instance, if $C$ is a constant.

\begin{remark}\label{measuregen}
Note that we do not assume that $\mu$ is the volume element
induced by the Riemannian metric.
In fact, the principal symbols of $D$ and $D^*$ do not depend on the choice of $\mu$,
although $D^*$ does. 
\end{remark}

Fix a vector field $T$\index{1T@$T$} of $M$ along $\partial M$
pointing into the interior of $M$ and let $\tau$\index{1Tau@$\tau$}
be the {\em associated one-form} along $\partial M$,\index{associated one-form}
that is, $\tau(T)=1$and $\tau|_{T\partial M}=0$.
Set $\sigma_0:=\sigma_D(\tau)$.\index{1Sigma0@$\sigma_0$}
Note that $\sigma_0$ is invertible if $D$ is elliptic.

\begin{definition}\label{defbs}
We say that $D$ is {\em boundary symmetric}\index{boundary symmetric}
(with respect to $T$) if $D$ is elliptic and
\begin{equation}
  \sigma_0(x)^{-1}\circ\sigma_D(\xi) : E_x \to E_x
  \quad\text{ and}\quad
  \sigma_D(\xi)\circ\sigma_0(x)^{-1} : F_x \to F_x \label{BS}
\end{equation}
are skew-Hermitian, for all $x\in\partial M$ and $\xi\in T^*_x\partial M$.
Here we identify $\xi$ with its extension to $T_xM$ which satisfies $\xi(T)=0$.
\end{definition}

It is somewhat hidden in this definition, but will become clear in the text,
that $D$ is boundary symmetric if and only if $D^*$ is boundary symmetric.
Clearly, boundary symmetry of $D$ does not depend on the choice of $\mu$.

We will see in \lref{nf} that the boundary symmetry of $D$ implies
existence of essentially selfadjoint elliptic first order differential operators $A$
on $E|_{\partial M}$ and $\tilde A$ on $F|_{\partial M}$
with symbols given by $\sigma_0^{-1}\circ\sigma_D$
and $(\sigma_D\circ\sigma_0^{-1})^*$, respectively.
We will call such operators {\em  adapted}\index{adapted operator} (with respect to the choice of $T$).
Note that we may add zero-order terms to adapted operators
without violating the requirement on their principal symbols.
\lref{nf} also shows that, in a collar about $\dM$,
$D$ and $D^*$ can be represented in the normal form\index{normal form near boundary}
\begin{equation}
  D = \sigma_t \left( \frac{\partial}{\partial t} + A + R_t \right)
  \quad\text{and}\quad
  D^* = -\sigma_t^* \left( \frac{\partial}{\partial t} + \tilde A + \tilde R_t \right) ,
  \label{norfor}
\end{equation}
where $t$ is an inward coordinate in the collar with $\partial/\partial t=T$
along $\dM=t^{-1}(0)$ and where, for $0\le t<r$,
\begin{enumerate}[(i)]
\item
$\sigma(t,x)=\sigma_t(x)=\sigma_D(dt|_{(t,x)})$\index{1Sigmat@$\sigma_t$}
for all $x\in\dM$;
\item
$R_t$ and $\tilde R_t$ are differential operators of $E|_{\dM}$ and $F|_{\dM}$
of order at most one, respectively,\index{1Rt@$R_t$}\index{1Rttilde@$\tilde R_t$}
whose coefficients depend smoothly on $t$ and
such that $R_0$ and $\tilde R_0$ are of order zero.
\end{enumerate}
Since $dt=\tau$ along $\dM$, the notation $\sigma_t$ is in peace
with the notation $\sigma_0=\sigma_D(\tau)$ further up.

To put the normal form~\eqref{norfor} into perspective,
we mention that Atiyah, Patodi, and Singer consider the case
where $\sigma$ does not depend on $t$ and where $R_t=\tilde R_t=0$.

\begin{stase}\label{stase}\index{standard setup}
Our standard setup, which we usually assume, is as follows:
\begin{itemize}
\item $M$ is a manifold with compact boundary $\dM$ (possibly empty);
\item $\mu$ is a smooth volume element on $M$;
\item $E$ and $F$ are Hermitian vector bundles over $M$;
\item $D$ is an elliptic differential operator from $E$ to $F$ of order one;
\item $T$ is an interior vector field along $\dM$;
\item $D$ is boundary symmetric with respect to $T$;
\item $A$ and $\tilde A$ are  adapted differential operators
on $E|_{\partial M}$ and $F|_{\partial M}$ \\ for $D$ and $D^*$, respectively.
\end{itemize}
\end{stase}

\begin{example}\label{introdir}
Let $M$ be a Riemannian manifold with boundary
and $\mu$ be the associated Riemannian volume element.
Then Dirac type operators over $M$ are elliptic and boundary symmetric
with respect to the interior unit normal field of $\dM$,
compare Example~\ref{exabso} (a).
\end{example}

For $I\subset\R$ and $s\in\R$,
we denote by $H^s_I(A)$\index{1HAsI@$H^s_I(A)$} the closed subspace
of the Sobolev space $H^s(\partial M,E)$ spanned by the eigenspaces of $A$
with eigenvalue in $I$.
We denote by $Q_I$\index{1QI@$Q_I$} the (spectral) projection
of $H^s(\partial M,E)$ onto $H^s_I(A)$ with kernel $H^s_{\R\setminus I}(A)$.
For $a\in\R$, we let
\begin{equation}\label{checkaa}
  \check H(A):=H^{1/2}_{(-\infty,a)}(A)\oplus H^{-1/2}_{[a,\infty)}(A) .
  \index{1HAcheck@$\check{H}(A)$}
\end{equation}
Since, for any finite interval $I$, the space $H^s_I(A)$ is finite-dimensional and contained
in $C^\infty(\dM,E)$, different choices of $a$ lead to the same topological vector space.

\begin{thm}\label{md}
Assume the Standard Setup~\ref{stase} and that $D$ and $D^*$ are complete.
Then
\begin{enumerate}
\item\label{mdd}
$C^\infty_c(M,E)$ is dense in $\dom(D_{\max})$;
\item\label{mdt}
the trace map $\mathcal R$ on $C^\infty_c(M,E)$, $\mathcal R\Phi:=\Phi|_{\partial M}$,
extends to a surjective bounded linear map $\mathcal R:\dom(D_{\max})\to\check H(A)$;
\item\label{mdr}
$\Phi\in\dom(D_{\max})$ is in $H^{k+1}_{\loc}(M,E)$
if and only if $D\Phi$ is in $H^{k}_{\loc}(M,F)$
and $Q_{[0,\infty)}(\mathcal R\Phi)$ is in $H^{k+1/2}(\dM,E)$,
for any integer $k\ge0$.
\end{enumerate}
\end{thm}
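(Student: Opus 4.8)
The plan is to work locally near $\partial M$ using the normal form~\eqref{norfor}, reducing everything to a model problem on the half-cylinder $Z = [0,r) \times \partial M$ where $D$ takes the form $\sigma_t(\partial_t + A + R_t)$. First I would establish a priori estimates for $D_{\max}$ near the boundary: for $\Phi$ in the domain of the maximal extension supported in the collar, one wants to control the $H^{1/2}$-norm of the trace $\Phi|_{\partial M}$ (in the appropriate graded sense $\check H(A)$) by the graph norm $\|\Phi\|_{D}$. The mechanism is the standard one: expand $\Phi$ in eigensections of the selfadjoint operator $A$, so $\Phi(t) = \sum_\lambda \phi_\lambda(t)$, and observe that $\partial_t \phi_\lambda + \lambda \phi_\lambda$ is $L^2$ in $t$; an ODE estimate then shows that $\phi_\lambda(0)$ is controlled, with the decaying modes ($\lambda \ge a$, say) only $H^{-1/2}$-controlled and the growing modes ($\lambda < a$) being $H^{1/2}$-controlled — this is precisely the space $\check H(A)$. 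The lower-order perturbation $R_t$ (and the $t$-dependence of $\sigma_t$, $A$) is absorbed by a Gr\"onwall/perturbation argument since $R_0$ has order zero and the dependence on $t$ is smooth.

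For part~\eqref{mdd}, density of $C^\infty_c(M,E)$ in $\dom(D_{\max})$, I would combine completeness (which gives density of \emph{compactly supported} sections in $\dom(D_{\max})$ for the graph norm — this is exactly \dref{complete}) with an interior elliptic regularity/mollification argument to smooth out a compactly supported element while controlling the graph norm; near the boundary one uses the normal form and a standard collar mollification that preserves the boundary behaviour. For part~\eqref{mdt}, boundedness of $\mathcal R$ on smooth compactly supported sections follows from the collar estimate just described; it then extends to all of $\dom(D_{\max})$ by part~\eqref{mdd}. Surjectivity onto $\check H(A)$ is proved by explicitly constructing, for a prescribed boundary datum decomposed along the spectrum of $A$, an extension into the collar: take $\phi_\lambda(t) = e^{-t\,\mathrm{sign}(\lambda - a)\,|\lambda - a|}\,\chi(t)\,\phi_\lambda(0)$ with a cutoff $\chi$, i.e. let each spectral component decay in the direction that keeps it $L^2$ and keeps $D$ applied to it $L^2$; summing up and checking convergence in the graph norm shows the datum is attained, and again the perturbation terms $R_t$ are handled by a fixed-point/Neumann-series correction.

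Part~\eqref{mdr}, the boundary regularity statement, is the most technical. One direction is elliptic regularity: if $\Phi \in H^{k+1}_{\loc}$ then $D\Phi \in H^k_{\loc}$ trivially, and the trace theorem gives $\mathcal R\Phi \in H^{k+1/2}$, hence $Q_{[0,\infty)}(\mathcal R\Phi)$ too. The substantive direction is the converse. In the interior this is just interior elliptic regularity. Near the boundary, working in the normal form, one wants to run an inductive bootstrap: knowing $D\Phi \in H^k_{\loc}$ and that the "positive" part of the trace lies in $H^{k+1/2}(\partial M)$, deduce $\Phi \in H^{k+1}_{\loc}$. The key point is that the equation $\partial_t \Phi = \sigma_t^{-1} D\Phi - (A + R_t)\Phi$ lets one trade a $t$-derivative for $D\Phi$ plus tangential derivatives, so tangential regularity plus the equation gives full regularity — but tangential regularity for the \emph{decaying} modes comes for free from the ODE (they are smoothed by the backward-heat-type flow of $A$), whereas for the \emph{non-decaying} modes it must be imposed, which is exactly the hypothesis on $Q_{[0,\infty)}(\mathcal R\Phi)$. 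Making this rigorous requires care with the functional-analytic setup of the graded Sobolev spaces $H^s_I(A)$ and with commuting $A$ past the $t$-dependent perturbations; I expect this spectral bookkeeping near the boundary — ensuring the ODE estimates upgrade correctly at each regularity level and that the cutoffs do not destroy the spectral splitting — to be the main obstacle.
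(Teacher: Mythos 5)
Your sketch for parts \eqref{mdt} and \eqref{mdr} is essentially the route taken in the paper: the trace bound is obtained for the model operator $D_0 = \sigma_0(\partial_t + A)$ via the spectral decomposition of $A$, surjectivity comes from the explicit extension operator $\mathcal E\phi(t) = \chi(t)e^{-t|A|}\phi$ (\lref{extfin2}), and higher regularity is a bootstrap in the collar that separates the automatically-smoothed half of the spectrum from the half on which the trace must be prescribed. The paper packages the ODE estimates into an explicit right-inverse $S_0$ of $D_0$ (\pref{riso}) together with a relative-boundedness lemma for $D - D_0$ on small collars (\lref{lem:relbnd}), and it proves the trace bound by pairing against extensions of boundary data for the adjoint via Green's formula (\lref{destimate}) rather than by a direct ODE estimate mode by mode; these are tactical differences, not a different strategy.

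For part \eqref{mdd}, however, there is a genuine gap. Completeness only reduces the problem to approximating a \emph{compactly supported} $\Phi \in \dom(D_{\max})$ by elements of $\Cuc(M,E)$ in the graph norm. Near $\dM$ such a $\Phi$ is a priori only $L^2$ (its trace lies in $\check H(A)$, which contains $H^{-1/2}$-components), so it is not in $H^1_{\loc}$ up to the boundary, and there is no ``standard collar mollification'' available: tangential mollification neither commutes with $A$ or $R_t$ nor improves $t$-regularity, while shifting inward by $\epsilon$ makes $\Phi$ locally $H^1$ but then one has to control $\|D\Phi(\cdot+\epsilon,\cdot) - D\Phi\|_{L^2}$ in the presence of the $t$-dependent first-order remainder $R_t - R_{t+\epsilon}$ acting on a section whose $H^1$-norm on $Z_{[\epsilon,\rho]}$ blows up as $\epsilon\to 0$ --- estimating this requires exactly the boundary regularity one is trying to establish. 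The paper avoids the issue with a duality argument (proof of \tref{domdmax}~\eqref{dense}): extend $(M,\mu,E,F,D)$ past $\dM$ to a closed $\tilde M$; for $\Psi$ in the domain of the $L^2$-adjoint of $D$ with domain $\Cuc(M,E)$, extend $\Psi$ by zero, deduce from interior elliptic regularity on $\tilde M$ that $\Psi\in H^1_{\loc}(M,F)$ with $\Psi|_{\dM}=0$, and conclude from $\dom(D^*_{\min}) = \{\Psi\in H^1_{D^*}(M,F)\mid \Psi|_{\dM}=0\}$ (\cref{domdmin}) that $(D_c)^{\mathrm{ad}} = (D^*)_{\min}$, whence $\overline{D_c} = D_{\max}$. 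If you want a direct approximation instead, you must first subtract off the trace using $\mathcal E$ (approximate $\mathcal R\Phi$ in $\check H(A)$ by finite Fourier series, then approximate the zero-trace remainder by $\Cucc$-sections as in \lref{density}~\eqref{ccdense}); but this already presupposes the bounded trace map of part~\eqref{mdt} on the full maximal domain, so it is not the na\"{\i}ve mollification you describe.
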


\begin{remarks}\label{comreg}
(a) Completeness is irrelevant for Assertions \eqref{mdt} and \eqref{mdr},
and a similar remark applies to the boundary regularity results below;
compare Remark~\ref{cylicom} below.

(b) In the case where $\dM=\emptyset$, Assertion \eqref{mdd} of \tref{md} says
that $D_{\max}$ is equal to the closure of $D_{cc}$.
If $D$ is {\em formally selfadjoint}, i.e., if $D=D^*$,
this means that $D$ is {\em essentially selfadjoint}.
 
(c) Assertion \eqref{mdt} of \tref{md} implies that, as a topological vector space,
$\check H(A)$ does not depend on the choice of $A$
(as long as its symbol is given by $\sigma_0^{-1}\circ\sigma_D$).
Carron pointed out to us a direct proof of this fact:
If $A'$ is a further operator of the required kind,
then the difference $\delta=Q_{[a,\infty)}-Q'_{[a,\infty)}$
of the corresponding spectral projections 
is a pseudo-differential operator of order $-1$,
thus $\delta$ maps $H^{-1/2}(\dM,E)$ to $H^{1/2}(\dM,E)$.
\end{remarks}

\tref{md} implies that, for any closed subspace $B\subset\check H(A)$,
the restriction $D_{B,\max}$ of $D_{\max}$ to
\begin{equation}
  \dom(D_{B,\max}) := \{ \Phi \in \dom(D_{\max}) \mid \mathcal R(\Phi) \in B \}
\end{equation}
is a closed extension of $D_{cc}$.
We will see in \pref{closedext} that any closed operator
between $D_{cc}$ and $D_{\max}$ is of this form.
In particular, the {\em minimal extension}\index{minimal extension}
$D_{\min}$\index{1Dmin@$D_{\min}$} of $D$,
that is, the closure of $D_{cc}$,
is given by the Dirichlet boundary condition, that is, by $D_{0,\max}$.
We arrive at the following

\begin{definition}
A \emph{boundary condition}\index{boundary condition} for $D$
is a closed subspace of $\check H(A)$.
\end{definition}

As we already mentioned above, we have
\begin{equation}
  D_{cc} \subset D_{\min} \subset D_{B,\max} \subset D_{\max}
  \label{dddd}
\end{equation}
for any boundary condition $B\subset\check H(A)$,
explaining the terminology {\em minimal} and {\em maximal} extension.

We will see in Subsection~\ref{suseabc} that, for any boundary condition $B$,
the adjoint operator of $D_{B,\max}$ is the closed extension $D^*_{\Bad,\max}$
of $D^*_{cc}$ with boundary condition
\begin{equation}
  \Bad = \{ \psi \in \check H(\tilde A) \mid \text{$\beta(\phi,\psi)
  = 0$ for all $\phi\in B$} \} ,
  \index{1Bad@$\Bad$}
\end{equation}
where $\beta$\index{1Beta@$\beta$} denotes the natural extension
of the sesqui-linear form
\begin{equation}
  \beta (\phi,\psi) := (\sigma_0\phi,\psi)_{L^2(\dM)}
\end{equation}
on $C^\infty(\partial M,E)\times C^\infty(\partial M,F)$
to $\check H(A)\times\check H(\tilde A)$,
compare \lref{duality}.

\begin{definition}\label{ellreg}
A boundary condition $B$ is said to be
\begin{enumerate}[(i)]
\item\label{elli}
{\em elliptic}\index{elliptic boundary condition} if
\[
  \dom(D_{B,\max}) \subset H^1_{\loc}(M,E)
  \quad\text{and}\quad
  \dom(D^*_{B^{\ad},\max}) \subset H^1_{\loc}(M,F) ;
\]
\item\label{regu}
{\em $m$-regular}\index{regular boundary condition},
where $m\ge0$ is an integer, if
\begin{align*}
  D_{\max}\Phi\in H^k_{\loc}(M,F) &\Longrightarrow \Phi\in H^{k+1}_{\loc}(M,E) , \\
  D^*_{\max}\Psi\in H^k_{\loc}(M,E) &\Longrightarrow \Psi\in H^{k+1}_{\loc}(M,F)
\end{align*}
for all $\Phi\in \dom(D_{B,\max})$, $\Psi\in \dom(D^*_{\Bad,\max})$, and $0\le k\le m$.
\item
{\em $\infty$-regular} if it is $m$-regular for all integers $m\ge0$.
\end{enumerate}
\end{definition}

We recall that the implications in \eqref{regu} hold in the interior of $M$
for all integers $k\ge0$,
by the standard interior regularity theory for elliptic differential operators.

\begin{remarks}\label{ell2}
(a) By definition, we have $H^0_{\loc}(M,E)=L^2_{\loc}(M,E)$, and similarly for $F$.
Hence a boundary condition is elliptic if and only if it is $0$-regular.

(b) By \tref{md}, Remark~\ref{comreg}, and the above,
a boundary condition $B$ is elliptic if and only if
\[
  B\subset H^{1/2}(\dM,E)
  \quad\text{and}\quad \Bad\subset H^{1/2}(\dM,F) .
\]
\end{remarks}

To state our main theorem on boundary regularity, we introduce the notation
$U^s:=U\cap H^s(\dM,E)$, for any $U\subset\cup_{s'\in\R}H^{s'}(\dM,E)$ and $s\in\R$.

\begin{thm}\label{tell}
Assume the Standard Setup~\ref{stase} and that $D$ and $D^*$ are complete.
Let $B\subset\check H(A)$ be a boundary condition.
Then $B$ is elliptic if and only if, for some (and then any) $a\in\R$,
there are $L^2$-orthogonal decompositions
\begin{equation}
  L^2_{(-\infty,a)}(A) = V_-\oplus W_- 
  \quad\text{and}\quad
  L^2_{[a,\infty)}(A) = V_+\oplus W_+
  \label{dell}
\end{equation}
such that
\begin{enumerate}[(i)]
\item\label{well}
$W_-$ and $W_+$ are finite-dimensional and contained in $H^{1/2}(\dM,E)$;
\item\label{gell}
there is a bounded linear map $g:V_-\to V_+$ with
\[
  g(V_-^{1/2})\subset V_+^{1/2}
  \quad\text{and}\quad
  g^*(V_+^{1/2})\subset V_-^{1/2}
\]
\end{enumerate}
such that
\begin{equation}
  B = W_+ \oplus \{ v+gv \mid v \in V_-^{1/2} \} .
  \label{bell}
\end{equation}
\end{thm}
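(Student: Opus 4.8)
The plan is to characterize ellipticity of $B$ via the criterion in Remark~\ref{ell2}(b): $B$ is elliptic if and only if $B\subset H^{1/2}(\dM,E)$ and $\Bad\subset H^{1/2}(\dM,F)$. So I would work with this reformulation throughout. Fix $a\in\R$ once and for all; since finite spectral intervals contribute only finite-dimensional smooth subspaces, the choice of $a$ is immaterial. Write $\check H(A)=H^{1/2}_{<a}(A)\oplus H^{-1/2}_{\ge a}(A)$ and decompose the graph projections of $B$ accordingly.

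For the ``if'' direction, suppose \eqref{dell}, \eqref{well}, \eqref{gell}, and \eqref{bell} hold. Then $B = W_+\oplus\{v+gv\mid v\in V_-^{1/2}\}$ is visibly contained in $H^{1/2}$: $W_+$ is finite-dimensional and smooth by \eqref{well}, each $v\in V_-^{1/2}$ lies in $H^{1/2}$ by definition, and $gv\in V_+^{1/2}\subset H^{1/2}$ by the first inclusion in \eqref{gell}. This gives $B\subset H^{1/2}(\dM,E)$. For $\Bad$, I would compute the annihilator of $B$ with respect to the pairing $\beta(\phi,\psi)=(\sigma_0\phi,\psi)_{L^2(\dM)}$. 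The key point is that $\sigma_0$ intertwines $A$ with $-\tilde A$ (up to lower order; this is exactly the content of the normal-form Lemma~\ref{nf} and the relation $\tilde A$'s symbol is $(\sigma_D\sigma_0^{-1})^*$), so $\sigma_0$ maps $H^s_I(A)$ to $H^s_{-I}(\tilde A)$ up to finite-dimensional corrections and smoothing. Transporting the decomposition \eqref{dell} through $\sigma_0$ yields $L^2$-orthogonal splittings $L^2_{<-a}(\tilde A)=\tilde V\oplus\tilde W$, etc., and a direct computation of $\beta$-orthogonality identifies $\Bad$ as $\tilde W_+\oplus\{w+\tilde g w\}$ with $\tilde g$ essentially the negative adjoint $-g^*$ transported by $\sigma_0$; the inclusion $g^*(V_+^{1/2})\subset V_-^{1/2}$ from \eqref{gell} then gives $\Bad\subset H^{1/2}(\dM,F)$ by the same bookkeeping as for $B$. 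Hence $B$ is elliptic.

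For the ``only if'' direction, assume $B\subset H^{1/2}(\dM,E)$ and $\Bad\subset H^{1/2}(\dM,F)$. I would consider the orthogonal projection $P_-\colon\check H(A)\to H^{1/2}_{<a}(A)$ along $H^{-1/2}_{\ge a}(A)$, and its adjoint-type analysis. Set $W_+ := B\cap H^{1/2}_{\ge a}(A)$, the ``vertical'' part of $B$, which is finite-dimensional because $H^{1/2}_{\ge a}(A)\cap H^{-1/2}_{\ge a}(A)$ has the property that $B$ being a closed subspace of $\check H(A)$ lying in $H^{1/2}$ forces $W_+$ to be closed in both the $H^{1/2}$- and $H^{-1/2}$-topologies, and the identity map between those on a fixed closed subspace is bounded only on finite-dimensional spaces (this is where the $\Bad\subset H^{1/2}$ hypothesis, via duality, is needed to rule out infinite-dimensional pathologies). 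Let $V_-$ be the closure in $L^2_{<a}(A)$ of $P_-(B)$, let $W_- := V_-^\perp\cap L^2_{<a}(A)$ inside $L^2_{<a}(A)$, and let $V_+ := W_+^{\perp}$ inside $L^2_{\ge a}(A)$. Then $B$, being the graph of a map from $P_-(B)$ to the $\ge a$ part modulo $W_+$, defines $g\colon V_-^{1/2}\to V_+$; boundedness of $g$ as a map on $L^2$-completions and the mapping properties $g(V_-^{1/2})\subset V_+^{1/2}$, $g^*(V_+^{1/2})\subset V_-^{1/2}$ follow from $B\subset H^{1/2}$ and $\Bad\subset H^{1/2}$ respectively, again by transporting through $\sigma_0$ and using the $\beta$-duality description of $\Bad$. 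Finally one checks $W_-$ and $W_+$ are genuinely in $H^{1/2}$ and that \eqref{bell} holds by construction.

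I expect the main obstacle to be the finite-dimensionality claim in \eqref{well} and the boundedness of $g$: a priori $B$ is only a closed subspace of $\check H(A)$, and one must leverage both $B\subset H^{1/2}(\dM,E)$ and $\Bad\subset H^{1/2}(\dM,F)$ simultaneously — neither alone suffices — to show that the ``mismatch'' between the $H^{1/2}$ and $H^{-1/2}$ scales is confined to finite-dimensional pieces $W_\pm$ and that the remaining part is a genuine bounded graph. This is essentially a closed-graph / interpolation argument combined with the observation that a closed subspace on which two Sobolev norms of different order are equivalent must be finite-dimensional (by compactness of the embedding $H^{1/2}\hookrightarrow L^2$ on the compact boundary $\dM$). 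Handling the interplay with $\sigma_0$ carefully — keeping track of the finite-dimensional and smoothing errors when transporting spectral decompositions of $A$ to those of $\tilde A$ — is the technical heart of the argument.
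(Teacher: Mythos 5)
Your outline captures the right high‐level shape of the argument: reformulate ellipticity as $B\subset H^{1/2}(\dM,E)$ and $\Bad\subset H^{1/2}(\dM,F)$ (Remark~\ref{ell2}(b)), split $\check H(A)$ across the spectrum of $A$ at level $a$, extract $W_+$ as the "vertical" part $B\cap H^{1/2}_{[a,\infty)}(A)$, and read $B$ as a finite‐rank correction of a graph. The proof of finite‐dimensionality of $W_+$ you sketch (equivalence of two Sobolev norms of different orders on a closed subspace, plus Rellich) is exactly the Hörmander–Peetre mechanism in Proposition~\ref{HormPeet} that the paper uses in Lemma~\ref{happylem}. However, you have misattributed the role of the hypothesis $\Bad\subset H^{1/2}$: the finite‐dimensionality of $W_+$ and the closedness of $Q_{(-\infty,a)}(B)$ in $H^{1/2}_{(-\infty,a)}(A)$ follow from $B\subset H^{1/2}$ \emph{alone} (this is what Lemma~\ref{happylem} gives); the $\Bad$ hypothesis is needed for the other half of the picture, namely finite‐dimensionality of $W_-$ and control of $g^*$.

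The more serious gap is in how you propose to use $\Bad$. You want to "transport the decomposition through $\sigma_0$," asserting that $\sigma_0$ maps $H^s_I(A)$ to $H^s_{-I}(\tilde A)$ up to finite‐dimensional and smoothing errors. That claim is neither established in the paper nor readily available: $A$ and $\tilde A$ act on different bundles, $\sigma_0$ is only a bundle isomorphism, and there is no a priori comparison of their spectral decompositions at the level of individual $H^s_I$ subspaces (Remark~\ref{comreg}(c) is about a weaker, $\check H$‐level statement, and even that requires a nontrivial pseudodifferential argument). The paper sidesteps this entirely: by Lemma~\ref{duality} and the annihilator relation~\eqref{ba}, $C:=\sigma_0^*(\Bad)$ is a closed subspace of $\hat H(A)=\check H(-A)$ contained in $H^{1/2}(\dM,E)$, and Lemma~\ref{happylem} is applied \emph{again} to $C$ with $-A$ in place of $A$. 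One then identifies the resulting $U^{1/2}_\pm$ with the $V^{1/2}_\pm$ coming from $B$ via a direct computation with annihilators, and finally extends $g$ to a bounded $L^2$‐map by interpolation between its action on $V_-^{1/2}$ and the dual action coming from $\Bad$ on $V_-^{-1/2}$. Without this symmetric dual application, your construction of $W_-$ as the orthogonal complement of $\overline{P_-(B)}$ in $L^2_{<a}(A)$ has no mechanism to be finite‐dimensional, and the interpolation step for $g$ is missing altogether. I would suggest replacing the "transport by $\sigma_0$" heuristic with the annihilator reformulation $\sigma_0^*(\Bad)=B^0\cap\hat H(A)$, which keeps the whole analysis inside the $A$‐calculus and makes the dual application of your own $W_+$ argument available for $W_-$.
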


\begin{add}\label{addbad}
Assume the Standard Setup~\ref{stase} and that $D$ and $D^*$ are complete.
Let $B$ be an elliptic boundary condition as in \tref{tell}.
Then
\begin{equation}
  \Bad = (\sigma_0^{-1})^* \left(W_- \oplus \{ v-g^*v \mid v \in V_+^{1/2} \} \right) .
  \label{bad}
\end{equation}
\end{add}

\begin{remark}\label{rembad}
For boundary conditions $B$ and $\Bad$ as in \tref{tell} and Addendum~\ref{addbad},
$\sigma_0^*(\Bad)$ is the $L^2$-orthogonal complement of $B$ in $H^{1/2}(\dM,E)$,
by the orthogonality of the decomposition \eqref{dell}, \eqref{bell}, and \eqref{bad}.
\end{remark}

\begin{thm}\label{treg}
Assume the Standard Setup~\ref{stase} and that $D$ and $D^*$ are complete.
Let $B$ be an elliptic boundary condition as in \tref{tell} and $m\ge0$ be an integer.
Then $B$ is $m$-regular if and only if
\begin{enumerate}[(i)]
\item $W_{\pm}\subset H^{m+1/2}(\dM,E)$;
\item\label{greg}  
$g(V_-^{m+1/2})\subset V_+^{m+1/2}$
and $g^*(V_+^{m+1/2})\subset V_-^{m+1/2}$.
\end{enumerate}
\end{thm}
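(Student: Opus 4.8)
The plan is to reduce \tref{treg} to a statement purely about the adapted operator $A$ on the boundary, exactly as \tref{tell} does for ellipticity. The key tool is \tref{md}\eqref{mdr}: a section $\Phi\in\dom(D_{\max})$ with $D\Phi\in H^k_{\loc}(M,F)$ lies in $H^{k+1}_{\loc}(M,E)$ if and only if the positive spectral part $Q_{[0,\infty)}(\mathcal R\Phi)$ of its boundary trace lies in $H^{k+1/2}(\dM,E)$; an analogous statement holds for $D^*$ with $\tilde A$. Since $m$-regularity concerns all $\Phi\in\dom(D_{B,\max})$ with $D\Phi\in H^k_{\loc}$ for $0\le k\le m$, and the trace map $\mathcal R:\dom(D_{\max})\to\check H(A)$ is surjective with $\mathcal R(\dom(D_{B,\max}))$ dense in $B$ (indeed equal to the natural "$H^{1/2}$-completion-free" version of $B$), the whole question becomes: for every $\phi\in B$, is $Q_{[0,\infty)}\phi\in H^{k+1/2}(\dM,E)$ (and dually for $\Bad$), for all $0\le k\le m$? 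So the first step is to make this reduction precise, using Remark~\ref{comreg}(a) to note that completeness plays no role in the regularity bookkeeping.

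Next I would feed in the explicit description from \tref{tell} and Addendum~\ref{addbad}. Write $\phi = w_+ + v + gv$ with $w_+\in W_+$, $v\in V_-^{1/2}$. Decompose the boundary bundle's $L^2$ sections according to the spectral splitting at $a$ and, absorbing the finite-dimensional space $H^2_{[0,a)}(A)$ (smooth, hence harmless) into the bookkeeping, identify $Q_{[0,\infty)}\phi$ up to smooth finite-dimensional corrections with the $V_+\oplus W_+$–component of $\phi$, i.e. with $w_+ + gv$. Likewise, using \eqref{bad}, the elements of $\Bad$ have the form $(\sigma_0^{-1})^*(w_- + v' - g^*v')$ with $w_-\in W_-$, $v'\in V_+^{1/2}$, and $Q_{(-\infty,0]}(\tilde A)$ applied to these is, modulo smoothing operators (note $(\sigma_0^{-1})^*$ is a bundle isomorphism and commutes with spectral projections only up to order $-1$ operators, which is exactly the slack we can afford — cf. Remark~\ref{comreg}(c)), identified with $w_- - g^*v'$. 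Hence $B$ is $m$-regular iff $w_+ + gv\in H^{m+1/2}$ for all $w_+\in W_+$, $v\in V_-^{1/2}$, and $w_- - g^*v'\in H^{m+1/2}$ for all $w_-\in W_-$, $v'\in V_+^{1/2}$.

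Finally, disentangle these two conditions. Taking $v=0$ (resp.\ $v'=0$) forces $W_+\subset H^{m+1/2}(\dM,E)$ and $W_-\subset H^{m+1/2}(\dM,E)$, which is condition (i). Granting (i), the remaining requirement is $gv\in H^{m+1/2}$ for all $v\in V_-^{1/2}$ and $g^*v'\in H^{m+1/2}$ for all $v'\in V_+^{1/2}$; but $gv$ already lies in $V_+$ by definition of $g$, so $gv\in V_+^{m+1/2}=V_+\cap H^{m+1/2}(\dM,E)$, i.e. $g(V_-^{1/2})\subset V_+^{m+1/2}$, and similarly $g^*(V_+^{1/2})\subset V_-^{m+1/2}$. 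To get the cleaner form \eqref{greg} — domains $V_-^{m+1/2}$ rather than $V_-^{1/2}$ — one runs the implication $D\Phi\in H^k_{\loc}\Rightarrow\Phi\in H^{k+1}_{\loc}$ inductively in $k$: at stage $k$ one already knows $\Phi\in H^{k}_{\loc}$, hence $\mathcal R\Phi\in H^{k-1/2}(\dM,E)$, hence the relevant $v$ lies in $V_-^{k-1/2}$, and applying the hypothesis $g(V_-^{k-1/2})\subset V_+^{k+1/2}$ (with the convention that $V_\pm^{1/2}$ is the initial case) pushes regularity up one notch; conversely $m$-regularity applied to carefully chosen $\Phi$ with prescribed boundary data yields exactly the inclusions in \eqref{greg}. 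The main obstacle is this bootstrap: one must set up the induction so that the hypothesis at "level $k$" is available precisely when needed, and check that the finite-dimensional and order-$(-1)$ error terms introduced when passing between $B$, its trace, and the spectral projections never obstruct the gain of a derivative — this is where the finite-dimensionality and smoothness in \tref{tell}\eqref{well} and the pseudodifferential-order argument of Remark~\ref{comreg}(c) are essential.
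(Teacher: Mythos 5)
Your proposal correctly identifies \tref{md}\eqref{mdr} and the spectral description of $B$ from \tref{tell} and Addendum~\ref{addbad} as the right ingredients, and the rough plan of reducing to a statement about $A$ and then bootstrapping is sound. But the bootstrap as you write it has a genuine gap, and the stated inductive hypothesis is not the one the theorem grants you.

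Concretely, at stage $k$ you argue: $\Phi\in H^k_{\loc}$, hence $\mathcal R\Phi\in H^{k-1/2}(\dM,E)$ by the trace theorem, hence $v\in V_-^{k-1/2}$; you then ``apply the hypothesis $g(V_-^{k-1/2})\subset V_+^{k+1/2}$.'' That last inclusion is not condition~\eqref{greg}: condition~\eqref{greg} says $g(V_-^{m+1/2})\subset V_+^{m+1/2}$, i.e.\ $g$ \emph{preserves} the Sobolev degree, it does not gain one. With only the degree-preserving hypothesis, your chain gives $gv\in V_+^{k-1/2}$, hence $Q_{[0,\infty)}\mathcal R\Phi\in H^{k-1/2}$, and \tref{md}\eqref{mdr} then gives back $\Phi\in H^{k}_{\loc}$ — no gain. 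Similarly, your intermediate claim ``$B$ is $m$-regular iff $w_+ + gv\in H^{m+1/2}$ for all $v\in V_-^{1/2}$'' is too strong: it would force $g(V_-^{1/2})\subset H^{m+1/2}$, which is far from what~\eqref{greg} asserts, and the reduction to $A$ must be conditional (compare \lref{lem:Breg}\eqref{Breg3}: ``for all $\phi\in B$ with $Q_{(-\infty,0)}\phi\in H^s$, one has $\phi\in H^s$'').

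The missing mechanism — and the real content of the forward implication — is that $D_{\max}\Phi\in H^k_{\loc}(M,F)$ already forces $Q_{(-\infty,0)}(\mathcal R\Phi)\in H^{k+1/2}(\dM,E)$, a \emph{full degree better} than what the trace theorem gives. This is exactly what the explicit parametrix $S_0$ of \pref{riso} delivers: $S_0$ maps $H^k$ to $H^{k+1}$ and its trace lies in $H^{1/2}_{(-\infty,0)}(A)$, so $Q_{(-\infty,0)}\mathcal R\Phi=(S_0 D_0\Phi)(0)\in H^{k+1/2}$ (after a cut-off and the perturbation argument of \lref{lem:DDo} to pass from $D_0$ to $D$). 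Once you have $v\in V_-^{k+1/2}$, the degree-preserving hypothesis $g(V_-^{k+1/2})\subset V_+^{k+1/2}$ is exactly enough, and \tref{md}\eqref{mdr} closes the induction. The paper packages this as the isomorphisms $\mathcal F_B$ and $\mathcal G_B$ in the proof of \tref{thm:regvc}; the error terms you flag (order $-1$ pseudo-differential, finite-dimensional) are indeed harmless, but they were never the obstruction — the obstruction is the extra derivative on the negative spectral part of the trace, which the trace theorem alone cannot supply.
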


\begin{examples}\label{exabc}
(a)
The boundary condition\index{Atiyah-Patodi-Singer boundary condition}
\begin{equation}
  \BAPS := H^{1/2}_{(-\infty,0)}(A)
  \label{baps}
\end{equation}
of Atiyah, Patodi, and Singer is $\infty$-regular.
More generally, given any $a\in\R$,
\begin{equation}
  B(a) = H^{1/2}_{(-\infty,a)}(A)
  \label{Ba}
\end{equation}
is an $\infty$-regular boundary condition with
\begin{equation}
  B(a)^{\rm ad} = (\sigma_0^{-1})^* \left( H^{1/2}_{[a,\infty)}(A) \right) .
  \label{bell0}
\end{equation}
Boundary conditions of this type are called
{\em generalized Atiyah-Patodi-Singer boundary conditions}.
From the point of view of Theorems~\ref{tell},
they are the most basic examples of elliptic or $\infty$-regular boundary conditions.

(b) Elliptic boundary value problems in the classical sense of 
Lopatinsky and Shapiro are $\infty$-regular, see \cref{lopashap}.

(c) {\em Chiral boundary conditions}:\index{chiral boundary conditions}
Let $G$ be a chirality operator of $E$ along $\dM$,
that is, $G$ is a field of unitary involutions of $E$ along $\dM$
which anticommutes with all $\sigma_A(\xi)$, $\xi\in T^*\dM$.
Let $E^\pm$ be the subbundle of $E$ over $\dM$
which consists of the eigenbundle of $G$ for the eigenvalue $\pm1$.
Then the (local) boundary condition requiring that $\mathcal R\Phi$
be a section of $E^\pm$ is $\infty$-regular,
see \cref{cor:involution}.

Suppose for example that $M$ is Riemannian,
that $D$ is a  Dirac operator in the sense of Gromov and Lawson,
and that $T$ is the interior unit normal vector field along $\dM$.
Then Clifford multiplication with $iT$ defines a field $G$ of involutions as above.
This type of boundary condition goes under the name {\em MIT bag condition}.
If $M$ is even dimensional, there is another choice for $G$,
namely Clifford multiplication with the complex volume form,
compare \cite[Ch.~I, \S~5]{LM}.

(d) {\em Transmission conditions}:\index{transmission condition}
For convenience assume $\dM=\emptyset$.
Let $N$ be a closed and two-sided hypersurface in $M$.
Cut $M$ along $N$ to obtain a manifold $M'$
whose boundary $\dM'$ consists of two disjoint copies $N_1$ and $N_2$ of $N$.
There are natural pull-backs $\mu'$, $E'$, $F'$, and $D'$
of $\mu$, $E$, $F$, and $D$ from $M$ to $M'$.
Assume that there is an interior vector field $T$ along $N=N_1$
such that $D'$ is boundary symmetric with respect to $T$ along $N_1$.
(If, for instance, $D$ is of Dirac type, we may choose $T$ to be the interior
normal vector field of $M'$ along $N_1$.)
Then $D'$ is also boundary symmetric with respect to the interior
vector field $-T$ of $M'$ along $N=N_2$.
The {\em transmission condition}
\begin{equation*}
  B:= \left\{ (\phi,\psi) \in H^{1/2}(N,E) \oplus H^{1/2}(N,E)
  \mid \varphi = \psi \right\}
\end{equation*}
reflects the fact that $H^1_{\loc}$-sections of $E$ have a well-defined trace along $N$.
The natural pull-back to $M'$ yields a 1-1 correspondence between $H^1_{\loc}$-sections
of $E$ and $H^1_{\loc}$-sections of $E'$ with boundary values in $B$.
This boundary condition is basic for the Fredholm theory of $D$
in the case where $M$ is noncompact.
We will see in Example~\ref{ex:TransCond}
that $B$ is an $\infty$-regular boundary condition for $D'$.
\end{examples}

\begin{definition}\label{coer}
We say that $D$ is {\em coercive at infinity}\index{coercive at infinity}
if there is a compact subset $K\subset M$ and a constant $C$ such that
\begin{equation}\label{coer2}
  \| \Phi \|_{L^2(M)}
  \le C \| D\Phi \|_{L^2(M)} ,
\end{equation}
for all smooth sections $\Phi$ of $E$ with compact support in $M\setminus K$.
\end{definition}

As for completeness, coercivity at infinity is automatic if $M$ is compact.

\begin{thm}\label{fred}
Assume the Standard Setup~\ref{stase}
and that $D$ and $D^*$ are complete.
Let $B\subset\check H(A)$ be an elliptic boundary condition.

Then $D$ is coercive at infinity if and only if $D_{B,\max}$
has finite-dimensional kernel and closed image.
In this case
\begin{align*}
  \ind D_{B,\max} 
  &= \dim\ker D_{B,\max} - \dim\ker D^*_{\Bad,\max} \\
  &= \ind D_{B(a)} + \dim W_+ - \dim W_- \in \Z \cup \{-\infty\} ,
\end{align*}
where we choose the representation of $B$ as in \tref{tell}.

In particular, $D_{B,\max}$ is a Fredholm operator
if and only if $D$ and $D^*$ are coercive at infinity.
\end{thm}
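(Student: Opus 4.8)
The plan is to reduce the Fredholm statement for $D_{B,\max}$ to the already-proven case of generalized Atiyah–Patodi–Singer conditions $B(a)$, and then to feed in a coercivity argument to control kernel and cokernel. First I would establish the following \emph{a priori estimate}: assuming $D$ is coercive at infinity with compact set $K$ and constant $C$ as in \dref{coer}, and using the density statement \tref{md}\eqref{mdd} together with the trace map of \tref{md}\eqref{mdt}, one shows that for $\Phi\in\dom(D_{B,\max})$ one has a G{\aa}rding-type bound
\[
  \LzM{\Phi} \le C'\bigl( \LzM{D\Phi} + \LzK{\Phi} \bigr).
\]
The point is that on $M\setminus K$ one applies \eqref{coer2} after cutting off, paying a commutator term supported on a slightly larger compact set; since the boundary $\dM$ is compact it may be absorbed into $K$. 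Combined with Rellich compactness of $H^1(K,E)\hookrightarrow L^2(K,E)$ — available because $\dom(D_{B,\max})\subset H^1_{\loc}(M,E)$ by ellipticity of $B$ — this standard functional-analytic lemma yields that $D_{B,\max}$ has finite-dimensional kernel and closed image. For the converse, if $D$ is \emph{not} coercive at infinity one produces, for each $n$, a smooth $\Phi_n$ supported in $M\setminus K_n$ with $\LzM{\Phi_n}=1$ and $\LzM{D\Phi_n}\to 0$; after passing to a subsequence that converges weakly to $0$ (by the escape of support) one contradicts either finite-dimensionality of the kernel or closedness of the image.

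Next I would prove the index formula. The key is the decomposition \eqref{bell}, $B = W_+\oplus\{v+gv\mid v\in V_-^{1/2}\}$, which exhibits $B$ as a finite-rank perturbation of $B(a)=H^{1/2}_{(-\infty,a)}(A)=L^2_{(-\infty,a)}(A)\cap H^{1/2}$. Concretely, I would interpolate through the intermediate boundary condition $B' := V_-^{1/2}\oplus W_+$ (the graph of $g$ replaced by the graph of $0$). One shows $\ker D_{B',\max}$ and $\ker D_{B(a),\max}$ differ by the finite-dimensional spaces $W_+$ and $W_-$ respectively — more precisely, passing from $B(a)$ to $B'$ replaces the summand $W_-$ of $L^2_{(-\infty,a)}(A)$ by the summand $W_+$ of $L^2_{[a,\infty)}(A)$, which changes the index by $\dim W_+-\dim W_-$; this is a direct consequence of the fact that enlarging a boundary condition by a finite-dimensional space $X\subset H^{1/2}$ increases the index by $\dim X$, which in turn follows from the exact-sequence/cokernel bookkeeping already implicit in \tref{md} and \pref{closedext}. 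Then passing from $B'$ to $B$ is a \emph{continuous} deformation $g_s := sg$, $s\in[0,1]$, of bounded maps respecting the $H^{1/2}$-conditions by \tref{tell}\eqref{gell}; since $D$ is coercive at infinity the operators $D_{B_s,\max}$ form a norm-continuous family of (semi-)Fredholm operators, hence the index is locally constant in $s$, giving $\ind D_{B,\max}=\ind D_{B',\max}$. Combining, $\ind D_{B,\max}=\ind D_{B(a)}+\dim W_+-\dim W_-$. The identity $\ind D_{B,\max}=\dim\ker D_{B,\max}-\dim\ker D^*_{\Bad,\max}$ then follows from Addendum~\ref{addbad}, which identifies $(D_{B,\max})^*$ with $D^*_{\Bad,\max}$, together with the closed-range property: $\coker D_{B,\max}\cong\ker(D_{B,\max})^*=\ker D^*_{\Bad,\max}$.

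For the final sentence, observe that $\ind D_{B,\max}\in\Z\cup\{-\infty\}$ with the value $-\infty$ occurring exactly when $\dim\ker D^*_{\Bad,\max}=\infty$. By the first part applied to $D^*$ — noting that $\Bad$ is an elliptic boundary condition for $D^*$ by Remarks~\ref{ell2}(b) and \tref{tell}, and that $\dom(D^*_{\Bad,\max})\subset H^1_{\loc}(M,F)$ — the kernel $\ker D^*_{\Bad,\max}$ is finite-dimensional (with closed range) precisely when $D^*$ is coercive at infinity. Hence $D_{B,\max}$ is Fredholm, i.e. has finite-dimensional kernel \emph{and} finite-dimensional cokernel and closed range, if and only if both $D$ and $D^*$ are coercive at infinity. (Coercivity of $D^*$ alone gives finite cokernel; coercivity of $D$ alone gives finite kernel; one needs both.)

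The main obstacle I anticipate is the bookkeeping in the second paragraph: making the two reductions — the finite-rank jump from $B(a)$ to $B'$, and the homotopy invariance from $B'$ to $B$ — genuinely rigorous requires knowing that $D_{B_s,\max}$ is a well-behaved (norm-continuous, semi-Fredholm) family, which in turn rests on the a priori estimate and on \tref{md}/\pref{closedext} being applied uniformly in $s$; the deformation $g_s=sg$ must be checked to preserve \emph{both} conditions $g_s(V_-^{1/2})\subset V_+^{1/2}$ and $g_s^*(V_+^{1/2})\subset V_-^{1/2}$, which is immediate here but is the crux of why \tref{tell}\eqref{gell} was stated symmetrically. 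The analytic input — coercivity $\Longleftrightarrow$ finite kernel and closed range — is essentially classical once the trace theory of \tref{md} is in hand, so I expect it to go through with little friction.
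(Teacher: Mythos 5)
Your proposal follows the paper's route essentially step for step: coercivity at infinity yields finite kernel and closed image via a cut-off estimate near infinity plus Rellich compactness on a compact collar, through the abstract criterion of Proposition~\ref{HormPeet}; the index formula is derived by deforming $g$ to $0$ and invoking constancy of the index under a continuous family of elliptic boundary conditions (Theorem~\ref{thmdefo2} together with Example~\ref{defob}), then applying the finite-rank bookkeeping of Corollary~\ref{cor:AgraDy} via the intermediate condition $W_-\oplus W_+\oplus V_-^{1/2}$; and the final Fredholm equivalence is obtained by applying the first part to $D^*$ with the adjoint boundary condition $\Bad$, which is elliptic precisely because $B$ is. The only cosmetic difference is that you cast the first step as a G{\aa}rding inequality $\LzM{\Phi}\le C'(\LzM{D\Phi}+\LzK{\Phi})$ and then invoke Rellich, which is criterion~(ii) of Proposition~\ref{HormPeet}, whereas the paper argues directly through the subconvergence criterion~(iii); these are equivalent by that proposition and the substance is identical.
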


Given an elliptic or regular boundary condition $B$ as in \tref{tell} above,
we obtain a continuous one-parameter family  $B_s$, $1\ge s\ge0$,
of elliptic or regular boundary conditions by substituting $sg$ for $g$.
Then $B_1=B$ and $B_0=W_+\oplus V_-^{1/2}$.
The proof of the second index formula above relies on the constancy
of $\ind D_{B_t,\max}$ under this deformation, given that $D$ is coercive at infinity.
It is clear that, in $H^{1/2}_{(-\infty,a)}(A)\oplus H^{1/2}_{[a,\infty)}(A)$,
any sum $W_-\oplus W_+$ of subspaces $W_-$ and $W_-$
of dimensions $k_-:=\dim W_-<\infty$ and $k_+:=\dim W_+<\infty$
can be deformed into any other such sum
as long as the latter has the same pair of dimensions $k_-$ and $k_+$.
We conclude that for fixed $a$, the pair $(k_-,k_+)$ of dimensions
is a complete invariant\symbolfootnote[2]{The pair $(k_-,k_+)$
depends on the choice of $a$, though.}
for the homotopy classes of elliptic boundary conditions,
and similarly for regular boundary conditions.

Suppose now for convenience that $\dM=\emptyset$ is empty,
and let $M=M'\cup M''$ be a decomposition of $M$ into two pieces
with common boundary $N=\dM'=\dM''$, a compact hypersurface.
Let $E$ and $F$ be Hermitian vector bundles over $M$
and $D$ a first-order elliptic differential operator from $E$ to $F$.
Denote by $E'$, $F'$, and $D'$ the restrictions of $E$, $F$, and $D$ to $M'$,
and analogously for $M''$.
Assume that $D'$ is  boundary symmetric
with respect to an interior normal field $T$ of $M'$ along $N$.
Then $D''$ is boundary symmetric with respect to $-T$.
Choose an adapted operator $A$ for $D'$ as in the Standard Setup~\ref{stase}.
Then $-A$ is an adapted operator for $D''$ as in the Standard Setup~\ref{stase}
and will be our preferred choice.
With respect to this choice,
the boundary condition of Atiyah, Patodi, and Singer for $D''$
is given by $H^{1/2}_{(-\infty,0)}(-A)=H^{1/2}_{(0,\infty)}(A)$.

\begin{thm}[Decomposition Theorem]\label{deco}\index{decomposition theorem}
Let $M=M'\cup M''$ and notation be as above.
Assume that $D'$ is  boundary symmetric
with respect to an interior normal field $T$ of $M'$ along $N$
and choose an adapted operator $A$ as above.

Then $D$ and $D^*$ are complete and coercive at infinity if and only if
$D'$ and $D''$ and their formal adjoints are complete and coercive at infinity.
In this case, $D_{\max}$, $D'_{B',\max}$, and $D_{B'',\max}$
are Fredholm operators and their indices satisfy
\[
  \ind D_{\max} = \ind D'_{B',\max} + \ind D''_{B'',\max} ,
\]
where $B'=B(a)=H^{1/2}_{(-\infty,a)}(A)$ and $B''=H^{1/2}_{[a,\infty)}(A)$ or,
more generally, where $B'$ is elliptic and $B''$ is the $L^2$-orthogonal complement
of $B'$ in $H^{1/2}(A)$.
\end{thm}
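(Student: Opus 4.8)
The plan is to deduce the Decomposition Theorem from the transmission-condition picture of Example~\ref{exabc}~(d) together with the Fredholm criterion of \tref{fred}. First I would set up the cut manifold: let $M'' $ carry $-T$ as interior normal field along $N=\dM''$, so that $D''$ is boundary symmetric there with adapted operator $-A$, and form the disjoint union $\hat M = M'\sqcup M''$ with bundles $\hat E = E'\sqcup E''$, $\hat F = F'\sqcup F''$ and operator $\hat D = D'\sqcup D''$. On the boundary $N_1\sqcup N_2$ the adapted operator is $A\sqcup(-A)$, and I would verify that $\check H(A\sqcup(-A)) = \check H(A)\oplus\check H(-A)$, so that the transmission condition
\[
  B_{\mathrm{tr}} := \{ (\phi,\psi)\in H^{1/2}(N,E)\oplus H^{1/2}(N,E) \mid \phi=\psi \}
\]
is a well-defined boundary condition for $\hat D$ in the sense of the Standard Setup. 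By the natural pull-back from $M$ to $\hat M$ described in Example~\ref{exabc}~(d), which is an $L^2$-isometry compatible with $D$ and restricts to a bijection between $H^1_{\loc}$-sections of $E$ and $H^1_{\loc}$-sections of $\hat E$ satisfying the transmission condition, one gets a canonical identification $\dom(D_{\max}) \cong \dom(\hat D_{B_{\mathrm{tr}},\max})$ intertwining the two operators; in particular $\ind D_{\max} = \ind \hat D_{B_{\mathrm{tr}},\max}$, and likewise for the formal adjoints with $B_{\mathrm{tr}}^{\ad}$ (which one checks is again the transmission condition, using that $\sigma_0$ flips sign under $T\mapsto -T$).

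Next I would address completeness and coercivity at infinity. Both are local-at-infinity conditions: a compact collar of $N$ is irrelevant, so $D$ is complete (resp. coercive at infinity) if and only if $D'$ and $D''$ are, and similarly for the formal adjoints. Concretely, the estimate in \dref{coer} for sections supported in $M\setminus K$ with $K\supset N$ splits as a sum over the two pieces, and a smooth section of $\hat E$ supported away from $N$ descends to one on $M$; completeness is handled by the same cut-and-paste together with a partition of unity subordinate to $\{M', M''\}$ applied to an element of $\dom(D_{\max})$ and its graph-norm approximants. Granting this, $D$ and $D^*$ complete-and-coercive is equivalent to $\hat D$ and $\hat D^*$ complete-and-coercive, which by \tref{fred} (applied to the elliptic boundary condition $B_{\mathrm{tr}}$ on $\hat M$ — note $B_{\mathrm{tr}}\subset H^{1/2}$ and its adjoint likewise, so it is elliptic by Remark~\ref{ell2}~(b)) is equivalent to $\hat D_{B_{\mathrm{tr}},\max}$ being Fredholm. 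It remains to see that this in turn is equivalent to $D'_{B',\max}$ and $D''_{B'',\max}$ being Fredholm for $B'=B(a)$, $B''=H^{1/2}_{[a,\infty)}(A)$. Since $\hat D$ splits as a direct sum, the only coupling between the two pieces is through $B_{\mathrm{tr}}$, so the point is a deformation: I would connect $B_{\mathrm{tr}}$ through elliptic boundary conditions on $\hat M$ to the \emph{split} condition $B(a)\oplus H^{1/2}_{(0,\infty)}(A) = B(a)\oplus H^{1/2}_{[a,\infty)}(-A)$ — equivalently write $B_{\mathrm{tr}}$ in the normal form of \tref{tell} on $\hat M$ with the off-diagonal map $g$ given by the graph of the identity $N_1\to N_2$, and scale $g\mapsto sg$ as in the paragraph preceding \tref{deco}. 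Constancy of the index of a Fredholm family under such a deformation (granting coercivity at infinity, which is untouched by the deformation) then gives $\ind \hat D_{B_{\mathrm{tr}},\max} = \ind D'_{B(a),\max} + \ind D''_{H^{1/2}_{[a,\infty)}(A),\max}$, and the general elliptic $B'$ with $L^2$-orthogonal complement $B''$ follows by the same scaling argument on $M'$ and $M''$ separately.

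The main obstacle I expect is the deformation step: one must check that scaling $g\mapsto sg$ on $\hat M$ keeps the boundary condition elliptic (clear from \tref{tell}, since the conditions on $W_\pm$ and on $g$ in items~(i),(ii) are preserved under scaling $g$), keeps $\hat D$ coercive at infinity (immediate, as coercivity at infinity does not see the boundary), and that the resulting family $\hat D_{B_s,\max}$ is continuous in the gap topology on closed operators — which is where one needs the explicit description of $\dom(\hat D_{B_s,\max})$ via the trace map $\mathcal R$ from \tref{md}~(2) and the boundedness of $\mathcal R$ in the graph norm. The bookkeeping identifying $\sigma_0$ on $N_2$ as $-\sigma_0$ on $N_1$, hence matching $B_{\mathrm{tr}}^{\ad}$ with the transmission condition and $H^{1/2}_{[a,\infty)}(A)$ with $H^{1/2}_{(-\infty,a)}(-A) = B(a)^{\ad}$-type data, is routine but must be done carefully so that the two index contributions are attached to the correct pieces. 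Everything else — the $L^2$-isometry of the pull-back, the locality of completeness and coercivity, the Fredholm equivalence — follows directly from \tref{md}, \tref{tell}, \tref{fred}, and Example~\ref{exabc}~(d).
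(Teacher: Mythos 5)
Your strategy is the same as the paper's: identify $D_{\max}$ with $D'_{B_{\mathrm{tr}}}$ via the transmission condition on the cut manifold, observe that completeness and coercivity are local-at-infinity and hence unaffected by cutting along the compact hypersurface $N$, and then deform the transmission condition by scaling $g\mapsto sg$ to decouple the two boundary components. The paper's proof of the Splitting/Decomposition Theorem~\ref{split} does exactly this.

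There is, however, a real gap in your final step, and it is not merely bookkeeping. When you scale $g\mapsto sg$ starting from $B_{\mathrm{tr}}$, the limit at $s=0$ is not a product boundary condition. As Example~\ref{ex:TransCond} shows, the normal form of $B_{\mathrm{tr}}$ has
\[
  W_+ = \{ (\phi,\phi) \mid \phi\in\ker A \} ,
  \qquad
  V_-^{1/2} = H^{1/2}_{(-\infty,0)}(A)\oplus H^{1/2}_{(0,\infty)}(A) ,
\]
and $W_+$ is a \emph{diagonal} subspace that still couples $N_1$ and $N_2$. So after the deformation you are at the elliptic boundary condition $W_+\oplus V_-^{1/2}$, which is not of the form $B'\oplus B''$, and you cannot yet read off a sum of indices. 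The paper disposes of this finite-dimensional piece using Corollary~\ref{cor:AgraDy}: shrinking $W_+\oplus V_-^{1/2}$ to $V_-^{1/2}$ lowers the index by $\dim W_+=\dim\ker A$, and enlarging $V_-^{1/2}=H^{1/2}_{(-\infty,0)}(A)\oplus H^{1/2}_{(0,\infty)}(A)$ to the genuine product $H^{1/2}_{(-\infty,0)}(A)\oplus H^{1/2}_{[0,\infty)}(A)$ raises it by the same amount; these cancel. Your proposal skips this exchange entirely, and as a symptom you land on $B(a)\oplus H^{1/2}_{(0,\infty)}(A)$ rather than $B(a)\oplus H^{1/2}_{[a,\infty)}(A)$, which differ precisely by $\ker A$. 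The same issue reappears for the general elliptic $B'$: deforming $g\mapsto 0$ leaves $W_{1,+}\oplus W_{1,-}$ distributed in a way that again requires Corollary~\ref{cor:AgraDy} to trade finite-dimensional pieces between the two sides before the index splits. Your worries about gap-topology continuity, by contrast, are already settled by Theorem~\ref{thmdefo2} together with Definition~\ref{def:contfam} and Lemma~\ref{defolem}, so no new argument is needed there.
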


\begin{definition}\label{def:agree}
For $i=1,2$, let $M_i$ be manifolds, $E_i$ and $F_i$ Hermitian vector bundles over $M_i$, and $D_i:\Cu(M_i,E_i) \to \Cu(M_i,F_i)$ be differential operators.
Let $K_i\subset M_i$ be closed subsets.
Then we say that {\em $D_1$ outside $K_1$ agrees with $D_2$ outside $K_2$}
if there are a diffeomorphism $f:M_1\setminus K_1 \to M_2\setminus K_2$
and smooth fiberwise linear isometries
$\mathcal{I}_E:E_1|_{M_1\setminus K_1} \to E_2|_{M_2\setminus K_2}$ and
$\mathcal{I}_F:F_1|_{M_1\setminus K_1} \to F_2|_{M_2\setminus K_2}$ such that 
$$
\xymatrix{
E_1|_{M_1\setminus K_1} \ar[d] \ar[r]^{\mathcal{I}_E} & E_2|_{M_2\setminus K_2} \ar[d] && F_1|_{M_1\setminus K_1} \ar[d] \ar[r]^{\mathcal{I}_F}& F_2|_{M_2\setminus K_2} \ar[d]\\
M_1\setminus K_1 \ar[r]^f & M_2\setminus K_2 && M_1\setminus K_1\ar[r]^f & M_2\setminus K_2
}
$$
commute and 
\begin{equation}
  \mathcal{I}_F\circ (D_1\Phi) \circ f^{-1} = D_2(\mathcal{I}_E\circ\Phi\circ f^{-1})
\end{equation}
for all smooth sections $\Phi$ of $E_1$ over $M_1\setminus K_1$.
\end{definition}

\begin{center}
\begin{pspicture}(2,-6.5)(6,3.3)
\psbezier[linewidth=0.04](7.006123,3.244471)(6.5488772,2.5644712)(4.9584584,0.8244712)(3.686123,0.64447117)
\psbezier[linewidth=0.04](3.786123,-0.5555288)(5.206123,-0.09552883)(6.666123,0.20447117)(8.126123,-0.8155288)
\psbezier[linewidth=0.04](7.126123,3.0244713)(6.8461227,2.4044712)(6.568243,2.125087)(6.586123,1.1644711)(6.604003,0.20385532)(7.646123,0.04447117)(8.166123,-0.23552883)
\psbezier[linewidth=0.04](3.726544,0.66447115)(3.4524388,0.42447117)(3.386123,-0.23552883)(3.806123,-0.5555288)
\usefont{T1}{ptm}{m}{n}
\rput(5.397529,0.54947114){$M_1 \setminus K_1$}
\psbezier[linewidth=0.04](3.686123,0.64447117)(2.7061229,0.64447117)(1.5322459,2.4071653)(0.76612294,1.7644712)(0.0,1.1217772)(0.66296667,-2.7065866)(1.626123,-2.9755287)(2.5892792,-3.244471)(3.026123,-0.9355288)(3.766123,-0.5555288)
\psbezier[linewidth=0.04](1.1661229,0.5044712)(1.1661229,-0.29552883)(2.606123,-0.39552882)(2.606123,0.40447116)
\psbezier[linewidth=0.04](1.2661229,0.18447118)(1.746123,0.66447115)(2.246123,0.42447117)(2.4661229,0.050428618)
\psbezier[linewidth=0.04](1.246123,-1.3355289)(1.246123,-2.1355288)(2.606123,-1.9955288)(2.606123,-1.1955289)
\psbezier[linewidth=0.04](1.366123,-1.6555288)(1.5261229,-1.4349228)(1.746123,-1.1355288)(2.406123,-1.6397712)
\usefont{T1}{ptm}{m}{n}
\rput(1.7775292,-0.6905288){$K_1$}

\rput(0,-4){
\psbezier[linewidth=0.04](7.646123,2.4144711)(7.1888776,1.7344712)(5.5984583,-0.0055288305)(4.3261228,-0.18552883)
\psbezier[linewidth=0.04](4.426123,-1.3855288)(5.8461227,-0.9255288)(7.306123,-0.6255288)(8.766123,-1.6455288)
\psbezier[linewidth=0.04](7.766123,2.1944711)(7.486123,1.5744711)(7.2082434,1.295087)(7.226123,0.33447117)(7.244003,-0.6261447)(8.286123,-0.78552884)(8.806123,-1.0655289)
\psbezier[linewidth=0.04](4.3665442,-0.16552883)(4.0924387,-0.40552884)(4.026123,-1.0655289)(4.446123,-1.3855288)
\usefont{T1}{ptm}{m}{n}
\rput(6.037529,-0.28052884){$M_2 \setminus K_2$}
\psbezier[linewidth=0.04](4.3261228,-0.18552883)(3.346123,-0.18552883)(1.6122459,0.7771652)(0.80612296,0.17447117)(0.0,-0.42822286)(0.3629667,-1.9565865)(1.366123,-2.1855288)(2.3692791,-2.4144711)(3.666123,-1.7655288)(4.406123,-1.3855288)
\psbezier[linewidth=0.04](0.786123,-0.8655288)(0.786123,-1.6655288)(2.666123,-1.6255288)(2.666123,-0.82552886)
\psbezier[linewidth=0.04](0.92612296,-1.1855289)(1.846123,-0.08552883)(2.526123,-1.2655288)(2.386123,-1.2655288)
\usefont{T1}{ptm}{m}{n}
\rput(3.3975291,-0.72052884){$K_2$}
}

\psarc[linewidth=0.04,arrowsize=0.05cm 2.0,arrowlength=1.4,arrowinset=0.4]{->}(6.4,-2.1){2.1}{140}{220}
\rput(4.7,-2.1){$f$}
\end{pspicture} 
\nopagebreak 

\emph{Fig.~1}
\end{center}

If $M_i$ are Riemannian and $D_i$ are Dirac type operators, then $f$ is necessarily
an isometry because the principal symbol of a Dirac type operator determines the
Riemannian metric on the underlying manifold via the Clifford relations \eqref{Cliff1}
and \eqref{Cliff2}.

To each Dirac type operator $D$ over $M$ there is an associated $1$-density $\alpha_D$\index{1AzlphaD@$\alpha_D$} on $M$, the \emph{index density}\index{index density}, see \cite[Ch.~4]{BGV}.
At any point $x\in M$ the value of $\alpha_D(x)$ can be computed in local coordinates from the coefficients of $D$ and their derivatives at $x$.

We are now ready to state a general version of Gromov and Lawson's relative index theorem:

\begin{thm}[Relative Index Theorem]\label{relind}\index{relative index theorem}
Let $M_1$ and $M_2$ be complete Riemannian manifolds without boundary.
Let $D_i: \Cu(M_i,E_i) \to \Cu(M_i,F_i)$ be Dirac type operators which agree outside compact subsets $K_1\subset M_1$ and $K_2\subset M_2$.

Then $D_{1,\max}$ is a Fredholm operator if and only if
$D_{2,\max}$ is a Fredholm operator.
In this case,
\[
  \ind D_{1,\max} - \ind D_{2,\max}
  = \int_{K_1} \alpha_{D_1} - \int_{K_2} \alpha_{D_2} .
\]
\end{thm}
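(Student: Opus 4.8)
The plan is to reduce the relative index theorem to the Decomposition Theorem (\tref{deco}) together with the classical index theorem for the generalized Atiyah--Patodi--Singer boundary condition on compact manifolds with boundary. First I would take a large compact set $L_i \subset M_i$ containing $K_i$ in its interior, chosen so that the identification $f$ of $D_1$ with $D_2$ outside the $K_i$ restricts to an identification of a collar neighborhood $N_{[0,\varepsilon)}$ of $\partial L_1$ with a collar neighborhood of $\partial L_2$; in fact, by enlarging $L_i$ if necessary, I may assume that $\partial L_1$ and $\partial L_2$ correspond under $f$ to the \emph{same} compact hypersurface $N$, that the operators agree on a collar of $N$, and that $D_i$ is boundary symmetric there with respect to the interior unit normal of $L_i$ (this uses \eref{introdir}, since Dirac type operators are boundary symmetric with respect to the interior unit normal). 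Writing $M_i = L_i \cup M_i''$ where $M_i'' = M_i \setminus \mathring{L_i}$, the hypothesis that $D_1$ and $D_2$ agree outside $K_i$ means precisely that $(M_1'', D_1'')$ and $(M_2'', D_2'')$ are isomorphic via $f$; call this common piece $(P, D_P)$.

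Next I would invoke \tref{deco} for each decomposition $M_i = L_i \cup M_i''$. Completeness of $D_i$ and $D_i^*$ holds because the metrics are complete and Dirac type operators have bounded principal symbol, so \tref{cherwolf} applies; hence by the Decomposition Theorem $D_{i,\max}$ is Fredholm if and only if $D_{i,L_i}$ (with $B' = \BAPS$ on $\partial L_i$) and $D_{i,P}$ (with the complementary APS condition on $N = \partial P$) are all Fredholm. Since $L_i$ is compact, $D_{i,L_i}$ is automatically Fredholm; thus $D_{i,\max}$ is Fredholm if and only if $D_{i,P}$ is, and $D_{1,P} = D_{2,P}$ because the pieces are isomorphic. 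This gives the equivalence "$D_{1,\max}$ Fredholm $\iff$ $D_{2,\max}$ Fredholm." Moreover, in the Fredholm case \tref{deco} yields
\[
  \ind D_{i,\max} = \ind D_{i,L_i} + \ind D_{i,P},
\]
where the boundary conditions on the two sides are $L^2$-orthogonal complements of each other. Since $D_{1,P}$ and $D_{2,P}$ are isomorphic operators with isomorphic boundary conditions (the boundary condition on $N$ being determined by the adapted operator, which is the same on both sides), $\ind D_{1,P} = \ind D_{2,P}$, and therefore
\[
  \ind D_{1,\max} - \ind D_{2,\max} = \ind D_{1,L_1} - \ind D_{2,L_2}.
\]

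It remains to compute $\ind D_{i,L_i}$ for the compact manifold $L_i$ with the APS boundary condition $\BAPS = H^{1/2}_{(-\infty,0)}(A_i)$ on $\partial L_i$. Here I would appeal to the Atiyah--Patodi--Singer index theorem: $\ind D_{i,L_i} = \int_{L_i} \alpha_{D_i} - (\text{boundary contribution depending only on } A_i)$, where the boundary term is the familiar $\tfrac12(\eta(A_i) + \dim\ker A_i)$ together with a transgression term supported in the collar. The crucial point is that $A_1 = A_2$ (as operators on sections over $N$) and the operators $D_1, D_2$ coincide on the collar of $N$, so \emph{all} boundary contributions are literally equal; they cancel in the difference. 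Hence
\[
  \ind D_{1,L_1} - \ind D_{2,L_2} = \int_{L_1}\alpha_{D_1} - \int_{L_2}\alpha_{D_2},
\]
and since $\alpha_{D_i}$ is locally computed from the coefficients of $D_i$ and vanishes — or rather, is \emph{identical} on the two sides — outside $K_i$, the integrals over $L_i$ may be replaced by integrals over $K_i$ after subtracting the common contribution of $L_i \setminus K_i$. This yields the asserted formula.

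The main obstacle I anticipate is making the cancellation of boundary terms rigorous without re-deriving the APS theorem: one wants to argue that the difference $\ind D_{1,L_1} - \ind D_{2,L_2}$ depends only on the interior index densities because the two problems are \emph{identical} near the boundary. A clean way to do this is to avoid the APS formula entirely: glue $L_1$ and $\overline{L_2}$ (with reversed orientation, i.e. the piece $P$ replaced by its mirror) along $N$ to form a closed manifold $X$, apply the Decomposition Theorem in the other direction to express $\ind D_X$ (computable by the Atiyah--Singer theorem as $\int_X \alpha$) in terms of the two APS indices, and solve for the difference. This trades the APS boundary term for the well-understood closed-manifold case, at the cost of some care in checking that the glued operator on $X$ is again of Dirac type and that the index densities behave additively under the cut-and-paste — which is exactly where the locality of $\alpha_D$ is used. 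Sorting out this gluing and the orientation bookkeeping is the technical heart of the argument; everything else is an application of the theorems already established.
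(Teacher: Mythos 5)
Your proposal contains both workable routes, and the one you finally settle on (``a clean way to do this is to avoid the APS formula entirely'' via gluing) is in essence the paper's actual proof; the APS boundary-cancellation argument you present first is exactly what the paper sketches as an \emph{alternative} in a remark after the proof. The one substantive difference is in the gluing: you propose to glue $L_1$ directly to $\overline{L_2}$ with reversed orientation to form a single closed manifold $X$, and then to recover $\ind D_{2,L_2}$ from $\ind D_{2,\overline{L_2}}$. This forces you to track what orientation reversal does to a Dirac type operator and to the APS condition, which is bookkeeping the paper cleverly sidesteps: it chooses \emph{one fixed} compact cap $X$ with $\partial X = N$ carrying a Dirac type operator compatible with the common germ along $N$, glues it to \emph{both} $M_1'$ and $M_2'$ to form two closed manifolds $\tilde M_1 = M_1' \cup_N X$ and $\tilde M_2 = M_2' \cup_N X$, and then the contribution of $X$ (index of the same operator with the same boundary condition) is manifestly identical on the two sides and drops out of the difference $\ind\tilde D_1 - \ind\tilde D_2$. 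This is strictly cleaner than your gluing because no orientation reversal or adjoint identification is needed, and the footnote in the paper notes that $X$ can simply be taken diffeomorphic to $M_1'$. Apart from this, the structure of your argument -- split off a compact piece $L_i \supset K_i$, use the decomposition theorem to reduce to compact boundary value problems, observe that the piece outside $K_i$ is literally the same on both sides, and invoke the closed-manifold Atiyah--Singer theorem together with locality of $\alpha_D$ -- is exactly what the paper does (via Theorem~\ref{firelind}), so the proposal is correct in spirit and would go through once the gluing is set up symmetrically.
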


The reason for the restriction to Dirac type operators in \tref{relind} is mostly for convenience.
We need that the operators are boundary symmetric along an auxiliary hypersurface $N_i\subset M_i$.
This is automatic for Dirac type operators.
Moreover, we need the local version of the Atiyah-Singer index theorem.

Finally, we show

\begin{thm}[Cobordism Theorem]\label{cobothm}
Let $M$ be a complete and connected Riemannian manifold,
$E\to M$ be a Hermitian vector bundle,
and $D:\Cu(M,E)\to\Cu(M,E)$ be a formally selfadjoint differential operator of Dirac type.
Let $A$ be an adapted boundary operator for $D$
with respect to the interior unit normal vector field,
and let $E=E^+\oplus E^-$ be the splitting into the eigensubbundles
of the involution $i\sigma_0$ for the eigenvalues $\pm 1$.
With respect to this splitting, we write
$$
A = 
\begin{pmatrix}
A_{++} & A^- \cr A^+ & A_{--}
\end{pmatrix} .
$$
Then, if $D$ is coercive at infinity,
\[
  \ind A^+ = \ind A^- = 0 .
\]
\end{thm}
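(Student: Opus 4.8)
The plan is to realize $\ind A^+$ as a difference of the indices of two \emph{chiral} boundary value problems for $D$, each of which will be invertible, so that this difference --- hence $\ind A^+$ --- vanishes. First some reductions. We may assume $\dM\neq\emptyset$. Since $D$ is of Dirac type on a complete manifold, $|\sigma_D(\xi)|=|\xi|$, so $D=D^*$ is complete by \tref{cherwolf}. The bundle map $G:=i\sigma_0$ is Hermitian with $G^2=\id$ and, by the Clifford relations, anticommutes with $\sigma_D(\xi)$ for $\xi\in T^*\dM$ and hence with the principal symbol of $A$; thus $G$ is a chirality operator with eigensubbundles $E^\pm$, as in the statement. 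As $\sigma_0$ is unitary ($\sigma_0^*\sigma_0=-\sigma_0^2=\id$), conjugation by it preserves formal selfadjointness, so $\frac12(A-\sigma_0 A\sigma_0^{-1})$ is again an adapted, essentially selfadjoint operator with the same principal symbol as $A$ which anticommutes with $\sigma_0$; replacing $A$ by it --- which changes neither $\ind A^+$ nor $\ind A^-$, as these depend only on the principal symbols --- we may assume $\sigma_0 A=-A\sigma_0$. Then $A$ is block off-diagonal for $E=E^+\oplus E^-$ with blocks $A^\pm$, one has $A^-=(A^+)^*$ by selfadjointness, $\spec A$ is symmetric about $0$, and $\ker A=\ker A^+\oplus\ker A^-$. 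Since $\ind A^-=-\ind A^+$, it suffices to prove $\ind A^+=0$.

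Set $B_\pm:=H^{1/2}(\dM,E^\pm)$; by \eref{exabc}(c) these are $\infty$-regular, hence elliptic, boundary conditions. I claim $\ker D_{B_\pm,\max}=0$. If $\Phi\in\ker D_{B_+,\max}$ then $\Phi\in H^1_\loc(M,E)$ by ellipticity, $D\Phi=0$, and $\mathcal R\Phi\in H^{1/2}(\dM,E^+)$; Green's formula applied to $\Phi=\Psi$ (using $D^*=D$) gives $(\sigma_0\mathcal R\Phi,\mathcal R\Phi)_{L^2(\dM)}=\beta(\mathcal R\Phi,\mathcal R\Phi)=0$, and since $\sigma_0$ acts on $E^+$ as $-i$ (because $i\sigma_0=+1$ there) this equals $\pm i\,\LzdM{\mathcal R\Phi}^2$, forcing $\mathcal R\Phi=0$. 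Thus $D\Phi=0$ with $\Phi|_{\dM}=0$, so by unique continuation from the boundary for Dirac type operators $\Phi$ vanishes on a collar of $\dM$, and since $M$ is connected the weak unique continuation property forces $\Phi\equiv0$; the same argument (with $\sigma_0=+i$ on $E^-$) gives $\ker D_{B_-,\max}=0$. Next, $(B_\pm)^{\ad}=B_\mp$, where we take $\tilde A=A$ (legitimate since $D=D^*$): as $B_+$ is elliptic, $(B_+)^{\ad}\subset H^{1/2}(\dM,E)$ by Remark~\ref{ell2}(b), and because $\sigma_0$ preserves $E^+$, $\beta(\phi,\psi)=0$ for all $\phi\in H^{1/2}(E^+)$ forces $\psi\in L^2(\dM,E^-)$, so $(B_+)^{\ad}\subset H^{1/2}(E^-)=B_-$; the reverse inclusion is immediate from $E^+\perp E^-$. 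Hence $\ker D^*_{(B_\pm)^{\ad},\max}=\ker D_{B_\mp,\max}=0$.

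By hypothesis $D$, and hence $D^*=D$, is coercive at infinity, and both are complete; so $D_{B_\pm,\max}$ are Fredholm by \tref{fred}, and since their index equals $\dim\ker D_{B_\pm,\max}-\dim\ker D^*_{(B_\pm)^{\ad},\max}$, the previous paragraph gives $\ind D_{B_\pm,\max}=0$. To read off $\ind A^+$, fix $a<0$ with no eigenvalue of $A$ in $[a,0)$ and put $B_+$ into the form~\eqref{bell} of \tref{tell} for this $a$. Using that $\sigma_0$ maps the $\lambda$-eigenspace of $A$ isomorphically onto the $(-\lambda)$-eigenspace, a direct computation with the spectral decomposition of $A$ shows that the orthogonal projection onto $L^2_{(-\infty,0)}(A)$ restricts to an isomorphism from the $L^2$-orthogonal complement of $\ker A^+$ in $L^2(\dM,E^+)$ onto $L^2_{(-\infty,0)}(A)$, compatibly with the $H^{1/2}$-topology; hence in~\eqref{dell}--\eqref{bell} we have $W_-=0$, $V_-=L^2_{(-\infty,0)}(A)$ and $W_+=\ker A^+$, so \tref{fred} gives $\ind D_{B_+,\max}=\ind D_{B(a)}+\dim\ker A^+$. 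The identical computation for $B_-$, with the same $a$, gives $\ind D_{B_-,\max}=\ind D_{B(a)}+\dim\ker A^-$. Subtracting, $0=\dim\ker A^+-\dim\ker A^-=\ind A^+$, and $\ind A^-=-\ind A^+=0$.

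The main obstacle is the unique continuation step --- that $D\Phi=0$ together with $\mathcal R\Phi=0$ forces $\Phi\equiv0$ on the connected manifold $M$. This requires unique continuation from the boundary for $D$, which is not part of the general theory developed here; for Dirac type operators it follows from the Aronszajn--Cordes theorem applied to $D^*D$, once one notes that $D\Phi=0$ and $\Phi|_{\dM}=0$ force the normal derivative $\partial_t\Phi|_{\dM}$ to vanish as well, by the normal form~\eqref{norfor}. This is the only point where ``Dirac type'' is used beyond boundary symmetry; the remaining work --- notably the identification of the chiral conditions in the form~\eqref{bell} in the third paragraph --- is routine linear algebra with the eigenspaces of $A$.
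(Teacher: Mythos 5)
Your argument is, in outline, the paper's own: normalize $A$ to block off-diagonal form, show that the chiral boundary value problems $D_{B^\pm}$ have trivial kernel, use adjointness to conclude $\ind D_{B^\pm}=0$, and read off $\ind A^\pm$ from the index formula in terms of the Atiyah--Patodi--Singer condition. The minor variations (symmetrizing $A$ via $\frac12(A-\sigma_0A\sigma_0^{-1})$ rather than discarding the diagonal blocks and transferring $(A^-)^*-A^+$, and parametrizing $B^\pm$ with $W_-=0$ rather than $W_-=\ker A^-$) are both legitimate and do not change anything of substance.

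The one place where you depart from the paper is the unique continuation step, and it is also where your sketch has a genuine gap. You assert that $D\Phi=0$, $\Phi|_{\dM}=0$ forces $\partial_t\Phi|_{\dM}=0$ and that this already allows you to ``apply Aronszajn--Cordes to $D^*D$''. But Aronszajn's theorem requires vanishing on an open set (or vanishing to infinite order at an interior point); vanishing of the Cauchy data $(\Phi,\partial_t\Phi)$ along a hypersurface is only first-order vanishing and does not put you in its hypotheses. To repair this along your lines you would need to show, by iterating the normal form $\partial_t\Phi=-(A+R_t)\Phi$, that all $t$-derivatives vanish along $\dM$, then extend $\Phi$ by zero past the boundary to a smooth (or at least $H^2$) section on a larger manifold and apply interior unique continuation there. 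The paper's Lemma~\ref{cobolem} avoids all of this: it extends $\Phi$ by zero to $\tilde M$, verifies via Green's formula~\eqref{eq:ParInt} that the trivial extension is a \emph{weak} solution of $\tilde D\tilde\Phi=0$ (the boundary term vanishes because $\mathcal R\Phi=0$), deduces smoothness from interior elliptic regularity, and then applies Aronszajn to $\tilde D^2\tilde\Phi=0$ on the open set $\tilde M\setminus M$. That route needs no separate theorem about unique continuation from the boundary and no infinite-order vanishing argument; you should adopt it.
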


Originally, the cobordism theorem was formulated for compact manifolds $M$ with boundary and showed the cobordism invariance of the index. 
This played an important role in the original proof of the Atiyah-Singer index theorem, compare e.g.\ \cite[Ch.~XVII]{Pa} and \cite[Ch.~21]{BW}.
In this case, one can also derive the cobordism invariance from Roe's index theorem for partitioned manifolds \cite{R,Hi}.
We replace compactness of the bordism by the weaker assumption of coercivity of $D$.
Our proof is comparatively simple and makes no use of the Calder\'on projector.

%%%%%%%%%%%%%%%%%%%%%%%%%%%%%%%%%%%%%%%%%%%%%%
{\sc Bibliographical Notes:}
%%%%%%%%%%%%%%%%%%%%%%%%%%%%%%%%%%%%%%%%%%%%%%
\tref{cherwolf} extends results of Wolf \cite{Wo} and Chernoff \cite{Ch}
to operators which are not necessarily essentially selfadjoint
and live on manifolds with possibly nonempty boundary.
In the case where $\sigma_D$ is uniformly bounded,
our proof consists of an adaptation of the argument in the proof of Thm. II.5.7 in \cite{LM}.
The seemingly weaker assumption of Chernoff on the growth of $\sigma_D$
as stated in \ref{cherwolf} follows from an appropriate conformal change
of the underlying Riemannian metric of the manifold, see Section~\ref{sectcom}.

\tref{md} extends Propositions 2.30 and 5.7 of \cite{BBC}
(where the higher regularity part $k>0$ is not discussed).
The higher regularity part of \tref{md} and \tref{treg}
generalize the Regularity Theorem 6.5 of \cite{BL2}.
Our proof is rather elementary.
It is worth mentioning that one of the main points in \cite{BBC}
is the low regularity of the given data,
whereas we assume throughout that the data are smooth.

In the articles \cite{BF1}, \cite{BF2}, and \cite{BFO} of Booss-Bavnbek et al.,
the space $\dom D_{\max}/\dom D_{\min}$ of abstract boundary values
of $D_{\max}$ is investigated and identified with the space $\check H(A)$
as in \eqref{checkaa}, see Proposition 7.15 of \cite{BF2}.
Conversely, it is immediate from \tref{md} and \cref{domdmin}
that $\check H(A)$ is topologically isomorphic to $\dom D_{\max}/\dom D_{\min}$.

A preliminary and unpublished version of our \tref{tell} was taken up in \cite{BBC},
where it is proved for Dirac operators in the sense of Gromov and Lawson.
Conversely, the presentation and results in \cite{BBC} have influenced our discussion.
In particular, in the text we interchange definition and characterization
of elliptic boundary conditions, compare Subsection~\ref{susebc}.
In contrast to \cite{BBC}, our arguments do not involve the Calder\'on projection
and its regularity properties.

Similar, but more special boundary conditions have been considered in \cite{BC}.
The emphasis there lies on Dirac type operators with coefficients of low regularity.
Interior and boundary regularity of solutions and certain Fredholm properties are derived.

The first part of \tref{fred} generalizes a result of Anghel \cite[Thm.~2.1]{An}
to manifolds with boundary and operators
which are not necessarily essentially selfadjoint.
Variants of the index formulas in the second part of \tref{fred}
are also contained in \cite{BW} and \cite{BBC}.
\tref{relind} corresponds to Theorem 3.1 of \cite{Ca}.
At the cost of introducing a bit more of (elementary) functional analysis,
we could replace our assumption on the Fredholmness
by the weaker assumption of non-parabolicity used in \cite{Ca}.

There is a vast literature on elliptic boundary value problems based on pseudo-differential techniques, see e.g.\ \cite{B,BLZ,G2,RS,S1,S2} and the references therein.
Not all of the boundary conditions which we consider can be treated with pseudo-differential operators.
Our main objective however, is to provide a more elementary and simplified approach using standard functional analysis only.
A further reference for an elementary approach to boundary value problems
of Dirac type operators is \cite{FS};
however, the proof of the main result in \cite{FS}, Theorem 4, is not complete.

%%%%%%%%%%%%%%%%%%%%%%%%%%%%%%%%%%%%%%%%%%%%%%
{\sc Prerequisites:}
%%%%%%%%%%%%%%%%%%%%%%%%%%%%%%%%%%%%%%%%%%%%%%
The reader should be familiar with basic differential geometric concepts such as manifolds and vector bundles.
The functional analysis of Hilbert and Banach spaces, selfadjoint operators and their spectrum is assumed.
From the field of partial differential equations, we only require knowledge about linear differential operators, principal symbols, and the standard interior elliptic regularity theory.
No previous knowlegde of boundary value problems is necessary.

%%%%%%%%%%%%%%%%%%%%%%%%%%%%%%%%%%%%%%%%%%%%%%
{\sc Structure of the Article:}
%%%%%%%%%%%%%%%%%%%%%%%%%%%%%%%%%%%%%%%%%%%%%%
Most of this is clear from the table of contents.
We just give references to the places,
where the results stated in the introduction are proved:
\tref{cherwolf} in Section~\ref{sectcom},
\tref{md} in Section~\ref{secMaxDom},
\tref{tell} and Addendum~\ref{addbad} in Subsection~\ref{susebc},
\tref{treg} in Subsection~\ref{suseregu},
\tref{fred} in Subsections~\ref{susefrepro} and \ref{susefp},
the decomposition and relative index theorems~\ref{deco} and \ref{relind}
in Subsection~\ref{suserelind}.
and, finally, the cobordism theorem in Subsection \ref{coth}.

%%%%%%%%%%%%%%%%%%%%%%%%%%%%%%%%%%%%%%%%%%%%%%
{\sc Acknowledgments:}
%%%%%%%%%%%%%%%%%%%%%%%%%%%%%%%%%%%%%%%%%%%%%%
We would like to thank Bernd Ammann, Bernhelm Booss-Bavnbek, and Gilles Carron for helpful remarks and suggestions.
We also gratefully acknowledge the support by the Max Planck Institute for Mathematics,
the Hausdorff Center for Mathematics, the Sonderforschungsbereich~647, and the Erwin Schr\"odinger Institute for Mathematical Physics.
W.~B.\ thanks the University of Potsdam for its hospitality.

%%%%%%%%%%%%%%%%%%%%%%%%%%%%%%%%%%%%%%%%%%%%%%

\section{Preliminaries}
\label{secPre}

%%%%%%%%%%%%%%%%%%%%%%%%%%%%%%%%%%%%%%%%%%%%%%

\subsection{Measured manifolds}
We will consider differential operators which live on manifolds with boundary
(possibly empty).
In general, the manifolds will not be Riemannian,
but we will assume that they are equipped with a smooth {\em volume element}.
By this we mean a nowhere-vanishing smooth one-density.\index{volume element}
If $M$ is oriented, this is essentially the same thing as a nowhere vanishing $n$-form, where $n=\dim(M)$.
The volume element is needed to define the integral of functions over $M$.

\begin{definition}
A \emph{measured manifold}\index{measured manifold} is a pair
consisting of a manifold $M$ (possibly with boundary) 
and a smooth volume element $\mu$ on $M$.
\end{definition}

Let $M$ be a manifold with nonempty boundary $\dM$.
Let $T$ be a smooth vector field on $M$ along $\dM$
pointing into the interior of $M$.\index{1T@$T$}
In particular, $T$ does not vanish anywhere.
\begin{center}
\begin{pspicture}(0,-2.5)(8,2.5)
\psellipse[linewidth=0.04,dimen=outer](7.4138293,0.029666925)(0.5,1.46)
\psbezier[linewidth=0.04](7.3938293,1.5096669)(6.4738293,1.589667)(1.9938294,2.6296668)(1.2338294,2.249667)(0.47382936,1.8696669)(0.0,-1.6709992)(1.0738294,-2.1503332)(2.1476588,-2.6296668)(6.6338296,-1.3503331)(7.3938293,-1.430333)
\psbezier[linewidth=0.04](1.2938293,0.12966692)(1.2938293,-0.6703331)(3.6938293,-0.71033305)(3.6938293,0.089666925)
\psbezier[linewidth=0.04](1.5138294,-0.21251442)(2.6338294,0.60966694)(3.0738294,0.15500201)(3.4138293,-0.31033307)
\psline[linewidth=0.04cm,arrowsize=0.05291667cm 2.0,arrowlength=1.4,arrowinset=0.4]{->}(7.073829,1.1296669)(6.1738296,1.329667)
\psline[linewidth=0.04cm,arrowsize=0.05291667cm 2.0,arrowlength=1.4,arrowinset=0.4]{->}(7,0.7496669)(6.093829,0.80966693)
\psline[linewidth=0.04cm,arrowsize=0.05291667cm 2.0,arrowlength=1.4,arrowinset=0.4]{->}(6.95,0.28966692)(6.073829,0.2696669)
\psline[linewidth=0.04cm,arrowsize=0.05291667cm 2.0,arrowlength=1.4,arrowinset=0.4]{->}(6.9138293,0.029666925)(6.1538296,-0.29033306)
\psline[linewidth=0.04cm,arrowsize=0.05291667cm 2.0,arrowlength=1.4,arrowinset=0.4]{->}(6.9738293,-0.69033307)(6.4138293,-1.1303331)
\psline[linewidth=0.04cm,arrowsize=0.05291667cm 2.0,arrowlength=1.4,arrowinset=0.4]{->}(7.093829,-1.050333)(6.4738293,-1.3503331)
\psline[linewidth=0.04cm,arrowsize=0.05291667cm 2.0,arrowlength=1.4,arrowinset=0.4]{->}(6.9338293,-0.29033306)(6.3538294,-0.69033307)
\psline[linewidth=0.04cm,arrowsize=0.05291667cm 2.0,arrowlength=1.4,arrowinset=0.4]{->}(7.8538294,0.18966693)(7.1338296,0.18966693)
\psline[linewidth=0.04cm,arrowsize=0.05291667cm 2.0,arrowlength=1.4,arrowinset=0.4]{->}(7.8338294,0.6896669)(7.1338296,0.54966694)
\psline[linewidth=0.04cm,arrowsize=0.05291667cm 2.0,arrowlength=1.4,arrowinset=0.4]{->}(7.7138295,1.1296669)(7.113829,1.0096669)
\psline[linewidth=0.04cm,arrowsize=0.05291667cm 2.0,arrowlength=1.4,arrowinset=0.4]{->}(7.8738294,-0.11033308)(7.1338296,-0.07033308)
\psline[linewidth=0.04cm,arrowsize=0.05291667cm 2.0,arrowlength=1.4,arrowinset=0.4]{->}(7.8538294,-0.43033308)(7.113829,-0.33033308)
\psline[linewidth=0.04cm,arrowsize=0.05291667cm 2.0,arrowlength=1.4,arrowinset=0.4]{->}(7.7738295,-0.8103331)(7.093829,-0.69033307)
\psline[linewidth=0.04cm,arrowsize=0.05291667cm 2.0,arrowlength=1.4,arrowinset=0.4]{->}(7.7138295,-1.030333)(7.093829,-0.9903331)
\psline[linewidth=0.04cm](7.613829,-1.2303331)(7.1738296,-1.1903331)
\psline[linewidth=0.04cm](7.573829,1.4096669)(7.2538295,1.3696669)
\psline[linewidth=0.04cm,arrowsize=0.05291667cm 2.0,arrowlength=1.4,arrowinset=0.4]{->}(7.45,1.48)(6.553829,1.709667)
\psline[linewidth=0.04cm,arrowsize=0.05291667cm 2.0,arrowlength=1.4,arrowinset=0.4]{->}(7.3338294,-1.390333)(6.613829,-1.430333)

\rput(3.8552356,1.074667){$M$}
\rput(7.655236,-1.785333){$\dM$}
\rput(6.0152354,-0.50533307){$T$}
\end{pspicture} 
\nopagebreak 

{\em Fig.~2}
\end{center}
At each $x\in\dM$ there is a unique $\tau(x)\in T_x^*M$\index{1Tau@$\tau$}
such that
\begin{equation}
  \<\tau(x),T(x)\> = 1 \quad\text{and}\quad \tau(x)|_{T_x\dM} = 0 .
  \label{tau}
\end{equation}
Here $\<\cdot,\cdot\>$ denotes the natural evaluation of $1$-forms on tangent vectors.
We call $\tau$ the {\em one-form associated to $T$}.\index{associated one-form}

If $\mu$ is a smooth volume element on $M$,
then $\dM$ inherits a smooth volume element $\nu$ such that along $\dM$
we have $\mu=|\tau|\otimes\nu$, 
i.e.,\index{volume element induced on boundary}
\begin{equation}
  \nu(X_1,\ldots,X_{n-1}) = \mu(T,X_1,\ldots,X_{n-1}) 
\label{eq:munu}
\end{equation}
for all vector fields $X_1,\ldots,X_{n-1}$ on $\dM$.
This turns $\dM$ into a measured manifold.

\begin{remark}\label{rem:Riemann}
If $M$ is a Riemannian manifold with boundary,
then the Riemannian volume element $\mu$ turns $M$
canonically into a measured manifold.
The natural choice for $T$ is the interior unit normal vector field along $\dM$. 
The induced volume element $\nu$ on $\dM$ then coincides
with the volume element of the induced Riemannian metric on $\dM$.

Conversely, given a smooth volume element $\mu$ on $M$
and an interior vector field $T$ along $\dM$,
one can always find a Riemannian metric on $M$ inducing $\mu$ and $T$ in this way.
\end{remark}

\begin{notation}
Throughout this article we will write\index{1ZI@$Z_I$}
$$
Z_I := I \times \dM
$$
where $I\subset\R$ is any interval.
We think of $Z_I$ as a cylinder over $\dM$.
\end{notation}

\begin{lemma}\label{adapted}
Let $(M,\mu)$ be a measured manifold with compact boundary
and let $\tau$ be the one-form  associated to an interior vector field $T$.

Then there is a neighborhood $U$ about $\dM$ in $M$,
a constant $r>0$, and a diffeomorphism $\Psi=(t,\psi):U \to Z_{[0,r)}$
such that
\begin{align}
  \dM &= t^{-1}(0) , \label{NN1} \tag{i} \\
  \psi|_\dM &=\id_\dM , \label{NN2} \tag{ii} \\
  d\Psi(T) &=\del/\del t \hspace{.95cm}\text{along $\dM$} , \label{NN3} \tag{iii} \\
  \tau &= dt  \hspace{1.45cm}\text{along $\dM$} , \label{NN4} \tag{iv} \\
  \Psi_*(\mu) &= |dt|\otimes \nu \hspace{.5cm}\text{on $Z_{[0,r)}$} . \label{NN5} \tag{v}
\end{align}
\end{lemma}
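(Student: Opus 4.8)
The statement is a mild refinement of the standard collar neighbourhood theorem, with the additional bookkeeping that the volume element is put into product form. I would proceed in two stages: first construct a diffeomorphism $\Psi_0=(t,\psi_0):U_0\to Z_{[0,r_0)}$ satisfying \eqref{NN1}--\eqref{NN4}, and then correct it to achieve \eqref{NN5} without destroying the first four properties.

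For the first stage, I would pick any auxiliary complete Riemannian metric on $M$ (or work locally near the compact boundary) and extend $T$ to a smooth vector field on a neighbourhood of $\dM$, still pointing inward along $\dM$; compactness of $\dM$ lets us arrange that its flow $\phi^s$ is defined for $s\in[0,r_0)$ on all of $\dM$ for some $r_0>0$. Setting $\Psi_0^{-1}(t,x):=\phi^t(x)$ for $x\in\dM$ gives, for $r_0$ small, an embedding onto a neighbourhood $U_0$ of $\dM$; this is the usual flow-box argument. By construction $t^{-1}(0)=\dM$, $\psi_0|_{\dM}=\id_{\dM}$, and $d\Psi_0(T)=\partial/\partial t$ along $\dM$ because $T$ is exactly the velocity field of the flow there; property \eqref{NN4}, $\tau=dt$ along $\dM$, is then immediate from the defining relations \eqref{tau} for $\tau$, since $dt(T)=1$ and $dt|_{T\dM}=0$ along $\dM=t^{-1}(0)$.

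For the second stage, transport $\mu$ to $Z_{[0,r_0)}$ via $\Psi_0$; the pushforward is of the form $h(t,x)\,|dt|\otimes\nu$ for some smooth positive function $h$ with $h(0,x)=1$, the normalization at $t=0$ being exactly \eqref{eq:munu} together with \eqref{NN3}. I would then look for a new time coordinate $\tilde t=\tilde t(t,x)$, keeping $\psi_0$ fixed, i.e. a diffeomorphism of the form $(t,x)\mapsto(\tilde t(t,x),x)$, such that the volume element becomes $|d\tilde t|\otimes\nu$; this amounts to solving $\partial\tilde t/\partial t = h(t,x)$ with $\tilde t(0,x)=0$, whose solution is simply $\tilde t(t,x)=\int_0^t h(s,x)\,ds$. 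Since $h>0$, for each fixed $x$ this is a diffeomorphism onto its image, smooth in $x$, with $\partial\tilde t/\partial t>0$, so on a possibly smaller cylinder $Z_{[0,r)}$ it defines the required change of coordinates. Because $\tilde t(0,x)=0$ and $\partial\tilde t/\partial t(0,x)=h(0,x)=1$, this correction fixes $\dM=\tilde t^{-1}(0)$ and does not disturb \eqref{NN2}, \eqref{NN3}, \eqref{NN4} along $\dM$; composing $\Psi_0$ with it gives the desired $\Psi=(\tilde t,\psi_0)$.

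The only point requiring a little care — the main (minor) obstacle — is checking that the coordinate change in the second stage genuinely preserves properties \eqref{NN3} and \eqref{NN4} \emph{along $\dM$}, and that shrinking $r$ is harmless; this is where one uses $h(0,\cdot)\equiv1$ and $\partial_t\tilde t(0,\cdot)\equiv1$ so that the Jacobian of the change is the identity on $\dM$. Everything else is the standard tubular neighbourhood construction for a hypersurface with a transverse vector field, using compactness of $\dM$ only to get a uniform $r>0$.
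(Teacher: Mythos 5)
Your proof is correct, and it takes a genuinely different route from the paper's. The paper first corrects the vector field: it extends $T$ to a vector field $T_1$, then solves the ordinary differential equation $0=\div(fT_1)=df(T_1)+f\div(T_1)$ along the integral curves of $T_1$ with $f|_{\dM}=1$, and uses the flow of the divergence-free field $fT_1$. Because this flow preserves $\mu$, property (v) is automatic, and no separate normalization step is needed. You instead do the flow-box construction with an arbitrary inward extension of $T$, obtaining (i)--(iv) directly, and then post-correct the time coordinate fiberwise via $\tilde t(t,x)=\int_0^t h(s,x)\,ds$, where $\Psi_{0,*}\mu=h\,|dt|\otimes\nu$ with $h(0,\cdot)\equiv 1$; the equation $\partial_t\tilde t = h$ is then exactly what makes $\Psi_*\mu = |d\tilde t|\otimes\nu$, and since the correction has Jacobian $\id$ along $\dM$, properties (i)--(iv) survive. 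Conceptually the two are dual ways of absorbing the same scalar factor: the paper reparametrizes the flow lines before flowing, you reparametrize the $t$-coordinate afterwards. The paper's construction is marginally slicker since the volume preservation is structural rather than checked by hand; yours factors cleanly into the classical collar theorem followed by a normalization, which some readers may find more transparent. One point you gloss over slightly: after the change of coordinates $(t,x)\mapsto(\tilde t(t,x),x)$ the domain is in general no longer a cylinder $Z_{[0,r')}$, but since $h$ is bounded above and below on a compact collar, the preimage of $Z_{[0,r)}$ for $r$ small is still an open neighborhood $U$ of $\dM$ mapped diffeomorphically onto $Z_{[0,r)}$, which is all the lemma requires.
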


\begin{center}
\begin{pspicture}(-12,-2.5)(0,3.5)
\psscalebox{-1 1}
{
\pscustom*[linecolor=lightgray]{
%\psellipticarc[linewidth=0.04,dimen=outer](7.4138293,0.029666925)(1.3,2){125}{232}
\psellipticarc[linewidth=0.04,dimen=outer](7.4138293,0.029666925)(1.3,2){115}{242}
\psline(7.4138293,-1.42)(7.4138293,1.46)
}

\psellipse[linewidth=0.04,dimen=outer,fillstyle=solid,fillcolor=white](7.4138293,0.029666925)(0.5,1.46)
\pscustom*[linecolor=lightgray]{
\psellipticarc[linewidth=0.04,dimen=outer,linecolor=lightgray](7.4138293,0.029666925)(0.5,1.46){225}{125}
%\pscurve(7.12,1.2)(7.15,0.1)(7.08,-1)
\pscurve(6.9,1.2)(7.15,0.1)(7.08,-1)
}
\psellipse[linewidth=0.04,dimen=outer](7.4138293,0.029666925)(0.5,1.46)
\psbezier[linewidth=0.04](7.3938293,1.5096669)(6.4738293,1.589667)(1.9938294,2.6296668)(1.2338294,2.249667)(0.47382936,1.8696669)(0.0,-1.6709992)(1.0738294,-2.1503332)(2.1476588,-2.6296668)(6.6338296,-1.3503331)(7.3938293,-1.430333)
\psbezier[linewidth=0.04](1.2938293,0.12966692)(1.2938293,-0.6703331)(3.6938293,-0.71033305)(3.6938293,0.089666925)
\psbezier[linewidth=0.04](1.5138294,-0.21251442)(2.6338294,0.60966694)(3.0738294,0.15500201)(3.4138293,-0.31033307)
\psline[linewidth=0.04cm,arrowsize=0.05291667cm 2.0,arrowlength=1.4,arrowinset=0.4]{->}(7.073829,1.1296669)(6.438296,1.329667)
\psline[linewidth=0.04cm,arrowsize=0.05291667cm 2.0,arrowlength=1.4,arrowinset=0.4]{->}(7,0.7496669)(6.23829,0.80966693)
\psline[linewidth=0.04cm,arrowsize=0.05291667cm 2.0,arrowlength=1.4,arrowinset=0.4]{->}(6.95,0.28966692)(6.133829,0.2196669)
\psline[linewidth=0.04cm,arrowsize=0.05291667cm 2.0,arrowlength=1.4,arrowinset=0.4]{->}(6.9138293,0.029666925)(6.1538296,-0.29033306)
\psline[linewidth=0.04cm,arrowsize=0.05291667cm 2.0,arrowlength=1.4,arrowinset=0.4]{->}(6.9738293,-0.69033307)(6.3138293,-1.0303331)
\psline[linewidth=0.04cm,arrowsize=0.05291667cm 2.0,arrowlength=1.4,arrowinset=0.4]{->}(7.093829,-1.050333)(6.4738293,-1.3503331)
\psline[linewidth=0.04cm,arrowsize=0.05291667cm 2.0,arrowlength=1.4,arrowinset=0.4]{->}(6.9338293,-0.29033306)(6.2538294,-0.69033307)
\psline[linewidth=0.04cm,arrowsize=0.05291667cm 2.0,arrowlength=1.4,arrowinset=0.4]{->}(7.8538294,0.18966693)(7.1338296,0.18966693)
\psline[linewidth=0.04cm,arrowsize=0.05291667cm 2.0,arrowlength=1.4,arrowinset=0.4]{->}(7.8338294,0.6896669)(7.1,0.54966694)
\psline[linewidth=0.04cm,arrowsize=0.05291667cm 2.0,arrowlength=1.4,arrowinset=0.4](7.7138295,1.1296669)(7.09,1.0096669)
\psline[linewidth=0.04cm,arrowsize=0.05291667cm 2.0,arrowlength=1.4,arrowinset=0.4]{->}(7.8738294,-0.11033308)(7.1338296,-0.07033308)
\psline[linewidth=0.04cm,arrowsize=0.05291667cm 2.0,arrowlength=1.4,arrowinset=0.4]{->}(7.8538294,-0.43033308)(7.113829,-0.33033308)
\psline[linewidth=0.04cm,arrowsize=0.05291667cm 2.0,arrowlength=1.4,arrowinset=0.4]{->}(7.7738295,-0.8103331)(7.093829,-0.69033307)
\psline[linewidth=0.04cm,arrowsize=0.05291667cm 2.0,arrowlength=1.4,arrowinset=0.4]{->}(7.7138295,-1.030333)(7.093829,-0.9903331)
\psline[linewidth=0.04cm](7.613829,-1.2303331)(7.1738296,-1.1903331)
\psline[linewidth=0.04cm](7.573829,1.4096669)(7.2538295,1.3696669)
\psline[linewidth=0.04cm,arrowsize=0.05291667cm 2.0,arrowlength=1.4,arrowinset=0.4]{->}(7.45,1.48)(6.653829,1.709667)
\psline[linewidth=0.04cm,arrowsize=0.05291667cm 2.0,arrowlength=1.4,arrowinset=0.4]{->}(7.3338294,-1.390333)(6.513829,-1.430333)

\psellipse[linewidth=0.02,dimen=outer,fillstyle=solid,fillcolor=lightgray](9,0)(0.5,1.46)
\psline*[linecolor=lightgray](9,1.46)(10,1.46)(10,-1.46)(9,-1.46)
\psline[linewidth=0.04,dimen=outer](9,1.46)(10,1.46)
\psline[linewidth=0.04,dimen=outer](9,-1.46)(10,-1.46)
\psellipse[linewidth=0.04,dimen=outer,fillstyle=solid,fillcolor=white](10,0)(0.5,1.46)
\pscustom*[linecolor=lightgray]{
\psellipticarc[linewidth=0.04,dimen=outer,linecolor=lightgray](10,0)(0.5,1.46){220}{130}
%\pscurve(9.65,0.9)(9.7,0)(9.65,-0.9)
\pscurve(9.56,0.9)(9.7,0)(9.65,-0.9)
}
\pscurve[linewidth=0.04,dimen=outer](9.65,0.9)(9.7,0)(9.65,-0.9)
\psellipse[linewidth=0.04,dimen=outer](10,0)(0.5,1.46)

\psline[linewidth=0.04cm](8.68,1)(10.32,1)
\psline[linewidth=0.04cm](8.55,0.5)(9.55,0.5)
\psline[linewidth=0.04cm](9.7,0.5)(10.43,0.5)
\psline[linewidth=0.04cm](8.5,0)(9.54,0)
\psline[linewidth=0.04cm](9.7,0)(10.46,0)
\psline[linewidth=0.04cm](8.68,-1)(10.32,-1)
\psline[linewidth=0.04cm](8.55,-0.5)(9.55,-0.5)
\psline[linewidth=0.04cm](9.7,-0.5)(10.43,-0.5)

\pscurve{->}(7,1.8)(8,2.3)(9,1.8)
}
\rput(-3.8552356,1.074667){$M$}
\rput(-6.6,-0.50533307){\psframebox*[framearc=.3]{$U$}}
\rput(-9.5,-1.9){\psframebox*[framearc=.3]{$Z_{[0,r)}$}}
\rput(-8,2.6){$\Psi$}
\end{pspicture}
\nopagebreak 

{\em Fig.~3}
\end{center}

\begin{proof}
Extend $T$ to a smooth vector field $T_1$ without zeros in a neighborhood of $\dM$ in $M$.
Solve for a smooth real function $f$ on a possibly smaller neighborhood such that
\begin{equation}
  0 = \div(f\cdot T_1) = df(T_1) + f\cdot \div(T_1)
  \label{eq:DivFrei}
\end{equation}
with $f_{|\dM} = 1$.
Note that the divergence is defined because $M$ carries a smooth volume element.
Since \eqref{eq:DivFrei} is an ordinary differential equation
along the integral curves of $T_1$, there is a unique solution $f$.

The vector field $fT_1$ is a smooth extension of $T$, which we denote again by $T$.
Let $\Phi$ be the flow of $T$.
Since $\dM$ is compact, there is a neighborhood $U$ of $\dM$ in $M$
and an $r>0$ such that 
\[
  Z_{[0,r)}  \to U, \quad
  (t,x) \mapsto \Phi_t(x) ,
\]
is a diffeomorphism.
Let $\Psi$ be the inverse of this diffeomorphism.
Properties \eqref{NN1}, \eqref{NN2}, and \eqref{NN3} are clear by construction
and they imply \eqref{NN4}.
Since $T$ is divergence free, its flow preserves $\mu$.
This shows \eqref{NN5}.
\end{proof}

\begin{definition}\label{def:adapted}
A diffeomorphism as in Lemma~\ref{adapted}
will be called {\em adapted to} $(M,\mu)$ and $T$.
\end{definition}

We will often identify a neighborhood $U$ of the boundary
with the cylinder $Z_{[0,r)}$ using an adapted diffeomorphism.
Property~\eqref{NN5} is not really necessary in our reasoning below,
but it simplifies the exposition.

%%%%%%%%%%%%%%%%%%%%%%%%%%%%%%%%%%%%%%%%%%%%%%

\subsection{Vector bundles and differential operators}

%%%%%%%%%%%%%%%%%%%%%%%%%%%%%%%%%%%%%%%%%%%%%%
Let $(M,\mu)$ be a measured manifold (possibly with boundary)
and $E \to M$ be a Hermitian vector bundle over $M$.

The space of smooth sections of $E$ is denoted by $\Cu(M,E)$.\index{1CME@$\Cu(M,E)$}
Here smoothness means smoothness up to the boundary, i.e.,
in local coordinates all derivatives have continuous extensions to the boundary.
Smooth sections with compact support
form the space $\Cu_c(M,E)$.\index{1CcME@$\Cuc(M,E)$}
Note that the support $\Phi\in\Cu_c(M,E)$ may intersect the boundary $\dM$.
The space of smooth sections with compact support contained in the interior of $M$
is denoted by $\Cu_{cc}(M,E)$.\index{1CccME@$\Cucc(M,E)$}
Obviously, we have 
\[
  \Cu_{cc}(M,E) \subset \Cu_c(M,E) \subset \Cu(M,E) .
\] 
The $L^2$-scalar product\index{L2-scalar product@$L^2$-scalar product}
of $\Phi,\Psi\in\Cu_c(M,E)$ is defined by 
\[
  (\Phi,\Psi)_{L^2(M)} := \int_M \<\Phi,\Psi\> d\mu ,
  \index{1L2scalarproduct@$(\cdot,\cdot)_{L^2(M)}$}
\]
the corresponding norm by
\[
  \|\Phi\|_{L^2(M)}^2 = (\Phi,\Phi)_{L^2(M)} = \int_M |\Phi|^2 d\mu .
  \index{1normL2@$\LzM{\cdot}$}
\]
Here $\<\cdot,\cdot\>$  and $|\cdot |$ denote the Hermitian product and norm of $E$.
The completion of $\Cu_c(M,E)$ with respect to $\LzM{\cdot}$
is denoted by $L^2(M,E)$.\index{1L2ME@$L^2(M,E)$}
Its elements are called {\em square integrable}\index{square integrable section} sections.
An alternative, but equivalent, definition of $L^2(M,E)$ would be the space
of all measurable sections with finite $L^2$-norm
modulo sections vanishing almost everywhere.
Measurable sections whose restrictions to compact subsets have finite $L^2$-norm
are called {\em locally square integrable}.\index{locally square integrable section}
The space of locally square integrable sections modulo sections vanishing almost
everywhere is denoted by $L^2_{\loc}(M,E)$.\index{1L2locME@$L^2_{\loc}(M,E)$}
If $M$ is compact, then, of course, $L^2(M,E)=L^2_{\loc}(M,E)$.
The $L^2$-norm and hence $L^2(M,E)$ depend on the volume element $\mu$,
while $L^2_{\loc}(M,E)$ does not because, over compact subsets of $M$,
any two smooth volume elements on $M$ can be bounded by each other.

Let $E\to M$ and $F\to M$ be Hermitian vector bundles over $M$
and $D$ be a (linear) differential operator from $E$ to $F$.
Associated with $D$, there is a unique differential operator 
\[
  D^*:\Cu(M,F) \to \Cu(M,E) ,
\]
called the {\em formal adjoint}\index{formal adjoint operator} of $D$,
such that 
\begin{equation}
  \int_M \<D\Phi,\Psi\> d\mu = \int_M \<\Phi,D^*\Psi\> d\mu
\label{eq:adjungierterOperator}
\end{equation}
for all sections $\Phi\in\Cucc(M,E)$ and $\Psi\in\Cucc(M,F)$.
Locally, $D^*$ is obtained from $D$ by integration by parts.
Clearly, we have $D^{**}=D$.

Let $D_c$ and $D_{cc}$ be the operator $D$,
considered as an unbounded operator on the Hilbert space $L^2(M,E)$
with domain $\dom(D_c)=\Cuc(M,E)$ and $\dom(D_{cc})=\Cucc(M,E)$, respectively,
and similarly for $D^*$.
Note that $\dom(D_{c})$ and $\dom(D_{cc})$ are dense in $L^2(M,E)$
and that $\dom(D_{cc})$ is contained in $\dom(D_{c})$.
We write $D_{cc}\subset D_{c}$ to express the latter fact.

Suppose that, as additional data, we are given a Riemannian metric $g$ on $M$
and a Hermitian connection $\nabla$ on $E$.
For any $\Phi\in\Cu(M,E)$ and integer $k\ge0$,
we then get the $k^{th}$ covariant derivative
\[
  \nabla^k\Phi \in
  \Cu(M,\underbrace{T^*M\otimes \ldots\otimes T^*M}_{\mbox{$k$ times}}\otimes E) ,
\]
where both, the Levi-Civita connection of $g$ and $\nabla$,
are used in the definition of the higher covariant derivatives of $\Phi$.
Together with $g$, the Hermitian metric on $E$ induces a metric
on $T^*M\otimes \ldots\otimes T^*M\otimes E$
so that we obtain the formal adjoint $(\nabla^k)^*$
of the differential operator $\nabla^k$.

For a section $\Phi\in L^2_{\loc}(M,E)$, 
we call $\Psi\in L^2_{\loc}(M,T^*M\otimes \ldots\otimes T^*M\otimes E)$ the
{\em $k$-th weak covariant derivative}\index{weak covariant derivative} of $\Phi$ if 
\[
  \LzMp{\Psi}{\Xi} = \LzMp{\Phi}{(\nabla^k)^*\Xi}
\] 
for all $\Xi\in\Cucc(M,T^*M\otimes \ldots\otimes T^*M\otimes E)$.
If $\Psi$ exists, it is uniquely determined and we write $\Psi= \nabla^k\Phi$
(instead of $\Psi=\nabla^k_{\max}\Phi$).
The space of $\Phi\in L^2_{\loc}(M,E)$,
whose weak covariant derivatives up to order $k$ exist in $L^2_{\loc}(M,E)$,
is denoted by $\Hkloc(M,E)$.\index{1HklocME@$\Hkloc(M,E)$}
We have the inclusions
\[
  \Cu(M,E) \subset \Hkloc(M,E) \subset H^{k-1}_{\loc}(M,E)
  \subset \cdots \subset H^0_{\loc}(M,E) = L^2_{\loc}(M,E)
\]
and, by the Sobolev embedding theorem,
\[
  \bigcap_{k=0}^\infty H^k_{\loc}(M,E) = \Cu(M,E) .
\]
The space of $\Phi\in L^2(M,E)$,
whose weak covariant derivatives up to order $k$ exist in $L^2(M,E)$,
is denoted by $H^k(M,E)$.\index{1HkME@$H^k(M,E)$}
It is a Hilbert space with respect to the {\em Sobolev norm}\index{Sobolev norm}
$\HkM{\cdot}$\index{1normHk@$\HkM{\cdot}$}
defined by 
\begin{equation}
  \HkM{\Phi}^2 = \LzM{\Phi}^2 + \LzM{\nabla\Phi}^2 + \dots + \LzM{\nabla^k\Phi}^2 .
  \label{eq:DefSobo}
\end{equation}
If $M$ is noncompact, then $H^k(M,E)$ depends
on the choice of $g$ and $\nabla$
(given the smooth volume element $\mu$ and the Hermitian vector bundle $E$).
If $M$ is compact, then any two $H^k$-norms (for the same $k$)
are equivalent so that $H^k(M,E)$ is independent of these choices.

For $M$ compact (possibly with boundary),
the {\em Rellich embedding theorem}\index{Rellich embedding theorem}
\cite[Theorem~2.34, p.~55]{Au} says in particular that the embedding
\[
  H^{k+1}(M,E) \hookrightarrow H^k(M,E)
\]
is compact; in other words,
bounded sequences in $H^{k+1}(M,E)$ subconverge in $H^k(M,E)$.

We collect some of the spaces introduced so far in the following table:
\begin{center}
\begin{tabular}{|c|c|}
\hline
space of & notation \\
\hline
\hline
smooth sections & $\Cu(M,E)$ \\
\hline
smooth sections with compact support & $\Cuc(M,E)$ \\
\hline
smooth sections with compact support & \\
contained in the interior of $M$ & \raisebox{1.5ex}[-1.5ex]{$\Cucc(M,E)$} \\
\hline
square integrable sections & $L^2(M,E)$ \\
\hline
locally square integrable sections & $L^2_{\loc}(M,E)$ \\
\hline
sections in $L^2_{\loc}(M,E)$ with first $k$  & \\
weak derivatives in $L^2_{\loc}(M,E)$ & \raisebox{1.5ex}[-1.5ex]{$\Hkloc(M,E)$} \\
\hline
\end{tabular}
\nopagebreak

{\em Table~1}
\end{center}

We can restrict the bundle $E$ to $\dM$ and consider the corresponding spaces
such as $\Cu(\dM,E)$, $L^2(\dM,E)$ etc.
Further spaces of sections will be defined as needed.

Suppose from now on that $D$ is a differential operator from $E$ to $F$ of order one
and denote the {\em principal symbol}\index{principal symbol} of $D$ by $\sigma_D$.
For any $x\in M$, $\sigma_D(x):T_x^*M \to \mathrm{Hom}(E_x,F_x)$
is a linear map which is characterized by the property that
\begin{equation}
  D(f\Phi) = fD\Phi + \sigma_D(df)\Phi ,
\label{eq:DefHauptsymbol}
\end{equation}
for all $f\in\Cu(M)$ and $\Phi\in\Cu(M,E)$.
For all $\xi\in T^*M$,
we have\symbolfootnote[2]{In order to get rid of the sign in \eqref{eq:symboladjoint},
a factor $i$ is often included in the definition of the principal symbol.} 
\begin{equation}
\sigma_{D^*}(\xi) = - \sigma_D(\xi)^* .
\label{eq:symboladjoint}
\end{equation}
Equation \eqref{eq:adjungierterOperator} holds if the supports of $\Phi$ and $\Psi$
are compact and contained in the interior of $M$.
In case they meet the boundary, there is an additional boundary term
involving the principal symbol of $D$:

\begin{lemma}[Green's formula]\label{lem:Green}\index{Green's formula}
Let $(M,\mu)$ be a measured manifold with boundary and let $\tau$
be the one-form associated to an interior vector field.
Then
\[
  \int_M \<D\Phi,\Psi\> d\mu - \int_M \<\Phi,D^*\Psi\> d\mu
  = 
  -\int_{\dM}\<\sigma_D(\tau)\Phi,\Psi\>d\nu
\]
for all $\Phi\in \Cu(M,E)$ and $\Psi\in \Cu(M,F)$
such that $\supp(\Phi)\cap\supp(\Psi)$ is compact.
\end{lemma}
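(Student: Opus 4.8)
The plan is to reduce the global statement to a local computation near $\dM$ and then perform an integration by parts in the normal coordinate supplied by \lref{adapted}. First I would use a cutoff: choose a smooth function $\chi$ on $M$ which equals $1$ on a neighborhood of $\supp(\Phi)\cap\supp(\Psi)$ and has compact support; write $\Psi = \chi\Psi + (1-\chi)\Psi$. On the piece where $1-\chi$ is supported, either $\Phi$ or $\Psi$ vanishes near any given point, so the integrands $\<D\Phi,\Psi\>$ and $\<\Phi,D^*\Psi\>$ vanish identically there and the boundary integrand vanishes too; hence it suffices to treat $\chi\Psi$, and we may as well assume $\supp(\Phi)\cap\supp(\Psi)$ is compact \emph{and} that everything is supported in a fixed compact collar neighborhood $U\cong Z_{[0,r)}$ of $\dM$ together with a compact set in the interior. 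The interior contribution is governed by \eqref{eq:adjungierterOperator} (which holds for sections whose supports are compact and interior), so the only genuine content is the collar term.

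Next I would work entirely inside the collar $U$, identified via the adapted diffeomorphism of \lref{adapted} with $Z_{[0,r)} = [0,r)\times\dM$, so that $\mu = |dt|\otimes\nu$ and $\del/\del t = T$ along $\dM$, with $\tau = dt$ there. Using a partition of unity on $\dM$ one may further localize in the $\dM$-directions so that $E$ and $F$ are trivialized; then $D$ has the form $D = \sigma_D(dt)\,\del_t + (\text{tangential terms})$, where the tangential part is a first-order operator in the $\dM$-variables only, involving no $\del_t$. The key observation is that for the tangential part the ordinary integration-by-parts identity \eqref{eq:adjungierterOperator} on the closed manifold $\dM$ produces no boundary term (as $\dM$ has no boundary), fibrewise in $t$, and then integrating in $t$ over $[0,r)$ still produces nothing at $t=0$ from those terms because they contain no $t$-derivative. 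So all boundary contributions come from the single term $\sigma_D(dt)\del_t$.

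For that term the computation is the one-dimensional identity
\[
  \int_0^r \<\sigma_t\,\del_t\Phi,\Psi\>\,dt
  = -\int_0^r \<\Phi,\del_t(\sigma_t^*\Psi)\>\,dt
    - \<\sigma_0\Phi,\Psi\>\big|_{t=0} ,
\]
using that everything vanishes near $t=r$, where $\sigma_t = \sigma_D(dt|_{(t,x)})$; the term $\del_t(\sigma_t^*\Psi)$ reassembles, together with the tangential terms handled above and the identity \eqref{eq:symboladjoint} relating $\sigma_{D^*}$ and $-\sigma_D^*$, into exactly $\<\Phi,D^*\Psi\>$ integrated against $|dt|\otimes\nu$. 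Integrating over $\dM$ with respect to $\nu$ and recalling $d\nu$ is the induced volume element and $\sigma_D(\tau)=\sigma_0$ along $\dM$, one arrives at the claimed formula $\int_M\<D\Phi,\Psi\>\,d\mu - \int_M\<\Phi,D^*\Psi\>\,d\mu = -\int_{\dM}\<\sigma_D(\tau)\Phi,\Psi\>\,d\nu$.

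The main obstacle, such as it is, is bookkeeping rather than conceptual: one must be careful that the normal form $D = \sigma_t\del_t + (\text{lower-order in }t\text{, tangential in }\dM)$ is legitimate in the collar — this is essentially the content of the discussion around \eqref{norfor}, but here we do not even need the refined structure of the $R_t$, only that the sole term containing $\del_t$ has coefficient $\sigma_D(dt)$ — and that the formal adjoint $D^*$ computed locally by integration by parts genuinely agrees with the globally defined $D^*$, which it does by uniqueness of the formal adjoint and \eqref{eq:symboladjoint}. One should also check the product-rule term $(\del_t\sigma_t^*)\Psi$ is absorbed correctly: it is a zeroth-order term in $t$, contributing to the $t$-dependent zeroth-order part of $D^*$, and causes no boundary term. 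Since all supports are compact and the sections are smooth up to the boundary, there are no convergence or density issues, and the result follows.
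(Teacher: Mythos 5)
Your proof is correct, but it takes a different route than the paper. The paper's argument is a two-line reduction: by Remark~\ref{rem:Riemann} there is a Riemannian metric on $M$ whose volume element is $\mu$ and whose interior unit normal along $\dM$ is $T$, and then the formula is exactly the standard Green's formula for first-order operators on Riemannian manifolds with boundary (cited from Taylor). Your approach is to prove that standard formula from scratch: cut off so that the supports live in the interior (where \eqref{eq:adjungierterOperator} gives the result with no boundary term) or in the collar $Z_{[0,r)}$ of \lref{adapted}; in the collar write $D=\sigma_D(dt)\,\del_t+(\text{tangential})$, observe that the tangential part integrates by parts over the closed manifold $\dM$ without any boundary contribution, and read off the boundary term $-\langle\sigma_0\Phi,\Psi\rangle|_{t=0}$ from the one-dimensional integration by parts in $t$, using $(\sigma_t\del_t)^*\Psi=-\del_t(\sigma_t^*\Psi)$ and \eqref{eq:symboladjoint} to reassemble the remaining terms into $\langle\Phi,D^*\Psi\rangle$. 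What the paper's approach buys is economy — it outsources the computation entirely to a textbook — whereas yours is self-contained and makes visible exactly which piece of $D$ produces the boundary term and why the normalization $\Psi_*\mu=|dt|\otimes\nu$ from \lref{adapted}~\eqref{NN5} makes the constants come out right. Note that neither argument needs boundary symmetry of $D$ or the refined normal form of \lref{nf}; your use of $D=\sigma_D(dt)\del_t+(\text{tangential})$ only invokes the elementary fact that the coefficient of $\del_t$ in any chart is $\sigma_D(dt)$, which is just the definition of the principal symbol.
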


\begin{proof}
Choose a Riemannian metric on $M$ inducing $\mu$ and $T$
as in Remark~\ref{rem:Riemann}.
Now the lemma follows from the standard Green's formula
for Riemannian manifolds, see e.g.\ \cite[Prop.~9.1, p.~160]{Ta}.
\end{proof}

We say that $D$ is \emph{elliptic}\index{elliptic differential operator}
if $\sigma_D(\xi)$ is invertible for each nonzero covector $\xi$.
It the boundary of $M$ is empty and $D$ is elliptic (of order one), then
\begin{equation}
  D_{\max}\Phi\in H^k_{\loc}(M,F)
  \Longrightarrow
  \Phi\in H^{k+1}_{\loc}(M,E)
  \label{intelreg}
\end{equation}
for all $\Phi\in \dom(D_{\max})$,
by {\em interior elliptic regularity theory},\index{interior elliptic regularity}
see e.g. \cite[Ch.~III, \S~5]{LM}.

%%%%%%%%%%%%%%%%%%%%%%%%%%%%%%%%%%%%%%%%%%%%%%

\section{Completeness}
\label{sectcom}

%%%%%%%%%%%%%%%%%%%%%%%%%%%%%%%%%%%%%%%%%%%%%%

Let $(M,\mu)$ be a measured manifold with compact boundary. 
Let $E,F\to M$ be Hermitian vector bundles
and $D:\Cu(M,E)\to \Cu(M,F)$ be a differential operator of first order.
We start by generalizing Equation \eqref{eq:DefHauptsymbol}
to Lipschitz functions and sections in the maximal domain of $D$:

\begin{lemma}\label{product}
Let $\chi:M\to\R$ be a Lipschitz function with compact support
and $\Phi\in\dom(D_{\max})$.
Then $\chi\Phi\in\dom(D_{\max})$ and 
\[
  D_{\max}(\chi\Phi) = \sigma_D(d\chi) \Phi + \chi D_{\max}\Phi . 
\]
\end{lemma}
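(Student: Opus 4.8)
The plan is to reduce the statement to the defining property of $D_{\max}$ as the adjoint of $D^*_{cc}$. So I must show: for every $\Xi \in \Cucc(M,F)$,
\[
  (\chi\Phi, D^*\Xi)_{L^2(M)} = (\sigma_D(d\chi)\Phi + \chi D_{\max}\Phi, \Xi)_{L^2(M)}.
\]
First I would observe that $\chi\Xi$ is not smooth (only Lipschitz), so I cannot directly plug $\chi\Xi$ into the definition of $D_{\max}$; this is the main technical obstacle. The way around it is a mollification/approximation argument: approximate $\chi$ by a sequence of smooth compactly supported functions $\chi_j$ with $\chi_j \to \chi$ uniformly, $\supp\chi_j$ contained in a fixed compact neighbourhood of $\supp\chi$, and $d\chi_j \to d\chi$ boundedly almost everywhere (e.g.\ convolve $\chi$ with a mollifier in local charts and patch together, using that Lipschitz functions have bounded weak first derivatives, so $|d\chi_j|$ stays uniformly bounded and $d\chi_j \to d\chi$ in $L^p_{\loc}$ for all $p<\infty$ and pointwise a.e.).

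Second, for each smooth $\chi_j$ the product rule \eqref{eq:DefHauptsymbol} applied to $D^*$ gives, for $\Xi\in\Cucc(M,F)$,
\[
  D^*(\chi_j\Xi) = \chi_j D^*\Xi + \sigma_{D^*}(d\chi_j)\Xi
  = \chi_j D^*\Xi - \sigma_D(d\chi_j)^*\Xi,
\]
using \eqref{eq:symboladjoint}. Since $\chi_j\Xi \in \Cucc(M,F)$, the definition of $D_{\max}$ applies to the pair $(\Phi, \chi_j\Xi)$:
\[
  (\Phi, D^*(\chi_j\Xi))_{L^2(M)} = (D_{\max}\Phi, \chi_j\Xi)_{L^2(M)}.
\]
Expanding the left side with the product rule above yields
\[
  (\Phi, \chi_j D^*\Xi)_{L^2(M)} - (\Phi, \sigma_D(d\chi_j)^*\Xi)_{L^2(M)} = (D_{\max}\Phi, \chi_j\Xi)_{L^2(M)},
\]
i.e.\ $(\chi_j\Phi, D^*\Xi)_{L^2(M)} = (\sigma_D(d\chi_j)\Phi + \chi_j D_{\max}\Phi, \Xi)_{L^2(M)}$.

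Third, I let $j\to\infty$. All integrals are over the fixed compact set $\supp\Xi$ (or the fixed compact neighbourhood of $\supp\chi$). On this compact set: $\chi_j\Phi \to \chi\Phi$ and $\chi_j D_{\max}\Phi \to \chi D_{\max}\Phi$ in $L^2$ by uniform convergence $\chi_j\to\chi$ and boundedness of $\chi_j$ (dominated convergence); and $\sigma_D(d\chi_j)\Phi \to \sigma_D(d\chi)\Phi$ in $L^2$ on that compact set because $|d\chi_j|$ is uniformly bounded and $d\chi_j\to d\chi$ a.e., so $|\sigma_D(d\chi_j)\Phi - \sigma_D(d\chi)\Phi|^2 \le C|\Phi|^2 \in L^1$ and we again invoke dominated convergence. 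Passing to the limit on both sides gives exactly
\[
  (\chi\Phi, D^*\Xi)_{L^2(M)} = (\sigma_D(d\chi)\Phi + \chi D_{\max}\Phi, \Xi)_{L^2(M)}
\]
for all $\Xi\in\Cucc(M,F)$. Since the right-hand-side section $\sigma_D(d\chi)\Phi + \chi D_{\max}\Phi$ lies in $L^2(M,F)$ (because $\chi$ and $d\chi$ are bounded with compact support and $\Phi, D_{\max}\Phi \in L^2_{\loc}$, indeed in $L^2$), this is precisely the statement that $\chi\Phi\in\dom(D_{\max})$ with $D_{\max}(\chi\Phi) = \sigma_D(d\chi)\Phi + \chi D_{\max}\Phi$.

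The only delicate point is constructing the approximating sequence $\chi_j$ with control on $d\chi_j$; everything else is routine dominated convergence. If one prefers to avoid explicit mollification, an alternative is to note that Rademacher's theorem gives $\chi\in W^{1,\infty}_{\loc}$ with compact support, and standard density results provide smooth $\chi_j$ with the stated convergence properties; I would cite this rather than prove it.
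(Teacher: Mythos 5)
Your proof is correct and follows essentially the same route as the paper: verify the adjoint criterion against $D^*$, use the product rule for $D^*$, and mollify $\chi$. The paper packages the approximation a bit differently — it first shows the adjoint identity $(\Phi,D^*\Psi)=(D_{\max}\Phi,\Psi)$ extends to all $\Psi\in H^1_{\loc}(M,F)$ with compact support in the interior, and separately verifies $D^*(\chi\Psi)=\sigma_{D^*}(d\chi)\Psi+\chi D^*\Psi$ in $L^2$ by $H^1$-approximating $\chi$, then applies both facts directly to $\chi\Psi$ — whereas you insert smooth approximants $\chi_j$ into the adjoint identity throughout and pass to the limit at the end by dominated convergence, but the underlying idea and the required regularity of the mollification of $\chi$ are the same.
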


\begin{proof}
Suppose $\Psi\in H^1_{\loc}(M,F)$ has compact support in the interior of $M$.
Let $K$ be a compact subset in the interior of $M$
which contains the support of $\Psi$ in its interior.
Then there is a sequence of $\Psi_j$ in $\Cucc(M,F)$ with supports in $K$
which converge to $\Psi$ in the $H^1$-norm (over $K$).
In particular, $D^*\Psi$ is well defined and $\lim_{n\to\infty}D^*\Psi_n=D^*\Psi$
with respect to the $L^2$-norm.
We conclude that, for any $\Phi\in\dom(D_{\max})$,
\[
  (\Phi,D^*\Psi) = \lim_{n\to\infty}(\Phi,D^*\Psi_n)
  = \lim_{n\to\infty}(D_{\max}\Phi,\Psi_n)
  = (D_{\max}\Phi,\Psi) .
\]
Assume now that $\chi:M\to\R$ is a Lipschitz function with compact support, 
and let $\Phi\in\dom(D_{\max})$.
For any $\Psi\in\Cucc(M,F)$ we have $\chi\Psi\in H^1_{\loc}(M,F)$,
the support of $\chi\Psi$ is compact and contained in the interior of $M$,
and $D^*(\chi\Psi)=\sigma_{D^*}(d\chi)\Psi+\chi D^*\Psi$ in $L^2(M,F)$,
as we see by approximating $\chi$ in the $H^1$-norm by smooth functions
with compact support.
Hence
\begin{align*}
  (\chi\Phi,D^*\Psi)_{L^2(M)}
  &= (\Phi,\chi D^*\Psi)_{L^2(M)} \\
  &= (\Phi,D^*(\chi\Psi)- \sigma_{D^*}(d\chi)\Psi)_{L^2(M)} \\
  &= (\Dmax\Phi,\chi\Psi)_{L^2(M)} + (\sigma_{D}(d\chi)\Phi,\Psi)_{L^2(M)} \\
  &= (\chi \Dmax\Phi+ \sigma_{D}(d\chi)\Phi,\Psi)_{L^2(M)} .
  \qedhere
\end{align*}
\end{proof}

Recall from Definition~\ref{complete}
that $D$ is complete if and only if the space of compactly supported sections in $\dom(D_{\max})$ is dense in $\dom(D_{\max})$.

\begin{remark}\label{minmax}
If $\dM=\emptyset$ and $D$ is elliptic, then $D$ is complete if and only if
the minimal and maximal extensions of $D$ on $\Cuc(M,E)=\Cucc(M,E)$ coincide.
Namely, if the extensions coincide, then $\Cucc(M,E)$ is dense in $\dom(D_{\max})$.
Conversely, since $D$ is elliptic, $\dom(D_{\max})$ is contained in $H^1_{\loc}(M,E)$,
by interior elliptic regularity.
Furthermore, any compactly supported $H^1$-section can be approximated
by smooth sections with support contained in a fixed compact domain
in the $H^1$-norm and hence in the graph norm of $D$.
\end{remark}

\begin{thm}\label{comsup}
Let $(M,\mu)$ be a measured manifold with compact boundary
and $D:\Cu(M,E)\to \Cu(M,F)$ be a differential operator of first order.
Suppose that there exists a constant $C>0$
and a complete Riemannian metric on $M$ with respect to which 
\[
  |\sigma_D(\xi)| \leq C\,|\xi|
\]
for all $x\in M$ and $\xi\in T_x^*M$.
Then $D$ and $D^*$ are complete.
\end{thm}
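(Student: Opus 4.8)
The plan is to approximate an arbitrary section $\Phi\in\dom(\Dmax)$ in the graph norm by compactly supported sections of the simple form $\chi_R\Phi$, where $\chi_R$ is a Lipschitz cutoff built from the Riemannian distance to the boundary. Concretely, fix the given complete metric, set $\rho(x):=\mathrm{dist}(x,\dM)$, and note that $\rho$ is $1$-Lipschitz, so $|d\rho|\le 1$ almost everywhere, and that the sublevel sets $\{\rho\le c\}$ are compact by the Hopf--Rinow theorem for complete manifolds with compact boundary. For $R\in\N$ put $\chi_R:=\max\{0,\min\{1,R+1-\rho\}\}$. Then $\chi_R$ is Lipschitz with compact support, $0\le\chi_R\le 1$, $\chi_R\equiv1$ on $\{\rho\le R\}$, and $d\chi_R$ is supported in the shell $A_R:=\{R\le\rho\le R+1\}$ with $|d\chi_R|\le 1$ there. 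Since the metric is used only to produce $\rho$ and to bound $\sigma_D$, while the $L^2$-norm is taken throughout with respect to $\mu$, the mismatch between $\mu$ and the Riemannian volume plays no role.

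Next I would invoke \lref{product}: it gives $\chi_R\Phi\in\dom(\Dmax)$, with compact support, and
\[
  \Dmax(\chi_R\Phi)=\sigma_D(d\chi_R)\Phi+\chi_R\Dmax\Phi .
\]
It then remains to show $\chi_R\Phi\to\Phi$ in the graph norm. Since $\chi_R\to1$ pointwise and $0\le\chi_R\le1$, dominated convergence yields $\chi_R\Phi\to\Phi$ and $\chi_R\Dmax\Phi\to\Dmax\Phi$ in $L^2(M,\cdot)$. For the remaining term, the symbol estimate gives $\LzM{\sigma_D(d\chi_R)\Phi}^2\le C^2\int_{A_R}|\Phi|^2\,d\mu$; because $\sum_{R\in\N}\mathbf 1_{A_R}\le 2$ pointwise, the series $\sum_R\int_{A_R}|\Phi|^2\,d\mu$ is bounded by $2\LzM{\Phi}^2<\infty$, so its terms tend to $0$ and $\LzM{\sigma_D(d\chi_R)\Phi}\to0$. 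Hence $\Dnorm{\chi_R\Phi-\Phi}\to0$, proving that $D$ is complete. For $D^*$ one argues identically: by \eqref{eq:symboladjoint} one has $|\sigma_{D^*}(\xi)|=|\sigma_D(\xi)^*|=|\sigma_D(\xi)|\le C|\xi|$, so the same cutoff argument, now applied to the first-order operator $D^*$ on $F$ via \lref{product}, shows $D^*$ is complete.

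The point I expect to carry the argument — and which at first looks like the obstacle — is that a crude cutoff to metric balls suffices even though $|d\chi_R|$ does \emph{not} tend to $0$ (on a general complete manifold it cannot). The resolution is that the error term $\sigma_D(d\chi_R)\Phi$ lives on the shells $A_R$, which are essentially pairwise disjoint, so the relevant tail $\int_{A_R}|\Phi|^2\,d\mu$ is summable and therefore vanishes in the limit purely because $\Phi\in L^2$. Completeness of the Riemannian metric enters only to guarantee that the cutoffs $\chi_R$, and hence the sections $\chi_R\Phi$, have compact support.
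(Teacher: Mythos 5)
Your proof is correct, and it differs from the paper at the one place where the argument could go wrong. The paper uses cutoffs $\chi_m(x)=\rho(r(x)/m)$ whose transition zone has width $m$, so that $|d\chi_m|\le 2/m\to0$ and the error term $\|\sigma_D(d\chi_m)\Phi\|_{L^2}\le \frac{2C}{m}\|\Phi\|_{L^2}$ dies because the gradient of the cutoff becomes small. You instead take a fixed unit-width cutoff whose gradient is bounded but does not decay, and you push it out to infinity; the error is controlled because $d\chi_R$ lives on the annulus $A_R=\{R\le\rho\le R+1\}$ and $\int_{A_R}|\Phi|^2\,d\mu\to0$ as $R\to\infty$, purely because $\Phi\in L^2$ (your bounded-overlap summation works, though the even simpler observation $A_R\subset\{\rho\ge R\}$ and $\int_{\{\rho\ge R\}}|\Phi|^2\to0$ already suffices). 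Both routes invoke \lref{product} in the same way and reduce the adjoint case to $|\sigma_{D^*}(\xi)|=|\sigma_D(\xi)|$. Your version is a slightly more elementary way to carry the key estimate: it makes explicit that what is really needed is that $|\sigma_D(d\chi_R)|$ stay bounded while the support of $d\chi_R$ escapes every compact set, whereas the paper's scaling makes the error term small pointwise. The hypothesis that the metric is complete and $\dM$ compact plays the same role in both proofs — it guarantees that $\{\rho\le c\}$ is compact, so the cutoffs have compact support.
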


\begin{proof}[Proof of \tref{comsup}]
Let $r:M\to\R$ be the distance function from the boundary, $r(x)=\mathrm{dist}(x,\dM)$.
Then $r$ is a Lipschitz function with Lipschitz constant $1$.
Choose $\rho\in\Cu(\R,\R)$ so that $0\le\rho\le 1$, $\rho(t)=0$ for $t\ge2$,
$\rho(t)=1$ for $t\le1$, and $|\rho'|\le 2$.
Set
\[
  \chi_m(x) := \rho\left(\frac{r(x)}{m}\right).
\]
Then $\chi_m$ is a Lipschitz function and we have almost everywhere
\[
  |d\chi_m(x)| \le \frac{2}{m} .
\]
Moreover, $\{\chi_m\}_m$ is a uniformly bounded sequence of functions
converging pointwise to $1$.

Now let $\Phi\in\dom(D_{\max})$.
Then $\| \chi_m\Phi - \Phi \|_{L^2(M)} \to 0$ as $m\to\infty$ by Lebesgue's theorem.
Furthermore, $\chi_m\Phi$ has compact support
and $\chi_m\Phi\in\dom(D_{\max})$ by \lref{product}. 
Since
\begin{align*}
  \| D_{\max}(\chi_m\Phi) &- D_{\max}\Phi \|_{L^2(M)} \\
  &\le \| (1-\chi_m)D_{\max}\Phi\|_{L^2(M)} + \| \sigma_D(d\chi_m)\Phi \|_{L^2(M)} \\
  &= \| (1-\chi_m)D_{\max}\Phi\|_{L^2(M)} + \frac{2C}{m}\,\| \Phi \|_{L^2(M)} 
  \to 0
\end{align*}
as $m\to\infty$, we conclude that $\chi_m\Phi \to \Phi$ in the graph norm of $\Dmax$.

The same discussion applies to $D^*$
because $|\sigma_{D^*}(\xi)|=|-\sigma_{D}(\xi)^*|=|\sigma_{D}(\xi)|$,
and the theorem is proved.
\end{proof}

\begin{example}\label{diracom}
If $D$ is of Dirac type with respect to a complete Riemannian metric,
see Example~\ref{exabso}~(a), then $D$ is complete.
Namely, by the Clifford relations \eqref{Cliff1}
we have $\sigma_D(\xi)^*\sigma_D(\xi) = |\xi|^2\cdot\id$
and hence $|\sigma_D(\xi)|=|\xi|$.
\end{example}

The assumption in \tref{comsup} that the principal symbol of $D$ is uniformly bounded
can be weakened to the condition considered by Chernoff
in \cite[Theorems~1.3 and 2.2]{Ch}:

\begin{proof}[Proof of \tref{cherwolf}]
Choose a smooth function $f:M\to\R$ with
\[
 C(\mathrm{dist}(x,\dM)) \leq f(x) \le 2\, C(\mathrm{dist}(x,\dM))
\]
for all $x\in M$.
Let $g$ denote the complete Riemannian metric as in the assumptions of \tref{cherwolf}.
Then $h:=f^{-2}g$ is also complete because the $h$-length
of a curve $c:[0,\infty)\to M$,
starting in $\dM$ and parametrized by arc-length with respect to $g$,
is estimated by
\begin{align*}
  L_h(c) 
  &= 
  \int_0^\infty \frac{|c'(t)|_g}{f(c(t))}\, dt \\
  &\ge 
  \frac12 \int_0^\infty \frac{|c'(t)|_g}{C(\mathrm{dist}(c(t),\dM))}\, dt \\
  &\ge
  \frac12 \int_0^\infty \frac{1}{C(t)}\, dt \\
  &=
  \infty .
\end{align*}
With respect to $h$,
the principal symbol $\sigma_D$ is uniformly bounded as required in \tref{comsup}.
\end{proof}

\begin{remark}\label{cylicom}
It will be convenient to assume further on that $D$ is complete.
However, in questions concerning boundary regularity,
assuming completeness is somewhat artificial because it is a property ``at infinity''.
But, in such questions, we can always pass to a complete
differential operator on vector bundles over the cylinder $Z_{[0,\infty)}$
which coincides with the given operator in a neighborhood of the boundary.
In this sense, assuming completeness causes no loss of generality
when studying the operator near the boundary.
\end{remark}

%%%%%%%%%%%%%%%%%%%%%%%%%%%%%%%%%%%%%%%%%%%%%%

\section{Normal form}
\label{secnf}

%%%%%%%%%%%%%%%%%%%%%%%%%%%%%%%%%%%%%%%%%%%%%%

Throughout this section,
let $(M,\mu)$ be a given measured manifold with compact boundary.
Let $T$ be an interior vector field along $\dM$ and $\tau$
be the associated one-form.
Identify a neighborhood $U$ of the boundary $\dM$ with a cylinder $Z_{[0,r)}$
via an adapted diffeomorphism as in \lref{adapted}.

Let $E,F\to M$ be Hermitian vector bundles.
Identify the restrictions of $E$ and $F$ to $Z_{[0,r)}$ as Hermitian vector bundles
with the pull-back of their restriction to $\dM$ with respect to the canonical projection
onto $\dM$ along the family of $t$-lines $(t,x)$, $0\le t<r$ and $x\in\dM$.
This can be achieved by using parallel transport along the $t$-lines
with respect to Hermitian connections on $E$ and $F$.
Then sections of $E$ and $F$ over $Z_{[0,r)}$ can be viewed
as $t$-dependent sections of $E$ and $F$ over $\dM$.
Using this identification we have, by \eqref{NN5} of \lref{adapted},
\[
  \int_{Z_{[0,r)}} |\Phi(p)|^2 d\mu(p)
  = \int_0^r \int_\dM |\Phi(t,x)|^2 d\nu(x)\, dt
\]
for any $\Phi\in L^2(Z_{[0,r)},E)$ or $\Phi\in L^2(Z_{[0,r)},F)$.

Let $D:\Cu(M,E) \to \Cu(M,F)$ be an elliptic differential operator
of first order and set
\begin{equation}\label{ddt}
  \sigma := \sigma_D(dt) .
\end{equation}
By \eqref{NN4} of \lref{adapted}, 
we have $\sigma_{(0,x)}=\sigma_D(\tau(x))$ for each $x\in\dM$.
Since $D$ is elliptic, $\sigma_{(t,x)}:E_{x}\to F_{x}$ 
is an isomorphism for each $(t,x)\in Z_{[0,r)}$.
We usually suppress the $x$-dependence in the notation
and write $\sigma_t$\index{1Sigmat@$\sigma_t$} instead of $\sigma_{(t,x)}$.

The main point about boundary symmetric operators as in Definition~\ref{defbs} is that,
in coordinates adapted to $(M,\mu)$ and $T$ as in \lref{adapted},
$D$ and $D^*$ admit good normal forms near the boundary:

\begin{lemma}[Normal form]\label{nf}\index{normal form near boundary}
Let $D:\Cu(M,E) \to \Cu(M,F)$ be an elliptic differential operator of first order.
If $D$ is boundary symmetric,
then there are formally selfadjoint elliptic differential operators
\[
  A:\Cu(\dM,E) \to \Cu(\dM,E) \index{1A@$A$}
  \quad\text{and}\quad
  \tilde A:\Cu(\dM,F) \to \Cu(\dM,F) \index{1Atilde@$\tilde A$}
\]
such that, over $Z_{[0,r)}$,
\begin{align}
  D
  &=
  \sigma_t\left(\frac{\del}{\del t} + A + R_t\right) ,
  \label{eq:Dnormalform}\\
  D^*
  &=
  -\sigma_t^*\left(\frac{\del}{\del t} + \tilde A + \tilde R_t\right) .
  \label{eq:Dstarnormalform}
\end{align}
The remainders
\[
  R_t:\Cu(\dM,E) \to \Cu(\dM,E) \index{1Rt@$R_t$}
  \quad\text{and}\quad
  \tilde R_t:\Cu(\dM,F) \to \Cu(\dM,F) \index{1Rttilde@$\tilde R_t$}
\]
are families of differential operators of order at most one
whose coefficients depend smoothly on $t\in [0,r)$.
They satisfy an estimate
\begin{align}
  \| R_t\Phi \|_{L^2(\dM)}
  &\le
  C \left( t \| A\Phi \|_{L^2(\dM)} + \| \Phi \|_{L^2(\dM)} \right) , \label{Rt}\\
  \| \tilde R_t\Psi \|_{L^2(\dM)}
  &\le
  C \left( t \| \tilde A\Psi \|_{L^2(\dM)} + \| \Psi \|_{L^2(\dM)} \right) , \label{Rtt}
\end{align}
for all $\Phi\in\Cu(\dM,E)$ and $\Psi\in\Cu(\dM,F)$.
\end{lemma}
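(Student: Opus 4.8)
The plan is to do everything inside the collar $Z_{[0,r)}$, using the adapted diffeomorphism of \lref{adapted} and the parallel‑transport trivialization fixed above, so that sections over $Z_{[0,r)}$ become smoothly $t$‑dependent sections over $\dM$. Since $D$ has order one and $\sigma_t=\sigma_D(dt)$ is by definition the coefficient of $\partial/\partial t$, I would first write $D=\sigma_t\circ(\partial/\partial t)+B_t$, where $B_t\colon\Cu(\dM,E)\to\Cu(\dM,F)$ is a family of first‑order differential operators on $\dM$ with coefficients depending smoothly on $t\in[0,r)$ (smoothness of $D$, of the diffeomorphism, and of parallel transport). Ellipticity of $D$ makes $\sigma_t$ fibrewise invertible, so
\[
  D=\sigma_t\left(\frac{\partial}{\partial t}+\tilde A_t\right),\qquad \tilde A_t:=\sigma_t^{-1}B_t ,
\]
and $\tilde A_t$ is again a smooth family of first‑order operators on $E|_\dM$ (with $\sigma_t^{-1}$ smooth in $t$).

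The core of the argument is the construction of $A$. Decomposing $T^*_{(0,x)}Z_{[0,r)}=\R\,dt\oplus T^*_x\dM$ and using linearity of the principal symbol, one gets $\sigma_{B_0}(\xi)=\sigma_D(\xi)$ for $\xi\in T^*_x\dM$ extended to $T^*_xM$ by $\xi(T)=0$, hence $\sigma_{\tilde A_0}(\xi)=\sigma_0^{-1}\sigma_D(\xi)$, which is \emph{skew‑Hermitian} precisely by boundary symmetry (\dref{defbs}). I would then set $A:=\half(\tilde A_0+\tilde A_0^*)$, the symmetrization with respect to the $L^2$‑structure of $E|_\dM$ (Hermitian metric and induced volume $\nu$). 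Then $A$ is formally selfadjoint, and by \eqref{eq:symboladjoint}
\[
  \sigma_A(\xi)=\half\bigl(\sigma_{\tilde A_0}(\xi)-\sigma_{\tilde A_0}(\xi)^*\bigr)=\sigma_{\tilde A_0}(\xi)=\sigma_0^{-1}\sigma_D(\xi) ,
\]
which is invertible for $\xi\neq0$, so $A$ is elliptic. Putting $R_t:=\tilde A_t-A$ gives \eqref{eq:Dnormalform} with $R_t$ a smooth family of first‑order operators; moreover $R_0=\half(\tilde A_0-\tilde A_0^*)$ has principal symbol $\half(\sigma_{\tilde A_0}(\xi)+\sigma_{\tilde A_0}(\xi)^*)=0$, so $R_0$ has order zero, i.e.\ is a bundle endomorphism along $\dM$.

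For the estimate \eqref{Rt}, write $R_t=R_0+(R_t-R_0)$. As $R_0$ is zeroth order, $\|R_0\Phi\|_{L^2(\dM)}\le C\|\Phi\|_{L^2(\dM)}$. Shrinking $r$ if necessary, a Taylor expansion in $t$ (smoothness of the coefficients, compactness of $\dM$) shows the coefficients of the first‑order operator $R_t-R_0$ are $O(t)$ uniformly, so $\|(R_t-R_0)\Phi\|_{L^2(\dM)}\le Ct\,\|\Phi\|_{H^1(\dM)}$. Since $A$ is a first‑order elliptic operator on the closed manifold $\dM$, the standard elliptic (G\aa rding) estimate gives $\|\Phi\|_{H^1(\dM)}\le C\bigl(\|A\Phi\|_{L^2(\dM)}+\|\Phi\|_{L^2(\dM)}\bigr)$; combining and absorbing the bounded factor $t<r$ yields \eqref{Rt}.

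Finally, the same argument applies to $D^*$, which is also elliptic of order one with $\sigma_{D^*}(dt)=-\sigma_D(dt)^*=-\sigma_t^*$: writing $D^*=-\sigma_t^*\bigl(\partial_t+\tilde A'_t\bigr)$ with $\tilde A'_t:=-(\sigma_t^*)^{-1}B'_t$ for the tangential part $B'_t$, one computes $\sigma_{\tilde A'_0}(\xi)=(\sigma_D(\xi)\sigma_0^{-1})^*$, which is skew‑Hermitian again by boundary symmetry of $D$ (this is also how one sees that $D^*$ is boundary symmetric); symmetrizing produces the formally selfadjoint elliptic operator $\tilde A:=\half(\tilde A'_0+(\tilde A'_0)^*)$, and $\tilde R_t:=\tilde A'_t-\tilde A$ satisfies \eqref{Rtt} exactly as before. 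I expect the only genuinely substantive step — the place where the hypothesis is used essentially — to be the passage from ``$\tilde A_0$ is formally selfadjoint up to lower‑order terms'' to ``$R_0$ has order zero'': this works because boundary symmetry forces $\sigma_{\tilde A_0}(\xi)$ to be skew‑Hermitian, so that the symmetrization $A$ has the \emph{same} principal symbol as $\tilde A_0$. The remaining ingredients — the collar normal form, the elliptic estimate on $\dM$, and the Taylor bound on the $t$‑dependence — are routine.
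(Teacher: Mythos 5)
Your proof is correct and follows essentially the same route as the paper: write $D=\sigma_t(\partial_t+\D_t)$ in the collar, observe that boundary symmetry makes $\sigma_{\D_0}(\xi)=\sigma_0^{-1}\sigma_D(\xi)$ skew-Hermitian, pick a formally selfadjoint elliptic $A$ with this symbol, set $R_t=\D_t-A$, and use that $R_0$ has order zero plus the elliptic estimate $\|\Phi\|_{H^1(\dM)}\lesssim\|A\Phi\|_{L^2}+\|\Phi\|_{L^2}$ to get \eqref{Rt}. The only (welcome) difference is that you make the existence of $A$ explicit by symmetrizing, $A=\tfrac12(\tilde A_0+\tilde A_0^*)$, where the paper just asserts such an $A$ can be chosen; both are standard and lead to the same remainder structure.
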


\begin{proof}
For $x\in\dM$,
identify $T_x^*\dM$ with the subspace of $\xi\in T_x^*M$ such that $\xi(T)=0$.
In this sense, the principal symbol of the desired operator $A$ is given by
\begin{equation}
  \sigma_{A}(\xi)
  = \sigma_0(x)^{-1}\circ\sigma_D(\xi) ,
\label{eq:symbDT}
\end{equation}
by \eqref{eq:Dnormalform}, \eqref{Rt}, and the definition of $\sigma$.
Since $D$ is boundary symmetric, $\sigma_0(x)^{-1}\circ\sigma_D(\xi)$
is skew-Hermitian for all $x\in\dM$ and $\xi\in T_x^*\dM$.
Hence we can choose a formally selfadjoint differential operator
$A:\Cu(\dM,E) \to \Cu(\dM,E)$ of order one
with principal symbol as required by \eqref{eq:symbDT}.
Since the principal symbol is composed of invertible symbols, $A$ is elliptic.

Over $Z_{[0,r)}$, we have
\[
  D = \sigma_t \big( \frac{\del}{\del t} + \D_t \big) ,
\]
where $\D_t : \Cu(\dM,E) \to \Cu(\dM,E)$ 
is a family of elliptic differential operators of order one
whose coefficients depend smoothly on $t\in [0,r)$.
Hence
\[
  R_t := \D_t - A
\]
is a family of differential operators of order at most one
whose coefficients depend smoothly on $t$.
Since $\D_0$ and $A$ have the same principal symbol, $R_0$ is of order $0$.
Since $\dM$ is a closed manifold, we conclude that
\[
  \| R_t\Phi \|_{L^2(\dM)}
  \le C'  \big( t \| \Phi \|_{H^1(\dM)} + \| \Phi \|_{L^2(\dM)} \big)
\]
for some constant $C'$.
Now $A$ is elliptic of order one.
Hence, by standard elliptic estimates,
the $H^1$-norm is bounded by the graph norm of $A$;
that is, we have an estimate as claimed in \eqref{Rt}.
This finishes the proof of the assertions concerning $D$.

By \eqref{eq:symboladjoint},
the principal symbol of the desired operator $\tilde A$ is given by
\begin{align}
  \sigma_{\tilde A}(\xi)
  &= \sigma_{D^*}(\tau(x))^{-1}\circ\sigma_{D^*}(\xi)
  \label{symboltildea} \\
  &= (\sigma_{D}(\tau(x))^*)^{-1}\circ\sigma_{D}(\xi)^* \notag \\
  &= (\sigma_{0}(x)^*)^{-1}\circ\sigma_{D}(\xi)^* \notag \\
  &= \big(\sigma_{D}(\xi)\circ\sigma_{0}(x)^{-1}\big)^* , \notag
\end{align}
which is also skew-Hermitian.
Thus the analogous arguments as above show the assertions concerning $D^*$.
\end{proof}

\begin{remarks}
(a)
Conversely, it is immediate that an elliptic differential operator $D$
with a normal form as in \lref{nf} is boundary symmetric.
In particular, $D$ is boundary symmetric if and only if $D^*$ is boundary symmetric.
The latter is also obvious from Equation \eqref{eq:symboladjoint}
(as we see from the end of the above proof).

(b)
The operators $A$ and $\tilde A$ in \lref{nf} are not unique.
One can add symmetric zero-order terms to them by paying
with a corresponding change of the remainder-terms $R_t$ and $\tilde R_t$.
Equations \eqref{Rt} and \eqref{Rtt} will still be valid for the modified remainder terms.
\end{remarks}

We conclude this section with a few examples.

\begin{examples}\label{exabso}
(a) (Dirac type operators)
We say that the operator $D$ is of {\em Dirac type}\index{Dirac type operator}
if $M$ carries a Riemannian metric $\<\cdot,\cdot\>$ such that the principal symbol
of $D$ satisfies the Clifford relations\index{Clifford relations}
\begin{align}
  \sigma_D(\xi)^*\sigma_D(\eta) + \sigma_D(\eta)^*\sigma_D(\xi)
  &= 2\<\xi,\eta\> \cdot \id_{E_x} ,
  \label{Cliff1} \\
  \sigma_D(\xi)\sigma_D(\eta)^* + \sigma_D(\eta)\sigma_D(\xi)^*
  &= 2\<\xi,\eta\> \cdot \id_{F_x} ,
  \label{Cliff2}
\end{align}
for all $x\in M$ and $\xi,\eta \in T^*_xM$.
They easily imply that $D$ is elliptic and boundary symmetric
with respect to the interior normal field of the given Riemannian metric.
If $D$ is of Dirac type, then so is $D^*$.

The class of Dirac type operators contains in particular Dirac operators
on Dirac bundles as in  \cite[Ch.~II, \S~5]{LM}.
The classical Dirac operator on a spin manifold is an important special case.

(b)
A somewhat artificial example of a boundary symmetric operator
which is not of Dirac type can be constructed as follows.
Let $g$ and $g'$ be two Riemannian metrics on a manifold $M$ with spin structure.
Let $D:\Cu(M,E) \to \Cu(M,F)$ and $D':\Cu(M,E') \to \Cu(M,F')$
be the corresponding Dirac operators acting on spinors.
Assume that $g$ and $g'$ are conformal along the boundary,
i.e., $g'=fg$ for some smooth positive function $f$ on $\dM$.
Let $T$ be an interior normal vector field along $\dM$,
perpendicular to $\dM$ for $g$ and $g'$.
Then 
\[
\begin{pmatrix}
D & 0 \cr 0 & D'
\end{pmatrix}
+ V
: \Cu(M,E\oplus E') \to \Cu(M,F\oplus F')
\]
is boundary symmetric (with respect to $T$) but in general not of Dirac type.
Here $V$ may be an arbitrary zero-order term.

(c)
More importantly, let $D$ be a Dirac type operator.
If one changes $D$ in the interior of $M$ in such a way that it remains
an elliptic first-order operator, then $D$ is still boundary symmetric.
\end{examples}

%%%%%%%%%%%%%%%%%%%%%%%%%%%%%%%%%%%%%%%%%%%%%%

\section{The model operator}
\label{secMOP}

%%%%%%%%%%%%%%%%%%%%%%%%%%%%%%%%%%%%%%%%%%%%%%

Throughout this section, assume the Standard Setup~\ref{stase},
identify a neighborhood $U$ of the boundary $\dM$ with a cylinder $Z_{[0,r)}$
via an adapted diffeomorphism as in \lref{adapted},
and fix a normal form for $D$ and $D^*$ as in \lref{nf}.
Consider $A$ as an unbounded operator in the Hilbert space $L^2(\dM,E)$
with domain $\dom(A)=C^\infty(\dM,E)$.
The \emph{model operator}\index{model operator} associated to $D$ and $A$
is the operator
\begin{equation}\label{moop}\index{1D0@$D_0$}
  D_0 := \sigma_0 \left( \frac{\del}{\del t} + A \right)
\end{equation}
on the half-infinite cylinder $Z_{[0,\infty)}$.
Here $\sigma_0(x)=\sigma_D(\tau(x))$, compare \lref{nf}.
The coefficients of $D_0$ do not depend on $t$.
With respect to the product measure $\mu_0:=dt\otimes\nu$ on $Z_{[0,\infty)}$,
we have
\begin{equation}\label{d0ad}
  D_0^* 
  = - \sigma_0^* \big( \frac{\del}{\del t} - (\sigma_0^*)^{-1} A \sigma_0^* \big)
  \quad\text{and}\quad
  (\sigma_0^{-1}D_0)^*
  = -   \big( \frac{\del}{\del t} - A \big) .
\end{equation}
We will keep the above setup and will use the abbrevation $\Phi' := \partial \Phi/\partial t$.

\begin{lemma}\label{intpart}
For any $\Phi\in C^\infty_c(Z_{[0,\infty)},E)$, we have
\begin{align*}
\| \sigma_0^{-1}D_0\Phi \|_{L^2(Z_{[0,\infty)})}^2
&= 
\| \Phi' \|_{L^2(Z_{[0,\infty)})}^2
  + \| A\Phi \|_{L^2(Z_{[0,\infty)})}^2  
  - (A\Phi_0,\Phi_0)_{L^2(\dM)} ,
\end{align*}
where $\Phi_0$ denotes the restriction of $\Phi$ to $\{0\}\times\dM$.
\end{lemma}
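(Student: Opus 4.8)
The plan is to reduce the identity to a single integration by parts in the normal variable $t$. By the definition of the model operator in \eqref{moop} we have $\sigma_0^{-1}D_0\Phi = \Phi' + A\Phi$, so expanding the square gives
\[
  \| \sigma_0^{-1}D_0\Phi \|_{L^2(Z_{[0,\infty)})}^2
  = \| \Phi' \|_{L^2(Z_{[0,\infty)})}^2
  + \| A\Phi \|_{L^2(Z_{[0,\infty)})}^2
  + 2\,\Re\,(\Phi',A\Phi)_{L^2(Z_{[0,\infty)})} ,
\]
and it remains to show that the cross term equals $-(A\Phi_0,\Phi_0)_{L^2(\dM)}$.

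First I would write the cylinder $L^2$-product as an iterated integral, $(\Phi',A\Phi)_{L^2(Z_{[0,\infty)})} = \int_0^\infty (\Phi'(t),A\Phi(t))_{L^2(\dM)}\,dt$, which is legitimate by \eqref{NN5} of \lref{adapted} since $\Phi$ is smooth with compact support and $\dM$ is closed. Since the coefficients of $A$ do not depend on $t$, differentiation in $t$ commutes with $A$, i.e.\ $(A\Phi)' = A\Phi'$; and since $A$ is formally selfadjoint on the closed manifold $\dM$, one has $(\Phi(t),A\Phi'(t))_{L^2(\dM)} = (A\Phi(t),\Phi'(t))_{L^2(\dM)} = \overline{(\Phi'(t),A\Phi(t))_{L^2(\dM)}}$. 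Hence
\[
  \frac{d}{dt}\,(\Phi(t),A\Phi(t))_{L^2(\dM)}
  = (\Phi'(t),A\Phi(t))_{L^2(\dM)} + (\Phi(t),A\Phi'(t))_{L^2(\dM)}
  = 2\,\Re\,(\Phi'(t),A\Phi(t))_{L^2(\dM)} .
\]
Integrating over $t\in[0,\infty)$ and using that $\Phi$ has compact support, so that the contribution at $t=\infty$ vanishes, yields $2\,\Re\,(\Phi',A\Phi)_{L^2(Z_{[0,\infty)})} = -(\Phi_0,A\Phi_0)_{L^2(\dM)}$. Finally, formal selfadjointness of $A$ gives $(\Phi_0,A\Phi_0)_{L^2(\dM)} = (A\Phi_0,\Phi_0)_{L^2(\dM)}$ (in particular this quantity is real), which is the asserted formula.

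I do not expect a genuine obstacle here. Everything in sight is smooth and compactly supported and $\dM$ is compact, so all three $L^2(Z_{[0,\infty)})$-norms are finite, the function $t\mapsto(\Phi(t),A\Phi(t))_{L^2(\dM)}$ is $C^1$ with compact support, and Fubini together with the fundamental theorem of calculus apply without difficulty. The one point to be careful about is not to discard the boundary contribution at $t=0$: the support of $\Phi$ is allowed to meet $\{0\}\times\dM$, which is precisely why the term $-(A\Phi_0,\Phi_0)_{L^2(\dM)}$ survives. An alternative, essentially equivalent route would be to combine the expression for $(\sigma_0^{-1}D_0)^*$ in \eqref{d0ad} with Green's formula \lref{lem:Green} applied on the cylinder, but the direct computation above appears to be the shortest.
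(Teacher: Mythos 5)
Your proof is correct and takes essentially the same route as the paper: both expand $\|\Phi'+A\Phi\|^2$, use that $A$ is $t$-independent and formally selfadjoint to recognize the cross term as the total $t$-derivative of $(\Phi(t),A\Phi(t))_{L^2(\dM)}$, and then integrate, picking up the boundary contribution at $t=0$. The only cosmetic difference is that you phrase the cross term as $2\Re(\Phi',A\Phi)$ before identifying it as a derivative, whereas the paper writes the two conjugate terms out and combines them directly.
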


\begin{proof}
We fix $t\in [0,\infty)$ and integrate over $\dM$:
\begin{align*}
  \LzdM{\sigma_0^{-1} D_0&\Phi}^2 \\
  &= ( \Phi'+A\Phi,\Phi'+A\Phi )_{L^2(\dM)} \\
  &= \LzdM{\Phi'}^2 + \LzdM{A\Phi}^2
  + ( \Phi',A\Phi )_{L^2(\dM)} + ( A\Phi,\Phi' )_{L^2(\dM)} \\
  &= \LzdM{\Phi'}^2 + \LzdM{A\Phi}^2
  + ( A\Phi',\Phi )_{L^2(\dM)} + ( A\Phi,\Phi' )_{L^2(\dM)} \\
  &= \LzdM{\Phi'}^2 + \LzdM{A\Phi}^2
  +  (A\Phi,\Phi )_{L^2(\dM)}' .
\end{align*}
Here we used that $A$ does not depend on $t$ and that it is formally selfadjoint.
Now we integrate this identity with respect to $t\in [0,\infty)$.
Since $\Phi$ vanishes for sufficiently large $t$,
the last term gives a boundary contribution only for $t=0$.
\end{proof}

\begin{lemma}\label{lem:relbnd}
There exist constants $r,C>0$ such that 
\[
  \LzZq{(D-D_0)\Phi} \le C(\rho\,\LzZq{D_0\Phi}+\LzZq{\Phi})
\]
for all $0<\rho <r$ and all $\Phi\in \Cu(Z_{[0,\infty)},E)$ with support
in $Z_{[0,\rho]}$.
\end{lemma}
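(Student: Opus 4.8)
The plan is to localize everything in a collar and use the normal forms of \lref{nf}. I choose $r$ no larger than the collar width in \lref{nf}, so that over $Z_{[0,r)}$ we have $D=\sigma_t(\del/\del t+A+R_t)$ with \eqref{Rt} valid; after shrinking $r$ further we may also assume that $\sigma_t$, $\sigma_0^{-1}$ and the coefficients of the $R_t$ are bounded on $Z_{[0,r]}$. Since $D_0\Phi=\sigma_0(\Phi'+A\Phi)$, so that $\Phi'+A\Phi=\sigma_0^{-1}D_0\Phi$, we obtain over $Z_{[0,r)}$ the pointwise identity
\[
  (D-D_0)\Phi = (\sigma_t-\sigma_0)\sigma_0^{-1}D_0\Phi + \sigma_t R_t\Phi .
\]
The first summand is the harmless one. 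Because $t\mapsto\sigma_t$ is smooth with $\sigma_0=\sigma_t|_{t=0}$ and $\dM$ is compact, there is a constant with $|(\sigma_t-\sigma_0)\sigma_0^{-1}|\le Ct$ on $Z_{[0,r]}$; integrating the slicewise bound $\LzdM{(\sigma_t-\sigma_0)\sigma_0^{-1}D_0\Phi}\le Ct\,\LzdM{D_0\Phi}$ over $t\in[0,\rho]$ and using $t\le\rho$ gives $\LzZq{(\sigma_t-\sigma_0)\sigma_0^{-1}D_0\Phi}\le C\rho\,\LzZq{D_0\Phi}$.

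For the second summand, \eqref{Rt} applied to the slice $\Phi(t,\cdot)$ yields $\LzdM{R_t\Phi}^2\le 2C^2(t^2\LzdM{A\Phi}^2+\LzdM{\Phi}^2)$, so the whole matter reduces to estimating the weighted integral $\int_0^\rho t^2\LzdM{A\Phi}^2\,dt$. The key point is to run the integration by parts from the proof of \lref{intpart} with the extra weight $t^2$: multiplying the slicewise identity $\LzdM{\sigma_0^{-1}D_0\Phi}^2=\LzdM{\Phi'}^2+\LzdM{A\Phi}^2+(A\Phi,\Phi)_{L^2(\dM)}'$ by $t^2$ and integrating over $[0,\rho]$, the term $\int_0^\rho t^2(A\Phi,\Phi)_{L^2(\dM)}'\,dt=[t^2(A\Phi,\Phi)_{L^2(\dM)}]_0^\rho-2\int_0^\rho t\,(A\Phi,\Phi)_{L^2(\dM)}\,dt$ has \emph{both} endpoint contributions vanishing: the one at $t=0$ because of the factor $t^2$, the one at $t=\rho$ because $\Phi(\rho,\cdot)=0$, which is forced by the support assumption on $\Phi$. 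This produces
\[
  \int_0^\rho t^2\LzdM{A\Phi}^2\,dt = \int_0^\rho t^2\LzdM{\sigma_0^{-1}D_0\Phi}^2\,dt - \int_0^\rho t^2\LzdM{\Phi'}^2\,dt + 2\int_0^\rho t\,(A\Phi,\Phi)_{L^2(\dM)}\,dt .
\]
Dropping the nonpositive middle term and absorbing the last one via $2t\,|(A\Phi,\Phi)_{L^2(\dM)}|\le\frac12 t^2\LzdM{A\Phi}^2+2\LzdM{\Phi}^2$, then bounding $\LzdM{\sigma_0^{-1}D_0\Phi}\le C\,\LzdM{D_0\Phi}$ and $t\le\rho$, one gets $\int_0^\rho t^2\LzdM{A\Phi}^2\,dt\le C'(\rho^2\LzZq{D_0\Phi}^2+\LzZq{\Phi}^2)$.

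Putting the pieces back together gives $\LzZq{\sigma_t R_t\Phi}^2\le C''(\rho^2\LzZq{D_0\Phi}^2+\LzZq{\Phi}^2)$, and combining with the estimate for the first summand and taking square roots yields the claim, with $r$ as chosen above. The only real obstacle is this term $\int_0^\rho t^2\LzdM{A\Phi}^2\,dt$: one cannot bound $\LzZq{A\Phi}$ by $\LzZq{D_0\Phi}+\LzZq{\Phi}$ alone, since that would be elliptic regularity up to the boundary face $\{t=0\}$, where $\Phi$ satisfies no boundary condition; what makes the argument work is that \eqref{Rt} supplies exactly the weight $t$ that annihilates the $t=0$ boundary term in the weighted form of \lref{intpart}.
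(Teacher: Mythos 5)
Your proof is correct and follows essentially the same idea as the paper's: reduce via \eqref{Rt} to bounding the weighted integral $\int t^2\|A\Phi\|^2_{L^2(\dM)}\,dt$ and exploit that the weight $t$ kills the unfavorable $t=0$ boundary term in the integration by parts behind \lref{intpart}. The only difference is cosmetic: the paper applies \lref{intpart} \emph{verbatim} to the section $t\Phi$ (which vanishes at $t=0$), obtaining $\|A(t\Phi)\|_{L^2(Z)}\le\|\sigma_0^{-1}D_0(t\Phi)\|_{L^2(Z)}$ outright and then using $D_0(t\Phi)=\sigma_0\Phi+tD_0\Phi$, whereas you carry the $t^2$ weight through the slicewise identity by hand, which costs you an extra Young-inequality/absorption step to deal with the cross term $2\int t\,(A\Phi,\Phi)\,dt$. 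Both routes arrive at the same estimate; the substitution $\Phi\leadsto t\Phi$ is a little slicker but the weighted integration by parts makes the mechanism more transparent, and your closing remark about why a weightless bound on $\|A\Phi\|_{L^2(Z)}$ would amount to unavailable boundary regularity is exactly the right explanation of why the factor $t$ in \eqref{Rt} is essential.
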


\begin{proof}
We have, by \lref{nf},
\begin{align*}
  D-D_0 
  &= \sigma_t\left(\frac{\del}{\del t}+ A + R_t\right)
  - \sigma_0 \left(\frac{\del}{\del t}+A\right) 
  &= (\sigma_t -\sigma_0)\sigma_0^{-1}D_0 + \sigma_t R_t .
\end{align*}
Since $(\sigma_t -\sigma_0)\sigma_0^{-1} = \mathrm{O}(t)$
and $\sigma$ is uniformly bounded, it remains to estimate $R_t\Phi$.
By \eqref{Rt} it suffices to estimate
\[
  t\|A\Phi\|_{L^2(\dM)}=\|A(t\Phi)\|_{L^2(\dM)} .
\]
Now $t\Phi$ is in $C^\infty_c(Z_{[0,\infty)},E)$ and vanishes at $t=0$.
Therefore, by \lref{intpart},
\[
  \|\sigma_0^{-1} D_0(t\Phi) \|_{L^2(Z_{[0,\infty)})}^2
  = \| (t\Phi)' \|_{L^2(Z_{[0,\infty)})}^2
  + \| A(t\Phi) \|_{L^2(Z_{[0,\infty)})}^2 .
\]
Hence
\begin{align*}
  \LzdM{A(t\Phi)}
  &\leq \|\sigma_0^{-1} D_0(t\Phi) \|_{L^2(Z_{[0,\infty)})} \\
  &\leq C' \big( \|tD_0\Phi \|_{L^2(Z_{[0,\infty)})} + \|\Phi \|_{L^2(Z_{[0,\infty)})} \big) .
\end{align*}
The asserted inequality follows.
\end{proof}

Since $\dM$ is compact without boundary, the minimal and maximal extensions
of the operator $A$ coincide.
Hence $A$ is essentially selfadjoint in the Hilbert space $L^2(\dM,E)$.
For any $s\in\R$, the positive operator $(\id + A^2)^{s/2}$ is defined by functional calculus. 

\begin{definition}\label{def:Hs}
For any $s\in\R$, we define the {\em Sobolev $H^s$-norm}\index{Sobolev norm}
on $\Cu(\dM,E)$ by
\[
  \HsdM{\phi}^2 := \LzdM{(\id + A^2)^{s/2}\phi}^2 .\index{1normHs@$\HsdM{\cdot}$}
\]
We denote by $H^{s}(\dM,E)$\index{1HsdME@$H^{s}(\dM,E)$}
the completion of $\Cu(\dM,E)$ with respect to this norm.
\end{definition}

By standard elliptic estimates, this norm is equivalent to the Sobolev norms
defined in \eqref{eq:DefSobo} if $s\in\N$.
It is a nice feature of \dref{def:Hs} that it makes sense for all $s\in\R$.
The values $s=1/2$ and $s=-1/2$ will be of particular importance.
Let 
\[
  -\infty \leftarrow \cdots \leq \lambda_{-2} \leq \lambda_{-1}
  \leq \lambda_0 \leq \lambda_1 \leq \lambda_2 \leq \cdots \to +\infty
\] 
be the spectrum of $A$ with each eigenvalue being repeated
according to its (finite) multiplicity,
and fix a corresponding $L^2$-orthonormal basis $\phi_j$, $j\in\Z$, of eigensections of $A$.
Then, for $\phi = \sum_{j=-\infty}^\infty a_j\phi_j$, one has
\[
  \HsdM{\phi}^2 = \sum_{j=-\infty}^\infty |a_j|^2 (1+\lambda_j^2)^{s}.
\]

\begin{facts}
The following facts are basic in our considerations:
\begin{enumerate}[(i)]
\item 
$H^0(\dM,E) = L^2(\dM,E)$;
\item
if $s<t$, then $\HsdM{\phi} \le \|\phi\|_{H^t(\dM)}$ and,
by the Rellich embedding theorem,
the induced embedding $H^t(\dM,E) \hookrightarrow H^s(\dM,E)$ is compact;
\item
by the Sobolev embedding theorem,
$\bigcap_{s\in\R} H^s(\dM,E) = \Cu(\dM,E)$;
\item\label{perfpair}
for all $s\in\R$, the $L^2$-product $(\phi,\psi) = \int_\dM\<\phi,\psi\> d\nu$,
where $\phi,\psi\in\Cu(\dM,E)$, extends to a perfect pairing 
\[
  H^s(\dM,E) \times H^{-s}(\dM,E) \to \C
\]
and therefore renders $H^s(\dM,E)$ and $H^{-s}(\dM,E)$ as pairwise dual;
\item
for all $k\geq1$, the restriction map $\mathcal R:\Cuc(M,E) \to \Cu(\dM,E)$,
$\mathcal R(\Phi):=\Phi|_{\dM}$, 
extends by the \emph{trace theorem}\index{trace theorem} \cite[Thm.~5.22]{Ad}
to a continuous linear map
\[
  \mathcal R: H^{k}_{\loc}(M,E) \to H^{k-1/2}(\dM,E) .
\]
\end{enumerate}
\end{facts}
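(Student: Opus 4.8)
The plan is to pass everything through the eigenexpansion $\phi=\sum_j a_j\phi_j$, under which, by the displayed identity $\HsdM{\phi}^2=\sum_j|a_j|^2(1+\lambda_j^2)^s$, the space $H^s(\dM,E)$ becomes isometric to the weighted sequence space of those $(a_j)$ with $\sum_j|a_j|^2(1+\lambda_j^2)^s<\infty$. Fact (i) is then immediate: for $s=0$ the operator $(\id+A^2)^{s/2}$ is the identity, the $H^0$-norm is the $L^2$-norm, and $H^0(\dM,E)$ is by \dref{def:Hs} the $L^2$-completion of $\Cu(\dM,E)$, i.e.\ $L^2(\dM,E)$. The first half of Fact (ii), the inequality $\HsdM{\phi}\le\|\phi\|_{H^t(\dM)}$ for $s<t$ and the resulting continuous inclusion $H^t(\dM,E)\hookrightarrow H^s(\dM,E)$, follows term by term from $1+\lambda_j^2\ge1$. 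For Fact (iii), smooth sections on the compact manifold $\dM$ have bounded derivatives of all orders and so lie in every $H^s(\dM,E)$; conversely an element of $\bigcap_s H^s(\dM,E)$ lies in $H^k(\dM,E)$ for every integer $k$, hence in $C^\ell(\dM,E)$ for every $\ell$ by the classical Sobolev embedding theorem, hence is smooth.

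For the compactness of $H^t(\dM,E)\hookrightarrow H^s(\dM,E)$ when $s<t$, I would argue directly on the sequence space: given $(\phi_n)$ bounded in $H^t$, a diagonal argument produces a subsequence whose coefficients converge for each $j$ to limits $a_j$ with $\sum_j|a_j|^2(1+\lambda_j^2)^t<\infty$; since $(1+\lambda_j^2)^{s-t}\to0$ as $|j|\to\infty$, the high-frequency tails $\sum_{|j|\ge N}|a_{n,j}-a_j|^2(1+\lambda_j^2)^s$ are uniformly small, so the subsequence converges in $H^s(\dM,E)$. For Fact (iv), Cauchy--Schwarz on the coefficients gives $\bigl|\sum_j a_j\overline{b_j}\bigr|\le\HsdM{\phi}\,\|\psi\|_{H^{-s}(\dM)}$, so the $L^2$-product extends to a bounded pairing $H^s(\dM,E)\times H^{-s}(\dM,E)\to\C$; it is nondegenerate in the first slot (test $\phi$ against $\phi_j$), and a bounded functional $\ell$ on $H^{-s}(\dM,E)$ is represented by $\phi=\sum_j\ell(\phi_j)\phi_j\in H^s(\dM,E)$, where membership is checked by evaluating $\ell$ on the $H^{-s}$-normalized truncations $\sum_{|j|\le N}\ell(\phi_j)(1+\lambda_j^2)^s\phi_j$.

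Fact (v) is where the actual content lies. I would first upgrade the equivalence of $\HsdM{\cdot}$ with the classical Sobolev norm from $s\in\N$ (already quoted) to all real $s$: since $A$ is formally selfadjoint and elliptic of order one on the closed manifold $\dM$, the operator $\id+A^2$ is a positive elliptic operator of order two, so $(\id+A^2)^{s/2}$ is a classical pseudo-differential operator of order $s$ and an isomorphism of the classical Sobolev scale, which identifies $H^s(\dM,E)$ with the classical $H^s$-space with equivalent norm for every $s\in\R$. Granting this, Fact (v) is the classical trace theorem \cite[Thm.~5.22]{Ad}: one picks a collar $Z_{[0,r)}\cong U\subset M$ of $\dM$ as in \lref{adapted}, a finite atlas of $\dM$ with a subordinate partition of unity, and reduces the restriction map locally to the bounded trace $H^k(\text{half-cube})\to H^{k-1/2}(\text{cube})$; patching yields the bounded extension $\mathcal R\colon\Hkloc(M,E)\to H^{k-1/2}(\dM,E)$. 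The one step I expect to be nontrivial is precisely this passage to half-integer (and general real) order in the comparison with classical Sobolev spaces; an alternative to invoking the pseudo-differential calculus is complex interpolation between the integer-order spaces, whose equivalence is in hand. Once the spaces are so identified, the compactness in Fact (ii) may also simply be quoted from the classical Rellich theorem \cite[Theorem~2.34, p.~55]{Au}.
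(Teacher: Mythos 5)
Your proof is correct and fills in what the paper leaves as standard facts supported only by citations. Working directly in the eigenbasis $\phi=\sum_j a_j\phi_j$, so that $H^s(\dM,E)$ becomes the weighted $\ell^2$-space with weight $(1+\lambda_j^2)^s$, makes (i), the monotonicity in (ii), and the pairing in (iv) essentially one-line computations, and your diagonal/tail argument for compactness in (ii) is a clean elementary substitute for quoting the classical Rellich theorem (which, as stated, applies to classical Sobolev spaces and would otherwise require the norm equivalence first). The Riesz-type representation argument in (iv), evaluating the functional on $H^{-s}$-normalized truncations, is exactly right.

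The place where you add the most value is (v): you correctly identify that the paper's ``equivalence with the classical $H^k$-norm for $s\in\N$'' is not enough to invoke the trace theorem, since the target space $H^{k-1/2}(\dM,E)$ must also be identified with the classical fractional Sobolev space. The paper implicitly uses this but never says so. Your two proposed fixes are both standard and valid: $(\id+A^2)^{s/2}$ is a classical elliptic pseudodifferential operator of order $s$ by Seeley's theorem (which the paper in fact cites elsewhere, for the spectral projections), giving isomorphisms of the classical Sobolev scale; alternatively, complex interpolation between integer orders does the same job without the pseudodifferential machinery. Either route, combined with a collar, a partition of unity, and the local half-space trace estimate, gives the bounded extension $\mathcal R:\Hkloc(M,E)\to H^{k-1/2}(\dM,E)$. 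I have no corrections; the only thing I would add is the remark that your diagonal argument in (ii) relies on $|\lambda_j|\to\infty$ as $|j|\to\infty$, which holds because $A$ is elliptic of positive order on the closed manifold $\dM$ and hence has discrete spectrum with finite multiplicities accumulating only at $\pm\infty$.
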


For $I \subset \R$, let $Q_I$\index{1QI@$Q_I$} be the spectral projection of the selfadjoint operator $A$,
\[
 Q_I : \sum_{j=-\infty}^\infty a_j\phi_j \mapsto \sum_{\lambda_j\in I} a_j\phi_j.
\]
Then $Q_I$ is an orthogonal projection in $L^2(\dM,E)$ and 
\[
  Q_I(H^s(\dM,E)) \subset H^s(\dM,E)
\]
for all $s\in\R$.
In particular, $Q_I(\Cu(\dM,E))\subset \Cu(\dM,E)$.
We abbreviate \index{1HAsI@$H^s_I(A)$}
\[
  H^s_I(A) := Q_I(H^s(\dM,E)).
\]
Fix $\Lambda\in\R$ and define \index{1normHAcheck@$\HcdM{\cdot}$}
\begin{equation}
  \HcdM{\phi}^2 := 
  \HhdM{Q_{(-\infty,\Lambda]}\phi}^2 + \HmhdM{Q_{(\Lambda,\infty)}\phi}^2.
\label{eq:defHc}
\end{equation}
This norm is, up to equivalence, independent of the choice of $\Lambda$.
Namely, if $\Lambda_1 < \Lambda_2$,
then the corresponding $\check{H}$-norms coincide
on the $L^2$-orthogonal complement of $Q_{[\Lambda_1,\Lambda_2]}(\Cu(\dM,E))$.
Now, the latter space is finite-dimensional so that any two norms on it are equivalent,
hence the claim.

The completion of $\Cu(\dM,E)$ with respect to $\HcdM{\cdot}$
will be denoted $\check{H}(A)$.\index{1HAcheck@$\check{H}(A)$}
In other words,
\begin{equation}\label{eq:Hcheck}
  \check{H}(A) =
  H^{1/2}_{(-\infty,\Lambda]}(A) \oplus H^{-1/2}_{(\Lambda,\infty)}(A).
\end{equation}
Similarly, we set \index{1normHAroof@$\HddM{\cdot}$}
\begin{equation}
  \HddM{\phi}^2 := 
  \HmhdM{Q_{(-\infty,\Lambda]}\phi}^2 + \HhdM{Q_{(\Lambda,\infty)}\phi}^2
\label{eq:defHd}
\end{equation}
and \index{1HAdach@$\hat{H}(A)$}
\begin{equation}\label{eq:Hdach}
  \hat{H}(A) :=
  H^{-1/2}_{(-\infty,\Lambda]}(A) \oplus H^{1/2}_{(\Lambda,\infty)}(A).
\end{equation}

While $H^s(\dM,E)$ is independent of $A$, the definitions
of $H^s_I(A)$, $\check{H}(A)$, and $\hat{H}(A)$ do depend on $A$.
We have 
\begin{equation}\label{chckhat}
  \hat H(A) = \check H(-A) .
\end{equation}
The $L^2$-product on $\Cu(\dM,E)$ uniquely extends to a perfect pairing 
\begin{equation}\label{omega}
\check H(A) \times \hat H(A) \to \C .
\end{equation}
Hence $\hat H(A)$ is canonically isomorphic to the dual space of $\check H(A)$
and conversely.
Let
\begin{equation}\label{hfinite}
  \Hfin(A) := \Big\{\phi = \sum_{j=-\infty}^\infty a_j\phi_j\,\Big|\, 
  a_j=0 \mbox{ for all but finitely many }j\Big\}
\end{equation}
be the space of ``finite Fourier series''.\index{1HAfinite@$\Hfin(A)$}
We have the inclusions
\begin{multline*}
  \Hfin(A) \subset \Cu(\dM,E) \\ \subset H^{1/2}(\dM,E) \subset
  \left\{ \begin{array}{c} L^2(\dM,E) \\ \hat H (A) \\ \check H (A)\end{array}\right\}
  \subset H^{-1/2}(\dM,E) .
\end{multline*}
The space $\Hfin(A)$ is dense in any of these spaces.
Sections $\Phi$ in 
\[
  L^2(Z_{[0,\infty)},E) = L^2([0,\infty),L^2(\dM,E))
\]
can be developed in the basis $(\phi_j)_j$ with coefficients depending on $t$,
\[
  \Phi(t,x) = \sum_{j=-\infty}^\infty a_j(t)\phi_j(x) .
\]
We fix a constant $r>0$
and a smooth cut-off function $\chi: \R \to \R$\index{1Chi@$\chi$} with 
\begin{equation}\label{chi}
  \text{$\chi(t)=1$ for all $t\le r/3$
  and $\chi(t)=0$ for all $t\ge 2r/3$.}
\end{equation}
We define, for $\phi\in \Hfin(A)$,
a smooth section $\mathcal E\phi$ of $E$ over $Z_{[0,\infty)}$ by\index{1E@$\mathcal E$}
\begin{equation}\label{exfi}
  (\mathcal E\phi)(t) := \chi(t)\cdot\exp(-t|A|)\phi .
\end{equation}
In other words, for $\phi(x) = \sum_{j=-\infty}^\infty a_j\phi_j(x)$ we have 
\[
  (\mathcal E\phi)(t,x)
  = \chi(t)\sum_{j=-\infty}^\infty a_j\cdot\exp(-t|\lambda_j|)\cdot\phi_j(x) .
\]
We obtain a linear map $\mathcal E:\Hfin(A) \to \Cuc(Z_{[0,\infty)},E)$.

\begin{lemma}\label{extfin}
There is a constant $C=C(\chi,A)>0$ such that
\[
  \|\mathcal E\phi \|_{D_0}^2 \le C \| \phi \|_{\check H(A)}^2
\]
for all $\phi\in \Hfin(A)$.
\end{lemma}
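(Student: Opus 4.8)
The plan is to compute $\|\mathcal E\phi\|_{D_0}^2 = \|\mathcal E\phi\|_{L^2(Z_{[0,\infty)})}^2 + \|D_0\mathcal E\phi\|_{L^2(Z_{[0,\infty)})}^2$ directly, using the Fourier expansion of $\phi$ in the eigenbasis $\phi_j$. Since $\mathcal E$ takes values in $\Cuc(Z_{[0,\infty)},E)$ and $\sigma_0$ is uniformly bounded with uniformly bounded inverse, it suffices to estimate $\|\mathcal E\phi\|_{L^2}^2$ and $\|\sigma_0^{-1}D_0\mathcal E\phi\|_{L^2}^2 = \|(\del/\del t + A)\mathcal E\phi\|_{L^2}^2$. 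For a single Fourier mode $\phi = a_j\phi_j$ the section is $(\mathcal E\phi)(t) = \chi(t)a_j e^{-t|\lambda_j|}\phi_j$, and the relevant integrals $\int_0^\infty \chi(t)^2 e^{-2t|\lambda_j|}\,dt$ and the corresponding derivative integrals can be computed or estimated explicitly in terms of $|\lambda_j|$. Summing over $j$ (the modes are $L^2$-orthogonal on $\dM$ and the $t$-integrals decouple) will produce a bound of the form $\sum_j |a_j|^2 \cdot w_j$ with a weight $w_j$ comparable to $(1+\lambda_j^2)^{\pm 1/2}$ according to the sign of $\lambda_j$, which is exactly $\|\phi\|_{\check H(A)}^2$.

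First I would reduce to the scalar computation: fix $j$, write $\mu := |\lambda_j|$, and analyze $f(t) := \chi(t)e^{-t\mu}$ on $[0,\infty)$. One has $\|f\|_{L^2}^2 = \int_0^\infty \chi^2 e^{-2t\mu}\,dt \le \int_0^{2r/3} e^{-2t\mu}\,dt \le \min(2r/3, 1/(2\mu))$, which is bounded by $C(1+\mu)^{-1}$. For the derivative term, $f'(t) = \chi'(t)e^{-t\mu} - \mu\chi(t)e^{-t\mu}$, and $(\del/\del t + A)(\mathcal E\phi)$ applied to the mode $a_j\phi_j$ gives $a_j(f'(t) + \mu \operatorname{sgn}(\lambda_j)\, f(t))\phi_j$ — here I use $A\phi_j = \lambda_j\phi_j$, so the ``$+A$'' contributes $+\lambda_j f = \pm\mu f$.

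The key observation is the sign dichotomy. If $\lambda_j \le 0$, then $f' + \lambda_j f = \chi' e^{-t\mu} - \mu\chi e^{-t\mu} - \mu\chi e^{-t\mu}$... more carefully: when $\lambda_j < 0$ the exponential decay and the spectral shift reinforce, whereas when $\lambda_j > 0$ the leading $-\mu\chi e^{-t\mu}$ from $f'$ cancels against $+\mu\chi e^{-t\mu}$ from $\lambda_j f$, leaving only $\chi' e^{-t\mu}$, whose $L^2$-norm is bounded by a constant depending on $\chi$ (and on $r$) but \emph{independent of $\mu$}. Thus for $\lambda_j \ge 0$ the mode contributes $\le C|a_j|^2$ to $\|D_0\mathcal E\phi\|^2$, matching the $H^{-1/2}$ weight $(1+\lambda_j^2)^{-1/2}$ up to the constant $(1+\lambda_j^2)^{1/2} \cdot (1+\lambda_j^2)^{-1/2}$; while for $\lambda_j < 0$ one gets $\le C\mu |a_j|^2 \le C(1+\lambda_j^2)^{1/2}|a_j|^2$, matching the $H^{1/2}$ weight. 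Combining with the $L^2$-part (which contributes the harmless $(1+\mu)^{-1}$ weight to both halves) and summing over $j$ yields $\|\mathcal E\phi\|_{D_0}^2 \le C\big(\sum_{\lambda_j \le \Lambda}(1+\lambda_j^2)^{1/2}|a_j|^2 + \sum_{\lambda_j > \Lambda}(1+\lambda_j^2)^{-1/2}|a_j|^2\big) = C\|\phi\|_{\check H(A)}^2$, where the split at an arbitrary $\Lambda$ versus at $0$ only affects finitely many modes and hence only the constant.

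The main obstacle — though it is more bookkeeping than genuine difficulty — is handling the finitely many modes with $\lambda_j$ near zero (or near the cutoff $\Lambda$) uniformly: for these the cancellation argument above still applies but the ``gain'' is $O(1)$ rather than genuinely improving, so one must be careful that the $L^2$-part and the derivative-part are each separately controlled by $\|\phi\|_{\check H(A)}^2$ without accidentally needing a full $H^{1/2}$ norm on the positive modes. This is resolved by noting that on the positive spectral subspace $(1+\lambda_j^2)^{-1/2} \ge c(1+\lambda_j^2)^{-1} \cdot$... rather, simply that $1 \le (1+\lambda_j^2)^{1/2}$ always, so the $L^2$ weight $(1+\mu)^{-1} \le 1$ is dominated by \emph{both} the $H^{1/2}$ and (since $(1+\mu)^{-1} \le (1+\mu)^{-1/2}$ when... wait, $(1+\mu)^{-1}\le(1+\mu)^{-1/2}$ holds for $\mu\ge 0$) the $H^{-1/2}$ weights. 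The cross term $2\Re(f', \lambda_j f)$ in expanding $\|f' + \lambda_j f\|^2$ is handled by $\lambda_j \int (\chi^2)' e^{-2t\mu}\,dt = \lambda_j[\chi^2 e^{-2t\mu}]_0^\infty + 2\mu\lambda_j\int\chi^2 e^{-2t\mu}$, where the boundary term is $-\lambda_j$ and the integral term is bounded as before; this is where the $-(A\Phi_0,\Phi_0)$ type boundary contribution of \lref{intpart} is implicitly reappearing, and one should present it cleanly either by invoking \lref{intpart} applied to $\mathcal E\phi$ directly or by the explicit scalar integration by parts.
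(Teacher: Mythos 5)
Your approach mirrors the paper's: both expand $\phi$ in the $A$-eigenbasis, split the spectrum at $0$, and exploit the sign dichotomy by which $\del/\del t + A$ annihilates $\exp(-t|A|)$ on the positive spectral half, so that only the $\chi'$-term survives in $D_0\mathcal{E}Q_{(0,\infty)}\phi$, while on the nonpositive half it produces the extra zero-order term $2A\chi$. Your integration-by-parts treatment of the cross term is merely a different bookkeeping of the nonpositive-half estimate; the paper avoids it by computing $D_0\mathcal E Q_{(-\infty,0]}\phi=\sigma_0(2\chi A+\chi')\exp(tA)Q_{(-\infty,0]}\phi$ directly.

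There is, however, a genuine gap on the positive spectral half. You assert that $\|\chi'e^{-t\mu}\|_{L^2([0,\infty))}$ is ``bounded by a constant depending on $\chi$ (and on $r$) but independent of $\mu$'', and conclude that the $j$-th positive mode contributes $\le C|a_j|^2$ to $\|D_0\mathcal{E}\phi\|^2$. This uniform bound is true, but it is too weak to close the argument: summing it over the positive modes only yields $C\sum_{\lambda_j>0}|a_j|^2 = C\,\LzdM{Q_{(0,\infty)}\phi}^2$, which is \emph{not} dominated by $\HcdM{\phi}^2$, since the $\check H$-norm carries the \emph{decaying} $H^{-1/2}$-weight $(1+\lambda_j^2)^{-1/2}$ on the positive spectral part. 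To finish you need the $\mu$-dependent improvement
\[
  \int_0^\infty |\chi'(t)|^2\, e^{-2t\mu}\,dt
  \;\le\; \|\chi'\|_\infty^2\int_0^\infty e^{-2t\mu}\,dt
  \;=\; \frac{\|\chi'\|_\infty^2}{2\mu}
\]
(or the even sharper bound $\le C\,e^{-2\mu r/3}$, using that $\chi'$ vanishes on $[0,r/3]$). Together with the spectral gap $\lambda_j\ge\eps>0$ this yields $\le C(1+\lambda_j^2)^{-1/2}|a_j|^2$ per positive mode, which is exactly what the paper's estimate of $\|\exp(-tA)Q_{(0,\infty)}\phi\|_{L^2(Z_{[0,r)})}^2$ by $\HmhdM{Q_{(0,\infty)}\phi}^2$ produces. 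Once this refinement is inserted, your argument closes correctly.
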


\begin{proof}
Without loss of generality we choose $\Lambda=0$ in \eqref{eq:Hcheck},
the definition of $\check H(A)$.
Since the eigenspaces of $A$ are pairwise $L^2$-orthogonal,
it suffices to consider the Fourier coefficients of $\mathcal E\phi$ separately.
We see that 
\[
  D_0\exp(-t|A|)Q_{(0,\infty)}\phi
  = \sigma_0 \big( \frac{\del}{\del t} + A \big)\exp(-tA)Q_{(0,\infty)}\phi=0,
\]
and hence that
\[
  D_0(\mathcal EQ_{(0,\infty)}\phi) = \chi'\sigma_0\exp(-tA)Q_{(0,\infty)}\phi .
\]
It follows that the graph norm $\|\mathcal EQ_{(0,\infty)}\phi\|_{D_0}$
can be bounded from above by $\|\exp(-tA)Q_{(0,\infty)}\phi\|_{L^2(Z_{[0,r)})}$.
Now there is some $\eps>0$ such that $|\lambda|\ge\eps>0$
for all nonzero eigenvalues $\lambda$ of $A$.
Hence, for $\phi = \sum_{j} a_j\phi_j$, we have 
\begin{align*}
  \|\exp(-t|A|)Q_{(0,\infty)}\phi\|_{L^2(Z_{[0,r)})}^2
  &=
  \sum_{\lambda_j\geq\eps} |a_j|^2 \int_0^r e^{-2t\lambda_j} \, dt\\
  &=
  \frac12\sum_{\lambda_j\geq\eps} |a_j|^2 \cdot \lambda_j^{-1}(1-e^{-2r\lambda_j}) \\
  &\le
  \frac12\sum_{\lambda_j\geq\eps} |a_j|^2\cdot \lambda_j^{-1}\\
  &\le
  \frac{(1+\eps^{-2})^{1/2}}{2}\sum_{\lambda_j\geq\eps} |a_j|^2 \cdot (1+\lambda_j^2)^{-1/2}\\
  &=
  \frac{(1+\eps^{-2})^{1/2}}{2} \|Q_{(0,\infty)}\phi\|_{H^{-1/2}(\dM,E)}^2\\
  &=
  \frac{(1+\eps^{-2})^{1/2}}{2} \|Q_{(0,\infty)}\phi\|_{\check H(A)}^2 .
\end{align*}
The claimed inequality for $\|\mathcal EQ_{(0,\infty)}\phi \|_{D_0}^2$ follows.
The estimate for $\|\mathcal EQ_{(-\infty,0]}\phi \|_{D_0}^2$
follows from similar considerations, where now
\[
  (D_0\exp(-t|A|)Q_{(-\infty,0]}\phi)(t)
  = 2\sigma_0A\exp(-t|A|)Q_{(-\infty,0]}\phi
\]
and hence 
\[
  (D_0\mathcal EQ_{(-\infty,0]}\phi)(t)
  = \sigma_0(2\chi A+\chi')\exp(tA)Q_{(-\infty,0]}\phi.
\]
Thus the graph norm $\|\mathcal EQ_{(-\infty,0]}\phi\|_{D_0}$ can be bounded
from above by
\[
  \|(\id+|A|)\exp(-t|A|)Q_{(-\infty,0]}\phi\|_{L^2(Z_{[0,r)})} .
\]
Now we have
{\allowdisplaybreaks 
\begin{align*}
  \|(\id+|A|)&\exp(-t|A|)Q_{(-\infty,0]}\phi\|_{L^2(Z_{[0,r)})}^2 \\
  &=
  \sum_{\lambda_j\leq 0} \int_0^r |a_j|^2
  (1+|\lambda_j|)^2 e^{2t\lambda_j} \, dt \\
  &=
  r\sum_{\lambda_j=0}|a_j|^2
  + \frac12\sum_{\lambda_j\leq-\eps} |a_j|^2 (1+|\lambda_j|)^2 |\lambda_j|^{-1}
  (1-e^{2r\lambda_j}) \\
  &\le
  r\sum_{\lambda_j=0}|a_j|^2
  + \frac12\sum_{\lambda_j\leq-\eps} |a_j|^2 (1+|\lambda_j|)^2 |\lambda_j|^{-1} \\
  &\le
  r\sum_{\lambda_j=0}|a_j|^2
  + C_1(\eps)\sum_{\lambda_j\leq-\eps} |a_j|^2 (1+|\lambda_j|^2)^{1/2} \\
  &\le
  C_2(\eps,r)\sum_{\lambda_j\leq 0} |a_j|^2 (1+|\lambda_j|^2)^{1/2} \\
  &=
  C_2(\eps,r)\|Q_{(-\infty,0]}\phi\|_{H^{1/2}(\dM,E)}^2 \\
  &= C_2(\eps,r)\|Q_{(-\infty,0]}\phi\|_{\check H(A)}^2 .
  \qedhere
\end{align*}
}
\end{proof}

\begin{remark}
With the same methods one can show that $\|\mathcal E\phi\|_{D_0}$
can also be bounded from below by $\|\phi\|_{\check H(A)}$.
\end{remark}

%%%%%%%%%%%%%%%%%%%%%%%%%%%%%%%%%%%%%%%%%%%%%%

\section{The maximal domain}
\label{secMaxDom}

%%%%%%%%%%%%%%%%%%%%%%%%%%%%%%%%%%%%%%%%%%%%%%

Throughout this Section, assume the Standard Setup~\ref{stase},
identify a neighborhood $U$ of the boundary $\dM$ with a cylinder $Z_{[0,r)}$
via an adapted diffeomorphism as in \lref{adapted},
and fix a normal form for $D$ and $D^*$ as in \lref{nf}.
Assume furthermore that $D$ and $D^*$ are complete.

%%%%%%%%%%%%%%%%%%%%%%%%%%%%%%%%%%%%%%%%%%%%%%

\subsection{Regularity properties of the maximal domain}
\label{susemaxdom}

%%%%%%%%%%%%%%%%%%%%%%%%%%%%%%%%%%%%%%%%%%%%%%
For some of our assertions concerning estimates over the cylinder $Z_{[0,r)}$,
it will be convenient to consider the operator $\sigma^{-1}D$ instead of $D$.
Its formal adjoint is given by
\begin{equation}\label{sigd}
 (\sigma^{-1}D)^* = - \left( \frac{\partial}{\partial t} - A - R_t^* \right)
\end{equation}
so that we can let $-A$ take over the role of $\tilde A$
in the normal form of $\sigma^{-1}D$ as in \lref{nf}.
Then we have $\check H(-A)=\hat H(A)$.

\begin{lemma}\label{extfin2}
For $\phi\in \Hfin(A)$, the section $\mathcal E\phi$ of $E$ over $Z_{[0,r)}$
belongs to the maximal domain of $D$.
Moreover, there is a constant $C>0$ such that
\[
  \|\mathcal E\phi \|_D 
  \le C \|\phi\|_{\check H(A)} 
  \quad\text{and}\quad
  \|\mathcal E\phi \|_{(\sigma ^{-1}D)^*}
  \le C \|\phi\|_{\hat H(A)} .
\]
\end{lemma}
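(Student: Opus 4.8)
The plan is to deduce both estimates from the corresponding bounds for the model operator contained in \lref{extfin}, controlling the differences $D-D_0$ and $(\sigma^{-1}D)^*-(\sigma_0^{-1}D_0)^*$ by the relative boundedness estimate of \lref{lem:relbnd}. Since the cut-off $\chi$ in \eqref{chi} vanishes for $t\ge 2r/3$, the section $\mathcal E\phi$ is smooth with support contained in $Z_{[0,2r/3]}$; extended by zero it lies in $\Cuc(M,E)$, hence in $\dom(D_{\max})$, and $D_{\max}\mathcal E\phi$ equals the classically computed $D\mathcal E\phi$ (pair $\mathcal E\phi$ against $\Xi\in\Cucc(M,F)$ and apply Green's formula \lref{lem:Green}, whose boundary term vanishes since $\Xi$ is supported in the interior). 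This settles the membership claim. After shrinking $r$ we may assume that $2r/3$ is below the collar width and below the threshold $r$ of \lref{lem:relbnd}, and we use that lemma with the fixed value $\rho:=2r/3$.

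For the first estimate I would write $D\mathcal E\phi=D_0\mathcal E\phi+(D-D_0)\mathcal E\phi$ and apply \lref{lem:relbnd} to $\Phi=\mathcal E\phi$, which gives $\|(D-D_0)\mathcal E\phi\|_{L^2}\le C(\rho\|D_0\mathcal E\phi\|_{L^2}+\|\mathcal E\phi\|_{L^2})$; hence $\|D\mathcal E\phi\|_{L^2}\le C'(\|D_0\mathcal E\phi\|_{L^2}+\|\mathcal E\phi\|_{L^2})$ and therefore $\|\mathcal E\phi\|_D\le C''\|\mathcal E\phi\|_{D_0}$. Combined with \lref{extfin} this yields $\|\mathcal E\phi\|_D\le C\|\phi\|_{\check H(A)}$.

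For the second estimate the key observations are that $-A$ is again a formally selfadjoint elliptic operator on $E|_{\dM}$, that $|{-A}|=|A|$ so the extension operator $\mathcal E$ of \eqref{exfi} is unchanged when $A$ is replaced by $-A$, and that $\check H(-A)=\hat H(A)$ by \eqref{chckhat}. Applying \lref{extfin} with $-A$ in place of $A$, i.e.\ to the model operator $\sigma_0(\partial/\partial t-A)$, gives $\|\mathcal E\phi\|_{\sigma_0(\partial/\partial t-A)}\le C\|\phi\|_{\hat H(A)}$; since the boundary is compact $\sigma_0$ has uniformly bounded inverse, and $(\sigma_0^{-1}D_0)^*\mathcal E\phi=-\sigma_0^{-1}\bigl(\sigma_0(\partial/\partial t-A)\mathcal E\phi\bigr)$ by \eqref{d0ad}, so $\|\mathcal E\phi\|_{(\sigma_0^{-1}D_0)^*}\le C\|\phi\|_{\hat H(A)}$. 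By \eqref{sigd} one has $(\sigma^{-1}D)^*\mathcal E\phi=(\sigma_0^{-1}D_0)^*\mathcal E\phi+R_t^{*}\mathcal E\phi$, so it remains to bound $\|R_t^{*}\mathcal E\phi\|_{L^2}$; I would do this by rerunning the proof of \lref{lem:relbnd}: $R_t^{*}$ is again a family of operators of order at most one with coefficients smooth in $t$ and with $R_0^{*}$ of order zero, so the analogue $\|R_t^{*}\Phi\|_{L^2(\dM)}\le C(t\|A\Phi\|_{L^2(\dM)}+\|\Phi\|_{L^2(\dM)})$ of \eqref{Rt} holds, and rewriting $t\|A\Phi\|_{L^2(\dM)}=\|A(t\Phi)\|_{L^2(\dM)}$ and estimating by \lref{intpart} applied with $-A$ in place of $A$ (so that the vanishing of $t\Phi$ at $t=0$ kills the boundary term) gives $\|R_t^{*}\mathcal E\phi\|_{L^2}\le C(\rho\|(\sigma_0^{-1}D_0)^*\mathcal E\phi\|_{L^2}+\|\mathcal E\phi\|_{L^2})$. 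Assembling, $\|\mathcal E\phi\|_{(\sigma^{-1}D)^*}\le C\|\mathcal E\phi\|_{(\sigma_0^{-1}D_0)^*}\le C\|\phi\|_{\hat H(A)}$.

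The first estimate is thus a routine combination of two earlier lemmas. The real work is the second one, but the obstacle there is bookkeeping rather than a new idea: one must check that \lref{extfin}, \lref{intpart}, and the argument of \lref{lem:relbnd} all go through verbatim under the substitutions of $-A$ for $A$ and of $R_t^{*}$ for $R_t$ --- in particular that $R_t^{*}$ inherits from $R_t$ the structural properties used in the proof of \lref{nf} (order at most one, coefficients smooth in $t$, order zero at $t=0$), so that the analogue of \eqref{Rt} is available. Granted that, everything needed is already in place.
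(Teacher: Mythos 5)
Your proof is correct. For the membership claim and the final appeal to \lref{extfin} your argument matches the paper's; the interesting divergence is in how you control the discrepancy between $D$ and the model operator $D_0$. You cite \lref{lem:relbnd} to obtain $\|(D-D_0)\mathcal E\phi\|_{L^2}\le C(\rho\|D_0\mathcal E\phi\|_{L^2}+\|\mathcal E\phi\|_{L^2})$ and hence $\|\mathcal E\phi\|_D\le C''\|\mathcal E\phi\|_{D_0}$, while the paper instead uses \eqref{Rt} together with a fresh explicit computation: expanding $\mathcal E\phi$ in eigensections $\phi_j$, substituting $\tilde t=t|\lambda_j|$, and summing to get $\|R_t(\mathcal E\phi)\|_{L^2}^2\le C\|\phi\|_{H^{-1/2}(\dM)}^2\le C\|\phi\|_{\check H(A)}^2$ directly. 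Both are valid; your route is a bit more economical because it reuses \lref{lem:relbnd} (which already encapsulates the interplay of \eqref{Rt} and \lref{intpart}) rather than redoing a parallel eigenvalue calculation. For the second estimate the paper is terse --- it merely notes that $-A$ is adapted to $(\sigma^{-1}D)^*$ and $\check H(-A)=\hat H(A)$ --- and your write-up supplies exactly the bookkeeping it implicitly asks of the reader: that $|{-A}|=|A|$ leaves $\mathcal E$ unchanged, that $R_t^*$ inherits the structural properties (order $\le 1$, smooth $t$-dependence, order $0$ at $t=0$) so the analogue of \eqref{Rt} holds, and that the boundary term in \lref{intpart} still drops out for $t\Phi$. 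One small caveat you correctly flag but should keep in mind: the $r$ in \lref{lem:relbnd} is not a priori the collar width $r$, so "shrinking $r$" really means shrinking the cutoff parameter in \eqref{chi} so that $\supp\mathcal E\phi\subset Z_{[0,\rho]}$ with $\rho$ below the threshold of \lref{lem:relbnd}; this changes $\mathcal E$ and hence the constant in \lref{extfin}, but since one only needs \emph{some} constant $C$, the conclusion is unaffected.
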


\begin{proof}
The section  $\mathcal E\phi$ belongs to the maximal domain of $D$
because $\mathcal E\phi \in \Cuc(Z_{[0,r)},E) \subset \Cuc(M,E) \subset \dom(\Dmax)$.
The first estimate has been shown in \lref{extfin}
with the model operator $D_0$ instead of $D$.
By the definition of $D_0$, we have
\begin{align*}
  \|D(\mathcal E\phi)\|_{L^2(M)}
  &=
  \|D(\mathcal E\phi)\|_{L^2(Z_{[0,r)})}\\
  &\le
  C_1\cdot \|\sigma_0\sigma_t^{-1}D(\mathcal E\phi)\|_{L^2(Z_{[0,r)})}\\
  &=
  C_1\cdot \|(D_0+\sigma_0 R_t)(\mathcal E\phi)\|_{L^2(Z_{[0,r)})}\\
  &\le
  C_1\cdot \|D_0(\mathcal E\phi)\|_{L^2(Z_{[0,r)})} +
  C_2\cdot \|R_t(\mathcal E\phi)\|_{L^2(Z_{[0,r)})} \\
  &\le
  C_1\cdot \|\mathcal E\phi\|_{D_0} +
  C_2\cdot \|R_t(\mathcal E\phi)\|_{L^2(Z_{[0,r)})} .
\end{align*}
It remains to estimate $\|R_t(\mathcal E\phi)\|_{L^2(Z_{[0,r)})}$.
By \eqref{Rt}, we get
\begin{align*}
  \|R_t&(\mathcal E\phi)\|_{L^2(Z_{[0,r)})}^2 \\
  &\le
  C_3 \int_0^{r} \left( \|tA(\mathcal E\phi) \|_{L^2(\dM)}^2
  +  \|\mathcal E\phi \|_{L^2(\dM)}^2 \right) dt\\
  &=
  C_3 \int_0^{r} \chi(t)^2 \left( \|t A \exp(-t|A|)\phi \|_{L^2(\dM)}^2
  +  \|\exp(-t|A|)\phi \|_{L^2(\dM)}^2 \right) dt .
\end{align*}
If $A\phi_j = \lambda_j\phi_j$, then we compute,
substituting $\tilde{t}=t\cdot|\lambda_j|$,
{
\allowdisplaybreaks
\begin{align*}
  \int_0^{r} \chi(t)^2 &\left( \|t A \exp(-t|A|)\phi_j \|_{L^2(\dM)}^2
  +  \|\exp(-t|A|)\phi_j \|_{L^2(\dM)}^2 \right) dt \\
  &\le
  \int_0^r \exp(-2t|\lambda_j|)(t^2|\lambda_j|^2+1)\|\phi_j\|_{L^2(\dM)}^2 \, dt \\
  &=
  \int_0^{r|\lambda_j|} \exp(-2\tilde{t})(\tilde{t}^2+1)\|\phi_j\|_{L^2(\dM)}^2 \,
  \frac{d\tilde{t}}{|\lambda_j|} \\
  &\le
  \frac{\|\phi_j\|_{L^2(\dM)}^2}{|\lambda_j|}\int_0^\infty \exp(-2\tilde{t})
  (\tilde{t}^2+1)\, d\tilde{t} \\
  &=
  \frac34 \cdot\||\lambda_j|^{-1/2}\phi_j\|_{L^2(\dM)}^2 .
\end{align*}
}
Expanding $\phi$ in an orthonormal eigenbasis for $A$, this shows
\begin{multline*}
  \int_0^{r} \chi(t)^2 \left( \|t A \exp(-t|A|)\phi \|_{L^2(\dM)}^2
  +  \|\exp(-t|A|)\phi \|_{L^2(\dM)}^2 \right) dt \\
  \le
  C_4 \|\phi\|_{H^{-1/2}(\dM)}^2 \le C_4 \|\phi\|_{\check H(A)}^2 .
\end{multline*}
This concludes the proof of the first inequality.
For the proof of the second,
we recall from \eqref{sigd} that $-A$ is an adapted boundary operator
for $(\sigma ^{-1}D)^*$ and that $\check H(-A) = \hat H(A)$.
\end{proof}

\begin{lemma}\label{destimate}
There is a constant $C>0$ such that, for all $\Phi\in C^\infty_c(Z_{[0,r)},E)$,
\[
  \| \Phi_{|\dM} \|_{\check H(A)} \le C \| \Phi \|_{D} .
\]
\end{lemma}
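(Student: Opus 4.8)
The plan is to prove the estimate by duality. Write $\Phi_0:=\Phi_{|\dM}$. Since $\Hfin(A)$ is dense in $\hat H(A)$ and the $L^2$-product on $\Cu(\dM,E)$ extends to a perfect pairing $\check H(A)\times\hat H(A)\to\C$ (see \eqref{omega}), we have
\[
  \HcdM{\Phi_0} = \sup\{\, |(\Phi_0,\psi)_{L^2(\dM)}| : \psi\in\Hfin(A),\ \|\psi\|_{\hat H(A)}\le1 \,\} .
\]
So it suffices to find a constant $C$, independent of $\Phi\in\Cuc(Z_{[0,r)},E)$ and of such $\psi$, with $|(\Phi_0,\psi)_{L^2(\dM)}|\le C\,\Dnorm{\Phi}\,\|\psi\|_{\hat H(A)}$.

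The key idea is to test against the extension $\mathcal E\psi$ defined in \eqref{exfi} and to work with the operator $\sigma\m D$ rather than with the constant-coefficient model operator $D_0$. Put $\tilde D:=\sigma\m D$; over the collar we have $\tilde D=\del/\del t+A+R_t$, its formal adjoint is $\tilde D^*=(\sigma\m D)^*=-(\del/\del t-A-R_t^*)$ by \eqref{sigd}, and its principal symbol satisfies $\sigma_{\tilde D}(dt)=\sigma_t\m\sigma_D(dt)=\id$. Recall that $\mathcal E\psi$ is smooth, that $\supp(\mathcal E\psi)\subset Z_{[0,2r/3]}$, and that $(\mathcal E\psi)|_{\dM}=\psi$ since $\chi(0)=1$. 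Green's formula (\lref{lem:Green}) for $\tilde D$ on $Z_{[0,r)}$ — with the interior vector field $\del/\del t$, so that the associated one-form is $dt$ and the induced boundary volume element is $\nu$, applied to $\Phi$ and $\mathcal E\psi$, whose supports meet in a compact set because $\supp(\mathcal E\psi)$ is compact — then gives
\[
  (\Phi_0,\psi)_{L^2(\dM)} = (\Phi,\tilde D^*(\mathcal E\psi))_{L^2(Z_{[0,r)})} - (\tilde D\Phi,\mathcal E\psi)_{L^2(Z_{[0,r)})} .
\]
It is essential that $A$ stays combined with $\del/\del t$ inside $\tilde D$: the zeroth-order remainders $R_t$, $R_t^*$ then enter only through $\tilde D$ and $\tilde D^*$ and never appear as separate error terms. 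If one instead split off $D_0$, there would be a leftover summand $\|R_t\Phi\|_{L^2}$ which, by \eqref{Rt}, is controlled only through $\|tA\Phi\|_{L^2}$, and $\|A\Phi\|_{L^2}$ is not dominated by the graph norm $\Dnorm{\Phi}$.

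To finish, note that $\supp(\mathcal E\psi)\subset Z_{[0,2r/3]}$, a compact subset of $Z_{[0,r)}$ on which $\sigma$ and $\sigma\m$ are bounded because $\dM$ is compact. Hence both inner products above are taken over $Z_{[0,2r/3]}$ alone, and Cauchy--Schwarz together with $\|\sigma\m D\Phi\|_{L^2(Z_{[0,2r/3]})}\le C\,\|D\Phi\|_{L^2(M)}$ gives $|(\Phi_0,\psi)_{L^2(\dM)}|\le C\,\Dnorm{\Phi}\,\|\mathcal E\psi\|_{(\sigma\m D)^*}$. By the second inequality of \lref{extfin2}, $\|\mathcal E\psi\|_{(\sigma\m D)^*}\le C\,\|\psi\|_{\hat H(A)}$, and combining these two bounds yields exactly the inequality isolated in the first paragraph; taking the supremum over $\psi$ completes the proof.

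The only genuine obstacle is the structural choice in the second paragraph: replacing $D_0$ by $\sigma\m D$ so that the remainder terms cancel in Green's formula instead of having to be estimated. Once this is in place, the argument relies only on Green's formula, on the compact support of $\mathcal E\psi$ (which automatically localises everything to the compact cylinder $Z_{[0,2r/3]}$), and on the extension estimate \lref{extfin2} proved earlier — no further analytic input is required.
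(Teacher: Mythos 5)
Your proposal is correct and takes essentially the same route as the paper: both prove the estimate by duality against $\Hfin(A)\subset\hat H(A)$, both pass to $\sigma^{-1}D$ (rather than the model operator $D_0$) so that the remainder terms are absorbed into the operator whose adjoint is controlled, and both finish by applying Green's formula to $\Phi$ and $\mathcal E\psi$ together with the second inequality of \lref{extfin2}.
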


\begin{proof}
Since the pairing between $\check H(A)$ and $\hat H(A)$ introduced
in \eqref{omega} is perfect  and since $\Hfin(A)$ is dense in $\hat H(A)$,
we have
\[
  \| \Phi_{|\dM} \|_{\check H(A)}
  = \sup \{ |(\Phi_{|\dM},\psi )| \mid
  \psi\in \Hfin(A) , 
  \|\psi\|_{\hat H(A)}=1 \} .
\]
Now \lref{lem:Green} and \lref{extfin2} imply that,
for any $\psi\in \Hfin(A)$ with $\|\psi\|_{\hat H(A)}=1$,
\begin{align*}
  |(\Phi_{|\dM}, &\psi)|
  =
  \left|(\sigma^{-1}D\Phi,\mathcal E\psi)_{L^2(M)} 
        - (\Phi,(\sigma^{-1}D)^*(\mathcal E\psi))_{L^2(M)}\right|  \\
  &\le 
  \LzM{\sigma^{-1}D\Phi}\LzM{\mathcal E\psi}
  + \LzM{\Phi}\LzM{(\sigma^{-1}D)^*(\mathcal E\psi)} \\
  &\le
  C'\cdot\| \Phi \|_D \cdot \|\mathcal E\psi \|_{(\sigma^{-1}D)^*} \\
  &\le
  C \cdot\| \Phi \|_D \cdot\| \psi \|_{\hat H(A)} \\
  &=
  C \cdot\| \Phi \|_D .
\qedhere
\end{align*}
\end{proof}

\begin{lemma}\label{duality}
Over $\dM$, the homomorphism field $(\sigma _0^{-1})^*:E\to F$
induces an isomorphism $\hat H(A)\to\check H(\tilde A)$. 
Here $\tilde A$ is an adapted boundary operator for $D^*$.
In particular, the sesquilinear form
\[
 \beta: \check H(A) \times \check H(\tilde A) \to \C , \quad
  \beta(\phi,\psi) := - (\sigma_0\phi,\psi) ,
\]
is a perfect pairing of topological vector spaces.
\end{lemma}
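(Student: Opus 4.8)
The plan is to realise the asserted isomorphism as multiplication by the smooth bundle map $\Theta := (\sigma_0^{-1})^* = (\sigma_0^*)^{-1}\colon E|_\dM \to F|_\dM$ and to show that $\Theta$ carries the spectral decomposition of $A$ to one compatible with $\tilde A$. Since $\dM$ is compact and $\Theta$ is a fibrewise invertible smooth section of $\mathrm{Hom}(E|_\dM,F|_\dM)$, multiplication by $\Theta$ is a bounded isomorphism $H^s(\dM,E)\to H^s(\dM,F)$ with bounded inverse for every $s\in\R$. Granting for a moment that $\Theta$ maps $\hat H(A)$ isomorphically onto $\check H(\tilde A)$, the ``in particular'' follows at once: by \eqref{omega} the $L^2$-product is already a perfect pairing $\check H(A)\times\hat H(A)\to\C$, and for $\phi\in\check H(A)$, $\psi\in\check H(\tilde A)$ we have $\beta(\phi,\psi) = -(\sigma_0\phi,\psi)_{L^2(\dM)} = -(\phi,\sigma_0^*\psi)_{L^2(\dM)} = -(\phi,\Theta^{-1}\psi)_{L^2(\dM)}$, so precomposing the perfect pairing with the isomorphism $\Theta^{-1}\colon\check H(\tilde A)\to\hat H(A)$ in the second slot shows $\beta$ is perfect.

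The algebraic heart is a symbol identity. For $\xi\in T_x^*\dM$ the endomorphism $\sigma_0(x)^{-1}\sigma_D(\xi)$ is skew-Hermitian by \eqref{BS}, i.e.\ $\sigma_D(\xi)^*(\sigma_0^*)^{-1} = -\sigma_0^{-1}\sigma_D(\xi)$; multiplying on the right by $\sigma_0^*$ gives
\[
  \sigma_0^{-1}\sigma_D(\xi)\,\sigma_0^* = -\sigma_D(\xi)^* .
\]
Hence the principal symbol of $\Theta A\Theta^{-1}$ is $\Theta\,\sigma_A(\xi)\,\Theta^{-1} = (\sigma_0^*)^{-1}\sigma_0^{-1}\sigma_D(\xi)\sigma_0^* = -(\sigma_0^*)^{-1}\sigma_D(\xi)^* = -(\sigma_D(\xi)\sigma_0^{-1})^* = -\sigma_{\tilde A}(\xi)$, using \eqref{eq:symboladjoint} and \lref{nf}. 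Thus $B := -\Theta A\Theta^{-1}$ is a first order elliptic differential operator on $F|_\dM$ with the same principal symbol as $\tilde A$.

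Next I would transport the spectral calculus of $A$ to $B$. As $\Theta\colon L^2(\dM,E)\to L^2(\dM,F)$ is a bounded bijection with bounded inverse, $B$ is selfadjoint with respect to the inner product $(\Theta^{-1}\cdot,\Theta^{-1}\cdot)_{L^2(\dM)}$, which is equivalent to the given one; it has discrete real spectrum $\{-\lambda_j\}_j$ with the $\Theta\phi_j$ a complete system of eigensections. Defining $H^s_I(B)$ (closure in $H^s$ of the span of eigensections with eigenvalue in $I$) in the obvious way, we obtain $\Theta\bigl(H^s_I(A)\bigr) = H^s_{-I}(B)$ for all $s$ and all $I$, and therefore, choosing the cut-off value $-\Lambda$ for $B$, an isomorphism of topological vector spaces $\Theta\colon \hat H(A)\to \check H(B)$. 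It remains to identify $\check H(B) = \check H(\tilde A)$ as topological vector spaces; this is exactly the argument of Remark~\ref{comreg}~(c) applied on $F|_\dM$: since $B$ and $\tilde A$ share the same principal symbol, the difference of the corresponding spectral (Riesz) projections $Q_{(-\Lambda,\infty)}$ is a pseudo-differential operator of order $-1$ and hence maps $H^{-1/2}(\dM,F)$ into $H^{1/2}(\dM,F)$, only the principal symbol entering, so strict selfadjointness of $B$ plays no role. Composing yields the isomorphism $(\sigma_0^{-1})^*\colon\hat H(A)\to\check H(\tilde A)$.

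The one genuinely substantive point is this last identification, namely the independence of the space $\check H$ of the particular first order elliptic operator realising a prescribed principal symbol; everything else is the symbol bookkeeping above together with the routine transport of a discrete spectral decomposition along the bounded isomorphism $\Theta$. Should one prefer to keep pseudo-differential calculus out of the argument, this step can instead be carried out by a direct resolvent comparison of $B$ and $\tilde A$, estimating $(B-z)^{-1}-(\tilde A-z)^{-1}$ by elliptic regularity for $z$ off the (real) spectra.
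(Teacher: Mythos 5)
Your proposal is correct in substance, but it takes a genuinely different route from the paper, and that difference is worth being aware of. The paper's own argument is an entirely elementary one in the spirit announced in the introduction: it avoids pseudo\mbox{-}differential calculus and instead bounds $\|(\sigma_0^{-1})^*\phi\|_{\check H(\tilde A)}$ by $\|\phi\|_{\hat H(A)}$ (and vice versa, via $\sigma_0^*$) by running the two ``workhorse'' estimates already proved --- \lref{extfin2} (the extension map $\mathcal E$ is bounded from $\check H$ into the graph norm) and \lref{destimate} (the trace map is bounded from the graph norm into $\check H$) --- first for $D^*$ and $\mathcal E$, then for $(\sigma^{-1}D)^*$ and $\tilde{\mathcal E}$. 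Your argument instead reduces everything to the symbol identity $\sigma_0^{-1}\sigma_D(\xi)\sigma_0^* = -\sigma_D(\xi)^*$ (which you derive correctly from boundary symmetry), transports the spectral decomposition of $A$ along $\Theta$ to get $\Theta(\hat H(A)) = \check H(B)$ for $B:=-\Theta A\Theta^{-1}$, and then appeals to Carron's pseudo\mbox{-}differential argument from Remark~\ref{comreg}~(c) to identify $\check H(B)$ with $\check H(\tilde A)$. This is conceptually transparent and exposes exactly where the boundary\mbox{-}symmetry hypothesis enters, but it imports Seeley's theorem that spectral projections of elliptic selfadjoint operators are classical pseudo\mbox{-}differential operators of order zero, plus the composition calculus --- precisely the machinery the authors state they wish to sidestep and which is otherwise used in the paper only later (in \tref{pseudo}).

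One small point in your argument deserves to be made explicit rather than waved away with ``only the principal symbol entering''. The Carron observation, as cited in Remark~\ref{comreg}~(c), is stated for two \emph{formally selfadjoint} adapted operators; $B=-\Theta A\Theta^{-1}$ is selfadjoint only with respect to the $\Theta$-twisted inner product. The claim that its Riesz projections are classical pseudo\mbox{-}differential operators of order zero with principal symbol equal to the spectral projection of $i\sigma_{\tilde A}(\xi)$ is true, but not because ``selfadjointness plays no role'': it follows because $Q^{(B)}_{[\,\cdot\,,\infty)}=\Theta\,Q^{(-A)}_{[\,\cdot\,,\infty)}\,\Theta^{-1}$, a conjugation of Seeley's projection for $-A$ by the bundle automorphism $\Theta$, and because for each $\xi\neq 0$ the idempotent $\Theta\,P^-(\xi)\,\Theta^{-1}$ has the same image and the same kernel as the orthogonal projection onto the positive eigenspace of the Hermitian matrix $i\sigma_{\tilde A}(\xi)$, hence coincides with it. Spelling this out closes the only gap in an otherwise valid proof.
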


\begin{proof}
For $\phi\in C^\infty(\partial M,E)$,
we have $(\sigma_0^{-1})^*\phi\in C^\infty(\partial M,F)$ and
\begin{align*}
  \| (\sigma _0^{-1})^*\phi \|_{\check H(\tilde A)}^2
  &\le C_1 \|(\sigma ^{-1})^*\mathcal E\phi\|_{D^*}^2 \\
  &= C_1 \big( \| D^* (\sigma ^{-1})^*\mathcal E\phi \|_{L^2(Z_{[0,r)})}^2
  +  \| (\sigma ^{-1})^*\mathcal E\phi \|_{L^2(Z_{[0,r)})}^2 \big) \\
  &\le C_2 \big( \| (\sigma ^{-1}D)^*\mathcal E\phi \|_{L^2(Z_{[0,r)})}^2
  +  \| \mathcal E\phi \|_{L^2(Z_{[0,r)})}^2 \big) \\
  &= C_2 \|\mathcal E\phi\|_{(\sigma ^{-1}D)^*}^2 \\
  &\le C_3 \|\phi\|_{\hat H(A)}^2
\end{align*}
by Lemmas~\ref{destimate} (for $D^*$) and \ref{extfin2}, respectively.

Conversely, for $\psi\in C^\infty(\partial M,F)$,
we have $\sigma _0^*\psi\in C^\infty(\partial M,E)$.
Furthermore, if $\tilde{\mathcal E}$ denotes the extension operator
defined in \eqref{exfi} associated to the boundary operator $\tilde A$, 
we obtain
\begin{align*}
  \| \sigma _0^*\psi \|_{\hat H(A)}^2
  &=
  \|(\sigma ^* \tilde{\mathcal E} \psi)|_{\dM}\|_{\check H(-A)}^2 \\
  &\le 
  C_4\|\sigma ^* \tilde{\mathcal E} \psi\|_{(\sigma ^{-1}D)^*}^2 \\
  &=
  C_4 \big( \| (\sigma ^{-1}D)^*\sigma ^* \tilde{\mathcal E} \psi \|_{L^2(Z_{[0,r)})}^2
  + \| \sigma ^* \tilde{\mathcal E} \psi \|_{L^2(Z_{[0,r)})}^2 \big) \\
  &\le 
  C_5 \big( \| D^* \tilde{\mathcal E} \psi \|_{L^2(Z_{[0,r)})}^2
  + \| \tilde{\mathcal E} \psi \|_{L^2(Z_{[0,r)})}^2 \big) \\
  &=
  C_5 \|\tilde{\mathcal E} \psi\|_{D^*}^2 \\
  &\le 
  C_6 \| \psi \|_{\check H(\tilde A)}^2
\end{align*}
by \lref{destimate} applied to the operator $(\sigma ^{-1}D)^*$
and \lref{extfin2} applied to the operator $D^*$.

Since $C^\infty(\partial M,E)$ and $C^\infty(\partial M,F)$ are dense
in $\hat H(A)$ and $\check H(A)$, respectively,
and since $(\sigma _0^*)^{-1}=(\sigma _0^{-1})^*$,
we conclude that $(\sigma _0^{-1})^*$ induces a topological  isomorphism
between the Hilbert spaces $\hat H(A)$ and $\check H(A)$ as asserted.
The second claim is now an immediate consequence since the corresponding
pairing between $\check H(A)$ and $\hat H(A)$ is perfect.
\end{proof}

Consider the intersection
\begin{equation}
  H^1_D(M,E) := \dom(D_{\max})\cap H^1_{\loc}(E,F).
  \index{1HgDME@$H^1_D(M,E)$}
  \label{H1}
\end{equation}
The $H^1_D$-norm on $H^1_D(M,E)$ is defined by
\begin{equation}
  \| \Phi \|_{H^1_D(M)}^2 :=
  \| \chi\Phi \|_{H^1(M)}^2 + \LzM{\Phi}^2 + \LzM{D\Phi}^2 ,
  \label{eq:H1Ddef}\index{1normHgD@$\HlD{\cdot}$}
\end{equation}
where $\chi$ is as in \eqref{chi},
and turns $H^1_D(M,E)$ into a Hilbert space.
Since the support of $\chi$ as a function on $M$ is compact,
the specific choice of $H^1$-norm is irrelevant and leads to equivalent $H^1_D$-norms.
The $H^1_D$-norm is stronger than the graph norm for $D$;
it controls in addition $H^1$-regularity near the boundary.
The completeness of $D$ is responsible for the following properties of $H^1_D(M,E)$.

\begin{lemma}\label{density}
\begin{enumerate}[(i)]
\item\label{cdense}
$\Cuc(M,E)$ is dense in $H^1_D(M,E)$;
\item\label{ccdense}
$\Cucc(M,E)$ is dense in $\{\Phi\in H^1_D(M,E) \mid \Phi|_{\dM}=0 \}$.
\end{enumerate}
\end{lemma}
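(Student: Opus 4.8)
The plan is to split an arbitrary $\Phi\in H^1_D(M,E)$ as $\Phi=\chi\Phi+(1-\chi)\Phi$, where $\chi$ is the cut-off function from \eqref{chi}, and to treat the collar part $\chi\Phi$ and the interior part $(1-\chi)\Phi$ by entirely different means: completeness of $D$ (a condition at infinity) for the interior part, and elementary $H^1$-density on the compact collar $Z_{[0,2r/3]}$ for the collar part. Note that both summands lie in $H^1_D(M,E)$ by \lref{product}, that $\chi\Phi$ has compact support in $Z_{[0,2r/3]}$ and the same trace on $\dM$ as $\Phi$, and that $(1-\chi)\Phi$ vanishes for $t<r/3$, hence near $\dM$. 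Two elementary observations will be used repeatedly: (a) if sections $\Psi_j$ with support in a fixed compact subset of $M$ converge to $\Psi$ in $H^1$, then they converge in the $H^1_D$-norm, since $H^1$-convergence controls all three terms in \eqref{eq:H1Ddef}; and (b) if $\Psi_j,\Psi\in\dom(\Dmax)$ and $\Psi_j\to\Psi$ in the graph norm of $\Dmax$, then $\Psi_j\to\Psi$ in $H^1_{\loc}$ over the interior of $M$, by interior elliptic regularity together with the interior elliptic estimate for the first-order elliptic operator $D$.

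First I would prove \eqref{cdense} in two steps. \emph{Step 1: compactly supported elements of $H^1_D(M,E)$ are dense.} Since $\chi\Phi$ is already compactly supported, it suffices to approximate $(1-\chi)\Phi$. By completeness of $D$ there are compactly supported $\Psi_j\in\dom(\Dmax)$ with $\Psi_j\to\Phi$ in the graph norm; by interior elliptic regularity each $\Psi_j$ lies in $H^1_{\loc}$ over the interior, so $(1-\chi)\Psi_j$ vanishes near $\dM$, lies in $H^1_D(M,E)$, and is compactly supported. Using $D\bigl((1-\chi)\Psi_j\bigr)=(1-\chi)D\Psi_j-\sigma_D(d\chi)\Psi_j$ together with observations (a) and (b) applied on the compact set $\{r/3\le t\le 2r/3\}$ carrying $d\chi$, one checks that $(1-\chi)\Psi_j\to(1-\chi)\Phi$ in the $H^1_D$-norm; hence $\chi\Phi+(1-\chi)\Psi_j\to\Phi$. \emph{Step 2: a compactly supported $\Psi\in H^1_D(M,E)$ lies in the $H^1_D$-closure of $\Cuc(M,E)$.} Such a $\Psi$ is a compactly supported $H^1$-section, so by the standard density of smooth-up-to-the-boundary sections in $H^1$ on a manifold with compact boundary there is a sequence in $\Cuc(M,E)$, with support in a fixed compact set, converging to $\Psi$ in $H^1$ and hence, by observation (a), in $H^1_D$. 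Combining the two steps diagonally proves \eqref{cdense}.

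For \eqref{ccdense}, let $\Phi\in H^1_D(M,E)$ with $\Phi|_\dM=0$, and write again $\Phi=\chi\Phi+(1-\chi)\Phi$. The interior part is handled as in Step 1: it is an $H^1_D$-limit of the compactly supported sections $(1-\chi)\Psi_j$, and each of these, being an $H^1$-section with compact support in the interior, is an $H^1$-limit and hence an $H^1_D$-limit of sections in $\Cucc(M,E)$ by interior mollification. For the collar part I am reduced to showing that an $H^1$-section $\Psi$ with compact support in $Z_{[0,2r/3]}$ and $\Psi|_{\{t=0\}}=0$ is an $H^1$-limit of sections vanishing near $\{t=0\}$: choosing $\eta\in\Cu(\R)$ with $\eta\equiv0$ near $0$ and $\eta\equiv1$ on $[1,\infty)$ and setting $\Psi_\delta:=\eta(t/\delta)\,\Psi$, the only term in $\HlM{\Psi_\delta-\Psi}$ that does not obviously vanish as $\delta\to0$ is $\|\delta^{-1}\eta'(t/\delta)\Psi\|_{L^2}$, which does so by the Hardy-type estimate $|\Psi(t,\cdot)|^2\le t\int_0^t|\partial_s\Psi(s,\cdot)|^2\,ds$ coming from $\Psi(0,\cdot)=0$. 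Each $\Psi_\delta$ vanishes near $\dM$, so mollification puts it into the $H^1_D$-closure of $\Cucc(M,E)$; applying this with $\Psi=\chi\Phi$ concludes the proof.

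The step I expect to be the main obstacle is Step 1 of \eqref{cdense}: one must see why completeness, which concerns only the behaviour of $\dom(\Dmax)$ at infinity, suffices to approximate the interior part $(1-\chi)\Phi$ in the stronger $H^1_D$-norm. The point is precisely that $\chi$ isolates the boundary, where only $H^1_D$-control is available, from the region where the crude graph-norm approximants $\Psi_j$ live, so that interior elliptic estimates may be invoked there without loss. Everything else reduces to routine $H^1$-manipulations over the compact collar $Z_{[0,2r/3]}$.
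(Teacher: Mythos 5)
Your proof is correct and follows the same broad strategy as the paper's --- split $\Phi$ into a collar part and a part vanishing near $\dM$, use completeness of $D$ for the latter, and reduce to the elementary $H^1$-density of smooth sections over a compact set --- but the technical devices differ at two points. For \eqref{cdense}, the paper works with three nested cutoffs $\chi,\chi_2,\chi_3$, with $\chi_2\equiv1$ on $\supp\chi$ and $\chi_3\equiv1$ on $\supp\chi_2$. The payoff is that $\chi(1-\chi_2)\equiv0$, so for sections of the form $(1-\chi_2)\Xi$ the $H^1_D$-norm collapses to the graph norm, and the key estimate
\[
\HlD{(1-\chi_2)\Xi}=\|(1-\chi_2)\Xi\|_D\le C\,\|\Xi\|_D
\]
follows from \lref{product} alone, with no elliptic estimate. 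You use a single cutoff $\chi$, for which $\chi(1-\chi)\not\equiv0$, so you are forced to invoke the interior elliptic estimate (your observation (b)) to convert graph-norm convergence of the approximants $\Psi_j$ into $H^1$-convergence on the compact interior annulus where $\chi(1-\chi)$ lives. Both routes are valid; the paper's is slightly leaner in that it needs interior elliptic regularity only qualitatively (to know $\Phi_0\in H^1_{\loc}$), not the quantitative interior estimate. For \eqref{ccdense}, after reducing to a section supported in a collar with vanishing trace, the paper translates the section inward by $1/n$ and extends by zero --- the vanishing trace ensures the translate has no jump, hence stays in $H^1$ --- whereas you cut off by $\eta(t/\delta)$ and absorb the error term $\|\delta^{-1}\eta'(t/\delta)\,\chi\Phi\|_{L^2}$ via a one-dimensional Hardy inequality. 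These are the two classical proofs that $\Cucc$ is $H^1$-dense in the kernel of the trace map; either is acceptable here.
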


\begin{proof}
\eqref{cdense}
We have to show that any given $\Phi\in H^1_D(M,E)$ can be approximated
by compactly supported smooth sections in the $H^1_D$-norm.
Let $\chi\in\Cuc(M,\R)$ be the cut-off function used in the definition of $\HlD{\cdot}$,
that is, the function from \eqref{chi}.
Choose a second cut-off function $\chi_2\in\Cuc(M,\R)$ with $\chi_2\equiv 1$
on the support of $\chi$ and a third cut-off function $\chi_3\in\Cuc(M,\R)$
with $\chi_3\equiv 1$ on the support of $\chi_2$.

\begin{center}
\begin{pspicture}(-2,0)(2,2)
\psline{->}(-1.5,0)(2.5,0)
\psline{->}(-1.5,0)(-1.5,2)

\psline(-1.5,1.5)(1.0,1.5)
\psbezier(-1,1.5)(-0.1,1.5)(-0.9,0)(0,0)
\psbezier(0,1.5)(0.9,1.5)(0.1,0)(1,0)
\psbezier(1,1.5)(1.9,1.5)(1.1,0)(2,0)

\rput(-0.7,0.7){$\chi$}
\rput(0.25,0.7){$\chi_2$}
\rput(1.8,0.7){$\chi_3$}
\end{pspicture}
\nopagebreak

{\em Fig.~4}
\end{center}

Then $\chi_3\Phi\in H^1_D(M,E)$ because $\Phi\in H^1_{\loc}(M,E)$ and $\chi_3\Phi$
has compact support.
Therefore $\chi_3\Phi$ can be approximated by smooth compactly supported
sections in any $H^1$-norm, hence also in the $H^1_D$-norm.

It remains to approximate $(1-\chi_3)\Phi\in H^1_D(M,E)$.
Since $D$ is complete, there exists $\Phi_0\in\dom(D_{\max})$
with compact support such that
\[
  \|(1-\chi_3)\Phi-\Phi_0\|_D < \epsilon
\]
for any given $\epsilon>0$.
Since $\chi_2$ vanishes on the support of $1-\chi_3$, we have
\begin{align*}
  \HlD{(1-\chi_3)\Phi-(1-\chi_2)\Phi_0}
  &=
  \HlD{(1-\chi_2)((1-\chi_3)\Phi-\Phi_0)} \\
  &=
  \|(1-\chi_2)((1-\chi_3)\Phi-\Phi_0)\|_D \\
  &\le
  C\cdot \|(1-\chi_3)\Phi-\Phi_0\|_D \\
  &<
  C\cdot \epsilon .
\end{align*}
Hence it suffices to approximate $(1-\chi_2)\Phi_0$.
Since $(1-\chi_2)\Phi_0$ has compact support,
this is possible exactly like for $\chi_3\Phi$.

\eqref{ccdense}
Suppose that $\Phi\in H^1_D(M,E)$ vanishes along $\dM$.
We have to show that $\Phi$ can be approximated by sections from $\Cucc(M,E)$.
As in the first part of the proof, we may assume that the support of $\Phi$ is
contained in a neighborhood of $\dM$, say in $Z_{r/2}$.
For $n\in \N$ sufficiently large,
define sections $\Phi_n$ of $E$ over $Z_{[0,r)}$ by
\[
  \Phi_n(t,x) = \begin{cases}
  0 &\text{for $0 \le t \le 1/n$} , \\
  \Phi(t-1/n,x) \quad &\text{for $1/n \le t < r$} .
  \end{cases}
\]
Then $\Phi_n$ has compact support in $Z_{[1/n,r]}$,
$\Phi_n\in H^1(Z_{[0,r)},E)$, and $\Phi_n\to\Phi$ in $H^1(Z_{[0,r)},E)$,
therefore also in $H^1_D(M,E)$.
Since $\Phi_n$ has compact support in $Z_{(0,r)}$,
it can be approximated in $H^1_D(M,E)$
by smooth sections of $E$ with compact support in $Z_{(0,r)}$.
\end{proof}

\begin{lemma}\label{esth1}
There is a constant $C>0$ such that
\[
  \| \Phi \|_D 
  \le 
  \| \Phi \|_{H^1_D(M)} 
  \le 
  C \,\| \Phi \|_D
\]
for all $\Phi\in C^\infty_c(M,E)$ with $Q_{(0,\infty)}(\Phi_{|\dM})=0$.
\end{lemma}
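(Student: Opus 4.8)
The first inequality is immediate from the definition \eqref{eq:H1Ddef}, so the whole content lies in the bound $\|\Phi\|_{H^1_D(M)}\le C\|\Phi\|_D$ for sections whose boundary value sits in the nonpositive spectral part of $A$, and the plan is to prove it by an energy estimate on the collar. First I would reduce to a \emph{small} collar: replacing the cut-off $\chi$ by another cut-off which equals $1$ near $\dM$ and is supported in $Z_{[0,\rho]}$ with $\rho>0$ as small as we please alters $\|\cdot\|_{H^1_D(M)}$ only up to equivalence (by the remark after \eqref{eq:H1Ddef}: the difference of the two versions of $\chi\Phi$ is supported in a fixed compact subset of the interior, and its $H^1$-norm is bounded by $\LzM{D\Phi}+\LzM{\Phi}$ through the standard interior elliptic estimate), so since the claim only asks for the existence of a constant this is harmless. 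I would likewise take the auxiliary metric and connection entering $\|\cdot\|_{H^1(M)}$ to be of product type on the collar, again changing $\|\cdot\|_{H^1_D(M)}$ only up to equivalence. Put $\Psi:=\chi\Phi$: it is smooth with support in $Z_{[0,\rho]}$, and since $\chi\equiv 1$ near $\dM$ its restriction $\Psi_0$ to $\{0\}\times\dM$ equals $\Phi|_{\dM}$, so $Q_{(0,\infty)}\Psi_0=0$ and hence $(A\Psi_0,\Psi_0)_{L^2(\dM)}=\sum_{\lambda_j\le0}\lambda_j|a_j|^2\le 0$.

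The core step is to apply \lref{intpart} to $\Psi$ (extended by $0$ to $Z_{[0,\infty)}$; all norms over $Z_{[0,\infty)}$ then agree with those over $Z_{[0,\rho]}$), which gives
\[
  \LzZq{\Psi'}^2+\LzZq{A\Psi}^2=\LzZq{\sigma_0^{-1}D_0\Psi}^2+(A\Psi_0,\Psi_0)_{L^2(\dM)}\le\LzZq{\sigma_0^{-1}D_0\Psi}^2 ,
\]
where the sign of the boundary term is used decisively. Next I would compare $D_0$ with $D$: since $\sigma_0^{\pm1}$ are bounded, $\LzZq{\sigma_0^{-1}D_0\Psi}\lesssim\LzZq{D_0\Psi}$, and \lref{lem:relbnd} applied to $\Psi$ (supported in $Z_{[0,\rho]}$) yields $\LzZq{D_0\Psi}\le\LzZq{D\Psi}+C\rho\,\LzZq{D_0\Psi}+C\LzZq{\Psi}$; fixing $\rho$ so small that $C\rho\le\tfrac12$ (and $\rho$ below the radii of \lref{adapted} and \lref{lem:relbnd}) and absorbing, $\LzZq{D_0\Psi}\lesssim\LzZq{D\Psi}+\LzZq{\Psi}$. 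Combining, $\LzZq{\Psi'}^2+\LzZq{A\Psi}^2\lesssim\LzZq{D\Psi}^2+\LzZq{\Psi}^2$.

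To conclude I would convert this into an $H^1$-estimate and return to $\Phi$. With the product structure on the collar, the $H^1(M)$-norm of a section supported in $Z_{[0,\rho]}$ is equivalent to $\big(\LzZq{\Psi'}^2+\int_0^\rho\|\Psi(t)\|_{H^1(\dM)}^2\,dt+\LzZq{\Psi}^2\big)^{1/2}$, and $\|\Psi(t)\|_{H^1(\dM)}\lesssim\LzdM{A\Psi(t)}+\LzdM{\Psi(t)}$ by the standard elliptic estimate for the elliptic first-order operator $A$ on the closed manifold $\dM$ (exactly as in the proof of \lref{nf}); integrating in $t$ gives $\|\chi\Phi\|_{H^1(M)}^2\lesssim\LzZq{D\Psi}^2+\LzZq{\Psi}^2$. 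Since $D\Psi=\chi D\Phi+\sigma_D(d\chi)\Phi$ by \lref{product}, one has $\LzM{D\Psi}\le\LzM{D\Phi}+C'\LzM{\Phi}$ and $\LzM{\Psi}\le\LzM{\Phi}$, whence $\|\chi\Phi\|_{H^1(M)}^2\lesssim\LzM{D\Phi}^2+\LzM{\Phi}^2$; feeding this into \eqref{eq:H1Ddef} gives $\|\Phi\|_{H^1_D(M)}^2\lesssim\LzM{D\Phi}^2+\LzM{\Phi}^2=\|\Phi\|_D^2$.

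The one genuinely delicate point is the boundary term $(A\Psi_0,\Psi_0)$ in \lref{intpart}: it is precisely the hypothesis $Q_{(0,\infty)}(\Phi|_{\dM})=0$ that forces it to be nonpositive, so that it may simply be discarded. Without that assumption the right-hand side would have to carry $\|\Phi|_{\dM}\|_{H^{1/2}(\dM)}$, which the graph norm does not control — this is exactly why the spectral splitting of $A$ enters the theory at all. Everything else (the reduction to a small collar, the comparison of $D$ with $D_0$, and the passage between the $H^1$-norm on the cylinder and the one on $\dM$) is routine bookkeeping with the normal form and with elliptic estimates on $\dM$.
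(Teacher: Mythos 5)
Your proposal is correct and follows essentially the same strategy as the paper's proof: localize to the collar with a cut-off, apply \lref{intpart} and discard the boundary term $(A\Psi_0,\Psi_0)_{L^2(\dM)}\le 0$ (this is where the spectral hypothesis enters), compare $D$ with $D_0$ via \lref{lem:relbnd} and absorb, and finish with the elliptic estimate for $A$ on $\dM$. The one place where you are slightly more explicit than the paper is the absorption step: you shrink the collar width $\rho$ so that $C\rho\le\tfrac12$ in \lref{lem:relbnd} before absorbing $\LzZq{D_0\Psi}$, whereas the paper's proof leaves this smallness implicit in the choice of $r$ for the cut-off $\chi$; your extra bookkeeping (including the remark that changing $\chi$ or passing to a product-type metric only changes $\HlD{\cdot}$ up to equivalence, since the difference lives in a fixed compact interior set) closes that gap cleanly but does not alter the argument.
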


\begin{proof}
We show that the $H^1_D$-norm is bounded from above by the graph norm,
the converse inequality being clear.
We write $\Phi=\Phi_1+\Phi_2$ where $\Phi_1 = \chi\Phi \in \Cuc(Z_{[0,r)},E)$
and $\Phi_2 = (1-\chi)\Phi \in \Cucc(M,E)$ has support disjoint from $Z_{r/3}$.
On the space of $\Phi_2$'s the graph norm and the $H^1_D$-norm are equivalent.
Therefore we can assume that $\Phi=\Phi_1$ has compact support
in the closure of $Z_{2r/3}$.

Since $Q_{(0,\infty)}\Phi_{|\dM}=0$,
we have $(\Phi_{|\dM},A\Phi_{|\dM})_{L^2(\dM)}\leq 0$ and hence
\begin{align}
  \| D_0\Phi \|_{L^2(Z_{[0,r)})}^2
  &=
  \| \sigma _0(\Phi'+ A\Phi) \|_{L^2(Z_{[0,r)})}^2\nonumber \\
  &\ge
  C_1\|\Phi' + A\Phi \|_{L^2(Z_{[0,r)})}^2 \nonumber \\
  &=
  C_1\{\| \Phi' \|_{L^2(Z_{[0,r)})}^2 + \| A\Phi \|_{L^2(Z_{[0,r)})}^2   
       - (\Phi_{|\dM},A\Phi_{|\dM})_{L^2(\dM)} \} \nonumber  \\
  &\ge
  C_1\{\| \Phi' \|_{L^2(Z_{[0,r)})}^2 + \| A\Phi \|_{L^2(Z_{[0,r)})}^2 \} ,
\label{eq:c1}
\end{align}
where we use \lref{intpart} to pass from line 2 to line 3.
Thus
\begin{align*}
\HlD{\Phi}^2 
& \le 
C_2\,\{\|\Phi\|_{L^2(Z_{[0,r)})}^2 + \|\Phi'\|_{L^2(Z_{[0,r)})}^2
+ \|A\Phi\|_{L^2(Z_{[0,r)})}^2\} \\
& \le
C_3\,\{\|\Phi\|_{L^2(Z_{[0,r)})}^2 + \|D_0\Phi\|_{L^2(Z_{[0,r)})}^2\} \\
& \le
C_4\,\|\Phi\|_D^2 .
\end{align*}
The first inequality follows from the ellipticity of $A$,
the second from \eqref{eq:c1},
and the third from \lref{lem:relbnd}.
\end{proof}

\begin{cor}\label{domdmin}
Assume the Standard Setup~\ref{stase} and that $D$ and $D^*$ are complete.
Then $\dom D_{\min}=\{\Phi\in H^1_D(M,E) \mid \Phi|_{\dM}=0 \}$.
\end{cor}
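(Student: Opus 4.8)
The plan is to prove the two inclusions separately. Recall that $D_{\min}$ is the closure of $D_{cc}$, so $\dom D_{\min}$ is exactly the set of $\Phi\in L^2(M,E)$ for which there is a sequence $\Phi_j\in\Cucc(M,E)$ converging to $\Phi$ in the graph norm of $D$. For the inclusion ``$\supseteq$'', let $\Phi\in H^1_D(M,E)$ with $\Phi|_{\dM}=0$. By \lref{density}\eqref{ccdense}, $\Cucc(M,E)$ is dense in the space $\{\Psi\in H^1_D(M,E)\mid \Psi|_{\dM}=0\}$ with respect to the $H^1_D$-norm. Since the $H^1_D$-norm dominates the graph norm of $D$ (the first inequality in \lref{esth1}, or simply by inspection of the definition \eqref{eq:H1Ddef}), the approximating sequence from $\Cucc(M,E)$ also converges to $\Phi$ in the graph norm, hence $\Phi\in\dom D_{\min}$.

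For the reverse inclusion ``$\subseteq$'', let $\Phi\in\dom D_{\min}$ and choose $\Phi_j\in\Cucc(M,E)$ with $\Phi_j\to\Phi$ in the graph norm of $D$. The point is to upgrade graph-norm convergence to $H^1_D$-convergence for this particular sequence. Each $\Phi_j$ has support in the interior of $M$, so in particular $Q_{(0,\infty)}((\Phi_j)|_{\dM})=0$ trivially (the boundary trace vanishes), and \lref{esth1} applies to give $\|\Phi_j-\Phi_k\|_{H^1_D(M)}\le C\|\Phi_j-\Phi_k\|_D$. Since $(\Phi_j)$ is Cauchy in the graph norm, it is therefore Cauchy in the $H^1_D$-norm, hence converges in the Hilbert space $H^1_D(M,E)$ to some limit; that limit must coincide with $\Phi$ (both are limits in $L^2$). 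Thus $\Phi\in H^1_D(M,E)$. Moreover the trace map $\mathcal R$ is continuous on $H^1_D(M,E)$ — either by \tref{md}\eqref{mdt} composed with the continuous inclusion, or directly via \lref{destimate} on the collar part and the ordinary trace theorem away from it — so $(\Phi_j)|_{\dM}\to\Phi|_{\dM}$, and since each $(\Phi_j)|_{\dM}=0$ we get $\Phi|_{\dM}=0$.

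The main obstacle is the ``$\subseteq$'' direction: naively one only knows graph-norm convergence, which a priori says nothing about $H^1$-regularity near $\dM$ or about the existence of a boundary trace. The key that unlocks it is \lref{esth1}, whose hypothesis $Q_{(0,\infty)}(\Phi|_{\dM})=0$ is automatically met by compactly supported sections in the interior because their trace is zero; this is precisely why completeness of $D$ and $D^*$ (used to prove \lref{esth1} via \lref{density} and \lref{lem:relbnd}) enters the statement. Everything else is a routine passage to the limit in Hilbert spaces together with continuity of the restriction map.
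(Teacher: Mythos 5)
Your proof is correct and follows the paper's argument exactly: the inclusion ``$\subseteq$'' comes from applying \lref{esth1} to an approximating sequence in $\Cucc(M,E)$ (whose traces vanish, so the hypothesis $Q_{(0,\infty)}(\cdot|_{\dM})=0$ is trivially satisfied) to upgrade graph-norm Cauchyness to $H^1_D$-Cauchyness, and ``$\supseteq$'' comes from \lref{density}~\eqref{ccdense} plus the trivial domination of the graph norm by the $H^1_D$-norm. You merely spell out the details that the paper's two-sentence proof leaves implicit.
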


\begin{proof}
Sections in $\Cucc(M,E)$ vanish along $\dM$,
hence satisfy the boundary condition $Q_{(0,\infty)}(\Phi_{|\dM})=0$
required in \lref{esth1}, and hence $\dom D_{\min}$ is contained
in $\{\Phi\in H^1_D(M,E) \mid \Phi|_{\dM}=0 \}$.
Now \lref{density} \eqref{ccdense} concludes the proof.
\end{proof}

\begin{thm}\label{domdmax}
Assume the Standard Setup~\ref{stase} and that $D$ and $D^*$ are complete.
Then we have:
\begin{enumerate}[(i)]
\item\label{dense}
$C^\infty_{c}(M,E)$ is dense in $\dom(D_{\max})$ with respect to the graph norm.
\item\label{trace}
The trace map $\Cuc(M,E)\to \Cu(\dM,E)$, $\Phi \mapsto \Phi_{|\dM}$,
extends uniquely to a surjective bounded linear map
$\mathcal R:\dom(D_{\max})\to\check H(A)$.\index{1R@$\mathcal R$}
\item\label{reg}
$H^1_D(M,E)=\{\Phi\in\dom(D_{\max}) \mid\mathcal R\Phi\in H^{1/2}(\dM,E) \}$.
\end{enumerate}
The corresponding statements hold for $\dom((D^*)_{\max})$.
Furthermore, for all sections $\Phi\in\dom(D_{\max})$ and $\Psi\in\dom((D^*)_{\max})$,
we have 
\begin{equation}\label{eq:ParInt}
  (D_{\max}\Phi,\Psi)_{L^2(M)}
  - (\Phi,(D^*)_{\max}\Psi)_{L^2(M)}
  = -(\sigma _0\mathcal R\Phi,\mathcal R\Psi)_{L^2(\dM)} .
\end{equation}
\end{thm}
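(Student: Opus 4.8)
The plan is to establish the assertions in the order (i)~$\Rightarrow$~(ii)~$\Rightarrow$~\eqref{eq:ParInt}~$\Rightarrow$~(iii), the density statement (i) being the engine that makes the rest routine: once $\Cuc(M,E)$ is dense in $\dom(D_{\max})$ for the graph norm, the estimate of \lref{destimate} extends the trace map $\mathcal R$ to $\dom(D_{\max})$, the extension operator $\mathcal E$ of \lref{extfin2} furnishes a bounded right inverse and hence surjectivity, Green's formula \eqref{eq:ParInt} follows from \lref{lem:Green} by continuity, and (iii) is obtained by subtracting $\mathcal E(\mathcal R\Phi)$ and invoking \cref{domdmin}. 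The statements for $\dom((D^*)_{\max})$ are the same statements applied to $D^*$, which satisfies the Standard Setup with $\tilde A$ in place of $A$; I shall not repeat them, but prove the $D^*$-versions in parallel.

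\emph{Part~(i).} By completeness of $D$ it suffices to approximate, in the graph norm, a compactly supported $\Phi\in\dom(D_{\max})$. Fix $\chi_0\in\Cuc(M)$ with $\chi_0\equiv1$ near $\dM$ and supported in the collar $U\cong Z_{[0,r)}$; by \lref{product}, $\chi_0\Phi$ and $(1-\chi_0)\Phi$ again lie in $\dom(D_{\max})$. The section $(1-\chi_0)\Phi$ is supported in the interior, hence lies in $H^1_{\loc}$ by interior elliptic regularity, and is approximated by $\Cucc(M,E)$ in $H^1$, a fortiori in the graph norm. For $\chi_0\Phi$, supported in some $Z_{[0,\rho]}$, pass to the equivalent graph norm of $\sigma_t^{-1}D=\partial_t+A+R_t$ from \lref{nf} and regularize in two stages. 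First truncate in the $\dM$-direction by $J_\eps:=\eta(\eps A)$, with $\eta$ smooth, compactly supported and $\equiv1$ near $0$: then $J_\eps(\chi_0\Phi)$ takes values in a fixed finite-dimensional subspace of $\Cu(\dM,E)$, and, since $AJ_\eps$ and $J_\eps R_t$ are bounded on $L^2(\dM)$, the identity $\partial_t(J_\eps\chi_0\Phi)=J_\eps(\sigma_t^{-1}D)(\chi_0\Phi)-AJ_\eps(\chi_0\Phi)-J_\eps R_t(\chi_0\Phi)$ exhibits $J_\eps(\chi_0\Phi)$ as an $H^1$ section. An $H^1$ section valued in a finite-dimensional space of smooth sections is approximated, by ordinary one-dimensional mollification in $t$, by sections smooth up to $\dM$, i.e.\ by elements of $\Cuc(M,E)$. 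Finally $J_\eps(\chi_0\Phi)\to\chi_0\Phi$ in the graph norm because $J_\eps\to\id$ strongly and the commutator $[R_t,J_\eps]$ is bounded on $L^2(\dM)$ uniformly in $\eps\in(0,1]$ and $t\in[0,\rho]$ and tends to $0$ strongly --- the Friedrichs mollifier estimate for the first-order operator $R_t$.

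\emph{Parts~(ii) and \eqref{eq:ParInt}.} For $\Phi\in\Cuc(M,E)$ split $\Phi=\chi_0\Phi+(1-\chi_0)\Phi$ as above; then $\mathcal R((1-\chi_0)\Phi)=0$, while \lref{destimate} together with the Leibniz rule $D(\chi_0\Phi)=\sigma_D(d\chi_0)\Phi+\chi_0D\Phi$ gives $\|\mathcal R\Phi\|_{\check H(A)}\le C\|\chi_0\Phi\|_D\le C'\|\Phi\|_D$. By (i) this extends uniquely to a bounded $\mathcal R\colon\dom(D_{\max})\to\check H(A)$. For surjectivity, \lref{extfin2} gives $\|\mathcal E\phi\|_D\le C\|\phi\|_{\check H(A)}$ on $\Hfin(A)$, and $(\mathcal E\phi)|_{\dM}=\chi(0)\phi=\phi$ by the choice of $\chi$ in \eqref{chi}; hence $\mathcal E$ extends to a bounded $\mathcal E\colon\check H(A)\to\dom(D_{\max})$ with $\mathcal R\circ\mathcal E=\id$, which proves surjectivity. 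For \eqref{eq:ParInt}: on $\Cuc(M,E)\times\Cuc(M,F)$ this is \lref{lem:Green} with $\sigma_D(\tau)=\sigma_0$; the left-hand side is graph-norm continuous, and so is the right-hand side since $\mathcal R$ is bounded into $\check H(A)$ resp.\ $\check H(\tilde A)$ and $(\phi,\psi)\mapsto(\sigma_0\phi,\psi)_{L^2(\dM)}$ is a continuous perfect pairing by \lref{duality}; density from (i), applied to $D$ and $D^*$, then yields \eqref{eq:ParInt} on the maximal domains.

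\emph{Part~(iii).} If $\Phi\in H^1_D(M,E)\subset H^1_{\loc}(M,E)$, the trace theorem gives $\Phi|_{\dM}\in H^{1/2}(\dM,E)$, and this equals $\mathcal R\Phi$ since $\Cuc(M,E)$ is dense in $H^1_D(M,E)$ by \lref{density}\,\eqref{cdense} and both traces are continuous for the $H^1_D$-norm. Conversely, let $\Phi\in\dom(D_{\max})$ with $\phi:=\mathcal R\Phi\in H^{1/2}(\dM,E)$. A direct estimate with $(\mathcal E\phi)(t)=\chi(t)\exp(-t|A|)\phi$ --- the computation of \lref{extfin}, now bounding the near-boundary $H^1$-norm via $\int_0^\infty|\lambda|e^{-2t|\lambda|}\,dt=\tfrac12$ and $\sum_j|\lambda_j|\,|a_j|^2\le\|\phi\|_{H^{1/2}(\dM)}^2$ --- shows that $\mathcal E$ maps $H^{1/2}(\dM,E)$ boundedly into $H^1_D(M,E)$. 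Put $\Psi:=\Phi-\mathcal E\phi\in\dom(D_{\max})$; then $\mathcal R\Psi=0$, hence $\mathcal R(\chi_0\Psi)=0$. Running the regularization of part~(i) on $\chi_0\Psi$, the $H^1$-in-$t$ approximants now vanish at $t=0$ --- because $J_\eps$ commutes with $\mathcal R$ --- so they may be translated slightly inward to have compact support in $Z_{(0,r)}\subset\operatorname{int}M$; together with the interior part this gives $\Psi\in\overline{\Cucc(M,E)}=\dom(D_{\min})\subset H^1_D(M,E)$ by \cref{domdmin}, whence $\Phi=\Psi+\mathcal E\phi\in H^1_D(M,E)$. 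The main obstacle is part~(i), and within it the two-stage regularization near the boundary: one must verify that spectral truncation of $A$ followed by mollification in $t$ converges in the graph norm of $D$, which rests on the Friedrichs-type commutator bound for the first-order remainders $R_t$, $\tilde R_t$, and --- in the trace-zero refinement used for (iii) --- on the compatibility of truncation with the trace, so that the inward translates stay in $\dom(D_{\max})$. Everything else is bookkeeping on the lemmas of Sections~\ref{secMOP} and \ref{secMaxDom}.
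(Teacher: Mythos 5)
Your parts (ii) and the extension of Green's formula \eqref{eq:ParInt} match the paper's proof, but for (i) and (iii) you take a genuinely different route, and that route has a concrete gap. The paper's (i) is short and avoids mollification altogether: one extends $(M,\mu)$, the bundles, and $D$ across $\dM$ to a manifold $\tilde M$ without boundary, observes that any $\Psi\in\dom((D_c)^{\ad})$ extends by zero to a weak solution of $\tilde D^*$ on $\tilde M$, invokes interior elliptic regularity to get $\tilde\Psi\in H^1_{\loc}$, and then reads off $\overline{D_c}=D_{\max}$ from the adjoint chain $(D_c)^{\ad}\subset(D^*)_{\min}$ via \cref{domdmin}. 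Your argument instead regularizes $\chi_0\Phi$ by the spectral cutoff $J_\eps=\eta(\eps A)$ followed by mollification in $t$, and it hinges on the assertion that $[R_t,J_\eps]$ is $L^2(\dM)$-bounded uniformly in $\eps\in(0,1]$ and $t\in[0,\rho]$ and tends to $0$ strongly. That commutator bound is the crux, as you acknowledge, and it is not a citable fact in the form you need: the classical Friedrichs lemma treats convolution mollifiers, whereas $\eta(\eps A)$ is a spectral (semiclassical) truncation, and the uniform $L^2$-bound for the first-order tangential operator $R_t$ against $\eta(\eps A)$ requires a genuine pseudo-differential or contour-integral argument, which you do not supply. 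It is probably true, but proving it carefully would cost as much as the paper's entire proof and would re-import exactly the pseudo-differential machinery the authors set out to avoid. Until that estimate is in place, (i) is incomplete; and since your (iii) ``runs the regularization of part~(i)'' once more --- and additionally needs $\mathcal R\circ J_\eps=J_\eps\circ\mathcal R$ on all of $\dom(D_{\max})$ rather than merely on $\Cuc(M,E)$ --- it inherits the same gap.

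On (iii) there is also a second, independent divergence. The paper puts $\phi:=Q_{(0,\infty)}\mathcal R\Phi$, checks $\mathcal E\phi\in H^1_D(M,E)$ by an eigenbasis expansion, and then invokes the a priori estimate of \lref{esth1} (for sections with vanishing $Q_{(0,\infty)}$-trace), together with the density from (i), to conclude $\Phi-\mathcal E\phi\in H^1_D(M,E)$. There is no need to subtract the full trace and no need to show $\Phi-\mathcal E\phi\in\dom(D_{\min})$. Your choice $\phi:=\mathcal R\Phi$ and the detour through $\dom(D_{\min})$ force you back into the mollification machinery. If you wish to keep your architecture you must supply and prove the spectral Friedrichs estimate for $[R_t,\eta(\eps A)]$; if you prefer the paper's economy, use the extension-to-$\tilde M$ trick for (i) and the $Q_{(0,\infty)}$-split together with \lref{esth1} for (iii).
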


\begin{proof}
\eqref{dense}
Extend $(M,\mu)$ smoothly to a larger measured manifold $(\tilde M,\tilde\mu)$
with $\partial\tilde M=\emptyset$.
Do it in such a way that $E$ and $F$ extend smoothly to Hermitian vector bundles
$\tilde E$ and $\tilde F$ over $\tilde M$  and $D$ to an elliptic operator
$\tilde D:\Cu(\tilde M,\tilde E) \to \Cu(\tilde M,\tilde F)$.
This is possible, since we may choose $\tilde M\setminus M$
to be diffeomorphic to the product $\dM\times(-1,0)$.

\begin{center}
\begin{pspicture}(-4,-1.5)(4,1.5)
\psellipse[linestyle=dashed](-3,0)(.25,1.2)
\psellipticarc(-3,0)(.25,1.2){95}{265}
\psecurve(-4,2)(-3,1.2)(-2,1)(3,1.5)(4,2.5)
\psecurve(-4,-2)(-3,-1.2)(-2,-1)(3,-1.5)(4,-2.5)
\psecurve(4,1)(3,1)(1.5,0)(3,-1)(4,-1)
\psellipse[linestyle=dashed](-2,0)(.25,1)
\psellipticarc(-2,0)(.25,1){100}{260}

\rput(-2.2,1.4){$\tilde M$}
\rput(-1.8,0){\psframebox*{$\dM$}}
\rput(0,0){$M$}
\end{pspicture} 

{\em Fig.~5}
\end{center}

Let $D_c$ be $D$ with domain $\dom(D_c)=C^\infty_{c}(M,E)$.
We have to show that the closure of $D_c$ equals $\Dmax$.
Let $\Psi\in L^2(M,F)$ be in the domain of the adjoint operator $(D_c)^{\ad}$.
Extend $\Psi$ and $(D_c)^{\ad}\Psi$ by the trivial section $0$ 
on $\tilde M\setminus M$ to sections $\tilde\Psi\in L^2(\tilde M,\tilde F)$
and $\tilde\Xi\in L^2(\tilde M,\tilde E)$, respectively.
Let $\tilde\Phi\in\Cuc(\tilde M,\tilde E)=\Cucc(\tilde M,\tilde E)$.
Then the restriction $\Phi$ of $\tilde\Phi$ to $M$ is in $\dom(D_c)$ and hence,
since $\tilde\Psi=\tilde\Xi=0$ on $\tilde M\setminus M$,
\begin{align*}
  (\tilde D\tilde\Phi,\tilde\Psi)_{L^2(\tilde M)}
  &=
  (D_c\Phi,\Psi)_{L^2(M)} \\
  &=
  (\Phi,(D_c)^{\ad}\Psi)_{L^2(M)}\\
  &=
  (\tilde\Phi,\tilde\Xi)_{L^2(\tilde M)} .
\end{align*}
Thus $\tilde\Psi$ is a weak solution of 
$\tilde D^*\tilde\Phi=\tilde\Xi\in L^2(\tilde M,\tilde E)$.
By interior elliptic regularity theory, $\tilde\Psi\in \Hlloc(\tilde M,\tilde F)$.
It follows that $\Psi$ is in $H^1_{D^*}(M,F)$ and that $\Psi_{|\dM}=0$.
By \cref{domdmin}, $\Psi\in\dom((D^*)_{\min})$,
the domain of the minimal extension of $(D^*)_{cc}$.
Hence $(D_c)^{\ad}\subset(D^*)_{\min}$,
and therefore the closure $\overline D_c$ of $D_c$ satisfies  
\[
\overline D_c
=
((D_c)^{\ad})^{\ad}
\supset
((D^*)_{\min})^{\ad}
=
D_{\max}
\supset
\overline D_c .
\]
Thus $\overline D_c=D_{\max}$ as asserted.

\eqref{trace}
By \lref{destimate},
the trace map $\Cuc(M,E) \to \Cu(\dM,E)$, $\Phi \mapsto \Phi_{|\dM}$,
extends to a bounded linear map $\dom(\Dmax) \to \check H(A)$.
By \eqref{dense}, this extension is unique.
By \lref{extfin2}, the map $\mathcal E: \Hfin(A) \to \Cuc(M,E)$
extends to a bounded linear map
$\mathcal E: \check H(A) \to \dom(\Dmax)$ with $\mathcal R \circ \mathcal E = \id$.
This proves surjectivity.

\eqref{reg}
The inclusion
$H^1_D(M,E)\subset\{\Phi\in\dom(D_{\max}) \mid\mathcal R\Phi\in H^{1/2}(\dM) \}$
is clear from the definition of $H^1_D(M,E)$ and the standard trace theorem.
To show the converse inclusion,
let $\Phi\in\dom(D_{\max})$ with $\mathcal R\Phi\in H^{1/2}(\dM)$.
Put $\phi:=Q_{(0,\infty)}\mathcal R\Phi\in H^{1/2}(\dM)$.
Expanding $\phi$ with respect to an eigenbasis of $A$,
one easily sees that $\mathcal E\phi\in H^1_D(M,E)$.
In particular, $\Phi-\mathcal E\phi\in \dom(D_{\max})$
and $Q_{(0,\infty)}\mathcal R(\Phi-\mathcal E\phi)=0$.
Now \eqref{dense} and \lref{esth1} imply $\Phi-\mathcal E\phi\in H^1_D(M,E)$.
Thus
\[
  \Phi = \mathcal E\phi + (\Phi-\mathcal E\phi) \in H^1_D(M,E).
\]
The asserted formula for integration by parts holds for all $\Phi, \Psi \in \Cuc(M,E)$,
hence for all $\Phi\in\dom(D_{\max})$ and all $\Psi\in \dom((D^*)_{\max})$,
by \eqref{dense}, \eqref{trace}, and \lref{duality}.
\end{proof}

%%%%%%%%%%%%%%%%%%%%%%%%%%%%%%%%%%%%%%%%%%%%%%

\subsection{Higher regularity}
\label{susehire}

%%%%%%%%%%%%%%%%%%%%%%%%%%%%%%%%%%%%%%%%%%%%%%
Fix $\rho>0$.
For $g\in L^2([0,\rho],\C)$ and $t\in[0,\rho]$ set
\begin{equation}\label{est4}
  (R_\lambda g)(t) :=
  \begin{cases}
  \int_0^t g(s) \, e^{\lambda(s-t)} ds
  &\mbox{if $\lambda\ge0$} , \\
  - \int_t^\rho g(s) \, e^{\lambda(s-t)} ds
  &\mbox{if $\lambda<0$} .
  \end{cases}
\end{equation}
For $\lambda>0$, we compute for $f=R_{\lambda}g$:
{\allowdisplaybreaks
\begin{align*}
  \| f \|_{L^2([0,\rho])}^2 
  &= 
  \int_0^\rho \left|\int_0^t g(s) e^{\lambda(s-t)}  ds \right|^2 dt \\
  &\le 
  \int_0^\rho \left(\int_0^t |g(s)|^2 e^{\lambda(s-t)} ds \right)
  \left( \int_0^t e^{\lambda(s-t)} ds \right) dt \\
  &\le 
  \frac{1}{\lambda} \int_0^\rho \int_s^\rho |g(s)|^2 e^{\lambda(s-t)} \,dt\,ds \\
  &= 
  \frac{1}{\lambda^2} \int_0^\rho |g(s)|^2 \, ds \\
  &=  
  \frac{1}{\lambda^2} \| g \|_{L^2([0,\rho])}^2 .
\end{align*}}
There is a similar computation in the case $\lambda<0$.
We obtain
\begin{equation}
  \|f\|_{L^2([0,\rho])}^2
  \le \begin{cases} 
  \frac1{\lambda^2} \|g\|_{L^2([0,\rho])}^2 &\text{for $\lambda\ne0$}, \\
  \frac{\rho^2}{2}\|g\|_{L^2([0,\rho])}^2 &\text{for $\lambda=0$} .
  \end{cases}
\label{eq:L2}
\end{equation}
Now $f$ satisfies 
\begin{equation}
f'+\lambda f=g.
\label{appest1}
\end{equation}
Hence
\[
  \|f'\|_{L^2([0,\rho])}^2 \le 
  \begin{cases} 
  4\|g\|_{L^2([0,\rho])}^2  &\text{for $\lambda\ne0$}, \\
  \phantom{4}\|g\|_{L^2([0,\rho])}^2 &\text{for $\lambda=0$} .
  \end{cases}
\]
In conclusion,
\begin{equation}
  \|f\|_{H^1([0,\rho])}^2  \le
  \begin{cases} 
  (4+\frac1{\lambda^2})\|g\|_{L^2([0,\rho])}^2 &\text{for $\lambda\ne0$}, \\
  (1+\frac{\rho^2}2)\|g\|_{L^2([0,\rho])}^2 &\text{for $\lambda=0$} .
  \end{cases}
\label{eq:H1}
\end{equation}

\begin{remarks}
(a)
The above considerations are an adaption and small simplification
of the corresponding discussion in \cite[Sec.~2]{APS}.

(b)
Note that the bound in \eqref{eq:H1} is independent of $\rho$ for $\lambda\ne 0$
and depends only on an upper bound on $\rho$ for $\lambda=0$.
\end{remarks}

 From now on assume that $\rho<r$ where $r<\infty$ is as in \lref{adapted}.
We apply the above discussion to sections of $E$
over the cylinder $Z_{[0,\rho]}=[0,\rho]\times \dM$.
We choose an orthonormal Hilbert basis of $L^2(\dM,E)$
consisting of eigensections $\phi_j$ of $A$ with corresponding
eigenvalues $\lambda_j$ as in \sref{secMOP}. 
For 
\[
 \Psi \in L^2(Z_{[0,\rho]},F)=L^2([0,\rho],L^2(\dM,F)), \quad
  \sigma _0^{-1}\Psi(t,x) = \sum_{j=-\infty}^\infty g_j(t)\phi_j(x)  , 
\]
we set
\begin{equation}
  S_0\Psi := \sum_{j=-\infty}^\infty  (R_{\lambda_j}g_j) \phi_j .
\label{eq:DefS0}
\end{equation}
Estimates \eqref{eq:L2} and \eqref{eq:H1} show that $\Phi=S_0\Psi$ satisfies
\begin{align*}
  \|\Phi\|_{H^1(Z_{[0,\rho]})}^2
  &\le C_1 \left(\|\Phi\|_{L^2(Z_{[0,\rho]})}^2
  + \|\Phi'\|_{L^2(Z_{[0,\rho]})}^2 + \|A\Phi\|_{L^2(Z_{[0,\rho]})}^2\right) \\
  &= C_1 \sum_{j=-\infty}^\infty \left(\|R_{\lambda_j}g_j\|_{H^1([0,\rho])}^2+\lambda_j^2\|R_{\lambda_j}g_j\|_{L^2([0,\rho])}^2\right) \\
  &\le C_1C_2\sum_{j=-\infty}^\infty \|g_j\|_{L^2([0,\rho])}^2 \\
  &= C_1C_2 \|\sigma _0^{-1}\Psi\|_{L^2(\Zrq)}^2 \\
  &\le C_1C_2C_3 \|\Psi\|_{L^2(\Zrq)}^2 .
\end{align*}
Here $C_1$ depends on the specific choice of $H^1$-norm, $C_2$ on $r$
(which is an upper bound for $\rho$) and a lower bound for the modulus
of the nonzero $\lambda_j$, and $C_3$ on an upper bound for $\sigma _0^{-1}$.
Differentiating \eqref{appest1} repeatedly,
we get
\begin{equation}
  \|\Phi\|_{H^{m+1}(\Zrq)}^2 \le C_4 \|\Psi\|_{H^m(\Zrq)}^2 ,
\label{eq:S}
\end{equation}
where $C_4$ depends on the specific choice of $H^k$-norms, on $m$, on $r$,
on a lower bound for the nonzero $|\lambda_j|$, and on an upper bound
for the derivatives of order up to $m$ of $\sigma _0^{-1}$.
By definition, $C_4$ is a bound for
\[
  S_0: H^m(\Zrq,F) \to H^{m+1}(\Zrq,E) .
\]
Moreover, with the model operator $D_0$ as in \eqref{moop}, we have
\begin{equation}
  D_0S_0\Psi = \Psi
\label{eq:DoS}
\end{equation}
and, by definition, $S_0\Psi$ satisfies the boundary condition
\begin{equation}
  Q_{[0,\infty)} (S_0\Psi)(0) = 0 
  \quad\text{and}\quad
  Q_{(-\infty,0)} (S_0\Psi)(\rho) = 0 .
 \label{eq:Randbed}
\end{equation}
By \tref{domdmax} \eqref{trace}, the boundary values of elements in the maximal
domain of $D_0$ over $\Zrq$ constitute the space $\check H(A)\oplus\hat H(A)$,
where $\check H(A)$ is responsible for the left part of the boundary,
$\{0\}\times \dM$, and $\hat H(A)$ for the right, $\{\rho\}\times \dM$.
Set
\begin{equation}
  B_0 := H_{(-\infty,0)}^{1/2}(A)\oplus H_{[0,\infty)}^{1/2}(A) 
\label{eq:DefB0}
\end{equation}
and, for $m\ge1$,
\[
H^m(\Zrq,E;B_0) := \{f\in H^m(\Zrq,E)\mid (f(0),f(\rho))\in B_0 \} .
\]
By \eqref{eq:Randbed},
$S_0$ is a bounded operator $H^m(\Zrq,F) \to H^{m+1}(\Zrq,E;B_0)$.

\begin{prop}\label{riso}
Let $S_0$ be as in \eqref{eq:DefS0} and $B_0$ as in \eqref{eq:DefB0}.
Then, for all $m\ge0$, the model operator
\[
  D_0: H^{m+1}(\Zrq,E;B_0) \to H^m(\Zrq,F)
\]
is an isomorphism with inverse $S_0$.
Furthermore, for elements $\Phi$ in the maximal domain of $D_0$
satisfying $Q_{(-\infty,0)}(\Phi(\rho))=0$, we have 
\[
\Phi-S_0D_0\Phi = \exp(-tA)(Q_{[0,\infty)}\Phi(0)) .
\]
\end{prop}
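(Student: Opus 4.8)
The plan is to lean on the two facts already assembled before the statement: estimate \eqref{eq:S} together with the boundary behaviour \eqref{eq:Randbed} show that $S_0$ maps $H^m(\Zrq,F)$ boundedly into $H^{m+1}(\Zrq,E;B_0)$, and \eqref{eq:DoS} says $D_0S_0=\id$ on $H^m(\Zrq,F)$. Thus $S_0$ is a bounded right inverse of $D_0$, and everything reduces to one injectivity-type statement, which I would isolate first: \emph{if $W$ lies in the maximal domain of $D_0$ over $\Zrq$, $D_0W=0$, and $(W(0),W(\rho))\in B_0$, then $W=0$.} Here the traces $W(0)\in\check H(A)$ and $W(\rho)\in\hat H(A)$ make sense by \tref{domdmax}\,\eqref{trace} applied to $D_0$ over the compact cylinder $\Zrq$ (where completeness is automatic), as recalled just before the statement.

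To prove this vanishing: since $\sigma_0$ is invertible and $t$-independent, $D_0W=0$ is equivalent to $W'+AW=0$. Expanding $W(t,x)=\sum_j w_j(t)\phi_j(x)$ in the eigenbasis $(\phi_j)$ of $A$, each coefficient solves the scalar equation $w_j'+\lambda_jw_j=0$ on $(0,\rho)$ (mode by mode this forces $w_j\in H^1([0,\rho])$), hence $w_j(t)=w_j(0)\,e^{-\lambda_jt}$, so $W(t)=\exp(-tA)\,W(0)$. The condition $(W(0),W(\rho))\in B_0$ unwinds to $Q_{[0,\infty)}W(0)=0$ and $Q_{(-\infty,0)}W(\rho)=0$. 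The first forces $w_j(0)=0$ whenever $\lambda_j\ge0$; the second, combined with $w_j(\rho)=w_j(0)e^{-\lambda_j\rho}$ and $e^{-\lambda_j\rho}\ne0$, forces $w_j(0)=0$ whenever $\lambda_j<0$. So all $w_j$ vanish and $W=0$.

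Granting this, I would finish as follows. For $\Phi$ in the maximal domain of $D_0$ over $\Zrq$ with $Q_{(-\infty,0)}(\Phi(\rho))=0$, put $\Psi:=D_0\Phi\in L^2(\Zrq,F)$ and
\[
  W := \Phi - S_0\Psi - \exp(-tA)\bigl(Q_{[0,\infty)}\Phi(0)\bigr).
\]
The term $S_0\Psi$ lies in $H^1(\Zrq,E)$ by \eqref{eq:S}, and $\exp(-tA)\bigl(Q_{[0,\infty)}\Phi(0)\bigr)$ lies in $L^2(\Zrq,E)$ — by the same computation as in the proof of \lref{extfin} — and is annihilated by $D_0$; hence $W$ is in the maximal domain and $D_0W=\Psi-\Psi-0=0$ by \eqref{eq:DoS}. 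Taking traces and using \eqref{eq:Randbed}: at $t=0$, $Q_{[0,\infty)}(S_0\Psi)(0)=0$ while the last term evaluated at $t=0$ contributes $Q_{[0,\infty)}\Phi(0)$, so $Q_{[0,\infty)}W(0)=0$; at $t=\rho$, $Q_{(-\infty,0)}(S_0\Psi)(\rho)=0$, $Q_{(-\infty,0)}\Phi(\rho)=0$ by hypothesis, and $\exp(-\rho A)\bigl(Q_{[0,\infty)}\Phi(0)\bigr)$ lies in the nonnegative spectral subspace, so $Q_{(-\infty,0)}W(\rho)=0$. Thus $(W(0),W(\rho))\in B_0$, and the vanishing statement gives $W=0$, which is precisely the asserted identity $\Phi-S_0D_0\Phi=\exp(-tA)(Q_{[0,\infty)}\Phi(0))$. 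Finally, if $\Phi\in H^{m+1}(\Zrq,E;B_0)$ then $Q_{[0,\infty)}\Phi(0)=0$, so the identity reads $S_0D_0\Phi=\Phi$; together with $D_0S_0=\id$ and the boundedness of $S_0$ and $D_0$ this shows that $D_0\colon H^{m+1}(\Zrq,E;B_0)\to H^m(\Zrq,F)$ is an isomorphism with inverse $S_0$.

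The main obstacle is the low-regularity bookkeeping at the level of the maximal domain: making rigorous that a weak solution of $W'+AW=0$ really equals $\exp(-tA)W(0)$ with $W(0)$ a bona fide element of $\check H(A)$ on which the spectral projections act, and that $\exp(-tA)$ applied to the $H^{-1/2}$-part of a boundary value lands in $L^2(\Zrq,E)$ inside the maximal domain with the expected traces at both ends. These points are all supplied by \tref{domdmax} and the computations behind \lref{extfin}, but they must be invoked carefully; the ODE solving and the spectral-projection manipulations are otherwise entirely routine.
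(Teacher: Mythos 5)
Your proof is correct and rests on exactly the same Fourier-mode/ODE argument as the paper's, differing only in organization: you first establish a uniqueness lemma for the kernel of $D_0$ subject to $B_0$, deduce the identity $\Phi-S_0D_0\Phi=\exp(-tA)(Q_{[0,\infty)}\Phi(0))$ by showing the difference satisfies that lemma, and then read off the isomorphism as the special case $Q_{[0,\infty)}\Phi(0)=0$, whereas the paper proves injectivity directly for the isomorphism claim and then computes $\Phi-S_0D_0\Phi$ mode by mode for the second assertion. Both arguments are the same in substance.
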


\begin{proof}
Surjectivity of $D_0$ follows from \eqref{eq:DoS} and \eqref{eq:Randbed}.
The Fourier coefficients $f_j$ of an element in the kernel of $D_0$
satisfy $f_j'+\lambda_j f_j=0$, thus the boundary condition $B_0$,
i.e.\ $f_j(0)=0$ or $f_j(\rho)=0$, implies that they vanish.
This shows injectivity and proves the first assertion.

Let $\Phi$ be in the maximal domain of $D_0$ with $Q_{(-\infty,0)}(\Phi(\rho))=0$.
Since $D_0(\Phi-S_0D_0\Phi)=0$,
the Fourier coefficients $f_j$ of $\Phi-S_0D_0\Phi$ satisfy $f_j'+\lambda_j f_j=0$.
From $Q_{(-\infty,0)}(\Phi(\rho))=0$ and $Q_{(-\infty,0)}(S_0D_0\Phi(\rho))=0$
we have $f_j(\rho)=0$ for all $\lambda_j<0$.
Hence $f_j\equiv 0$ whenever $\lambda_j<0$.
Thus 
\begin{equation}
(\Phi-S_0D_0\Phi)(t,x) = \sum_{\lambda_j\ge0} e^{-t\lambda_j}f_j(0)\phi_j(x).
\label{eq:riso1}
\end{equation}
On the other hand,
\begin{align}
 \exp(-tA)(Q_{[0,\infty)}\Phi(0))
&=
\exp(-tA)(Q_{[0,\infty)}(\Phi-S_0D_0\Phi)(0)) \nonumber\\
&=
\exp(-tA)\big(\sum_{\lambda_j\ge0} f_j(0)\phi_j\big) \nonumber\\
&=
\sum_{\lambda_j\ge0} e^{-t\lambda_j}f_j(0)\phi_j .
\label{eq:riso2}
\end{align}
Equating \eqref{eq:riso1} and \eqref{eq:riso2} concludes the proof.
\end{proof}

We return from the model operator $D_0$ to the original operator $D$.

\begin{lemma}\label{lem:DDo}
Let $(M,\mu)$ be a measured manifold with compact boundary
and let $D$ and $A$ be as in the Standard Setup~\ref{stase}.
Let $m\ge0$.
If $\rho>0$ is sufficiently small, then
\begin{enumerate}[(i)]
\item \label{DDoi}
$D_0$ and $D$ have the same maximal domain when regarded as unbounded
operators from $L^2(Z_{[0,\rho]},E)$ to $L^2(Z_{[0,\rho]},F)$.
\item\label{DDoii}
$(D-\sigma_0R_0): H^{m+1}(Z_{[0,\rho]},E;B_0) \to H^m(Z_{[0,\rho]},F)$
is an isomorphism.
\end{enumerate}
\end{lemma}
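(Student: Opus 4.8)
The plan is to regard $D$ near the boundary as a perturbation of the model operator $D_0 = \sigma_0(\partial/\partial t + A)$, using the normal form $D = \sigma_t(\partial/\partial t + A + R_t)$ of \lref{nf}, and to make the perturbation small by choosing $\rho$ small. Since $\sigma_t - \sigma_0 = \OO(t)$ and, by smoothness in $t$, $R_t - R_0 = \OO(t)$, the first-order operator
\[
  P \;:=\; D - \sigma_0 R_0 - D_0 \;=\; (\sigma_t - \sigma_0)\sigma_0^{-1} D_0 + (\sigma_t - \sigma_0) R_t + \sigma_0 (R_t - R_0)
\]
over $Z_{[0,\rho]}$ has all coefficients vanishing at $t = 0$ and depending smoothly on $t$; in particular each coefficient of $P$ is $\OO(\rho)$ in $L^\infty(Z_{[0,\rho]})$.

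For part (ii) with $m = 0$ this gives $\|P\Phi\|_{L^2(Z_{[0,\rho]})} \le C\rho\,\|\Phi\|_{H^1(Z_{[0,\rho]})}$ with $C$ independent of $\rho$. By \pref{riso}, $D_0\colon H^{1}(Z_{[0,\rho]},E;B_0) \to L^2(Z_{[0,\rho]},F)$ is an isomorphism with inverse $S_0$ bounded uniformly in $\rho < r$, and since the invertible bounded operators form an open set, $D - \sigma_0 R_0 = D_0 + P$ is an isomorphism once $C\rho\,\|S_0\| < 1$. For $m \ge 1$ one writes $D - \sigma_0 R_0 = D_0\,(\id + S_0 P)$ on $H^{m+1}(Z_{[0,\rho]},E;B_0)$, where $S_0 P$ is the sum of an operator of norm $\OO(\rho)$ and one that factors through the compact embedding $H^{m+1} \hookrightarrow H^m$; thus $\id + S_0 P$ is Fredholm of index zero, and it is injective because $D - \sigma_0 R_0$ is already injective on $H^1(Z_{[0,\rho]},E;B_0)$ by the case $m = 0$, hence it is an isomorphism, and so is $D - \sigma_0 R_0$.

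For part (i): by interior elliptic regularity the maximal domains of $D$ and of $D_0$ over the open interior of $Z_{[0,\rho]}$ coincide, so only the behaviour at the two boundary hypersurfaces is at issue. Near $\{0\}\times\dM$, \lref{lem:relbnd} states precisely that $D - D_0$ is $D_0$-bounded with relative bound at most $C\rho$. Near $\{\rho\}\times\dM$ one substitutes $s = \rho - t$; in that coordinate $D$ is in normal form with adapted operator $-A$ and remainder $-R_{\rho - s}$, whose first-order part is $\OO(\rho)$ uniformly on $Z_{[0,\rho]}$, so the analogous $D_0$-bound holds there. Gluing the two local estimates with a partition of unity subordinate to $\{[0,2\rho/3),\,(\rho/3,\rho]\}$ --- the commutators of $D_0$ with the cut-off functions being of order zero --- one obtains
\[
  \|(D - D_0)\Phi\|_{L^2(Z_{[0,\rho]})} \;\le\; C\rho\,\|D_0\Phi\|_{L^2(Z_{[0,\rho]})} + C\,\|\Phi\|_{L^2(Z_{[0,\rho]})}
\]
for every $\Phi$ in the maximal domain of $D_0$ over $Z_{[0,\rho]}$. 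Then, for $\rho$ so small that $C\rho < 1$, the identity $D = D_0 + (D - D_0)$ shows $D_0\Phi \in L^2 \Leftrightarrow D\Phi \in L^2$, with equivalent graph norms, which is part (i).

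The step I expect to be the main obstacle is the last one above: upgrading the relative-boundedness estimate of \lref{lem:relbnd} from compactly supported sections --- where the boundary contributions in the integration-by-parts identity of \lref{intpart} are killed by passing from $\Phi$ to $t\Phi$ --- to \emph{all} of the maximal domain of $D_0$ over $Z_{[0,\rho]}$, and in particular to sections concentrated near the new boundary hypersurface $\{\rho\}\times\dM$, whose trace need only lie in $\hat H(A) \supsetneq L^2(\dM,E)$, so that $A\Phi$ is a priori not square integrable there. One should split such a section as in \pref{riso} into an $H^1$-part plus the exponential tails $\exp(-tA)\,Q_{[0,\infty)}\Phi(0)$ anchored at the two ends, estimate the $H^1$-part by \lref{lem:relbnd} and the tails by an explicit Fourier-series computation in the spirit of \lref{extfin}, and recombine.
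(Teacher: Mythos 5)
Your argument for part \eqref{DDoii} is correct and essentially the same as the paper's; for $m\ge1$ the paper simply repeats the smallness estimate at level $m$ rather than running your Fredholm-plus-injectivity argument, but both work.

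Your worry about part \eqref{DDoi} is legitimate: \lref{lem:relbnd} is proved only for sections that vanish to infinite order at $t=\rho$, and these do \emph{not} form a core for $D_{0,\max}$ over $Z_{[0,\rho]}$, so Kato's theorem does not directly yield equality of the \emph{maximal} domains. But your repair does not close the gap, for a structural reason. The mechanism of \lref{lem:relbnd} is not that the remainder is small; it is that its first-order part is $\OO(t)$, i.e.\ \emph{vanishes at the boundary hypersurface one is sitting on}. That is what lets one write $t\|A\Phi\|_{L^2(\dM)}=\|A(t\Phi)\|_{L^2(\dM)}$ and apply \lref{intpart} to $t\Phi$, whose trace at $t=0$ is zero. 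After $s=\rho-t$, the first-order part of $-R_{\rho-s}$ at $s=0$ is $\OO(\rho)$, not $\OO(s)$; multiplying by $s$ does not absorb it, and the boundary term in the analogue of \lref{intpart} at $\{\rho\}\times\dM$ has indefinite sign.

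In fact the gap cannot be closed, because assertion \eqref{DDoi} read literally is stronger than what is true. Take $\dM=S^1$, $A=-i\partial_\theta$, $D=\partial_t+(1+t)A$, so $D_0=\partial_t+A$, $R_t=tA$, $R_0=0$. For a unit eigensection $\phi_{-\lambda}$ of $A$ with eigenvalue $-\lambda<0$, the tail $\Phi_\lambda:=\sqrt\lambda\,e^{-\lambda(\rho-t)}\phi_{-\lambda}$ satisfies $\|\Phi_\lambda\|_{L^2(Z_{[0,\rho]})}\sim 1$ and $D_0\Phi_\lambda=0$, but
\[
  \|D\Phi_\lambda\|_{L^2(Z_{[0,\rho]})}=\|tA\Phi_\lambda\|_{L^2(Z_{[0,\rho]})}
  \sim \frac{\lambda\rho}{\sqrt2}\longrightarrow\infty,
\]
so $D-D_0$ is not relatively $D_0$-bounded on $\dom(D_{0,\max})$ over $Z_{[0,\rho]}$, for any $\rho>0$; and the superposition $\sum_{j\ge N} j^{-1/4}e^{-j(\rho-t)}\phi_{-j}$ lies in $\dom(D_{0,\max})$ but not in $\dom(D_{\max})$. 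These are exactly the sections whose trace at $\{\rho\}\times\dM$ lies in $\hat H(A)$ but not in $H^{1/2}(\dM,E)$ — the case your last paragraph singles out; the Fourier computation you propose there would expose this divergence rather than absorb it. What is true, and all the paper actually uses (in the proof of \tref{charasob} the section $\Phi$ has support in $Z_{[0,\rho)}$), is the statement for sections vanishing near $\{\rho\}\times\dM$: for those, $\Phi\in\dom(D_{\max})$ if and only if $\Phi\in\dom(D_{0,\max})$, with comparable graph norms. This follows by viewing $\Phi$ in $\dom(D_{\max}|_{Z_{[0,\rho]}})$, approximating by $\Cuc(Z_{[0,\rho]},E)$ in graph norm (\tref{domdmax}\eqref{dense}, automatic on the compact cylinder), cutting off near $t=\rho$, applying \lref{lem:relbnd}, and using closedness once $C\rho<1$. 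You should prove and cite that restricted version.
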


\begin{proof}
By \lref{lem:relbnd}, $D-D_0$ is relatively $D_0$-bounded with constant $C\rho$.
For $\rho$ so small that $C\rho<1$ the first assertion follows
from \cite[Thm.~1.1, p.~190]{Ka}.

If $\rho$ is small enough, we have by \lref{nf}, for given $\eps>0$,
\[
  \| (D_0 - (D-\sigma_0R_0))(\Phi) \|_{H^m(Z_{[0,\rho]},E)}
  \le \eps \|\Phi\|_{H^{m+1}(Z_{[0,\rho]},E)}
\]
for all $\Phi\in H^{m+1}(M,E)$.
Since $\rho$ is small,
the norm of the inverse $S_0:H^m(Z_{[0,\rho]},F)\to H^{m+1}(Z_{[0,\rho]},E;B_0)$
of $D_0$ is bounded by a constant $C$ independent of $\rho$,
see \eqref{eq:S}.
Thus if $\eps < 1/C$, then
\[
  (D-\sigma_0R_0):H^{m+1}(Z_{[0,\rho]},E;B_0)\to H^m(Z_{[0,\rho]},F)
\]
is also an isomorphism.
\end{proof}

\begin{thm}\label{charasob}
Assume the Standard Setup~\ref{stase} and that $D$ and $D^*$ are complete.
Then, for any $m\ge0$, 
\begin{multline*}
  \dom(D_{\max}) \cap H^{m+1}_{\loc}(M,E) \\
  = \{\Phi\in\dom(D_{\max}) \mid D\Phi\in H^m_{\loc}(M,F)
  \mbox{ and } Q_{[0,\infty)}(\mathcal R\Phi)\in H^{m+1/2}(\dM,E)\} .
\end{multline*}
\end{thm}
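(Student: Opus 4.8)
The plan is to prove the two inclusions of \tref{charasob} separately. The inclusion ``$\subseteq$'' is immediate: if $\Phi\in\dom(D_{\max})\cap H^{m+1}_{\loc}(M,E)$, then $D\Phi\in H^m_{\loc}(M,F)$ because $D$ has order one, and $\mathcal R\Phi\in H^{m+1/2}(\dM,E)$ by the trace theorem, whence $Q_{[0,\infty)}(\mathcal R\Phi)\in H^{m+1/2}(\dM,E)$ since every spectral projection $Q_I$ preserves the spaces $H^s(\dM,E)$.

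For the inclusion ``$\supseteq$'' I would induct on $m$. The case $m=0$ is a restatement of \tref{domdmax} \eqref{reg}: for $\Phi\in\dom(D_{\max})$ one has $\mathcal R\Phi\in\check H(A)=H^{1/2}_{(-\infty,0)}(A)\oplus H^{-1/2}_{[0,\infty)}(A)$, so $Q_{[0,\infty)}(\mathcal R\Phi)\in H^{1/2}(\dM,E)$ is equivalent to $\mathcal R\Phi\in H^{1/2}(\dM,E)$, while $D\Phi\in H^0_{\loc}=L^2_{\loc}$ is automatic; hence the right-hand side for $m=0$ equals $\{\Phi\in\dom(D_{\max})\mid\mathcal R\Phi\in H^{1/2}(\dM,E)\}=H^1_D(M,E)=\dom(D_{\max})\cap H^1_{\loc}(M,E)$. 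For the inductive step, let $m\ge1$ and $\Phi\in\dom(D_{\max})$ with $D\Phi\in H^m_{\loc}$ and $Q_{[0,\infty)}(\mathcal R\Phi)\in H^{m+1/2}(\dM,E)$. The inductive hypothesis, applied to the weaker data $D\Phi\in H^{m-1}_{\loc}$ and $Q_{[0,\infty)}(\mathcal R\Phi)\in H^{m-1/2}(\dM,E)$, already gives $\Phi\in H^m_{\loc}(M,E)$; and interior elliptic regularity \eqref{intelreg} gives $\Phi\in H^{m+1}_{\loc}$ off $\dM$, so only $H^{m+1}$-regularity in a collar of $\dM$ remains. I would first record, via the eigenfunction expansion used in the proof of \tref{domdmax} \eqref{reg}, that the extension map $\mathcal E$ of \eqref{exfi} sends $H^{m+1/2}_{[0,\infty)}(A)$ boundedly into $H^{m+1}(Z_{[0,r)},E)$. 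Replacing $\Phi$ by $\Phi-\mathcal E\big(Q_{[0,\infty)}(\mathcal R\Phi)\big)$ keeps all hypotheses, and it suffices to prove the conclusion after this replacement; so I may assume $Q_{[0,\infty)}(\mathcal R\Phi)=0$.

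Now I would localize to a collar. Fix $\rho\in(0,r)$ so small that \lref{lem:DDo} \eqref{DDoii} holds at levels $m-1$ and $m$, and pick $\eta\in\Cuc(M,\R)$ that, in the collar, depends only on $t$, equals $1$ on $Z_{[0,\rho/2]}$, and is supported in $Z_{[0,3\rho/4]}$. By \lref{product}, $u:=\eta\Phi\in\dom(D_{\max})$ and $D(\eta\Phi)=\eta'\sigma_t\Phi+\eta\,D\Phi$ on $Z_{[0,r)}$; restricted to $Z_{[0,\rho]}$ one has $u\in H^m(Z_{[0,\rho]},E)$, the trace of $u$ at $\{\rho\}\times\dM$ vanishes, and its trace at $\{0\}\times\dM$ is $\mathcal R\Phi\in H^{m-1/2}_{(-\infty,0)}(A)$, so $u\in H^m(Z_{[0,\rho]},E;B_0)$ with $B_0$ as in \eqref{eq:DefB0}. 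Moreover $(D-\sigma_0R_0)u=\eta'\sigma_t\Phi+\eta\,D\Phi-\sigma_0R_0u$, and each summand lies in $H^m(Z_{[0,\rho]},F)$: the first is supported where $\Phi$ is $H^{m+1}_{\loc}$ by interior regularity, the second since $D\Phi\in H^m_{\loc}$, and the third since $R_0$ has order zero and $u\in H^m$. By \lref{lem:DDo} \eqref{DDoii} at level $m$ there is a unique $\tilde u\in H^{m+1}(Z_{[0,\rho]},E;B_0)$ with $(D-\sigma_0R_0)\tilde u=(D-\sigma_0R_0)u$, and by the same lemma at level $m-1$ the operator $D-\sigma_0R_0$ is injective on $H^m(Z_{[0,\rho]},E;B_0)$; hence $u=\tilde u\in H^{m+1}$. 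Thus $\Phi\in H^{m+1}$ on $Z_{[0,\rho/2)}$, and patching with the interior estimate gives $\Phi\in H^{m+1}_{\loc}(M,E)$.

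The main obstacle is the bookkeeping of boundary conditions in the collar: one must put $\eta\Phi$ into precisely the space $H^{m+1}(Z_{[0,\rho]},E;B_0)$ on which the perturbed model operator is invertible throughout the Sobolev scale. The cut-off $\eta$ disposes of the artificial boundary $\{\rho\}\times\dM$ for free, but the condition $Q_{[0,\infty)}=0$ at $\{0\}\times\dM$ must be manufactured, and this is exactly where the hypothesis $Q_{[0,\infty)}(\mathcal R\Phi)\in H^{m+1/2}(\dM,E)$ gets used, through the higher-order mapping property of $\mathcal E$. After that, the regularity gain is just the isomorphism statement \lref{lem:DDo} \eqref{DDoii}, with the next lower level supplying the injectivity needed to identify $u$ with $\tilde u$, and \eqref{intelreg} patches the collar conclusion to the rest of $M$.
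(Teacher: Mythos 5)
Your proof is correct, and the core strategy agrees with the paper's: induct on $m$, localize to a collar, exploit the $H^{m+1/2}\to H^{m+1}$ mapping property of the extension $\exp(-tA)$ on the positive spectral part, and close via the isomorphism of Lemma~\ref{lem:DDo}~\eqref{DDoii}. Where you diverge is in the bookkeeping: the paper invokes Proposition~\ref{riso} to decompose $\Phi = S_0D_0\Phi + \exp(-tA)\bigl(Q_{[0,\infty)}\mathcal R\Phi\bigr)$ and then pushes the first summand up the Sobolev scale, whereas you avoid Proposition~\ref{riso} altogether by subtracting $\mathcal E\bigl(Q_{[0,\infty)}\mathcal R\Phi\bigr)$ at the outset, reducing to $Q_{[0,\infty)}(\mathcal R\Phi)=0$, cutting off with $\eta$ to land in $H^m(Z_{[0,\rho]},E;B_0)$, and then using the isomorphism at level $m$ together with injectivity at level $m-1$ to identify $u$ with the $H^{m+1}$-solution. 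The two routes are mathematically equivalent (both rest on the same $S_0$-machinery underlying Lemma~\ref{lem:DDo}), but yours makes explicit the uniqueness step that the paper leaves implicit when it concludes $\Phi_0\in H^{m+1}$, and it dispenses with the explicit formula $\Phi - S_0D_0\Phi = \exp(-tA)Q_{[0,\infty)}\Phi(0)$; this is a modest gain in transparency at the cost of an extra cut-off. The one auxiliary fact you use that the paper does not state as such---that $\mathcal E$ maps $H^{m+1/2}_{[0,\infty)}(A)$ boundedly into $H^{m+1}(Z_{[0,r)},E)$---is exactly the eigenfunction-expansion estimate that underlies the paper's claim $\Phi_1\in H^{m+1}(Z_{[0,\rho]},E)$, so nothing new is needed.
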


\begin{proof}
The case $m=0$ is assertion \eqref{reg} in \tref{domdmax},
so that we may assume $m>0$ in the following.

Clearly, if $\Phi\in\dom(D_{\max}) \cap H^{m+1}_{\loc}(M,E)$,
then $D\Phi\in H^m_{\loc}(M,F)$
and $\mathcal R\Phi\in H^{m+1/2}(\dM,E)$.
A fortiori, $Q_{[0,\infty)}(\mathcal R\Phi)\in H^{m+1/2}(\dM,E)$.

Conversely, let $\Phi\in\dom(D_{\max})$ satisfy $D\Phi\in H^m_{\loc}(M,F)$
and $Q_{[0,\infty)}(\mathcal R\Phi)\in H^{m+1/2}(\dM,E)$.
By interior elliptic regularity,
we may assume that $\Phi$ has support in $Z_{[0,\rho)}$,
where $\rho$ is as in \lref{lem:DDo}.
By induction on $m$, we may also assume that $\Phi\in H^m_{\loc}(M,E)$,
and then $\sigma_0R_0\in H^m(Z_{[0,\rho]},F)$.

Since now $\Phi|_{\{\rho\}\times\dM}=0$ and $\Phi$ is in the maximal domain of $D_0$
over $Z_{[0,\rho]}$ by \lref{lem:DDo} \eqref{DDoi}, we can apply \pref{riso} and get
\[
\Phi = S_0D_0\Phi + \exp(-tA)(Q_{[0,\infty)}(\mathcal R\Phi)) =: \Phi_0 + \Phi_1 .
\] 
Since $Q_{[0,\infty)}(\mathcal R\Phi)\in H^{m+1/2}(\dM,E)$,
we have $\Phi_1 \in H^{m+1}(Z_{[0,\rho]},E)$.
Furthermore, $\Phi_0 = S_0D_0\Phi \in H^1(Z_{[0,\rho]},E;B_0)$ and
\begin{equation*}
  (D-\sigma_0R_0)\Phi_0
  = (D-\sigma_0R_0)\Phi - (D-\sigma_0R_0)\Phi_1 \in H^m(Z_{[0,\rho]},F) .
\end{equation*}
By \lref{lem:DDo}~\eqref{DDoii} we have $\Phi_0\in H^{m+1}(Z_{[0,\rho]},E;B_0)$
and hence
\[
  \Phi = \Phi_0 + \Phi_1\in H^{m+1}(Z_{[0,\rho]},E) .
  \qedhere
\]
\end{proof}

%%%%%%%%%%%%%%%%%%%%%%%%%%%%%%%%%%%%%%%%%%%%%%

\section{Boundary value problems}
\label{secbova}

%%%%%%%%%%%%%%%%%%%%%%%%%%%%%%%%%%%%%%%%%%%%%%

Assume the Standard Setup~\ref{stase} and that $D$ and $D^*$ are complete.

%%%%%%%%%%%%%%%%%%%%%%%%%%%%%%%%%%%%%%%%%%%%%%

\subsection{Boundary conditions}
\label{suseboco}

%%%%%%%%%%%%%%%%%%%%%%%%%%%%%%%%%%%%%%%%%%%%%%
We will use the following notation.
For any subset $U\subset \bigcup_{s'\in\R} H^{s'}(A)$ and any $s\in\R$,
we let
\[
  U^s:=U\cap H^s(A) , \quad
  \check U:=U\cap\check H(A) , \quad
  \hat U:=U\cap\hat H(A) .
\]

\begin{definition}\label{defboucon}\index{boundary condition}
A closed linear subspace $B\subset\check H(A)$
will be called a \emph{boundary condition} for $D$.
\end{definition}

Later we will study the equation $D\Phi=\Psi$ with given $\Psi\in L^2(M,F)$
subject to the boundary condition $\phi:=\mathcal R\Phi\in B$ for $\Phi\in\dom(D_{\max})$.
This explains the terminology; compare also \pref{closedext} below.

For a boundary condition $B\subset\check H(A)$, 
we consider the operators $D_{B,\max}$\index{1DBmax@$D_{B,\max}$} and $D_B$\index{1DB@$D_B$} with domains
\begin{align*}
  \dom(D_{B,\max})
  &= \{\Phi\in\dom(D_{\max}) \mid \mathcal R\Phi\in B\} , \\
  \dom(D_{B})
  &= \{\Phi\in H^1_D(M,E) \mid \mathcal R\Phi\in B\} \\
  &= \{\Phi\in\dom(D_{\max}) \mid \mathcal R\Phi\in H^{1/2}(\dM,E)\},
\end{align*}
and similarly for the formal adjoint $D^*$.
By \eqref{trace} of \tref{domdmax}, $\dom(D_{B,\max})$ is a closed subspace
of $\dom(D_{\max})$.
Since the trace map extends to a bounded linear map $\mathcal R : H^1_D(M,E) \to H^{1/2}(\dM,E)$ and $H^{1/2}(\dM,E) \hookrightarrow \check H(A)$ is a continuous embedding, $\dom(D_{B})$ is also a closed subspace of $H^1_D(M,E)$. 

In particular, $D_{B,\max}$ is a closed operator.
Conversely, Proposition 1.50 of \cite{BBC} reads as follows:

\begin{prop}\label{closedext}
Any closed extension of $D$ between $D_{cc}$ and $D_{\max}$
is of the form $D_{B,\max}$,
where $B\subset\check H(A)$ is a closed subspace.
\end{prop}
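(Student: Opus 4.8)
The plan is to characterize the domain of an arbitrary closed extension $\bar D$ of $D$ with $D_{cc}\subset\bar D\subset D_{\max}$ purely in terms of its boundary values, and show that the set of boundary values is automatically a \emph{closed} subspace of $\check H(A)$, so that $\bar D=D_{B,\max}$ for $B:=\mathcal R(\dom\bar D)$. The key tool is \tref{domdmax}, which gives the bounded surjective trace map $\mathcal R:\dom(D_{\max})\to\check H(A)$, together with \cref{domdmin}, which identifies $\dom(D_{\min})=\ker\mathcal R$ (as $\dom D_{\min}=\{\Phi\in H^1_D(M,E)\mid \Phi|_{\dM}=0\}$, and by \eqref{trace} of \tref{domdmax} the vanishing of $\mathcal R\Phi$ for $\Phi\in\dom(D_{\max})$ forces $\mathcal R\Phi=0$ in $\check H(A)$, which combined with \eqref{reg} gives $\Phi\in H^1_D(M,E)$ with $\Phi|_{\dM}=0$).

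First I would observe that since $\bar D$ is a closed extension of $D_{cc}$ and $\bar D\subset D_{\max}$, we have $D_{\min}=\overline{D_{cc}}\subset\bar D$, hence $\dom(D_{\min})\subset\dom(\bar D)\subset\dom(D_{\max})$, and on all of $\dom(D_{\max})$ the operator acts by $D_{\max}$. Therefore $\dom(\bar D)$ is entirely determined by the quotient $\dom(\bar D)/\dom(D_{\min})\subset\dom(D_{\max})/\dom(D_{\min})$. Set $B:=\mathcal R(\dom(\bar D))\subset\check H(A)$. Since $\ker\mathcal R=\dom(D_{\min})\subset\dom(\bar D)$, the map $\mathcal R$ induces a linear isomorphism $\dom(\bar D)/\dom(D_{\min})\xrightarrow{\ \sim\ }B$, and moreover $\dom(\bar D)=\mathcal R^{-1}(B)=\{\Phi\in\dom(D_{\max})\mid\mathcal R\Phi\in B\}$; indeed the inclusion $\subset$ is the definition of $B$, and for the reverse, if $\mathcal R\Phi\in B$ pick $\Psi\in\dom(\bar D)$ with $\mathcal R\Psi=\mathcal R\Phi$, so $\Phi-\Psi\in\ker\mathcal R=\dom(D_{\min})\subset\dom(\bar D)$, whence $\Phi\in\dom(\bar D)$. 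Thus once we know $B$ is closed in $\check H(A)$, we get $\bar D=D_{B,\max}$ by definition of the latter.

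The main obstacle, and the only substantial point, is to show that $B$ is closed in $\check H(A)$. The argument is a standard closed-graph/open-mapping type statement. Equip $\dom(\bar D)$ with the graph norm $\|\cdot\|_D$, under which it is a Banach (indeed Hilbert) space because $\bar D$ is closed; likewise $\dom(D_{\min})$ is closed in $(\dom(D_{\max}),\|\cdot\|_D)$, so the quotient $\dom(\bar D)/\dom(D_{\min})$ is a Banach space. The induced map $\dom(\bar D)/\dom(D_{\min})\to\check H(A)$ is bounded (as $\mathcal R$ is bounded on $\dom(D_{\max})$ by \tref{domdmax}\eqref{trace}) and injective with image $B$. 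To conclude $B$ is closed, I would use that $\mathcal R$ has a bounded linear right inverse: by \lref{extfin2} the extension operator $\mathcal E$ extends to a bounded map $\mathcal E:\check H(A)\to\dom(D_{\max})$ with $\mathcal R\circ\mathcal E=\mathrm{id}$ (this was used in the proof of \tref{domdmax}\eqref{trace}). Hence on $\dom(D_{\max})$ we have the topological direct sum decomposition $\dom(D_{\max})=\dom(D_{\min})\oplus\mathcal E(\check H(A))$, with $\mathcal R$ restricting to an isomorphism $\mathcal E(\check H(A))\xrightarrow{\sim}\check H(A)$. Under this splitting, $\dom(\bar D)=\dom(D_{\min})\oplus\mathcal E(B)$, and $\dom(\bar D)$ closed in $\dom(D_{\max})$ (which holds since $\bar D$ is a closed operator) forces $\mathcal E(B)$ closed in $\mathcal E(\check H(A))$, hence $B=\mathcal R(\mathcal E(B))$ closed in $\check H(A)$. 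This establishes that $\bar D=D_{B,\max}$ with $B$ a boundary condition, completing the proof.
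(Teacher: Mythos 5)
Your proposal is correct and takes essentially the same approach as the paper: both identify $B=\mathcal R(\dom\bar D)$, use $\ker\mathcal R=\dom(D_{\min})\subset\dom(\bar D)$ to show $\dom(\bar D)=\mathcal R^{-1}(B)$, and then deduce closedness of $B$ from the boundedness of the extension operator $\mathcal E:\check H(A)\to\dom(D_{\max})$ together with $\mathcal R\circ\mathcal E=\id$ and the closedness of $\bar D$. The only cosmetic difference is that the paper finishes by directly checking that a convergent sequence $\phi_j\to\phi$ in $B$ gives $\mathcal E\phi_j\to\mathcal E\phi\in\dom\bar D$ (hence $\phi\in B$), whereas you package the same content into the topological direct sum $\dom(D_{\max})=\dom(D_{\min})\oplus\mathcal E(\check H(A))$ and observe that $\mathcal E(B)=\dom(\bar D)\cap\mathcal E(\check H(A))$ is closed.
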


\begin{proof}
Let $\bar D\subset D_{\max}$ be a closed extension of $D_{cc}$.
Then $\bar D$ extends the minimal closed extension $D_{\min}=D_{0,\max}$
of $D_{cc}$, that is,
\[
  \dom(D_{\min})
  = \{ \Phi \in \dom(D_{\max}) \mid \mathcal R(\Phi) = 0 \}
  \subset \dom(\bar D) ,
\]
compare \cref{domdmin}.
It follows that
\[
  \dom\bar D = \{ \Phi \in \dom(D_{\max}) \mid \mathcal R(\Phi) \in B \} ,
\]
where $B\subset\check H(A)$ is the space of all $\phi$ such that there exists
a $\Phi\in\dom\bar D$ with $\mathcal R(\Phi)=\phi$.
In particular, $\mathcal E(\phi)$ is contained in $\dom\bar D$,
for any $\phi\in B$.
Now let $(\varphi_j)$ be a sequence in $B$
converging to $\varphi$ in $\check H(A)$.
Then $(\mathcal E(\varphi_j))$ converges to $\mathcal E(\varphi)$
in $\dom(D_{\max})$, by \lref{extfin2}.
Since $\bar D$ is closed,
we have $\mathcal E(\varphi)\in\dom\bar D$.
It follows that $\varphi\in B$ and hence that $B$ is closed in $\check H(A)$.
\end{proof}

\begin{lemma}\label{h1ngn}
Let $B$ be a boundary condition.
Then $B\subset H^{1/2}(\dM,E)$ if and only if $D_B=D_{B,\max}$,
and then there exists a constant $C>0$ such that 
\[
  \|\Phi\|_{H^1_D(M)}^2
  \leq C \cdot \left( 
  \|\Phi\|_{L^2(M)}^2 + \|D\Phi\|_{L^2(M)}^2 \right) 
\]
for all $\Phi\in\dom(D_B)$.
\end{lemma}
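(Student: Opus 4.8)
The plan is to prove the equivalence $B\subset H^{1/2}(\dM,E)\iff D_B=D_{B,\max}$ first, and then deduce the estimate as a by-product via the closed graph theorem. For the ``only if'' direction, assume $B\subset H^{1/2}(\dM,E)$. Then by the second description of $\dom(D_B)$ recorded just before the lemma --- namely $\dom(D_B)=\{\Phi\in\dom(D_{\max})\mid\mathcal R\Phi\in H^{1/2}(\dM,E)\}$ --- together with the fact that $\dom(D_{B,\max})=\{\Phi\in\dom(D_{\max})\mid\mathcal R\Phi\in B\}$, we get directly that $\dom(D_{B,\max})\subset\dom(D_B)$: any $\Phi\in\dom(D_{B,\max})$ has $\mathcal R\Phi\in B\subset H^{1/2}(\dM,E)$, hence lies in $\dom(D_B)$. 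The reverse inclusion $\dom(D_B)\subset\dom(D_{B,\max})$ is immediate from $H^1_D(M,E)\subset\dom(D_{\max})$ and $\mathcal R\Phi\in B$. So the domains coincide, and the operators agree since both are restrictions of $D_{\max}$.

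For the ``if'' direction, suppose $D_B=D_{B,\max}$, i.e.\ $\dom(D_{B,\max})\subset H^1_D(M,E)$. Then for any $\phi\in B$ we have $\mathcal E\phi\in\dom(D_{\max})$ with $\mathcal R(\mathcal E\phi)=\phi\in B$, so $\mathcal E\phi\in\dom(D_{B,\max})=\dom(D_B)\subset H^1_D(M,E)$, and hence by \tref{domdmax}~\eqref{reg} we get $\phi=\mathcal R(\mathcal E\phi)\in H^{1/2}(\dM,E)$. Since $\phi\in B$ was arbitrary, $B\subset H^{1/2}(\dM,E)$.

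It remains to produce the estimate when $B\subset H^{1/2}(\dM,E)$. Here the point is that $D_B=D_{B,\max}$ is a closed operator (being a restriction of $D_{\max}$ to a closed subspace of $\dom(D_{\max})$, by \eqref{trace} of \tref{domdmax}), and $\dom(D_B)$ is a closed subspace of $H^1_D(M,E)$ as remarked before the lemma, hence a Banach space in the $H^1_D$-norm. The identity map $(\dom(D_B),\|\cdot\|_{H^1_D(M)})\to(\dom(D_B),\|\cdot\|_{D})$ is bounded (the $H^1_D$-norm dominates the graph norm, see the remark after \eqref{eq:H1Ddef}); and it is a bijection of Banach spaces, since the graph-norm completion of $\dom(D_B)$ is again $\dom(D_B)$ by closedness of $D_B$. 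The open mapping theorem then gives a bounded inverse, i.e.\ a constant $C$ with $\|\Phi\|_{H^1_D(M)}^2\le C(\|\Phi\|_{L^2(M)}^2+\|D\Phi\|_{L^2(M)}^2)$ for all $\Phi\in\dom(D_B)$. The main obstacle is the bookkeeping in the ``if'' direction --- making sure that $\mathcal E$ really lands in $\dom(D_{B,\max})$ and invoking \tref{domdmax}~\eqref{reg} correctly --- but this is routine given the machinery already assembled; the closed-graph argument for the estimate is then essentially automatic.
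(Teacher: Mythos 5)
Your proof is correct and takes essentially the same route as the paper: both directions of the equivalence rest on Theorem~\ref{domdmax}~\eqref{reg} together with the surjectivity of $\mathcal R$ (you spell out the extension operator $\mathcal E$, the paper leaves this implicit), and the estimate follows from the open mapping theorem applied to the two Banach-space structures on $\dom(D_B)$, exactly as in the paper. No gaps.
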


\begin{proof}
If $D_{B,\max}=D_{B}$
then $B\subset H^{1/2}(\dM,E)$ by \eqref{reg} of \tref{domdmax}.
Conversely, if
$B\subset H^{1/2}(\dM,E)$, then
\begin{align*}
\dom(D_{B,\max})
&=
\{\Phi\in\dom(D_{\max}) \mid \mathcal R\Phi\in B\} \\
&\subset
\{\Phi\in\dom(D_{\max}) \mid \mathcal R\Phi\in H^{1/2}(\dM,E)\} \\
&=
H^1_D(M,E) ,
\end{align*}
again by \eqref{reg} of \tref{domdmax}, and therefore $D_{B,\max}=D_{B}$.

Suppose now that $D_{B,\max}=D_{B}$.
Since $\dom(D_{B,\max})$ is a closed subspace of $\dom(D_{\max})$
and $\dom(D_{B})$ is a closed subspace of $H^1_D(M,E)$,
we conclude that the $H^1_D$-norm and the graph norm for $D$
are equivalent on $\dom(D_{B,\max})=\dom(D_{B})$.
This shows the asserted inequality.
\end{proof}

%%%%%%%%%%%%%%%%%%%%%%%%%%%%%%%%%%%%%%%%%%%%%%

\subsection{Adjoint boundary condition}
\label{suseabc}

%%%%%%%%%%%%%%%%%%%%%%%%%%%%%%%%%%%%%%%%%%%%%%
For any boundary condition $B$,
we have $D_{cc} \subset D_{B,\max}$.
Hence the $L^2$-adjoint operators satisfy
\begin{equation}\label{dbadin}
  (D_{B,\max})^{\ad} 
  \subset(D_{cc})^{\ad} = (D^*)_{\max} .
\end{equation}
From \eqref{eq:ParInt}, we conclude that
\begin{multline}\label{dbaddo}
  \dom((D_{B,\max})^{\ad}) \\
  = \{ \Psi\in\dom((D^*)_{\max}) \mid
  \text{$(\sigma _0\mathcal R\Phi,\mathcal R\Psi)=0$
  for all $\Phi\in\dom(D_{B,\max})$} \} .
\end{multline}
Now for any $\phi\in B$ there is a $\Phi\in\dom(D_{B,\max})$ with $\mathcal R\Phi=\phi$.
Therefore
\begin{equation}\label{dba}
  (D_{B,\max})^{\ad} = (D^*)_{\Bad,\max} 
\end{equation}
with\index{1Bad@$\Bad$}
\begin{equation}\label{dbabc}  
  \Bad := \{ \psi\in\check H(\tilde A) 
  \mid \text{$(\sigma _0\phi,\psi)=0$ for all $\phi\in B$} \} .
\end{equation}
By \lref{duality}, $\Bad$ is a closed subspace of $\check H(\tilde A)$.
In other words, it is a boundary condition for $D^*$.

\begin{definition}\label{defabc}\index{adjoint boundary condition}
We call $\Bad$ the boundary condition {\em adjoint to} $B$.
\end{definition}

The perfect pairing between $H^{1/2}(\dM,E)$ and $H^{-1/2}(\dM,E)$
as in \eqref{perfpair} on page~\pageref{perfpair}
and the analogous perfect pairing between $\check H(A)$ and $\hat H(A)$
as in \eqref{omega} 
coincide on the intersection $H^{1/2}(\dM,E)\times \hat H(A)$.
For a boundary condition $B$ which is contained in $H^{1/2}(\dM,E)$,
it follows that
\begin{equation}\label{ba}
  \sigma_0^*(\Bad) = B^0 \cap \hat H(A) ,
\end{equation}
where the superscript $0$ indicates the annihilator in $H^{-1/2}(\dM,F)$.

%%%%%%%%%%%%%%%%%%%%%%%%%%%%%%%%%%%%%%%%%%%%%%

\subsection{Elliptic boundary conditions}
\label{susebc}

%%%%%%%%%%%%%%%%%%%%%%%%%%%%%%%%%%%%%%%%%%%%%%
\begin{definition}\label{defellbou}\index{elliptic boundary condition}
A linear subspace $B\subset H^{1/2}(\dM,E)$ is called
an {\em elliptic boundary condition} for $A$ (or for $D$)
if there is an $L^2$-orthogonal decomposition
\[
  L^2(\dM,E) = V_- \oplus W_- \oplus V_+ \oplus W_+ 
\]
such that
\begin{enumerate}[(i)]
\item\label{ebcw}
$W_-$ and $W_+$ are finite-dimensional and contained in $H^{1/2}(\dM,E)$;
\item\label{ebcv}
$V_- \oplus W_-\subset L^2_{(-\infty,a]}(A)$
and $V_+ \oplus W_+\subset L^2_{[-a,\infty)}(A)$ for some $a\ge 0$;
\item\label{ebcg}
there is a bounded (with respect to $\LzdM{\cdot}$) linear map $g:V_-\to V_+$ with 
\[
  g(V_-^{1/2}) \subset V_+^{1/2}
  \quad\text{and}\quad
  g^*(V_+^{1/2}) \subset V_-^{1/2}
\]
such that
\[
  B = W_+ \oplus \{ v+gv \mid v \in V_-^{1/2} \} .
\]
\end{enumerate}
\end{definition}

\begin{example}\label{aps}
If we put $V_-=L^2_{(-\infty,0)}(A)$, $V_+=L^2_{[0,\infty)}(A)$, 
$W_-=W_+=0$, and $g=0$, then we have
\[
  B = \BAPS := H_{(-\infty,0)}^{1/2}(A) .
\]
This is the well-known {\em Atiyah-Patodi-Singer boundary
condition}\index{Atiyah-Patodi-Singer boundary condition}
as introduced in \cite{APS}.
\end{example}

We will use the notation $\Gamma(g) := \{ v+gv \mid v \in V_- \}$
to denote the graph of $g$ and similarly for the restriction of $g$ to $V_-^{1/2}$,
$\Gamma(g)^{1/2} := \{ v+gv \mid v \in V_-^{1/2} \}$.

\begin{lemma}\label{lem:Bperp}
Let $B \subset H^{1/2}(\dM,E)$ be an elliptic boundary condition.
Then
\[
V_- \oplus V_+ = \Gamma(g) \oplus \Gamma(-g^*) ,
\]
where both decompositions are $L^2$-orthogonal.
\end{lemma}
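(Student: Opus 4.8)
The claim has two parts: (a) the two subspaces $\Gamma(g)$ and $\Gamma(-g^*)$ together span $V_-\oplus V_+$ (inside $L^2(\dM,E)$), and (b) they are $L^2$-orthogonal to each other. I would prove (b) first, as it is a one-line computation, and then deduce (a) by a dimension/orthogonal-complement argument.

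\emph{Orthogonality.} Take $v\in V_-$ and $w\in V_+$. Then $v+gv\in\Gamma(g)$ and $w-g^*w\in\Gamma(-g^*)$, and since $V_-\perp V_+$ in the $L^2$-inner product,
\[
  (v+gv,\,w-g^*w)_{L^2(\dM)}
  = (v,-g^*w)_{L^2(\dM)} + (gv,w)_{L^2(\dM)}
  = -(gv,w)_{L^2(\dM)} + (gv,w)_{L^2(\dM)} = 0 ,
\]
using that $g:V_-\to V_+$ and $g^*:V_+\to V_-$ are mutually adjoint as maps between these Hilbert spaces. This shows $\Gamma(g)\perp\Gamma(-g^*)$.

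\emph{Spanning.} First note both graphs are closed subspaces of $V_-\oplus V_+$: since $g$ is a bounded operator, the graph of $g$ is closed, and likewise for $-g^*$. Next observe that the map $v\mapsto v+gv$ is a linear isomorphism from $V_-$ onto $\Gamma(g)$ (it is injective because the $V_-$-component of $v+gv$ recovers $v$, via the $L^2$-orthogonal projection onto $V_-$), and similarly $w\mapsto w-g^*w$ identifies $V_+$ with $\Gamma(-g^*)$. Hence, as Hilbert spaces, $\Gamma(g)\cong V_-$ and $\Gamma(-g^*)\cong V_+$. Since $\Gamma(g)$ and $\Gamma(-g^*)$ are orthogonal closed subspaces of $V_-\oplus V_+$, their (orthogonal) direct sum $\Gamma(g)\oplus\Gamma(-g^*)$ is a closed subspace of $V_-\oplus V_+$. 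To see it is all of $V_-\oplus V_+$, it suffices to show its orthogonal complement in $V_-\oplus V_+$ is trivial: suppose $(u_-,u_+)\in V_-\oplus V_+$ is orthogonal to $v+gv$ for all $v\in V_-$ and to $w-g^*w$ for all $w\in V_+$. The first condition gives $(u_-,v) + (u_+,gv)=0$ for all $v$, i.e. $u_- + g^*u_+ = 0$; the second gives $(u_-, -g^*w) + (u_+,w)=0$ for all $w$, i.e. $u_+ - gu_- = 0$. Substituting, $u_+ = gu_- = g(-g^*u_+) = -gg^*u_+$, so $(\id + gg^*)u_+ = 0$; since $\id + gg^*$ is positive definite, $u_+ = 0$, and then $u_- = -g^*u_+ = 0$. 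Thus the complement is trivial and $V_-\oplus V_+ = \Gamma(g)\oplus\Gamma(-g^*)$ with the sum $L^2$-orthogonal.

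\emph{Expected obstacle.} There is no real obstacle here; the only point requiring a little care is making the "graph is closed, and spans after intersecting with its orthogonal complement" argument clean — one must be slightly careful that $\Gamma(g)$ is defined using $g$ as a bounded map on all of $V_-$ (not just on $V_-^{1/2}$), which is exactly the content of Definition~\ref{defellbou}~\eqref{ebcg}. Everything else is elementary Hilbert-space linear algebra with the mutually adjoint pair $g, g^*$.
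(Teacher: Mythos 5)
Your proof is correct. The orthogonality computation is identical to the paper's. For the spanning part you take a slightly different route: the paper exhibits the explicit operator matrix $\begin{pmatrix}\id & -g^* \cr g & \id\end{pmatrix}$ mapping $V_-\oplus V_+$ onto $\Gamma(g)+\Gamma(-g^*)$ and writes down its inverse using the block diagonal $\begin{pmatrix}\id+g^*g & 0 \cr 0 & \id+gg^*\end{pmatrix}$, whereas you argue that $\Gamma(g)\oplus\Gamma(-g^*)$ is a closed subspace (being an orthogonal sum of graphs of bounded operators) and compute that its orthogonal complement in $V_-\oplus V_+$ is trivial. Both arguments hinge on the same fact, namely that $\id+gg^*$ (equivalently $\id+g^*g$) is invertible; the paper packages that as the invertibility of a $2\times 2$ operator matrix, while your version packages it as the vanishing of the orthogonal complement. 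Your approach is arguably a bit cleaner in that it exploits the orthogonality you already established rather than re-deriving surjectivity, but the paper's form has the advantage of producing the explicit projection formula that is then quoted in Remark~\ref{rem:Bperp} (see~\eqref{eq:ProjGamma}) and reused in the proof of Theorem~\ref{pseudo}; your argument would require a separate small computation to recover that formula.
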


\begin{proof}
The decomposition $V_- \oplus V_+$ is $L^2$-orthogonal by assumption.
With respect to the splitting $V_- \oplus V_+$ we have
\[
\Gamma(g) = \left\lbrace {v_- \choose gv_-}\,|\,v_-\in V_-\right\rbrace
\quad\text{and}\quad
\Gamma(-g^*) = \left\lbrace {-g^*v_+ \choose v_+}\,|\,v_+\in V_+\right\rbrace
\]
By the definition og $g^*$,
the decomposition on the right hand side is $L^2$-orthogonal.
An arbitrary element of $\Gamma(g) + \Gamma(-g^*)$ is given by
\[
{v_--g^*v_+ \choose gv_-+v_+} 
= \begin{pmatrix}\id & -g^* \cr g & \id\end{pmatrix}
{v_- \choose v_+} .
\]
Since
\[
\begin{pmatrix}\id & g^* \cr -g & \id\end{pmatrix}
\begin{pmatrix}\id & -g^* \cr g & \id\end{pmatrix}
=
\begin{pmatrix}\id & -g^* \cr g & \id\end{pmatrix}
\begin{pmatrix}\id & g^* \cr -g & \id\end{pmatrix}
= 
\begin{pmatrix}\id+g^*g & 0 \cr 0 & \id+gg^*\end{pmatrix}
\]
we see that $\begin{pmatrix}\id & -g^* \cr g & \id\end{pmatrix}$
is an isomorphism with inverse
$\begin{pmatrix}\id & g^* \cr -g & \id\end{pmatrix}
\begin{pmatrix}\id+g^*g & 0 \cr 0 & \id+gg^*\end{pmatrix}^{-1}$.
The decomposition follows.
\end{proof}

\begin{remark}\label{rem:Bperp}
With respect to the splitting $V_+ \oplus V_-$, 
the orthogonal projection onto $\Gamma(g)$ is given by 
\[
\begin{pmatrix}\id &  -g^*\cr g & \id\end{pmatrix}
{v_- \choose v_+} 
\mapsto
\begin{pmatrix}\id & 0 \cr g & 0\end{pmatrix}
{v_- \choose v_+} ,
\]
hence by the matrix
\begin{equation}
\begin{pmatrix}\id & 0 \cr g & 0\end{pmatrix}
\begin{pmatrix}\id &  -g^*\cr g & \id\end{pmatrix}^{-1} .
\label{eq:ProjGamma}
\end{equation}
\end{remark}

\begin{lemma}\label{lem:Bclosed}
Let $B \subset H^{1/2}(\dM,E)$ be an elliptic boundary condition.
Then
\begin{enumerate}[(i)]
\item \label{Bc1}
the spaces $B, V_\pm^{1/2}$, and $W_\pm$ are closed in $H^{1/2}(\dM,E)$ and
\[
H^{1/2}(\dM,E) = V_-^{1/2} \oplus W_- \oplus V_+^{1/2} \oplus W_+  ;
\]
\item \label{Bc2}
the spaces $V_\pm^{1/2}$ are dense in $V_\pm$;
\item \label{Bc3}
the map $g$ restricts to a continuous (w.\ r.\ t.\ $\HhdM{\cdot}$)
linear map $g:V_-^{1/2}\to V_+^{1/2}$, and similarly for $g^*$;
\item \label{Bc4}
the $L^2$-orthogonal projections
\[
  \pi_\pm:L^2(\dM,E) \twoheadrightarrow  V_\pm \subset L^2(\dM,E)
\]
restrict to continuous projections
\[
  \pi_\pm^{1/2}:H^{1/2}(\dM,E)
  \twoheadrightarrow  V_\pm^{1/2} \subset H^{1/2}(\dM,E) .
\]
\end{enumerate}
\end{lemma}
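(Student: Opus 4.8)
The plan is to exploit the fact, just established in Lemma~\ref{lem:Bperp}, that $V_-\oplus V_+=\Gamma(g)\oplus\Gamma(-g^*)$ as an $L^2$-orthogonal decomposition, together with the $a$-separation of $V_-\oplus W_-$ and $V_+\oplus W_+$ in terms of the spectrum of $A$. The key observation is that on $L^2_{(-\infty,a]}(A)\oplus L^2_{[-a,\infty)}(A)$ the $H^{1/2}(\dM,E)$-norm and the $L^2(\dM,E)$-norm differ only on the finite-dimensional overlap $L^2_{[-a,a]}(A)$, so that up to a compact perturbation the $H^{1/2}$-topology on the relevant spaces is the $L^2$-topology rescaled by $(1+A^2)^{1/4}$. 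Concretely, first I would note that since $W_\pm$ are finite-dimensional subspaces of $H^{1/2}(\dM,E)$, they are automatically closed in $H^{1/2}(\dM,E)$, and similarly $B=W_+\oplus\Gamma(g)^{1/2}$ will be closed once $\Gamma(g)^{1/2}$ is.

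For assertion~\eqref{Bc1} I would argue as follows. Because $V_-\oplus W_-\subset L^2_{(-\infty,a]}(A)$ and $V_+\oplus W_+\subset L^2_{[-a,\infty)}(A)$, one has $H^{1/2}_{(-\infty,-a)}(A)\subset V_-$ and $H^{1/2}_{(a,\infty)}(A)\subset V_+$ (a vector of $H^{1/2}$ lies in $V_\pm$ iff its $L^2$-orthogonal projection does, and the spectral parts outside $[-a,a]$ have no $W$-component). Hence $V_-^{1/2}$ and $V_+^{1/2}$ both contain the corresponding ``large-eigenvalue'' $H^{1/2}$-subspaces, and everything else is governed by the finite-dimensional space $L^2_{[-a,a]}(A)\subset C^\infty(\dM,E)$, on which all norms are equivalent. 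This gives the $L^2$-orthogonal splitting $H^{1/2}(\dM,E)=V_-^{1/2}\oplus W_-\oplus V_+^{1/2}\oplus W_+$ and, in particular, the closedness of each summand in $H^{1/2}(\dM,E)$; the spaces $V_\pm^{1/2}$ are then closed as they are intersections of the closed subspaces $V_\pm\subset L^2$ with $H^{1/2}$, restricted to these spectral decompositions. Assertion~\eqref{Bc2} is immediate: $\Hfin(A)\cap V_\pm$ is dense in $V_\pm$ with respect to $\LzdM{\cdot}$ because $V_\pm$ is a closed $A$-reducing (up to the finite part) subspace, and $\Hfin(A)\subset H^{1/2}(\dM,E)$. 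For~\eqref{Bc3}, the hypothesis $g(V_-^{1/2})\subset V_+^{1/2}$ together with the closed graph theorem gives continuity: $g:V_-^{1/2}\to V_+^{1/2}$ has closed graph since $g$ is $L^2$-bounded and the $H^{1/2}$-topologies are finer than $L^2$, and both spaces are Banach in $\HhdM{\cdot}$ by~\eqref{Bc1}; the same for $g^*$. Finally~\eqref{Bc4}: the $L^2$-projection $\pi_\pm$ onto $V_\pm$ commutes with the spectral projections $Q_{(-\infty,-a]}$, $Q_{[-a,a]}$, $Q_{[a,\infty)}$ up to finite-rank corrections, and on each of these $A$-invariant pieces it is either the identity, zero, or a fixed finite-rank map, so it preserves $H^{1/2}(\dM,E)$ and is continuous there by the norm equivalence on the finite part.

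The main obstacle I anticipate is bookkeeping around the overlap region $L^2_{[-a,a]}(A)$: the subspaces $V_\pm, W_\pm$ are only required to lie in the \emph{half-open} spectral ranges $(-\infty,a]$ and $[-a,\infty)$, so $V_-$ and $V_+$ can both have components in $L^2_{[-a,a]}(A)$, and one must check that restricting to $H^{1/2}$ does not destroy the direct-sum property or orthogonality. The clean way to handle this is to decompose once and for all $L^2(\dM,E)=L^2_{(-\infty,-a)}(A)\oplus L^2_{[-a,a]}(A)\oplus L^2_{(a,\infty)}(A)$, observe that the first and last summands automatically sit inside $V_-$ and $V_+$ respectively (hence inside $B$ is irrelevant, but inside $V_\mp^{1/2}$ with full $H^{1/2}$-content), and then note that all four spaces $V_\pm, W_\pm$ differ from these canonical pieces only inside the finite-dimensional smooth space $L^2_{[-a,a]}(A)$, where all Sobolev norms coincide up to constants. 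Once this reduction is in place, assertions~\eqref{Bc1}--\eqref{Bc4} all follow from elementary Hilbert-space and closed-graph arguments.
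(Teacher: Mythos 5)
There is a genuine gap. Your key claim is that $H^{1/2}_{(-\infty,-a)}(A)\subset V_-$ and $H^{1/2}_{(a,\infty)}(A)\subset V_+$, justified by the parenthetical assertion that ``the spectral parts outside $[-a,a]$ have no $W$-component.'' This is false: Definition~\ref{defellbou} only requires $W_-\subset L^2_{(-\infty,a]}(A)$, not $W_-\subset L^2_{[-a,a]}(A)$, so $W_-$ may perfectly well contain eigenvectors of $A$ with eigenvalue $<-a$. In that case $L^2_{(-\infty,-a)}(A)$ is contained in $V_-\oplus W_-$ but \emph{not} in $V_-$, and your inclusion fails. The same error recurs in your final paragraph, where you assert that $L^2_{(-\infty,-a)}(A)$ and $L^2_{(a,\infty)}(A)$ ``automatically sit inside $V_-$ and $V_+$ respectively'' and that $V_\pm,W_\pm$ ``differ from these canonical pieces only inside $L^2_{[-a,a]}(A)$''; both statements fail as soon as $W_-$ has a component below $-a$ or $W_+$ above $a$. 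Your sketch of~\eqref{Bc2} via density of $\Hfin(A)\cap V_\pm$ in $V_\pm$ is also not justified: $V_\pm$ are in general not $A$-invariant, so there is no reason for this intersection to be dense.

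The paper avoids this by proving~\eqref{Bc4} \emph{first}, using only the correct inclusion $L^2_{(-\infty,-a)}(A)\subset V_-\oplus W_-$. One takes $F$ to be the $L^2$-orthogonal complement of $L^2_{(-\infty,-a)}(A)$ in $V_-\oplus W_-$; since $V_-\oplus W_-\subset L^2_{(-\infty,a]}(A)$, one has $F\subset L^2_{[-a,a]}(A)$, which is finite-dimensional and consists of smooth sections. Then $\pi_-=Q_{(-\infty,-a)}+\pi_F-\pi_{W_-}$; the spectral projection $Q_{(-\infty,-a)}$ is $H^{1/2}$-bounded, and $\pi_F$ and $\pi_{W_-}$ are finite-rank operators with range in $H^{1/2}(\dM,E)$, hence also $H^{1/2}$-bounded --- even though $\pi_{W_-}$ does not commute with $Q_{(-\infty,-a)}$ and does not vanish on $L^2_{(-\infty,-a)}(A)$. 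From~\eqref{Bc4}, the closedness of $V_\pm^{1/2}$, the direct-sum decomposition in~\eqref{Bc1}, and the density~\eqref{Bc2} follow immediately (project smooth approximants). Your closed-graph argument for~\eqref{Bc3} is then correct and is essentially what the paper does, but it can only be invoked once $V_\pm^{1/2}$ are known to be Banach, i.e.\ after~\eqref{Bc1} has been repaired.
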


\begin{proof}
We start by proving \eqref{Bc4}.
Since $V_+\oplus W_+ \subset L^2_{[-a,\infty)}(A)$ is the orthogonal complement
of $V_-\oplus W_-$ in $L^2(\dM,E)$, we have
\[
L^2_{(-\infty,-a)}(A) \subset V_-\oplus W_- \subset L^2_{(-\infty,a]}(A) .
\]
Hence the orthogonal complement $F$ of $L^2_{(-\infty,-a)}(A)$ in $V_-\oplus W_-$
is contained in $L^2_{[-a,a]}(A)$.
In particular, $F$ is finite-dimensional and contained in $H^{1/2}(\dM,E)$.
Thus the orthogonal projection
$\pi_F:L^2(\dM,E) \twoheadrightarrow F \subset L^2(\dM,E)$
restricts to a continuous projection
$H^{1/2}(\dM,E) \twoheadrightarrow F \subset H^{1/2}(\dM,E)$,
and similarly for the orthogonal projection $\pi_{W_-}$ onto $W_-$.
Since the orthogonal projection
$\pi_-:L^2(\dM,E) \twoheadrightarrow V_- \subset L^2(\dM,E)$
is given by $\pi_- = Q_{(-\infty,-a)} + \pi_F - \pi_{W_-}$,
it restricts to a continuous projection
$H^{1/2}(\dM,E) \twoheadrightarrow V_-^{1/2} \subset H^{1/2}(\dM,E)$ as asserted.
The case $V_+$ is analogous. 

Clearly, \eqref{Bc4} implies \eqref{Bc2}
and shows that $V_\pm^{1/2}$ is closed in $H^{1/2}(\dM,E)$.
Since they are finite-dimensional, $W_+$ and $W_-$ are also closed in $H^{1/2}(\dM,E)$.
Moreover, we obtain the decomposition of $H^{1/2}(\dM,E)$ as asserted in \eqref{Bc1}.

We now show \eqref{Bc3}.
Let $\phi_j \to \phi$ in $V_-^{1/2}$ and $g\phi_j \to \psi$ in $V_+^{1/2}$ as $j\to\infty$.
Then $\phi_j \to \phi$ in $V_-$ and, 
since $g$ is continuous on $V_-$, $g\phi_j \to g\phi$ in $V_+$.
Thus $\psi=g\phi$.
This shows that the graph $\Gamma(g)^{1/2}$ of $g:V_-^{1/2}\to V_+^{1/2}$ is closed.
The closed graph theorem implies \eqref{Bc3}.

Since $W_+$ is closed in $H^{1/2}(\dM,E)$ and $\Gamma(g)^{1/2}$
is closed in $V_-^{1/2}\oplus V_+^{1/2}$,
 it follows that $B$ is closed in $H^{1/2}(\dM,E)$ as well.
This concludes the proof of \eqref{Bc1} and of the lemma.
\end{proof}

\begin{lemma}\label{lemHhHc}
Let $B \subset H^{1/2}(\dM,E)$ be an elliptic boundary condition.
Then there exists a constant $C>0$ such that 
\[
\HhdM{\phi} \leq C\cdot \HcdM{\phi}
\]
for all $\phi\in B$.
\end{lemma}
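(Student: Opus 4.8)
The plan is to prove the estimate by contradiction, using two structural facts: that $B$ is a closed subspace of $H^{1/2}(\dM,E)$ by \lref{lem:Bclosed} \eqref{Bc1} (so that norm convergence is available there), and that the spectral band $L^2_{[-a,a]}(A)$ is finite dimensional, where $a\ge 0$ is the constant from \dref{defellbou} \eqref{ebcv}. First I would normalize $\Lambda:=a$ in the definition \eqref{eq:defHc} of $\HcdM{\cdot}$; then directly from that definition one has $\HhdM{Q_{(-\infty,a]}\phi}\le\HcdM{\phi}\le\HhdM{\phi}$ for all $\phi$. I would also record that, since $W_+\subset H^{1/2}(\dM,E)\subset\hat H(A)$ and $\hat H(A)$ is the dual of $\check H(A)$ under the pairing \eqref{omega}, the $L^2$-orthogonal projection $\pi_{W_+}$ onto the finite dimensional space $W_+$ extends to a bounded map $\check H(A)\to W_+$.

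Now assume the asserted estimate fails and pick $\phi_j\in B$ with $\HhdM{\phi_j}=1$ and $\HcdM{\phi_j}\to 0$. Writing $\phi_j=w_j+v_j+gv_j$ with $w_j\in W_+$, $v_j\in V_-^{1/2}$ as in \dref{defellbou} \eqref{ebcg}, and noting that $v_j+gv_j\in V_-\oplus V_+$ is $L^2$-orthogonal to $W_+$, we get $w_j=\pi_{W_+}\phi_j\to 0$ in $\check H(A)$, hence — $W_+$ being finite dimensional — also in $H^{1/2}(\dM,E)$. Thus $\psi_j:=\phi_j-w_j=v_j+gv_j$ satisfies $\psi_j\to 0$ in $\check H(A)$ while $\HhdM{\psi_j}\to 1$. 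Since $v_j=\pi_-^{1/2}\psi_j$ and $gv_j=\pi_+^{1/2}\psi_j$, the sequences $\{v_j\}$ and $\{gv_j\}$ are bounded in $H^{1/2}(\dM,E)$ by \lref{lem:Bclosed} \eqref{Bc4}.

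The core of the argument is then as follows. Because $v_j\in V_-^{1/2}\subset H^{1/2}_{(-\infty,a]}(A)$ while $gv_j\in V_+\subset L^2_{[-a,\infty)}(A)$, we have $Q_{(-\infty,a]}\psi_j=v_j+Q_{[-a,a]}(gv_j)$, so $v_j+Q_{[-a,a]}(gv_j)\to 0$ in $H^{1/2}(\dM,E)$. The sequence $Q_{[-a,a]}(gv_j)$ lies in the finite dimensional space $L^2_{[-a,a]}(A)$ and is bounded there, so along a subsequence $Q_{[-a,a]}(gv_j)\to z$ and hence $v_j\to u:=-z$ in $H^{1/2}(\dM,E)$. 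Then $u\in V_-^{1/2}$, and $gv_j\to gu$ in $H^{1/2}(\dM,E)$ by the $H^{1/2}$-continuity of $g$ from \lref{lem:Bclosed} \eqref{Bc3}; therefore $\psi_j\to u+gu$ in $H^{1/2}(\dM,E)$, a fortiori in $\check H(A)$. Comparing with $\psi_j\to 0$ in $\check H(A)$ and using that $H^{1/2}(\dM,E)\hookrightarrow\check H(A)$ is injective forces $u+gu=0$ in $H^{1/2}(\dM,E)$, contradicting $\HhdM{\psi_j}\to 1$.

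The step I expect to be the main obstacle is exactly the passage from $\HcdM{\psi_j}\to 0$ to genuine $H^{1/2}$-convergence of $v_j$: because $V_-$ and $V_+$ overlap inside the finite band $[-a,a]$, the term $Q_{[-a,a]}(gv_j)$ cannot be removed by an orthogonality argument, and a crude $L^2$-bound on it merely reproduces $\|v_j\|$ and closes no estimate. What unlocks the argument is that $L^2_{[-a,a]}(A)$ is finite dimensional, so mere boundedness there already produces a convergent subsequence — no Rellich compactness on $\dM$ is needed — while the complementary high-frequency part of $v_j$ is controlled directly by $\HcdM{\psi_j}$.
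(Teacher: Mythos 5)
Your argument is correct, and it takes a genuinely different route from the paper's. The paper derives the estimate from Proposition~\ref{HormPeet} applied to $X=B$ (with the $H^{1/2}$-norm), $L=Q_{(-\infty,\Lambda]}|_B$, and $K$ the Rellich-compact embedding $B\hookrightarrow H^{-1/2}(\dM,E)$; the substance of the paper's proof is then the verification that $\ker L$ is finite dimensional and $\im L$ is closed, after which the abstract implication $(\ref{HP1})\Rightarrow\mathrm{(iv)}$ of \pref{HormPeet} hands over the inequality. You instead argue directly by contradiction, normalizing a hypothetical failing sequence, stripping off the $W_+$-component using the $\check H(A)$-boundedness of $\pi_{W_+}$, projecting with $\pi_\pm^{1/2}$ (which requires \lref{lem:Bclosed}~\eqref{Bc4}, correctly cited), and exploiting the finite dimensionality of $L^2_{[-a,a]}(A)$ to force $H^{1/2}$-convergence of the sequence, which then collides with the normalization. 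The two proofs therefore differ in where the compactness comes from: the paper genuinely invokes the Rellich embedding $H^{1/2}\hookrightarrow H^{-1/2}$ on $\dM$ as a black box inside \pref{HormPeet}, whereas you need only the finite dimensionality of the spectral band $L^2_{[-a,a]}(A)$, together with the structural facts (closed complements, $H^{1/2}$-boundedness of $\pi_\pm$ and $g$) already assembled in \lref{lem:Bclosed}. Your version is therefore slightly more elementary and makes visible exactly which properties of an elliptic boundary condition drive the norm equivalence; the paper's version is shorter on the page because it reuses the \pref{HormPeet} machinery, which it needs elsewhere anyway (e.g.\ in \tref{thmfred}). Both use \lref{lem:Bclosed}, which precedes \lref{lemHhHc}, so there is no circularity in either approach.
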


Since we always have $\HcdM{\cdot}\leq\HhdM{\cdot}$, Lemma~\ref{lemHhHc}
says that on $B$ the $H^{1/2}$ and $\check{H}$-norms are equivalent.
In particular, $B$ is also closed in $\check{H}(A)$ and hence a boundary condition
in the sense of \dref{defboucon}.

\begin{proof}
We apply \pref{HormPeet} with $X=B$ (with the $H^{1/2}$-norm), 
$Y=H_{(-\infty,\Lambda]}^{1/2}(A)$, and $Z=H^{-1/2}(\dM,E)$.
The linear map
\[
L: X=B \hookrightarrow H^{1/2}(\dM,E)
\xrightarrow{Q_{(-\infty,\Lambda]}} H^{1/2}_{(-\infty,\Lambda]}(A) = Y
\]
is bounded and the inclusion
\[
K: X=B \hookrightarrow H^{1/2}(\dM,E) \hookrightarrow H^{-1/2}(\dM,E) = Z
\] 
is compact.
Since $K$ is injective, $\ker K \cap \ker L = \{0\}$.
We need to show that the kernel of $L$ is finite-dimensional and that its range is closed.
Then the implication $\eqref{HP1} \Rightarrow \mbox{(iv)}$ of \pref{HormPeet}
yields the desired inequality.

Without loss of generality we assume that $\Lambda \geq a$,
where $\Lambda$ is as in \eqref {eq:defHc} and $a$ as in \dref{defellbou}.
Let $w+v+gv\in \ker(L)$, $w\in W_+$, $v\in V_-$.
Then
\[
0= Q_{(-\infty,\Lambda]}(w+v+gv)
= Q_{(-\infty,\Lambda]}(w) + v + Q_{[-a,\Lambda]}(gv),
\]
hence
\begin{align*}
v &= -Q_{(-\infty,\Lambda]}(w) - Q_{[-a,\Lambda]}(gv) \\
& \in Q_{(-\infty,\Lambda]}(W_+) + Q_{[-a,\Lambda]}(H^{1/2}(\dM,E)) =:F .
\end{align*}
Thus $\ker(L) \subset W_+ + \Gamma(g|_F)$ which is finite-dimensional.
Here $\Gamma(g|_F) = \{x+gx \mid x\in F\}$ denotes the graph of $g$
restricted to the finite-dimensional space $F$.

The image of $L$ is given by
\[
\im(L) = Q_{(-\infty,\Lambda]}(B) = 
\underbrace{Q_{(-\infty,\Lambda]}(W_+)}_{\mathrm{finite\,\, dimensional}}
+ \underbrace{\Gamma(Q_{(-\infty,\Lambda]}\circ g:V_-^{1/2}\to Y)}_{\mathrm{closed}}
\]
which is closed because $g$ (and hence $Q_{(-\infty,\Lambda]}\circ g$)
is $H^{1/2}$--bounded by \lref{lem:Bclosed}\eqref{Bc3}.
\end{proof}

\begin{thm}\label{ellbou}
Assume the Standard Setup~\ref{stase}.
Then, for a linear subspace $B\subset H^{1/2}(\dM,E)$,
the following are equivalent: 
\begin{enumerate}[(i)]
\item\label{ellbouc}
$B$ is closed in $\check H(A)$ and $B^{\ad}\subset H^{1/2}(\dM,F)$.
\item\label{ellboua}
For any $a\in\R$, we can choose orthogonal decompositions
\[
  L^2_{(-\infty,a)}(A) = V_-\oplus W_- 
  \quad\text{and}\quad
  L^2_{[a,\infty)}(A) = V_+\oplus W_+
\]
and $g:V_-\to V_+$ as in \dref{defellbou} such that
\[
  B = W_+ \oplus \{ v+gv \mid v \in V_-^{1/2} \} .
\]
\item\label{ellbouh}
$B$ is an elliptic boundary condition.
\end{enumerate}
Moreover, for an elliptic boundary condition $B$, the adjoint boundary condition
$\Bad\subset\check H(\tilde A)=(\sigma _0^*)^{-1}(\hat H(A))$
is also elliptic (for $D^*$) with
\[
  \sigma _0^*(\Bad) = 
  W_- \oplus \{ u-g^*u \mid u\in V_+^{1/2} \} .
\]
\end{thm}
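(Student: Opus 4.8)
\emph{Strategy.} I would prove the three conditions equivalent by running the cycle (ii) $\Rightarrow$ (iii) $\Rightarrow$ (i) $\Rightarrow$ (ii), recording the adjoint formula and the ellipticity of $\Bad$ along the way. The implication (ii) $\Rightarrow$ (iii) is immediate: applied with any $a\ge0$, (ii) produces a decomposition that already satisfies \dref{defellbou} with that $a$, since $L^2_{(-\infty,a)}(A)\subset L^2_{(-\infty,a]}(A)$ and $L^2_{[a,\infty)}(A)\subset L^2_{[-a,\infty)}(A)$. For (iii) $\Rightarrow$ (ii) --- which I will use below in its rigid form --- one starts from the data $(V_\pm,W_\pm,g)$ of \dref{defellbou} with parameter $a_0\ge0$ and re-cuts the spectrum of $A$ at the prescribed $a$: the spectral subspace for the finite interval bounded by $a$ and $\pm a_0$ is finite-dimensional and smooth, so one enlarges $W_-$ and $W_+$ by absorbing, through their $\Gamma(g)$-graphs, the finite-dimensional parts of $V_\pm$ falling on the wrong side of $a$, and restricts $g$ accordingly. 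This affects neither $B$ nor the boundedness and $H^s$-mapping properties of $g$, and now $V_-\oplus W_-=L^2_{(-\infty,a)}(A)$, $V_+\oplus W_+=L^2_{[a,\infty)}(A)$.

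\emph{From (iii) to (i) and the adjoint formula.} Given $B$ elliptic, use the rigid decomposition with $a\ge0$ and $\Lambda=a$. Closedness of $B$ in $\check H(A)$ is \lref{lemHhHc} together with \lref{lem:Bclosed}. Put $\tilde B:=W_-\oplus\{u-g^*u\mid u\in V_+^{1/2}\}$. Expanding $(w_++v+gv,\,w_-+u-g^*u)_{L^2(\dM)}$, every cross-term vanishes by the pairwise $L^2$-orthogonality of $V_-,W_-,V_+,W_+$ and the two surviving terms cancel by the adjointness of $g$ and $g^*$; hence $\tilde B$ is $L^2$-orthogonal to $B$. Combined with the splitting $H^{1/2}(\dM,E)=V_-^{1/2}\oplus W_-\oplus V_+^{1/2}\oplus W_+$ of \lref{lem:Bclosed} and the identity $V_-^{1/2}\oplus V_+^{1/2}=\Gamma(g)^{1/2}\oplus\Gamma(-g^*)^{1/2}$ of \lref{lem:Bperp}, this yields $H^{1/2}(\dM,E)=B\oplus\tilde B$ ($L^2$-orthogonally), so $\tilde B$ is precisely the $L^2$-orthogonal complement of $B$ inside $H^{1/2}(\dM,E)$. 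Now $\sigma_0^*(\Bad)=B^0\cap\hat H(A)$ by \eqref{ba} and \lref{duality}; decomposing an element of $B^0\cap\hat H(A)$ along $\hat H(A)=H^{-1/2}_{(-\infty,a)}(A)\oplus H^{1/2}_{[a,\infty)}(A)$, pairing it against $B=W_+\oplus\Gamma(g)^{1/2}$, and using the orthogonality of opposite spectral halves and the defining relation of $g^*$, one identifies this space with $\tilde B$. Therefore $\sigma_0^*(\Bad)=W_-\oplus\{u-g^*u\mid u\in V_+^{1/2}\}\subset H^{1/2}(\dM,E)$, which is both the displayed formula and the statement $\Bad\subset H^{1/2}(\dM,F)$ (the bundle automorphism $\sigma_0^*$ is an order-zero operator restricting to an isomorphism of the $H^{1/2}$-spaces). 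Ellipticity of $\Bad$ for $D^*$ then follows by applying (i) $\Rightarrow$ (iii) to $D^*$: indeed $\Bad$ is closed in $\check H(\tilde A)$ by \lref{duality}, and $(\Bad)^{\ad}=B\subset H^{1/2}(\dM,E)$ because $D_{B,\max}$ is closed, so \eqref{dba} used for $D$ and for $D^*$ forces the adjoint of $(D^*)_{\Bad,\max}$ to be $D_{B,\max}$ and the boundary condition is determined by the operator.

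\emph{From (i) to (ii); the main obstacle.} Fix $a\in\R$ and let $P^\pm$ be the spectral projections onto $L^\pm:=L^2_{(-\infty,a)}(A)$ resp.\ $L^2_{[a,\infty)}(A)$. Since $B\subset H^{1/2}(\dM,E)$ is $\check H(A)$-closed it is $H^{1/2}$-closed, and likewise $\sigma_0^*(\Bad)=B^0\cap\hat H(A)\subset H^{1/2}(\dM,E)$ is $H^{1/2}$-closed and $L^2$-orthogonal to $B$. I would set $W_+:=B\cap L^+$, $W_-:=\sigma_0^*(\Bad)\cap L^-$, $V_\pm:=L^\pm\ominus W_\pm$, and $B_1:=B\cap W_+^{\perp}$, and then: (a) prove $W_\pm$ finite-dimensional; (b) show $P^-|_{B_1}$ is injective with image dense in $V_-$, so $\phi\mapsto P^+\phi$ descends to a linear map $g_0$ on that image whose graph is $B_1$, whence $B=W_+\oplus\Gamma(g_0)$; (c) prove $\|\phi\|_{L^2(\dM)}\le C\|P^-\phi\|_{L^2(\dM)}$ on $B_1$, which extends $g_0$ to a bounded $g:V_-\to V_+$ with $P^-(B_1)=V_-^{1/2}$ and $g(V_-^{1/2})\subset V_+^{1/2}$; and (d) run the same construction on $\sigma_0^*(\Bad)$ and compare, via $L^2$-orthogonality, to obtain $g^*(V_+^{1/2})\subset V_-^{1/2}$. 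Steps (a) and (c) are the heart of the matter, and the only places where hypothesis (i) is used in full: finite-dimensionality of $W_+$ should come from combining the compact Rellich embedding $H^{1/2}(\dM,E)\hookrightarrow H^{-1/2}(\dM,E)$ with an estimate of the $H^{1/2}$-norm by the $\check H(A)$-norm on $W_+$, which I would extract from the abstract finite-kernel/closed-range lemma \pref{HormPeet} applied to $P^-|_B:B\to H^{1/2}_{(-\infty,a)}(A)$ once that map is shown to have finite-dimensional kernel and closed range; the estimate in (c) is just that closed-range statement, and $W_-$ is handled symmetrically using $P^+$ on $\sigma_0^*(\Bad)$ and the hypothesis $\Bad\subset H^{1/2}(\dM,F)$. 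I expect the finite-dimensionality of $W_\pm$ and the closed range of $P^-|_B$ to be the genuinely hard points: "$B$ closed in $\check H(A)$" alone is insufficient (e.g.\ $B=\{0\}$ is closed but is not an elliptic boundary condition, its adjoint being all of $\check H(\tilde A)$), so both halves of (i) must enter, and they do so precisely through this closed-range/finite-kernel analysis; everything else --- the graph algebra, the orthogonality computations, and the adjoint formula --- merely recombines the decompositions already prepared in Lemmas~\ref{lem:Bperp}--\ref{lemHhHc}.
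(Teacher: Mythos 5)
Your cycle $(ii)\Rightarrow(iii)\Rightarrow(i)\Rightarrow(ii)$ matches the paper, and your treatment of $(iii)\Rightarrow(i)$ together with the formula for $\sigma_0^*(\Bad)$ and the ellipticity of $\Bad$ is essentially the paper's argument (orthogonality of $\tilde B=W_-\oplus\Gamma(-g^*)^{1/2}$ to $B$, identification with $B^0\cap\hat H(A)$, then deduce ellipticity of $\Bad$ by checking condition (i) for $D^*$ and invoking the already-established equivalence). There is, however, a genuine gap in $(i)\Rightarrow(ii)$.

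The problem is step (c). You assert that the $L^2$-estimate $\|\phi\|_{L^2}\le C\|P^-\phi\|_{L^2}$ on $B_1$, which is what you need to conclude that $g$ extends to an $L^2$-bounded map $V_-\to V_+$, is ``just that closed-range statement.'' It is not. The closed-range/finite-kernel analysis you invoke --- \pref{HormPeet} applied to $P^-|_B:B\to H^{1/2}_{(-\infty,a)}(A)$, with the compact map $Q_{[a,\infty)}:B\to H^{-1/2}_{[a,\infty)}(A)$ and the estimate $\|\phi\|_{H^{1/2}}\le C\|\phi\|_{\check H(A)}$ coming from the $\check H(A)$-closedness of $B$ via the open mapping theorem --- yields $\|\phi\|_{H^{1/2}}\le C\|P^-\phi\|_{H^{1/2}}$ on $B_1$. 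That gives you $H^{1/2}$-boundedness of $g$ on $V_-^{1/2}$, which is part of \dref{defellbou}(iii), but it does not give $L^2$-boundedness; there is no route from an $H^{1/2}$-lower bound to an $L^2$-lower bound on a general closed subspace. The $L^2$-boundedness of $g$ is precisely the nontrivial content of \dref{defellbou}(iii), and it is where the adjoint hypothesis $\Bad\subset H^{1/2}(\dM,F)$ must do real work. The paper obtains it by running \lref{happylem} on both $B$ and $C:=\sigma_0^*(\Bad)$, identifying the resulting $V_\pm^{1/2}$'s, and then observing that $g$ agrees on $V_-^{1/2}$ with the restriction of $-h'$, the $H^{1/2}/H^{-1/2}$-dual of the map $h:V_+^{1/2}\to V_-^{1/2}$ produced from $C$. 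Since $-h':V_-^{-1/2}\to V_+^{-1/2}$ is bounded (it is a dual map) and its restriction to $V_-^{1/2}$ is bounded into $V_+^{1/2}$, interpolation yields boundedness $V_-\to V_+$. Your step (d) runs the parallel construction on $\sigma_0^*(\Bad)$, but only to read off $g^*(V_+^{1/2})\subset V_-^{1/2}$; you never use the $H^{-1/2}$-bound that the dual map supplies, nor do you interpolate, so the $L^2$-boundedness is left unproved.

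A secondary point: the re-cutting lemma you propose for the ``rigid'' form of $(iii)\Rightarrow(ii)$ is more delicate than you indicate. For $v\in V_-\cap L^2_{(a,a_0]}(A)$, the element $v+gv$ need not lie in $L^2_{[a,\infty)}(A)$, since $gv\in V_+\subset L^2_{[-a_0,\infty)}(A)$ may have components in $L^2_{[-a_0,a)}(A)$; thus one cannot simply absorb these graphs into a new $W_+$ that is supposed to be a subspace of $L^2_{[a,\infty)}(A)$. The paper sidesteps the re-cut entirely: it proves $(iii)\Rightarrow(i)$ directly from the loose decomposition of \dref{defellbou} (accepting a finite-dimensional bookkeeping in $\hat H(A)$), and obtains the rigid version only as the conclusion of $(i)\Rightarrow(ii)$, which is carried out for arbitrary $a$ via \lref{happylem}. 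I would follow that route rather than attempt the re-cut.
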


\begin{remark}\label{remellbou}
In \cite{BBC}, the point of departure for elliptic boundary conditions
is Property~\eqref{ellbouc}.
The equivalence between \eqref{ellbouc} and \eqref{ellboua} was already observed there.
Since our setup is slightly different and more general, we repeat the argument. 
\end{remark}

\begin{proof}
It is trivial that \eqref{ellboua} implies \eqref{ellbouh}.
We show that both \eqref{ellbouc} and the assertion on the adjoint boundary condition
are implied by \eqref{ellbouh}.
By \lref{lemHhHc}, $B$ is closed in $\check H(A)$.
Denote the closure of $V_\pm$ with respect to $\HmhdM{\cdot}$ by $V_\pm^{-1/2}$.
Then we have
\[
  \hat H(A) = V_-^{-1/2} \oplus W_- \oplus V_+^{1/2} \oplus W_+ .
\]
By \eqref{ba}, an element 
\[
  v_-+w_-+v_++w_+ \in V_-^{-1/2}\oplus W_- \oplus V_+^{1/2}\oplus W_+
\]
lies in $\sigma _0^*(\Bad)$ if and only if
\[
  (\tilde w_+ + \tilde v_- + g\tilde v_-,v_-+w_-+v_++w_+) = 0
\]
for all $\tilde w_+\in W_+, \tilde v_-\in V_-^{1/2}$, i.e.\ if and only if
\[
  (\tilde w_+,w_+) + (\tilde v_-,v_-) + (g\tilde v_-,v_+)=0
\]
for all $\tilde w_+\in W_+, \tilde v_-\in V_-^{1/2}$, i.e., if and only if it is of the form
\[
  -g^*v_++w_-+v_+ , \quad\mbox{where $w_-\in W_-$, $v_+\in V_+^{1/2}$ .}
\]
Hence
\[
  \sigma _0^*(\Bad) = W_- \oplus \{ v_+-g^*v_+ \mid v_+\in V_+^{1/2} \}
\]
as asserted.
In particular, $\Bad \subset (\sigma _0^*)^{-1}(H^{1/2}(\dM,E)) = H^{1/2}(\dM,F)$.

It remains to show that \eqref{ellbouc} implies \eqref{ellboua}.
Fix $a\in\R$.
Consider the spaces $W_+$, $V_-^{1/2}$, $V_+^{1/2}$, $V_+$,
and $V_+^{-1/2}$ as in \lref{happylem} below.
In particular, 
\begin{equation}
B = W_+ \oplus \{v+gv\mid v\in V_-^{1/2}\}.
\label{eq:B}
\end{equation}
By \eqref{ba}, $C:=\sigma _0^*(B^{\ad})=B^0\cap\hat H(A)$.
In particular, $C$ is closed in $\hat H(A)=\check H(-A)$.
By assumption, $C\subset H^{1/2}(\dM,E)$,
hence we can apply \lref{happylem} with $-A$ instead of $A$
and $-a-\eps$ instead of $a$,
where $\eps>0$ is chosen so small that $A$ has no eigenvalues in $[a-\eps,a)$.
We obtain subspaces
\begin{enumerate}[ (a)]
\item 
$W_- = C\cap H^{1/2}_{[-a+\eps,\infty)}(-A)
= C\cap H^{1/2}_{(-\infty,a-\eps]}(A)
= C\cap H^{1/2}_{(-\infty,a)}(A)$;
\item
$U_+^{1/2} = Q_{[a,\infty)}(C)$;
\item
$U_-^{-1/2}$, the annihilator of $W_-$ in $H^{-1/2}_{(-\infty,a)}(A)$;
\item
$U_-=U_-^{-1/2}\cap L^2(\dM,E)$ and $U_-^{1/2}=U_-^{-1/2}\cap H^{1/2}(\dM,E)$.
\end{enumerate}
Moreover,
\begin{equation}
C=W_- \oplus \{u+hu\mid u\in U_+^{1/2}\} .
\label{eq:C}
\end{equation}
We have
\begin{align*}
W_- 
&=
C \cap H^{1/2}_{(-\infty,a)}(A) \\
&=
B^0 \cap \hat H(A) \cap H^{1/2}_{(-\infty,a)}(A) \\
&=
B^0 \cap \hat H(A) \cap H^{-1/2}_{(-\infty,a)}(A) ,
\end{align*}
where we use that $B^0\cap\hat H(A) \subset H^{1/2}(\dM,E)$
to pass from the second to the third line.
Now $\hat H(A) \cap H^{-1/2}_{(-\infty,a)}(A) = H^{-1/2}_{(-\infty,a)}(A)$
by the definition of $\hat H(A)$.
We conclude that
\begin{align*}
W_-
&=
B^0 \cap H^{-1/2}_{(-\infty,a)}(A)\\
&=
\{x\in H^{-1/2}_{(-\infty,a)}(A) \mid
\text{$(x,w+v+gv)=0$ for all $w\in W_+$, $v\in V_-^{1/2}$ }\}\\
&=
\{x\in H^{-1/2}_{(-\infty,a)}(A) \mid \text{$(x,v)=0$ for all $v\in V_-^{1/2}$} \} .
\end{align*}
Hence $W_-$ is the annihilator of $V_-^{1/2}$ in $H^{-1/2}_{(-\infty,a)}(A)$.
By \lref{happylem},
$W_-$ is also the annihilator of $U_-^{1/2}$ in $H^{-1/2}_{(-\infty,a)}(A)$.
Thus
\[
U_-^{1/2} = V_-^{1/2} .
\]
By interchanging the roles of $B$ and $C$, we also get
\[
U_+^{1/2} = V_+^{1/2} .
\]
We set
\[
V_- := U_- \mbox{ and } V_-^{-1/2} := U_-^{-1/2} 
\]
and get by \lref{happylem}~\eqref{t3}
\[
H^{-1/2}(\dM,E) = H_{(-\infty,a)}^{-1/2}(A) \oplus H_{[a,\infty)}^{-1/2}(A) =
W_- \oplus V_-^{-1/2} \oplus W_+ \oplus V_+^{-1/2}
\]
and similarly for $L^2(\dM,E)$ and $H^{1/2}(\dM,E)$.

It follows that the annihilators of $B$  and $C$ in $H^{-1/2}(\dM,E)$ are given by
\begin{align*}
  B^0 &= W_- \oplus \{ u- g'u \mid u\in V_+^{-1/2} \} ,\\
  C^0 &= W_+ \oplus \{ v- h'v \mid u\in V_-^{-1/2} \} ,
\end{align*}
where $g':V_+^{-1/2}\to V_-^{-1/2}$ is the dual map of $g$ and similarly for $h$.
Furthermore, we get that the restriction of $-h'$ to $V_-^{1/2}$ equals $g$
and the restriction of $-g'$ to $V_-^{1/2}$ coincides with $h$.
By interpolation, $-h'$ restricts to a continuous linear map $V_-\to V_+$,
again denoted by $g$, and $-g'$ restricts to $-g^*$.
This shows that \eqref{ellbouc} implies \eqref{ellboua}.
\end{proof}

\begin{lemma}\label{happylem}
Let $B\subset \check H(A)$ be a boundary condition
which is contained in $H^{1/2}(\dM, E)$.
Let $a\in\R$.
Define
\begin{enumerate}[ (a)]
\item 
$W_+:= B\cap H^{1/2}_{[a,\infty)}(A)$;
\item
$V_-^{1/2}:= Q_{(-\infty,a)}(B)$;
\item
$V_+^{-1/2}$ to be the annihilator of $W_+$ in $H^{-1/2}_{[a,\infty)}(A)$;
\item
$V_+:=V_+^{-1/2}\cap L^2(\dM,E)$ and $V_+^{1/2}:=V_+^{-1/2}\cap H^{1/2}(\dM,E)$.
\end{enumerate}
Then 
\begin{enumerate}[(i)]
\item\label{t1}
$W_+$ is finite-dimensional and equals the annihilator
of $V_+^{1/2}$ in $H_{[a,\infty)}^{-1/2}(A)$;
\item\label{t2}
$V_-^{1/2}$ is a closed subspace of $H^{1/2}_{(-\infty,a)}(A)$;
\item\label{t3}
$H_{[a,\infty)}^{-1/2}(A) = W_+ \oplus V_+^{-1/2}$, \\
$L^2_{[a,\infty)}(A) = W_+ \oplus V_+$,\\
$H_{[a,\infty)}^{1/2}(A) = W_+ \oplus V_+^{1/2}$,\\
where the second decomposition is $L^2$-orthogonal;
\item\label{t4}
the perfect pairing between $H^{1/2}(\dM,E)$ and $H^{-1/2}(\dM,E)$
restricts to a perfect pairing between $V_+^{1/2}$ and $V_+^{-1/2}$;
\item\label{t5}
there exists a continuous linear map $g:V_-^{1/2} \to V_+^{1/2}$ such that 
\[
B = W_+ \oplus \{v+gv\mid v\in V_-^{1/2}\}.
\]
\end{enumerate}
\end{lemma}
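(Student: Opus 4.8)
The plan is to derive all five assertions from one coercivity estimate on $B$; the estimate comes from the hypothesis that $B$ is closed in $\check H(A)$ together with the bounded inverse theorem, and the remaining bookkeeping uses only the duality Facts~\eqref{perfpair} and \eqref{omega} for Sobolev spaces on $\dM$. First I would choose the parameter $\Lambda$ in the definition \eqref{eq:Hcheck} of $\check H(A)$ so that $A$ has no eigenvalue strictly between $\Lambda$ and $a$; then (up to equivalence of norms, or even literally) $\check H(A) = H^{1/2}_{(-\infty,a)}(A)\oplus H^{-1/2}_{[a,\infty)}(A)$. The analytic heart is the claim that on $B$ the $H^{1/2}(\dM,E)$-norm and the $\check H(A)$-norm are equivalent: since $H^{1/2}(\dM,E)\hookrightarrow\check H(A)$ is continuous, $B$ being closed in $\check H(A)$ forces $B$ to be closed in $H^{1/2}(\dM,E)$ as well, so $(B,\|\cdot\|_{H^{1/2}})$ and $(B,\|\cdot\|_{\check H(A)})$ are both Banach spaces and the identity from the former to the latter is a continuous bijection, hence a homeomorphism by the bounded inverse theorem. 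Consequently $\|\phi\|_{H^{1/2}(\dM)}\le C\bigl(\|Q_{(-\infty,a)}\phi\|_{H^{1/2}(\dM)}+\|\phi\|_{H^{-1/2}(\dM)}\bigr)$ for all $\phi\in B$. Applying \pref{HormPeet} to the bounded operator $L:=Q_{(-\infty,a)}|_B\colon B\to H^{1/2}_{(-\infty,a)}(A)$ and the injective inclusion $K\colon B\hookrightarrow H^{-1/2}(\dM,E)$, which is compact by the Rellich embedding theorem, this estimate forces $\ker L$ to be finite-dimensional and $\im L$ to be closed. Since $B\subset H^{1/2}(\dM,E)$ we have $\ker L=B\cap H^{1/2}_{[a,\infty)}(A)=W_+$ and $\im L=Q_{(-\infty,a)}(B)=V_-^{1/2}$, which gives the finite-dimensionality in \eqref{t1} and all of \eqref{t2}.

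The decompositions \eqref{t3}, the perfect pairing \eqref{t4}, and the annihilator statement in \eqref{t1} are then pure duality. The perfect $L^2$-pairing $H^s(\dM,E)\times H^{-s}(\dM,E)\to\C$ of Fact~\eqref{perfpair} restricts, since $Q_{[a,\infty)}$ is an $L^2$-orthogonal projection preserving all Sobolev spaces, to a perfect pairing $H^{1/2}_{[a,\infty)}(A)\times H^{-1/2}_{[a,\infty)}(A)\to\C$. As $W_+\subset H^{1/2}_{[a,\infty)}(A)$ is finite-dimensional, its annihilator $V_+^{-1/2}$ in $H^{-1/2}_{[a,\infty)}(A)$ has finite codimension $\dim W_+$, while $W_+\cap V_+^{-1/2}=0$ because for $w$ in the intersection $(w,w)_{L^2}=0$; hence $H^{-1/2}_{[a,\infty)}(A)=W_+\oplus V_+^{-1/2}$, and intersecting with $L^2_{[a,\infty)}(A)$ and with $H^{1/2}_{[a,\infty)}(A)$, both of which contain $W_+$, yields the other two decompositions in \eqref{t3}; the $L^2$ one is orthogonal because $V_+=V_+^{-1/2}\cap L^2$ is exactly the $L^2$-orthogonal complement of $W_+$ in $L^2_{[a,\infty)}(A)$. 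For \eqref{t1}: if $\psi\in H^{-1/2}_{[a,\infty)}(A)$ annihilates $V_+^{1/2}$, write $\psi=w+v$ with $w\in W_+$, $v\in V_+^{-1/2}$; then $v$ annihilates both $W_+$ and $V_+^{1/2}$, hence all of $W_+\oplus V_+^{1/2}=H^{1/2}_{[a,\infty)}(A)$, so $v=0$ by perfectness, showing that $W_+$ is precisely the annihilator of $V_+^{1/2}$. Finally \eqref{t4} is the general fact that a perfect pairing carrying compatible splittings $H^{1/2}_{[a,\infty)}(A)=W_+\oplus V_+^{1/2}$ and $H^{-1/2}_{[a,\infty)}(A)=W_+\oplus V_+^{-1/2}$ in which $V_+^{-1/2}$ and $W_+$ are mutually annihilating restricts to a perfect pairing of $V_+^{1/2}$ with $V_+^{-1/2}$.

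To produce $g$ in \eqref{t5}, use $H^{1/2}_{[a,\infty)}(A)=W_+\oplus V_+^{1/2}$: let $P$ be the bounded finite-rank projection onto $W_+$ along $V_+^{1/2}$, and put $\tilde P:=P\circ Q_{[a,\infty)}$ on $H^{1/2}(\dM,E)$. The surjection $Q_{(-\infty,a)}|_B\colon B\to V_-^{1/2}$ onto the Banach space $V_-^{1/2}$ has finite-dimensional kernel $W_+$, hence admits a bounded linear section $s$, and I would set $g(v):=Q_{[a,\infty)}(sv)-\tilde P(sv)\in V_+^{1/2}$; this is bounded, and independent of $s$ because two lifts of the same $v$ differ by an element of $W_+=\ker(Q_{(-\infty,a)}|_B)$, on which $Q_{[a,\infty)}-\tilde P$ vanishes. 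Then for $\phi\in B$ one has $\phi=\tilde P\phi+\bigl(Q_{(-\infty,a)}\phi+(Q_{[a,\infty)}\phi-\tilde P\phi)\bigr)\in W_+\oplus\{v+gv\mid v\in V_-^{1/2}\}$, while conversely $W_+\subset B$ by definition and $v+gv=sv-\tilde P(sv)\in B$; the sum is direct since $w=v+gv\in W_+$ forces $v=Q_{(-\infty,a)}w=0$, proving \eqref{t5}. The only genuinely delicate point is the opening estimate and its passage through \pref{HormPeet}: this is exactly where closedness of $B$ in $\check H(A)$ and the compactness of the Rellich embedding $H^{1/2}(\dM,E)\hookrightarrow H^{-1/2}(\dM,E)$ are used, and without them $W_+$ could fail to be finite-dimensional and $V_-^{1/2}$ could fail to be closed; everything afterwards is soft linear algebra and duality.
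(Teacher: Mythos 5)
Your proof is correct and follows essentially the same route as the paper's: the coercivity estimate on $B$ from closedness in $\check H(A)$, the application of Proposition~\ref{HormPeet} with a Rellich-compact map to get finite-dimensionality of $W_+$ and closedness of $V_-^{1/2}$, soft duality for the splittings in (iii)--(iv) and the second half of (i), and finally the construction of $g$. The only cosmetic differences are that you take $K$ to be the full inclusion $B\hookrightarrow H^{-1/2}(\dM,E)$ rather than $Q_{[a,\infty)}|_B$, you inline the duality bookkeeping that the paper outsources to Lemma~\ref{lem:Triple}, and you build $g$ from an arbitrary bounded section corrected by the projection onto $W_+$ along $V_+^{1/2}$, whereas the paper uses the $L^2$-orthogonal complement of $W_+$ in $B$ to land in $V_+^{1/2}$ automatically -- both produce the same uniquely determined $g$.
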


\begin{proof}
Since $B$ is closed in $\check H(A)$ and contained in $H^{1/2}(\dM,E)$,
there is an estimate
\begin{equation}
\begin{split}
\|\phi\|_{H^{1/2}(\dM)}
&\le
C \|\phi\|_{\check H(A)} \\
&\le
C'( \|Q_{(-\infty,a)}\phi\|_{H^{1/2}(\dM)} + \|Q_{[a,\infty)}\phi\|_{H^{-1/2}(\dM)} ) 
\label{pfellbou2}
\end{split}
\end{equation}
for all $\phi\in B$.
Since $B\subset H^{1/2}(\dM,E)$ is closed, the linear map
\[
  Q_{[a,\infty)}:B\to H^{1/2}_{[a,\infty)}(A) \hookrightarrow H^{-1/2}_{[a,\infty)}(A)
\]
is compact by Rellich's theorem.
It follows from \pref{HormPeet} that $Q_{(-\infty,a)}:B\to H^{1/2}_{(-\infty,a)}(A)$
has finite-dimensional kernel $W_+$ and closed image $V_-^{1/2}$.
This shows \eqref{t2} and the first part of \eqref{t1}.

Since $W_+\subset H_{[a,\infty)}^{1/2}(A)$ is finite-dimensional,
\lref{lem:Triple} implies \eqref{t3}, \eqref{t4}, and the second part of \eqref{t1}.

Let $G$ be the $L^2$-orthogonal complement of $W_+$ in $B$.
Then $B=W_+\oplus G$ and $Q_{(-\infty,a)}:G\to V_-^{1/2}$ is an isomorphism.
Compose its inverse $V_-^{1/2} \to G$ with $Q_{[a,\infty)}: G \to H^{1/2}_{[a,\infty)}(A)$
to obtain a continuous linear map $g:V_-^{1/2}\to H^{1/2}_{[a,\infty)}(A)$.
Since $G$ is $L^2$-orthogonal to $W_+$ and since $V_+$
is the $L^2$-orthogonal complement of $W_+$ in $L^2_{[a,\infty)}(A)$ by \eqref{t3},
the map $g$ takes values in $V_+^{1/2}$.
In conclusion,
\[
  B = W_+ \oplus \{ v+ gv \mid v\in V_-^{1/2} \} ,
\]
where $g$ is now considered as a continuous linear map $V_-^{1/2} \to V_+^{1/2}$.
\end{proof}

%%%%%%%%%%%%%%%%%%%%%%%%%%%%%%%%%%%%%%%%%%%%%%

\subsection{Boundary regularity}
\label{suseregu}

%%%%%%%%%%%%%%%%%%%%%%%%%%%%%%%%%%%%%%%%%%%%%%
\begin{lemma}\label{lem:Breg}
Let $B\subset H^{1/2}(\dM,E)$ be an elliptic boundary condition.
Let $s\ge 1/2$.
Then the following are equivalent:
\begin{enumerate}[(i)]
\item \label{Breg1}
There exist $V_\pm$, $W_\pm$, and $g$ for $B$ as in \dref{defellbou}
such that $W_+\subset H^s(\dM,E)$ and $g(V_-^s) \subset V_+^s$.
\item \label{Breg2}
For all $V_\pm$, $W_\pm$, and $g$ for $B$ as in \dref{defellbou},
we have that $W_+\subset H^s(\dM,E)$ and $g(V_-^s) \subset V_+^s$.
\item \label{Breg3}
For all $\phi\in B$ with $Q_{(-\infty,0)}(\phi)\in H^s(\dM,E)$,
we have $\phi\in H^s(\dM,E)$.
\end{enumerate}
\end{lemma}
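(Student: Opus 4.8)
The plan is to establish the cyclic chain (i)$\Rightarrow$(iii)$\Rightarrow$(ii)$\Rightarrow$(i); the implication (ii)$\Rightarrow$(i) is immediate, since $B$ is an elliptic boundary condition and hence at least one decomposition as in \dref{defellbou} exists, so the universally quantified statement (ii) specialises to (i). The only inputs beyond \dref{defellbou} are the elementary spectral facts about the operator $A$ on the closed manifold $\dM$ recorded in \sref{secMOP}: the spectral projections $Q_I$ preserve every Sobolev space $H^s(\dM,E)$ (and commute, with $Q_IQ_J=Q_{I\cap J}$), and for a bounded interval $I$ the space $H^s_I(A)$ is finite-dimensional, hence contained in $\Cu(\dM,E)$. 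In particular the spectral part of any $L^2$-section in a bounded spectral band is automatically smooth and is irrelevant for $H^s$-questions; the two points that require care in the bookkeeping are to use $a\ge 0$ so that $(-\infty,-a)\subset(-\infty,0)$, and to isolate such bounded bands.

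For (i)$\Rightarrow$(iii): fix a decomposition with $W_+\subset H^s(\dM,E)$ and $g(V_-^s)\subset V_+^s$, and let $\phi\in B$ with $Q_{(-\infty,0)}\phi\in H^s(\dM,E)$. Write $\phi=w+v+gv$ with $w\in W_+$ and $v\in V_-^{1/2}$, using $B=W_+\oplus\{v+gv\mid v\in V_-^{1/2}\}$. Since $a\ge0$ we have $Q_{(-\infty,-a)}\phi=Q_{(-\infty,-a)}Q_{(-\infty,0)}\phi\in H^s(\dM,E)$; on the other hand $w$ and $gv$ lie in $L^2_{[-a,\infty)}(A)$, so $Q_{(-\infty,-a)}\phi=Q_{(-\infty,-a)}v$, whence $Q_{(-\infty,-a)}v\in H^s(\dM,E)$. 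But $v\in V_-\subset L^2_{(-\infty,a]}(A)$, so $v-Q_{(-\infty,-a)}v=Q_{[-a,a]}v$ lies in the finite-dimensional, hence smooth, space $H^s_{[-a,a]}(A)$. Therefore $v\in H^s(\dM,E)$, i.e.\ $v\in V_-^s$, so by hypothesis $gv\in V_+^s\subset H^s(\dM,E)$, and $w\in W_+\subset H^s(\dM,E)$; thus $\phi\in H^s(\dM,E)$.

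For (iii)$\Rightarrow$(ii): take an arbitrary decomposition for $B$ as in \dref{defellbou}. If $w\in W_+$, then $Q_{(-\infty,0)}w=Q_{[-a,0)}w$ (again since $W_+\subset L^2_{[-a,\infty)}(A)$ and $a\ge0$), which is smooth, so (iii) applied to $w\in B$ gives $w\in H^s(\dM,E)$; hence $W_+\subset H^s(\dM,E)$. For $v\in V_-^s$ (note $V_-^s\subset V_-^{1/2}$ since $s\ge1/2$) set $\phi:=v+gv\in B$; then $Q_{(-\infty,0)}\phi=Q_{(-\infty,0)}v+Q_{[-a,0)}(gv)$, the first summand lying in $H^s(\dM,E)$ because $v\in H^s(\dM,E)$ and $Q_{(-\infty,0)}$ preserves $H^s$, the second being smooth. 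Hence $Q_{(-\infty,0)}\phi\in H^s(\dM,E)$, so $\phi\in H^s(\dM,E)$ by (iii); since $v\in H^s(\dM,E)$ this gives $gv=\phi-v\in H^s(\dM,E)$, and $gv\in V_+$ forces $gv\in V_+^s$. Thus $g(V_-^s)\subset V_+^s$, which together with $W_+\subset H^s(\dM,E)$ yields (ii).

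I do not expect a genuinely hard step here: all of the real content is already encoded in the structure of elliptic boundary conditions in \dref{defellbou} and in the standard spectral theory of the elliptic operator $A$ on the closed manifold $\dM$. The only thing demanding attention is keeping the fourfold decomposition $L^2(\dM,E)=V_-\oplus W_-\oplus V_+\oplus W_+$ and the spectral truncations of $\phi$ straight, and in particular recording that every component of a section in a bounded spectral window of $A$ is smooth, so that such components never obstruct $H^s$-regularity.
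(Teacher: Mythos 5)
Your proof is correct and follows essentially the same route as the paper: the cyclic chain (ii)$\Rightarrow$(i)$\Rightarrow$(iii)$\Rightarrow$(ii), with spectral truncation by $Q_{(-\infty,-a)}$ and absorption of the finite-dimensional band $Q_{[-a,a]}$ as the only real content. Your (i)$\Rightarrow$(iii) bookkeeping is in fact slightly cleaner than the paper's, since you kill both $w$ and $gv$ under $Q_{(-\infty,-a)}$ at once rather than subtracting $Q_{(-\infty,-a)}(w_+)$ afterwards.
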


\begin{proof}
Clearly, \eqref{Breg2} implies \eqref{Breg1}.
We show that \eqref{Breg1} implies \eqref{Breg3}.
Let $V_\pm$, $W_\pm$, and $g$ be as in \eqref{Breg1}.
Let $\phi\in B$ with $Q_{(-\infty,0)}(\phi)\in H^s(\dM,E)$.
We have to show $\phi\in H^s(\dM,E)$.
Write $\phi=w_+ + v_- + gv_-$ with $w_+\in W_+$ and $v_-\in V_-$.
Since $V_+ \subset L^2_{[-a,\infty)}(A)$ for some $a\ge 0$, we have
\[
Q_{(-\infty,-a)}(\phi) = Q_{(-\infty,-a)}(w_++v_-)
\]
and thus, using $V_-\subset L^2_{(-\infty,a]}(A)$,
\begin{align*}
v_- 
&= 
Q_{(-\infty,a]}(v_-)  \\
&=
Q_{(-\infty,-a)}(v_-) + Q_{[-a,a]}(v_-) \\
&=
Q_{(-\infty,-a)}(\phi) - Q_{(-\infty,-a)}(w_+) + Q_{[-a,a]}(v_-) \\
&=
Q_{(-\infty,0)}(\phi) - Q_{[-a,0)}(\phi) - Q_{(-\infty,-a)}(w_+) + Q_{[-a,a]}(v_-) .
\end{align*}
Since for bounded intervals $I$ the projection $Q_I$ takes values in smooth sections
and since $w_+\in H^s(\dM,E)$,
all four terms on the right hand side are contained in $H^s(\dM,E)$.
Hence $v_-\in H^s(\dM,E)$.
By the assumption on $g$,
we also have $gv_-\in H^s(\dM,E)$ and therefore $\phi\in H^s(\dM,E)$.

It remains to show that \eqref{Breg3} implies \eqref{Breg2}.
Let $V_\pm$, $W_\pm$, and $g$ be for $B$ as in \dref{defellbou}.
Since $W_+\subset L^2_{[-a,\infty)}(A)$, we have for $w_+\in W_+$ that
\[
  Q_{(-\infty,0)}(w_+) = Q_{[-a,0)}(w_+) \in \Cu(\dM,E) .
\]
By \eqref{Breg3}, $w_+\in H^s(\dM,E)$.
Hence $W_+\subset H^s(\dM,E)$.

Now let $v_-\in V_-^s$.
Then $v_-+gv_-\in B$ and
\[
  Q_{(-\infty,0)}(v_-+gv_-) = Q_{(-\infty,0)}(v_-)+Q_{[-a,0)}(gv_-) \in H^s(\dM,E) .
\]
Again by \eqref{Breg3}, $v_-+gv_- \in H^s(\dM,E)$ and therefore $gv_- \in H^s(\dM,E)$.
This shows $g(V_-^s) \subset V_+^s$.
\end{proof}

\begin{definition}\index{regular1@$s$-regular}\index{regular2@$\infty$-regular}
An elliptic boundary condition $B\subset H^{1/2}(\dM,E)$
is called $(s+1/2)$-\emph{regular}
if the assertions in \lref{lem:Breg} hold for $B$ and for $\Bad$.
If $B$ is $(s+1/2)$-regular for all $s\ge 1/2$,
then $B$ is called $\infty$-\emph{regular}.
\end{definition}

\begin{remarks}\label{remreg}
(a)
An elliptic boundary condition $B\subset H^{1/2}(\dM,E)$ is $(s+1/2)$-regular
if and only if for some (or equivalently all) $V_\pm$, $W_\pm$, and $g$
as in \dref{defellbou} we have $W_+\cup W_- \subset H^s(\dM,E)$
and $g(V_-^s) \subset V_+^s$ as well as $g^*(V_+^s) \subset V_-^s$.
In this case, $g$ and $g^*$ are continuous with respect to the $H^s$-norm.

(b)
If $B$ is $(s+1/2)$-regular, then so is $\Bad$.

(c)
By definition, every elliptic boundary condition is $1$-regular.
By interpolation one sees that if $B$ is $(s+1/2)$-regular,
then $B$ is also $t$-regular for all $1\le t \le s+1/2$.
\end{remarks}

\begin{thm}[Higher boundary regularity]\label{thm:regvc}
Assume the Standard Setup~\ref{stase} and that $D$ and $D^*$ are complete.
Let $m\in\N$ and $B\subset H^{1/2}(\dM,E)$ be an $m$-regular
elliptic boundary condition for $D$.

Then, for all $0\le k< m$ and $\Phi\in\dom(D_{\max})$, we have
\[
\mathcal{R}(\Phi) \in B \;\mbox{ and }\; D_{\max}\Phi\in H^k_{\loc}(M,F) 
\Longrightarrow \Phi \in H^{k+1}_{\loc}(M,E).
\]
\end{thm}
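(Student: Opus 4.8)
The plan is to reduce everything to the model situation on the cylinder $Z_{[0,\rho]}$ and then invoke the machinery already set up in Section~\ref{secMaxDom}. First I would note that the interior regularity statement holds on all of $\dot M$ by standard interior elliptic regularity, so by a cutoff argument (using \lref{product} and \tref{domdmax}~\eqref{dense}, exactly as in the proof of \tref{charasob}) it suffices to treat a section $\Phi\in\dom(D_{\max})$ with support in $Z_{[0,\rho)}$, where $\rho>0$ is small enough for \lref{lem:DDo} to apply. I would then argue by induction on $k$: the base case $k=0$ is the ellipticity of $B$, i.e. \dref{ellreg}~\eqref{elli}, which by Remark~\ref{ell2}(a) is precisely $0$-regularity, and this is where one uses $\mathcal R(\Phi)\in B$ together with $B\subset H^{1/2}(\dM,E)$ (which holds since $B$ is elliptic) to get $\Phi\in H^1_{\loc}$ via \tref{domdmax}~\eqref{reg}. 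For the inductive step, suppose $D_{\max}\Phi\in H^k_{\loc}(M,F)$ with $1\le k<m$; by induction $\Phi\in H^k_{\loc}(M,E)$, so $\sigma_0R_0\Phi\in H^k(Z_{[0,\rho]},F)$ and $\mathcal R\Phi\in H^{k-1/2}(\dM,E)$.

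The heart of the argument is then to upgrade the boundary regularity of $\mathcal R\Phi$ from $H^{k-1/2}$ to $H^{k+1/2}$ and conclude via \tref{charasob}. The key point: $\phi:=\mathcal R\Phi\in B$, and I claim $Q_{(-\infty,0)}(\phi)\in H^{k+1/2}(\dM,E)$. This follows from \tref{charasob} once we know $\Phi\in H^{k+1}_{\loc}$ near the boundary after splitting off the $Q_{[0,\infty)}$ part — but that is circular, so instead I would extract the $Q_{(-\infty,0)}$-regularity directly from the equation. Concretely, writing $\Phi$ in the basis of eigensections of $A$, the negative-eigenvalue Fourier modes of $\Phi$ on $Z_{[0,\rho]}$ are governed (via \lref{lem:DDo}~\eqref{DDoi}, so $\Phi$ lies in the maximal domain of $D_0$) by the ODE $f_j' + \lambda_jf_j = (\text{data in }H^k)$ with $\lambda_j<0$ and prescribed value $f_j(\rho)=0$; the operator $S_0$ of \eqref{eq:DefS0} and the bound \eqref{eq:S} show that this negative part of $\Phi$ lies in $H^{k+1}(Z_{[0,\rho]},E)$, hence its trace, $Q_{(-\infty,0)}(\mathcal R\Phi)$, lies in $H^{k+1/2}(\dM,E)$. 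Now $m$-regularity of $B$ enters through \lref{lem:Breg}~\eqref{Breg3}: since $\phi\in B$ and $Q_{(-\infty,0)}(\phi)\in H^{k+1/2}(\dM,E)$ with $k+1/2\le m$, we conclude $\phi\in H^{k+1/2}(\dM,E)$, so in particular $Q_{[0,\infty)}(\mathcal R\Phi)\in H^{k+1/2}(\dM,E)$.

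With $D_{\max}\Phi\in H^k_{\loc}(M,F)$ and $Q_{[0,\infty)}(\mathcal R\Phi)\in H^{k+1/2}(\dM,E)$ in hand, \tref{charasob} gives directly $\Phi\in H^{k+1}_{\loc}(M,E)$, completing the induction and the proof. The analogous statement for $D^*$ and $\Bad$ — needed only implicitly, but worth noting since the definition of $m$-regularity is symmetric — follows by the same argument applied to $D^*$, whose normal form and adapted operator $\tilde A$ are provided by \lref{nf}, and whose adjoint boundary condition $\Bad$ is elliptic and $m$-regular by \tref{ellbou} and Remark~\ref{remreg}(b). The main obstacle I anticipate is precisely the non-circular extraction of the $Q_{(-\infty,0)}$-regularity of the trace: one must resist the temptation to apply \tref{charasob} before the boundary regularity is known, and instead isolate the negative spectral part of $\Phi$ and run the scalar ODE estimates \eqref{eq:L2}--\eqref{eq:S} on it, which requires checking that the right-hand side data $(D-\sigma_0R_0)\Phi$ minus the contribution of the (already regular) positive exponential tail indeed sits in $H^k(Z_{[0,\rho]},F)$ — a bookkeeping step entirely parallel to the final paragraph of the proof of \tref{charasob}.
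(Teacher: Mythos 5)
Your high-level plan (cut off to the cylinder, induct on $k$, upgrade the trace regularity, conclude via \tref{charasob}) is genuinely different from the paper's proof, which builds an explicit Sobolev-isomorphism $\mathcal F_B(\Phi)=((D-\sigma_0R_0)\Phi,P_{W_+}\mathcal R\Phi)$ with inverse $\mathcal G_B$ and then bootstraps in one stroke via $\Phi=\mathcal G_B(\mathcal F_B(\Phi))$. However, the heart of your argument --- extracting $Q_{(-\infty,0)}(\mathcal R\Phi)\in H^{k+1/2}$ ``directly from the equation'' --- contains a genuine gap. Projecting $\sigma_t^{-1}D\Phi=\Phi'+A\Phi+R_t\Phi$ onto $Q_{(-\infty,0)}$ yields for $\Phi_-:=Q_{(-\infty,0)}\Phi$ the ODE
\[
\Phi_-'+A\Phi_-=Q_{(-\infty,0)}\bigl[\sigma_t^{-1}D\Phi-R_t\Phi\bigr],
\]
but $R_t$ is a genuine first-order operator for $t>0$, so $R_t\Phi\in H^{k-1}$ only, and the $S_0$-estimate \eqref{eq:S} then gives $\Phi_-\in H^k$ --- no gain. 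The alternative you gesture at (``subtract the already regular positive exponential tail'') suffers the same problem: at the stage of the induction where you stand, $\Phi_1=\exp(-tA)Q_{[0,\infty)}\mathcal R\Phi$ is only in $H^k$, since $\mathcal R\Phi$ is only known to lie in $H^{k-1/2}$; therefore $(D-\sigma_0R_0)\Phi_1\in H^{k-1}$ generically, and you cannot conclude $(D-\sigma_0R_0)\Phi_0\in H^k$. The final paragraph of the proof of \tref{charasob}, which you invoke as ``entirely parallel,'' is not parallel: there the regularity $Q_{[0,\infty)}(\mathcal R\Phi)\in H^{m+1/2}$ is a \emph{hypothesis} and yields $\Phi_1\in H^{m+1}$ for free, whereas in \tref{thm:regvc} that is precisely what must be proved. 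This is the circularity you announced you would resist, but the route you describe reintroduces it.

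The paper avoids the chicken-and-egg problem by never separately establishing trace regularity. After normalizing $W_-=\{0\}$ and $V_-=H_{(-\infty,0)}(A)$, it uses the identity $Q_{[0,\infty)}(\mathcal R\Phi)=gQ_{(-\infty,0)}(\mathcal R\Phi)+P_{W_+}(\mathcal R\Phi)$, valid for $\mathcal R\Phi\in B$, to build $\mathcal G_{B,0}(\Psi,\phi)=S_0\Psi+\exp(-tA)\bigl(gQ_{(-\infty,0)}(\mathcal R(S_0\Psi))+\phi\bigr)$ as a two-sided inverse of $\mathcal F_{B,0}=(D_0\cdot,P_{W_+}\mathcal R\cdot)$ on $H^{k+1}(Z_{[0,\rho]},E;B\oplus B_\rho)$, and then perturbs to $\mathcal F_B=((D-\sigma_0R_0)\cdot,P_{W_+}\mathcal R\cdot)$ as in \lref{lem:DDo}. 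Here the $m$-regularity of $B$ (namely $W_\pm\subset H^{m-1/2}$ and $g$ preserving $H^{k+1/2}$) is exactly what makes $\mathcal G_B$ map into the correct Sobolev space, and the perturbative smallness comes from shrinking $\rho$. Your plan would need an extra, rather delicate, estimate showing that the first-order parts of $R_t-R_0$ and $\sigma_t-\sigma_0$ (which vanish at $t=0$) gain back the lost derivative when applied to the exponential tail $\Phi_1$; this is not what \eqref{eq:L2}--\eqref{eq:S} provide and is not ``bookkeeping.''
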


\begin{proof}
Since $B$ is an $m$-regular elliptic boundary condition,
\tref{ellbou}, \lref{lem:Breg}, and (c) in Remark~\ref{remreg} above
imply that we have an orthogonal decomposition
\[
  H_{(-\infty,0)}(A) = V_- \oplus W_- 
  \quad\text{and}\quad
  H_{[0,\infty)}(A) = V_+ \oplus W_+
\]
with $W_\pm\subset H^{m-1/2}(\dM,E)$ of finite dimension
and a bounded linear map $g:V_-\to V_+$ with $g(V_-^{k+1/2})\subset V_+^{k+1/2}$
and $g^*(V_+^{k+1/2})\subset V_-^{k+1/2}$ for all $0\le k<m$ such that
\begin{align*}
  B &= W_+ \oplus \{ v+gv \mid v\in V_-^{1/2} \} , \\
  \sigma _0^*(\Bad) &=  W_- \oplus \{ u-g^*u \mid u\in V_+^{1/2} \} .
\end{align*}
It is no loss of generality to enlarge $B$ by extending $g$ along $W_-$ by $0$.
Hence we may assume that $V_-=H_{(-\infty,0)}(A)$ and $W_-=\{0\}$.

We identify a small collar about the boundary with $\Zrq=[0,\rho]\times \dM$,
where $0<\rho<r$ and $r$ is as in \lref{adapted}.
Assume first that we have constant coefficients $D=D_0$ over $\Zrq$.

At the ``right end'' $t=\rho$ we impose the boundary condition $B_\rho=H_{[0,\infty)}(A)$, 
at the ``left end'' $t=0$ we impose $B$.
Since $V_-=H_{(-\infty,0)}(A)$,
we have for $\Phi$ with $\mathcal{R}(\Phi)=\Phi(0)\in B$,
\[
Q_{[0,\infty)}(\mathcal{R}(\Phi))
= gQ_{(-\infty,0)}(\mathcal{R}(\Phi)) + P_{W_+}(\mathcal{R}(\Phi)) ,
\]
where $P_{W_+}:L^2(\dM,E)\to W_+$ is the orthogonal projection.
Using this and \pref{riso} one easily checks that the maps
\begin{align*}
  \mathcal{F}_{B,0}:\, & H^{k+1}(\Zrq,E;B\oplus B_\rho) \to H^k(\Zrq,F) \oplus W_+ , \\
  &\mathcal{F}_{B,0}(\Phi) := (D_0\Phi, P_{W_+}\mathcal{R}(\Phi)) ,
\end{align*}
and
\begin{align*}
  & \mathcal{G}_{B,0}:\, H^k(\Zrq,F) \oplus W_+  \to H^{k+1}(\Zrq,E;B\oplus B_\rho)  , \\
  \mathcal{G}_{B,0} &(\Psi,\phi) := S_0\Psi
  + \exp(-tA)(gQ_{(-\infty,0)}(\mathcal{R}(S_0\Psi)) + \phi),
\end{align*}
are inverse to each other for all $k\in\N$.
Hence $\mathcal{F}_{B,0}$ is an isomorphism for all $k\in\N$.

Passing now to variable coefficients, we compare with constant coefficients.
We still let $B_\rho$ on the right side of $\Zrq$ be defined with respect to $A$.
Arguing as in the proof of \lref{lem:DDo}, we find that 
\begin{align*}
  \mathcal{F}_{B}: &H^{k+1}(\Zrq,E;B\oplus B_\rho) \to H^k(\Zrq,E) \oplus W_+ , \\
  &\mathcal{F}_{B}(\Phi) := ((D-\sigma_0R_0)\Phi, P_{W_+}\mathcal{R}(\Phi)) ,
\end{align*}
is also an isomorphism for $0\le k<m$ and $\rho$ small enough.
Denote the inverse of $\mathcal{F}_{B}$ by $\mathcal{G}_B$.

Suppose now that $\mathcal{R}(\Phi)\in B$ and $D\Phi\in H^k_{\loc}(M,F)$.
We only need to show that $\Phi$ is $H^{k+1}$ near $\dM$.
Multiplying by a smooth cut-off function, 
we can assume that the support of $\Phi$ lies inside $\Zrq$
and does not intersect the right part of the boundary of $\Zrq$.
Then $\Phi\in H^1(\Zrq,E;B\oplus B_\rho)$ by \tref{domdmax}~\eqref{reg}
because $B\subset H^{1/2}(\dM,E)$.
By induction on $k$, we can also assume that $\Phi\in H^k(\Zrq,E)$.
Then
\[
\mathcal{F}_B(\Phi)
= ((D-\sigma_0R_0)\Phi, P_{W_+}\mathcal{R}(\Phi)) \in H^k(\Zrq,F) \oplus W_+
\]
and therefore
\[
\Phi = \mathcal{G}_B(\mathcal{F}_B(\Phi)) \in H^{k+1}(\Zrq,E;B\oplus B_\rho) .
\qedhere
\]
\end{proof}

\begin{cor}
Assume the Standard Setup~\ref{stase} and that $D$ and $D^*$ are complete.
Let $B\subset H^{1/2}(\dM,E)$ be an $\infty$-regular elliptic boundary condition for~$D$.

Then each $\Phi\in\dom(D_{\max})$ with $D_{\max}\Phi=0$ and $\mathcal R\Phi\in B$
is smooth up to the boundary.
\end{cor}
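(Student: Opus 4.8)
The plan is a one-step bootstrap off the higher boundary regularity theorem, \tref{thm:regvc}. Since $D_{\max}\Phi = 0$, we have trivially $D_{\max}\Phi \in H^k_{\loc}(M,F)$ for every integer $k \ge 0$. Now fix an arbitrary $m \in \N$. Because $B$ is $\infty$-regular it is in particular an $m$-regular elliptic boundary condition, so \tref{thm:regvc} applies with this $m$: from the hypotheses $\mathcal R\Phi \in B$ and $D_{\max}\Phi \in H^{m-1}_{\loc}(M,F)$ (take $k = m-1 < m$) we conclude $\Phi \in H^m_{\loc}(M,E)$. Since $m$ was arbitrary, this shows $\Phi \in \bigcap_{m \ge 1} H^m_{\loc}(M,E)$.

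By the Sobolev embedding theorem, as recorded in Section~\ref{secPre}, this intersection is exactly $\Cu(M,E)$, the space of sections that are smooth up to the boundary; this is the assertion. There is essentially no analytic obstacle left: all the real content sits in \tref{thm:regvc}, and the bootstrap cannot stall, since each invocation of that theorem—legitimate for every $m$ precisely because $B$ is $\infty$-regular—raises the local Sobolev order of the \emph{same} section $\Phi$ by one, and the conclusion $\Phi \in H^m_{\loc}(M,E)$ never produces a constraint obstructing the passage to $m+1$. The only point meriting a remark is that $\infty$-regularity is exactly the hypothesis guaranteeing that \tref{thm:regvc} is available for all $m$ simultaneously; a merely elliptic (i.e.\ $1$-regular) boundary condition would only give $\Phi \in H^1_{\loc}(M,E)$.

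If one prefers to spell out the induction explicitly rather than appeal to the built-in induction inside \tref{thm:regvc}, one argues by induction on $k$: for $k=0$, ellipticity of $B$ together with $\mathcal R\Phi \in B \subset H^{1/2}(\dM,E)$ and \tref{domdmax}~\eqref{reg} (or \tref{thm:regvc} with $m=1$) gives $\Phi \in H^1_{\loc}(M,E)$; assuming $\Phi \in H^{k}_{\loc}(M,E)$ and using $m$-regularity with $m > k$, \tref{thm:regvc} upgrades this to $\Phi \in H^{k+1}_{\loc}(M,E)$, closing the induction. Either way one arrives at $\Phi \in \bigcap_{k \ge 0} H^k_{\loc}(M,E) = \Cu(M,E)$.
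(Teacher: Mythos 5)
Your proof is correct and takes essentially the same route as the paper: invoke \tref{thm:regvc} for every $m$ (which is legitimate because $\infty$-regularity supplies $m$-regularity for all $m$), noting that $D_{\max}\Phi=0$ trivially lies in every $H^k_{\loc}$, and then conclude $\Phi\in\bigcap_m H^m_{\loc}(M,E)=\Cu(M,E)$ by the Sobolev embedding theorem.
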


\begin{proof}
By standard elliptic regularity theory, $\Phi$ is smooth in the interior of $M$.
\tref{thm:regvc} shows that $\Phi\in H^{m}_{\loc}(M,E)$ for all $m$.
The Sobolev embedding theorem now implies that $\Phi$ is smooth up to the boundary.
\end{proof}

%%%%%%%%%%%%%%%%%%%%%%%%%%%%%%%%%%%%%%%%%%%%%%

\subsection{Local and pseudo-local boundary conditions}
\label{suseplbc}

%%%%%%%%%%%%%%%%%%%%%%%%%%%%%%%%%%%%%%%%%%%%%%
\begin{definition}\index{local boundary condition}\index{pseudo-local boundary condition}
We say that $B \subset H^{1/2}(\dM,E)$ is a \emph{local boundary condition}
if there is a subbundle $E'\subset E_{|\dM}$ such that
\[
B = H^{1/2}(\dM,E').
\]
More generally, we call $B$ \emph{pseudo-local} if there is
a classical pseudo-differential operator $P$ of order $0$ acting on sections of $E$
over $\dM$ inducing an orthogonal projection on $L^2(\dM,E)$ such that
\[
B=P(H^{1/2}(\dM,E)) .
\]
\end{definition}

\begin{thm}\label{pseudo}
Assume the Standard Setup~\ref{stase} and that $D$ and $D^*$ are complete.
Let $P$ be a classical pseudo-differential operator of order zero
acting on section of $E$ over $\partial M$.
Suppose that $P$ induces an orthogonal projection in $L^2(\dM,E)$.
Then the following are equivalent:
\begin{enumerate}[(i)]
\item\label{lbcell}
$B=P(H^{1/2}(\dM,E))$ is an elliptic boundary condition for~$D$.
\item\label{lbcfop} 
For some (and then all) $a\in\R$,
$P-Q_{[a,\infty)}:L^2(\dM,E)\to L^2(\dM,E)$ is a Fredholm operator. 
\item\label{lbcpdo} 
For some (and then all) $a\in\R$,
$P-Q_{[a,\infty)}:L^2(\dM,E)\to L^2(\dM,E)$
is an elliptic classical pseudo-differential operator of order zero.
\item\label{lbcloc}
For all $\xi\in T^*_x\dM\setminus\{0\}$, $x\in\dM$, 
the principal symbol $\sigma_P(\xi):E_x\to E_x$ restricts to an isomorphism
from the  sum of the eigenspaces to the negative eigenvalues of $i\sigma_{A}(\xi)$
onto its image $\sigma_P(\xi)(E_x)$.
\end{enumerate}
\end{thm}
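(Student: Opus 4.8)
The plan is to prove the three equivalences $\eqref{lbcell}\Leftrightarrow\eqref{lbcfop}$, $\eqref{lbcfop}\Leftrightarrow\eqref{lbcpdo}$ and $\eqref{lbcpdo}\Leftrightarrow\eqref{lbcloc}$ separately, using two standard facts. First, since $A$ is formally selfadjoint and elliptic, $i\sigma_A(\xi)$ is Hermitian and invertible for $\xi\neq0$, so $E_x=N^-_x(\xi)\oplus N^+_x(\xi)$ splits into the negative and positive eigenspaces of $i\sigma_A(\xi)$, and $Q_{[a,\infty)}$ is a classical pseudo-differential operator of order $0$ on the closed manifold $\dM$ with principal symbol $\sigma_{Q_{[a,\infty)}}(x,\xi)=\pi_{N^+_x(\xi)}$, the orthogonal projection onto $N^+_x(\xi)$ (cf.\ \cite{APS} and the discussion around Remark~\ref{comreg}). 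Second, on a closed manifold a classical $\Psi$DO of order $0$ is Fredholm on $L^2$ if and only if it is elliptic. Write $T_a:=P-Q_{[a,\infty)}$; since $Q_{[a,\infty)}-Q_{[a',\infty)}$ has finite rank and smooth range it is smoothing, so $T_a$ and $T_{a'}$ differ by a smoothing operator and each of \eqref{lbcfop}--\eqref{lbcloc} is indeed independent of $a$. Because $P$ is a classical $\Psi$DO of order $0$, so is $T_a$, whence $\eqref{lbcfop}\Leftrightarrow\eqref{lbcpdo}$. For $\eqref{lbcpdo}\Leftrightarrow\eqref{lbcloc}$ I use the elementary fact that for orthogonal projections $p,q$ on a finite-dimensional Hermitian space, $p-q$ is invertible if and only if $p|_{\ker q}\colon\ker q\to\im p$ is an isomorphism (both implications follow from $(p-q)^2=\id-(p+q-\id)^2$); applied with $p=\sigma_P(\xi)$ and $q=\pi_{N^+_x(\xi)}$, whose kernel is $N^-_x(\xi)$, ellipticity of $T_a$ at $(x,\xi)$ becomes exactly \eqref{lbcloc}.

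For $\eqref{lbcfop}\Rightarrow\eqref{lbcell}$: if $T_a$ is Fredholm then by the above it is elliptic, so it has a $\Psi$DO parametrix $S$ of order $0$ with $ST_a-\id$ and $T_aS-\id$ smoothing. Since $P=P^*$ is a projection extending continuously to all $H^s(\dM,E)$, the annihilator of $B=P(H^{1/2}(\dM,E))$ in $H^{-1/2}(\dM,E)$ equals $B^0=\ker(P|_{H^{-1/2}(\dM,E)})$. I use this twice. First, for $u\in B$ one has $T_au=u-Q_{[a,\infty)}u=Q_{(-\infty,a)}u$, hence $u=SQ_{(-\infty,a)}u-(ST_a-\id)u$; as $S$ has order $0$ and $ST_a-\id$ maps $H^{-1/2}$ into $C^\infty$, this gives $\|u\|_{H^{1/2}(\dM)}\le C\|u\|_{\check H(A)}$ for all $u\in B$, so $B$ — being closed in $H^{1/2}(\dM,E)$ as the fixed-point set of $P$ — is closed in $\check H(A)$, i.e.\ a boundary condition. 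Second, by \eqref{ba} we have $\sigma_0^*(\Bad)=B^0\cap\hat H(A)$; an $\eta$ in this space satisfies $P\eta=0$ and, by definition of $\hat H(A)$, $Q_{[a,\infty)}\eta\in H^{1/2}(\dM,E)$, so $T_a\eta=-Q_{[a,\infty)}\eta$ and the same parametrix identity forces $\eta\in H^{1/2}(\dM,E)$; hence $\Bad\subset H^{1/2}(\dM,F)$. By \tref{ellbou}, $B$ is an elliptic boundary condition for $D$.

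For $\eqref{lbcell}\Rightarrow\eqref{lbcfop}$: assume $B=P(H^{1/2})$ is elliptic and fix $a$. By \tref{ellbou} there are $L^2$-orthogonal decompositions $L^2_{(-\infty,a)}(A)=V_-\oplus W_-$ and $L^2_{[a,\infty)}(A)=V_+\oplus W_+$ with $W_\pm$ finite-dimensional and a bounded $g\colon V_-\to V_+$ so that $B=W_+\oplus\{v+gv\mid v\in V_-^{1/2}\}$. Since $H^{1/2}(\dM,E)$ is dense in $L^2$ and $P$ is $L^2$-continuous, $P$ is the orthogonal projection onto $\overline{B}^{L^2}=W_+\oplus\Gamma(g)$. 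With respect to the $L^2$-orthogonal splitting $L^2(\dM,E)=W_-\oplus V_-\oplus V_+\oplus W_+$, the operator $Q_{[a,\infty)}$ is the projection onto $V_+\oplus W_+$, and $P$ vanishes on $W_-$, is the identity on $W_+$, and restricts on $V_-\oplus V_+$ to $\pi_{\Gamma(g)}$. Thus $T_a$ is block diagonal: zero on $W_-\oplus W_+$ and $\pi_{\Gamma(g)}-\pi_{V_+}$ on $V_-\oplus V_+$. By \lref{lem:Bperp}, $V_-\oplus V_+=\Gamma(g)\oplus\Gamma(-g^*)$ orthogonally, and a one-line computation shows this last operator sends $v+gv\mapsto v$ and $-g^*w+w\mapsto -w$; since $V_-\perp V_+$, the maps $\id+g$ and $\id-g^*$ are bounded below, so $T_a|_{V_-\oplus V_+}$ is invertible with bounded inverse ($v\mapsto v+gv$ on $V_-$, $w\mapsto g^*w-w$ on $V_+$); the explicit matrix of $\pi_{\Gamma(g)}$ in Remark~\ref{rem:Bperp} gives the same conclusion. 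Hence $T_a$ is Fredholm (of index $0$, with kernel and cokernel $W_-\oplus W_+$), which is \eqref{lbcfop}.

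The step I expect to demand the most care is $\eqref{lbcfop}\Rightarrow\eqref{lbcell}$: one must track how the order-$0$ parametrix interacts with the spectral splittings defining $\check H(A)$ and $\hat H(A)$, and absorb the finite-dimensional, smooth discrepancies coming from whether the cut at $\lambda=a$ sits on the closed or the open side (harmless, but to be accounted for), together with the identification of $\sigma_0^*$-images with Sobolev spaces on the boundary. The two imported facts — that $Q_{[a,\infty)}$ is a classical order-$0$ $\Psi$DO with principal symbol $\pi_{N^+_x(\xi)}$, and that order-$0$ $\Psi$DOs on closed manifolds are Fredholm exactly when elliptic — are standard and will simply be quoted; everything else is linear algebra and the structure theory of elliptic boundary conditions already available from \tref{ellbou}, \lref{lem:Bperp}, and Remark~\ref{rem:Bperp}.
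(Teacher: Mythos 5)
Your proof is correct and follows essentially the same route as the paper: the Hörmander citation for $\eqref{lbcfop}\Leftrightarrow\eqref{lbcpdo}$, the Seeley/APS principal-symbol description of $Q_{[a,\infty)}$ plus elementary facts about pairs of orthogonal projections for $\eqref{lbcpdo}\Leftrightarrow\eqref{lbcloc}$, the parametrix estimate combined with $\sigma_0^*(\Bad)=B^0\cap\hat H(A)$ for $\eqref{lbcpdo}\Rightarrow\eqref{lbcell}$, and the block matrix of $P-Q$ relative to $W_-\oplus V_-\oplus V_+\oplus W_+$ (using \lref{lem:Bperp} and Remark~\ref{rem:Bperp}) for $\eqref{lbcell}\Rightarrow\eqref{lbcfop}$. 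The only cosmetic difference is that you keep the finite-dimensional $W_\pm$-blocks in the matrix computation rather than first reducing to $W_\pm=0$ via the stability of Fredholmness under finite-rank perturbations, and you make the linear-algebra lemma behind $\eqref{lbcpdo}\Leftrightarrow\eqref{lbcloc}$ explicit; both are harmless rearrangements of the paper's argument.
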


\begin{remarks}
(a)
Variants of the equivalence of \eqref{lbcell} with \eqref{lbcfop} are contained in \cite[pp.~75--77]{Gi}, \cite[Thm.~5.6]{BL2}, and \cite[Thm.~1.95]{BBC}.
A special case of the equivalence of \eqref{lbcfop} with \eqref{lbcloc}
is contained in \cite[Thm.~7.2]{BL2}.

(b)
Not every elliptic boundary condition is pseudo-local, i.e.,
the orthogonal projection onto an elliptic boundary condition $B$
is not always given by a pseudo-differential operator.
For example, let $V_-=L^2_{(-\infty,0]}(A)$, $W_-=0$, and $g=0$.
Choose $\phi \in H^{1/2}_{(0,\infty)}(A)$ which is not smooth,
put $W_+=\C\cdot\phi$ and let $V_+$ be the orthogonal complement
of $W_+$ in $L^2_{(0,\infty)}(A)$.
Each eigensection for $A$ to a positive eigenvalue which occurs
in the spectral decomposition of $\phi$ is a smooth section
which is mapped by $P$ to a nontrivial multiple of $\phi$.
If $P$ were pseudo-differential it would map smooth sections to smooth sections.
\end{remarks}

\begin{proof}[Proof of \tref{pseudo}]
The equivalence of \eqref{lbcfop} and \eqref{lbcpdo} follows
from Theorems~19.5.1 and 19.5.2 in \cite{Ho}.

It is known that the spectral projection $Q:=Q_{[a,\infty)}$, $a\in\R$,
is a classical pseudo-differential operator of order zero
and that its principal symbol $i\sigma_Q(\xi)$, $\xi\in T^*\partial M\setminus\{0\}$,
is the orthogonal projection onto the sum of the eigenspaces
of positive eigenvalues of $i\sigma_{A}(\xi)$,
see \cite[p.~48]{APS} together with \cite{Se} or \cite[Prop.~14.2]{BW}.
Both, $i\sigma_P(\xi)$ and $i\sigma_Q(\xi)$, are orthogonal projections.
Hence $i\sigma_{P-Q}(\xi) = i\sigma_P(\xi)-i\sigma_Q(\xi)$ is an isomorphism
if and only if $i\sigma_P(\xi)$ induces an isomorphism between its image
and the orthogonal complement of the image of $i\sigma_Q(\xi)$.
This shows the equivalence of \eqref{lbcpdo} and \eqref{lbcloc}.

It remains to show the equivalence of \eqref{lbcell} with the other conditions.
We show that \eqref{lbcpdo} implies \eqref{lbcell}.
We check the first characterization in \tref{ellbou}.
First we observe that if $\phi_j\in B$ converge in $H^{1/2}(\dM,E)$ to an element $\phi$,
then $\phi_j=P(\phi_j) \to P(\phi)\in B$, hence $B$ is closed in $H^{1/2}(\dM,E)$.
Since $P-Q$ is elliptic of order zero, it has a parametrix $R$, that is,
$R$ is a  classical pseudo-differential operator of order zero
such that $R(P-Q) = \id + S$, where $S$ is a smoothing operator.
For $\phi\in B$ we have
\begin{align*}
\HhdM{\phi}
&\le
\HhdM{(\id+S)\phi} + \HhdM{S\phi}\\
&=
\HhdM{R(P-Q)\phi} + \HhdM{S\phi}\\
&\le
C_1\cdot\left(\HhdM{(P-Q)\phi} + \HmhdM{\phi}\right)\\
&=
C_1\cdot\left(\HhdM{(\id-Q)\phi} + \HmhdM{\phi}\right)\\
&\le
C_2 \cdot \HcdM{\phi}.
\end{align*}
Thus the $H^{1/2}$ and $\check H$--norms are equivalent on $B$,
hence $B$ is closed in $\check H(A)$.

Now let $\psi\in\sigma _0^*(\Bad)$.
Then $\psi\in\hat H(A)$, and we have for all $\phi\in H^{1/2}(A)$:
\[
0
=
(P\phi,\psi)
=
(\phi,P\psi) .
\]
Hence $P\psi=0$ and thus $(P-Q)(\psi)=-Q(\psi)\in H^{1/2}(A)$
since $\psi\in \hat H(A)$.
Therefore
\[
\psi 
=
(\id + S)\psi - S\psi
=
\underbrace{R(P-Q)\psi}_{\in H^{1/2}} 
- \underbrace{S \psi}_{\in \Cu}
\in
H^{1/2}(\dM,E).
\]
Thus $\sigma _0^*(\Bad)\subset H^{1/2}(\dM,E)$.
Hence $\Bad\subset H^{1/2}(\dM,F)$ and \eqref{lbcell} follows.

Finally, we show that \eqref{lbcell} implies \eqref{lbcfop}.
Let $L^2(\dM,E) = V_- \oplus W_- \oplus V_+ \oplus W_+$
and $B=P(H^{1/2}(\dM,E)) = W_+ \oplus \Gamma(g)^{1/2}$,
hence $P(L^2(\dM,E)) = W_+ \oplus \Gamma(g)$.
Since the sum of a Fredholm operator and a finite-rank-operator
is again a Fredholm operator and since $W_+$ and $W_-$ are finite-dimensional,
we can assume without loss of generality that $W_+=W_-=0$.
Let $a\in\R$ such that $V_- \subset L^2_{(-\infty,a]}(A)$
and $V_+ \subset L^2_{[-a,\infty)}(A)$.
Since $L^2_{[-a,a]}(A)$ is finite-dimensional,
we can furthermore assume that $a=0$, $V_-=L^2_{(-\infty,0)}(A)$,
and $V_+ = L^2_{[0,\infty)}(A)$.
With respect to the splitting $L^2(\dM,E) = V_- \oplus V_+$ we have
\[
Q = \begin{pmatrix}
    0 & 0 \cr 0 & \id
    \end{pmatrix}
\quad\mbox{ and, by \eqref{eq:ProjGamma}},\quad
P = \begin{pmatrix}
    \id & 0 \cr g & 0
    \end{pmatrix}
    \begin{pmatrix}
    \id &  -g^*\cr g & \id
    \end{pmatrix}^{-1} ,
\]
hence
\[
P-Q = \begin{pmatrix}
      \id & 0 \cr 0 & -\id
      \end{pmatrix}
      \begin{pmatrix}
      \id &  -g^*\cr g & \id
      \end{pmatrix}^{-1}
\]
is an isomorphism.
\end{proof}

\begin{cor}\label{lopashap}
Assume the Standard Setup~\ref{stase} and that $D$ and $D^*$ are complete.
Let $E' \subset E_{|\dM}$ be a subbundle
and let $P:E_{|\dM}\to E'$ be the fiberwise orthogonal projection.

If $(D,\id-P)$ is an elliptic boundary value problem in the classical sense
of Lopatinsky and Shapiro, then $B=H^{1/2}(\dM,E')$
is a local elliptic boundary value condition in the sense of \dref{defellbou}.
\end{cor}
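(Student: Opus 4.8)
The plan is to derive the corollary from \tref{pseudo} together with \tref{ellbou}. The fibrewise orthogonal projection $P\colon E_{|\dM}\to E'$ is a classical pseudo-differential operator of order zero inducing an orthogonal projection on $L^2(\dM,E)$, with $P(H^{1/2}(\dM,E))=H^{1/2}(\dM,E')=B$, and its principal symbol is $\sigma_P(\xi)=P$ for every $\xi$, so $\sigma_P(\xi)(E_x)=E'_x$. Hence \tref{pseudo} applies to $B$, and it suffices to verify condition \eqref{lbcloc} there, i.e.\ that for all $x\in\dM$ and all $0\ne\xi\in T^*_x\dM$ the restriction $P|_{E_x^-(\xi)}\colon E_x^-(\xi)\to E'_x$ is an isomorphism, where $E_x^-(\xi)$ denotes the sum of the eigenspaces of the Hermitian endomorphism $i\sigma_A(x,\xi)$ for its negative eigenvalues and $E_x^+(\xi)$ the sum of those for its positive eigenvalues. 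Once this is established, the implication \eqref{lbcloc}$\Rightarrow$\eqref{lbcell} of \tref{pseudo} shows that $B$ is an elliptic boundary condition, \tref{ellbou} (equivalence of \eqref{ellbouc} and \eqref{ellbouh}) identifies it with an elliptic boundary condition in the sense of \dref{defellbou}, and locality is immediate since $B=H^{1/2}(\dM,E')$.

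So the real task is to match the classical Lopatinsky--Shapiro condition for the first-order boundary value problem $(D,\id-P)$ with condition \eqref{lbcloc}. Writing $D$ in the normal form $D=\sigma_t(\partial_t+A+R_t)$ of \lref{nf} and freezing coefficients at a point $x\in\dM$, the ``stable subspace'' of $E_x$ attached to a covector $0\ne\xi\in T^*_x\dM$ --- the space of Cauchy data at $t=0$ of the $L^2$-solutions of the model equation on $[0,\infty)$, equivalently the range of the principal symbol of the Calder\'on projector --- equals $E_x^+(\xi)$; this is Seeley's computation of the symbol of the Calder\'on projector, already invoked in the proof of \tref{pseudo} (cf.\ \cite{Se} and \cite[Prop.~14.2]{BW}, where it is identified with the symbol of the spectral projection $Q_{[0,\infty)}(A)$). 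Consequently the Lopatinsky--Shapiro (covering) condition for $(D,\id-P)$ reads: for every $x\in\dM$ and every $0\ne\xi\in T^*_x\dM$, the map
\[
  (\id-P)|_{E_x^+(\xi)}\colon E_x^+(\xi)\to (E'_x)^\perp=\im(\id-P)
\]
is an isomorphism.

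It then remains to check by elementary linear algebra that this is equivalent to \eqref{lbcloc}. Since $i\sigma_A(x,\xi)$ is Hermitian and invertible for $\xi\ne0$, one has the orthogonal splitting $E_x=E_x^+(\xi)\oplus E_x^-(\xi)$, while $E_x=E'_x\oplus(E'_x)^\perp$ orthogonally by definition of $P$. For complementary linear subspaces one has $E_x=V\oplus W\iff E_x=V^\perp\oplus W^\perp$, because $(V+W)^\perp=V^\perp\cap W^\perp$ and $(V\cap W)^\perp=V^\perp+W^\perp$. A short argument --- for the forward direction, injectivity gives $E_x^+(\xi)\cap E'_x=0$ and surjectivity onto $(E'_x)^\perp$ gives $E_x^+(\xi)+E'_x=E_x$; the converse is a direct check --- shows
\[
  (\id-P)|_{E_x^+(\xi)}\ \text{iso}\ \iff\ E_x=E_x^+(\xi)\oplus E'_x\ \iff\ E_x=E_x^-(\xi)\oplus(E'_x)^\perp\ \iff\ P|_{E_x^-(\xi)}\ \text{iso},
\]
the middle equivalence using $E_x^+(\xi)^\perp=E_x^-(\xi)$. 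The last statement is exactly condition \eqref{lbcloc}, and the corollary follows.

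The main obstacle is the first of these steps, namely passing from the ``classical'' Lopatinsky--Shapiro condition to the symbolic statement \eqref{lbcloc}. For a boundary symmetric operator the tangential symbol $\sigma_A(\xi)$ is skew-Hermitian, so the naive one-ordinary-differential-equation-per-covector formulation of the covering condition is degenerate; one must instead use the Calder\'on-projector version of the condition, i.e.\ the fact that the principal symbol of the Calder\'on projector coincides with the spectral projection onto $E_x^+(\xi)$. Since this identification is already part of the machinery behind \tref{pseudo}, the remaining steps are routine.
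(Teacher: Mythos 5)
Your proposal is correct and takes essentially the same route as the paper: you formulate the Lopatinsky--Shapiro condition via the ordinary differential equation $(i\sigma_{A}(\xi)+d/dt)f=0$ on $[0,\infty)$, use boundary symmetry to see that $i\sigma_{A}(\xi)$ is Hermitian so the decaying solutions are exactly those with initial data in the positive eigenspace $E_x^+(\xi)$, and match this with criterion \eqref{lbcloc} of Theorem~\ref{pseudo}. You are in fact a little more careful than the paper in one place: the paper passes directly from ``$(\id-P)|_{E_x^+(\xi)}$ is a bijection onto $(E'_x)^\perp$'' to criterion \eqref{lbcloc}, while you spell out the orthogonal-complement argument ($E_x=E_x^+(\xi)\oplus E'_x\iff E_x=E_x^-(\xi)\oplus(E'_x)^\perp$, using $E_x^+(\xi)^\perp=E_x^-(\xi)$ and the self-adjointness of $P$) showing this is equivalent to $P|_{E_x^-(\xi)}$ being an isomorphism onto $E'_x=\sigma_P(\xi)(E_x)$. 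Two small points deserve correction, though neither affects validity: the detour through Seeley's symbol of the Calder\'on projector is unnecessary here, since the stable subspace is read off directly and elementarily from the ODE (this is exactly what the paper does); and the remark that the ``naive ODE formulation of the covering condition is degenerate'' for boundary symmetric operators has it backwards --- boundary symmetry makes $i\sigma_{A}(\xi)$ Hermitian and (by ellipticity of $A$) invertible for $\xi\ne 0$, so the eigenvalues are real and nonzero, which is precisely what makes the ODE analysis clean rather than degenerate.
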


\begin{proof}
We use a boundary defining function $t$ as in \lref{adapted}.
Let $(D,\id-P)$ be an elliptic boundary value problem in the classical sense
of Lopatinsky and Shapiro, see e.~g.\ \cite[Sec.~1.9]{Gi}.
This means that the rank of $E'$ is half of that of $E$ and that, for any $x\in\dM$,
any $\eta\in T^*_x\dM \setminus \{0\}$, and any $\phi\in (E'_x)^\perp$,
there is a unique solution $f:[0,\infty)\to E_x$ to the ordinary differential equation
\begin{equation}
\left(i\sigma_{A}(\eta) +\frac{d}{dt}\right)f(t)=0
\label{eq:LopShap}
\end{equation}
subject to the boundary conditions
$$
(\id-P)f(0)=\phi
\quad\mbox{ and }\quad
\lim_{t\to\infty}f(t)=0 .
$$
Recall from \dref{defbs} that $i\sigma_{A}(\eta)$ is Hermitian,
hence diagonalizable with real eigenvalues.
The solution to \eqref{eq:LopShap} is given by $f(t)= \exp(-it\sigma_{A}(\eta))\phi$.
The condition $\lim_{t\to\infty}f(t)=0$ is therefore equivalent to $\phi$
lying in the sum of the eigenspaces to the positive eigenvalues of $i\sigma_{A}(\eta)$.
This shows criterion \eqref{lbcloc} of \tref{pseudo}.
\end{proof}

As a direct consequence of \tref{pseudo} \eqref{lbcloc} we obtain

\begin{cor}\label{cor:involution}
Assume the Standard Setup~\ref{stase} and that $D$ and $D^*$ are complete.
Let $E_{|\dM}=E'\oplus E''$ be a decomposition
such that $\sigma_{A}(\xi)$ interchanges $E'$ and $E''$ for all $\xi\in T^*\dM$.

Then $B':=H^{1/2}(\dM,E')$ and $B'':=H^{1/2}(\dM,E'')$
are local elliptic boundary conditions for $D$.\qed
\end{cor}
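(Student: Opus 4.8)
The plan is to deduce the corollary from criterion \eqref{lbcloc} of \tref{pseudo}, applied to the fiberwise orthogonal projection onto $E'$. Let $P:E_{|\dM}\to E_{|\dM}$ be the smooth bundle endomorphism which on each fiber $E_x$ is the orthogonal projection onto $E'_x$ along $E''_x$. Then $P$ is a differential operator of order zero, a fortiori a classical pseudo-differential operator of order zero, it induces an orthogonal projection on $L^2(\dM,E)$, and since $P$ is a surjective smooth bundle map onto $E'$ which is the identity on $E'$, one has $P(H^{1/2}(\dM,E))=H^{1/2}(\dM,E')=B'$. Its order-zero principal symbol is $\sigma_P(\xi)=P_x$, independent of $\xi$, with image $P_x(E_x)=E'_x$. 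So the whole task is to check that, for every $x\in\dM$ and every $\xi\in T^*_x\dM\setminus\{0\}$, the projection $P_x$ restricts to an isomorphism from the sum $E^-_x$ of the eigenspaces of $i\sigma_A(\xi)$ for negative eigenvalues onto $E'_x$; \tref{pseudo} then yields that $B'$ is elliptic, and it is local by the very form of $B'$. Since the hypothesis is symmetric in $E'$ and $E''$, the same argument with $P$ replaced by the orthogonal projection onto $E''$ handles $B''=H^{1/2}(\dM,E'')$.

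For the pointwise claim I first record the linear algebra. Since $A$ is elliptic, $\sigma_A(\xi)$ is invertible for $\xi\neq0$, and since $D$ is boundary symmetric, $\sigma_A(\xi)=\sigma_0^{-1}\sigma_D(\xi)$ is skew-Hermitian (\dref{defbs}), so $i\sigma_A(\xi)$ is Hermitian and invertible; in particular $E_x=E^-_x\oplus E^+_x$ orthogonally, where $E^+_x$ is the sum of the positive eigenspaces of $i\sigma_A(\xi)$. Writing $\sigma_A(\xi)$ in block form with respect to the orthogonal splitting $E_x=E'_x\oplus E''_x$, the assumption that $\sigma_A(\xi)$ interchanges $E'_x$ and $E''_x$ kills the diagonal blocks, and skew-adjointness then forces
\[
  \sigma_A(\xi)=\begin{pmatrix} 0 & b \\ -b^{*} & 0\end{pmatrix},
\]
with $b:E''_x\to E'_x$ invertible because $\sigma_A(\xi)$ is; in particular $\dim E'_x=\dim E''_x$. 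Moreover the grading involution $G$ ($G=\id$ on $E'_x$, $G=-\id$ on $E''_x$) anticommutes with $i\sigma_A(\xi)$, hence maps $E^-_x$ onto $E^+_x$, so $\dim E^-_x=\dim E^+_x=\frac12\dim E_x=\dim E'_x$.

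It remains to show that $P_x|_{E^-_x}$ is injective, i.e. that $E^-_x\cap E''_x=\{0\}$; combined with the equality of dimensions just established, this forces $P_x|_{E^-_x}$ to be an isomorphism onto $E'_x=P_x(E_x)$. Suppose $0\neq v\in E^-_x$. Then $(i\sigma_A(\xi)v,v)<0$, since $i\sigma_A(\xi)$ is negative definite on $E^-_x$ and $(i\sigma_A(\xi)v,v)$ is real. If in addition $v\in E''_x$, then $\sigma_A(\xi)v=bv\in E'_x$, so $(i\sigma_A(\xi)v,v)=(ibv,v)=0$ by the orthogonality $E'_x\perp E''_x$ — a contradiction. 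Hence $E^-_x\cap E''_x=\{0\}$, and the corollary follows. The only point that is not pure bookkeeping with \tref{pseudo} and \lref{nf} is this last transversality, and it is immediate once the block form above is in hand, so I do not expect a serious obstacle.
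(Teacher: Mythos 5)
Your argument is correct and follows exactly the route the paper intends: the paper states the corollary as ``a direct consequence of Theorem~\ref{pseudo}\,(iv)'' without writing out the verification, and you have filled in precisely that verification — block form of the skew-Hermitian $\sigma_A(\xi)$, the anticommuting grading involution giving $\dim E^-_x = \dim E'_x$, and the transversality $E^-_x\cap E''_x=\{0\}$. The only tacit point, which you correctly assumed, is that the decomposition $E|_{\dM}=E'\oplus E''$ is orthogonal; this is needed both for the block form and for $P$ to induce an orthogonal projection, and it is what the paper has in mind (cf.\ Example~\ref{chirco}).
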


This corollary applies, in particular,
if $A$ itself interchanges sections of $E'$ and $E''$.

Solutions to elliptic equations under pseudo-local elliptic boundary conditions
have optimal regularity properties.
Namely, we have

\begin{prop}\label{prop:localregular}
Every pseudo-local elliptic boundary condition is $\infty$-regular.
\end{prop}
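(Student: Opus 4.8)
The plan is to reduce $\infty$-regularity to the pseudo-differential parametrix furnished by \tref{pseudo} and then to run a short regularity bootstrap, once for $B$ and once for the adjoint boundary condition $\Bad$. Recall that, by definition, the elliptic boundary condition $B$ is $\infty$-regular if and only if for every $s\ge 1/2$ the (equivalent) assertions of \lref{lem:Breg} hold for both $B$ and $\Bad$; so it suffices to prove, for all $s\ge 1/2$, that every $\phi\in B$ with $Q_{(-\infty,0)}\phi\in H^s$ lies in $H^s$, together with the analogous statement for $\Bad$ (with $\tilde A$ in place of $A$). Note that $\Bad$ is elliptic by \tref{ellbou}, so \lref{lem:Breg} does apply to it.

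First I would treat $B$ itself. Let $P$ be the order-zero classical pseudo-differential operator inducing the orthogonal projection with $B=P(H^{1/2}(\dM,E))$ and set $Q:=Q_{[0,\infty)}$. By the equivalence of \eqref{lbcell} and \eqref{lbcpdo} in \tref{pseudo}, $P-Q$ is an elliptic classical pseudo-differential operator of order zero, hence admits a parametrix $R$ of order zero with $R(P-Q)=\id+S$ and $S$ smoothing, exactly as in the proof of \tref{pseudo}. For $\phi\in B$ we have $P\phi=\phi$, so $(P-Q)\phi=\phi-Q\phi=Q_{(-\infty,0)}\phi$; if this lies in $H^s(\dM,E)$, then $\phi=R(P-Q)\phi-S\phi=R\bigl(Q_{(-\infty,0)}\phi\bigr)-S\phi\in H^s(\dM,E)$, since $R$ has order zero and $S\phi\in\Cu(\dM,E)$ (because $\phi\in H^{1/2}$ and $S$ is smoothing). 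This is assertion \eqref{Breg3} of \lref{lem:Breg} for $B$, and it holds for every $s\ge 1/2$.

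Next I would verify that $\Bad$ is again a \emph{pseudo-local} elliptic boundary condition, now for $D^*$ with adapted operator $\tilde A$, so that the same bootstrap applies. Since $\Bad$ is elliptic we have $\Bad\subset H^{1/2}(\dM,F)$, whence $\Bad$ is the $L^2(\dM,F)$-orthogonal complement of $\sigma_0(B)$ intersected with $H^{1/2}(\dM,F)$. The $L^2$-closure of $\sigma_0(B)$ is $\sigma_0(\im P)$, and a direct computation identifies the $L^2$-orthogonal projection onto $\sigma_0(\im P)$ with $\sigma_0\bigl(P\sigma_0^*\sigma_0P+\id-P\bigr)^{-1}P\sigma_0^*$; here $P\sigma_0^*\sigma_0P+\id-P$ is a self-adjoint classical pseudo-differential operator of order zero which is bounded below by a positive constant, since $\sigma_0^*\sigma_0$ is a smooth positive-definite bundle endomorphism of $E|_{\dM}$ and $\dM$ is compact, so it is elliptic and its inverse is again a classical pseudo-differential operator of order zero. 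Hence $\tilde P:=\id-\sigma_0\bigl(P\sigma_0^*\sigma_0P+\id-P\bigr)^{-1}P\sigma_0^*$ is an order-zero classical pseudo-differential operator inducing the orthogonal projection onto $\Bad$, so that $\Bad=\tilde P(H^{1/2}(\dM,F))$ is pseudo-local. Applying \tref{pseudo} to $D^*$, $\tilde A$ and $\tilde P$ shows that $\tilde P-Q^{\tilde A}_{[0,\infty)}$ is an elliptic pseudo-differential operator of order zero, and repeating the argument of the previous paragraph with $\tilde P$, $\tilde A$ in place of $P$, $A$ yields assertion \eqref{Breg3} of \lref{lem:Breg} for $\Bad$, for every $s\ge 1/2$. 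Combined with the first step, this shows that $B$ is $(s+1/2)$-regular for all $s\ge 1/2$, i.e.\ $\infty$-regular.

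The step I expect to be the main obstacle is exhibiting the orthogonal projection onto $\Bad$ as an honest classical pseudo-differential operator: because $\sigma_0$ need not be unitary, the naive candidate $\id-\sigma_0P\sigma_0^*$ has to be corrected by the factor $\bigl(P\sigma_0^*\sigma_0P+\id-P\bigr)^{-1}$, and it is exactly the positivity remark above that keeps this factor inside the order-zero calculus. Everything else is the standard elliptic parametrix bootstrap already used in the proof of \tref{pseudo}.
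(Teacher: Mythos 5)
Your proof is correct, and it is actually \emph{more} thorough than the paper's own. The paper's proof verifies criterion~\eqref{Breg3} of \lref{lem:Breg} only for $B$, via the parametrix of $P-Q_{[0,\infty)}$, and simply declares $B$ to be $(s+1/2)$-regular; but Definition~\ref{def:Hs} ff.\ (the definition just above Remark~\ref{remreg}) requires the \lref{lem:Breg} criterion \emph{both} for $B$ and for $\Bad$. You correctly notice this and close the gap by showing that $\Bad$ is itself pseudo-local. Your formula $\tilde P=\id-\sigma_0\bigl(P\sigma_0^*\sigma_0P+\id-P\bigr)^{-1}P\sigma_0^*$ is the orthogonal projection onto $\overline{\Bad}$: the operator $P\sigma_0^*\sigma_0P+\id-P$ commutes with $P$, is self-adjoint, elliptic of order zero, and bounded below (for $\phi=\phi_1+\phi_2$ with $\phi_1\in\im P$, $\phi_2\in\ker P$, the quadratic form equals $\|\sigma_0\phi_1\|^2+\|\phi_2\|^2\ge c\|\phi\|^2$), so its inverse is again classical of order zero; idempotence and self-adjointness of $\tilde P$ then follow by the computation $(P\sigma_0^*\sigma_0P+\id-P)^{-1}P\sigma_0^*\sigma_0P=P$.

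Two remarks for comparison. First, the paper's first step and yours coincide: for $\phi\in B$ one has $(P-Q_{[0,\infty)})\phi=Q_{(-\infty,0)}\phi$, and ellipticity of $P-Q_{[0,\infty)}$ (criterion~\eqref{lbcpdo} of \tref{pseudo}) bootstraps $\phi$ into $H^s$. Second, there is a shorter route for $\Bad$ that avoids your correction factor altogether: by Remark~\ref{rembad}, $\sigma_0^*(\Bad)=(\id-P)\bigl(H^{1/2}(\dM,E)\bigr)$, and $\id-P$ is already the classical order-zero orthogonal projection onto $\overline{\sigma_0^*(\Bad)}$; moreover $(\id-P)-Q_{[0,\infty)}(-A)=-(P-Q_{(0,\infty)}(A))$, which differs from $-(P-Q_{[0,\infty)}(A))$ by the finite-rank smoothing projection onto $\ker A$ and is hence elliptic. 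Running the same parametrix bootstrap for $\sigma_0^*(\Bad)$ with $-A$ (the adapted operator for $(\sigma^{-1}D)^*$, cf.~\eqref{sigd}) gives the $\Bad$ half of the criterion. Your explicit construction of $\tilde P$ acting on $L^2(\dM,F)$ is entirely valid and self-contained; the $\id-P$ route simply spares you the positivity-of-$\sigma_0^*\sigma_0$ argument and keeps everything on sections of $E$.
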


\begin{proof}
Let $B$ be a pseudo-local elliptic boundary condition, and let $s\ge1/2$.
We show that $B$ is $(s+1/2)$-regular
by checking criterion~\eqref{Breg3} of \lref{lem:Breg}.
Let $\phi\in B$ with $Q_{(-\infty,0)}(\phi)\in H^s(\dM,E)$.
Since $B$ is pseudo-local, there is a classical pseudo-differential operator $P$
of order $0$ inducing an orthogonal projection on $L^2(\dM,E)$
such that $B=P(H^{1/2}(\dM,E))$
and such that $P-Q_{[0,\infty)}:L^2(\dM,E)\to L^2(\dM,E)$ is elliptic,
see criterion \eqref{lbcpdo} in \tref{pseudo}.
Now, since $P\phi=\phi$,
\[
(P-Q_{[0,\infty)})(\phi) = \phi - Q_{[0,\infty)}(\phi)
= Q_{(-\infty,0)}(\phi) \in H^s(\dM,E).
\]
Ellipticity of $P-Q_{[0,\infty)}$ implies that $\phi\in H^s(\dM,E)$.
\end{proof}

%%%%%%%%%%%%%%%%%%%%%%%%%%%%%%%%%%%%%%%%%%%%%%

\subsection{Examples}
\label{susexas}

%%%%%%%%%%%%%%%%%%%%%%%%%%%%%%%%%%%%%%%%%%%%%%
We start with examples of well-known local boundary conditions
of great geometric significance.
They are all of the form described in \cref{cor:involution}.

\begin{example}[Differential forms]
Let $M$ carry a Riemannian metric $g$ and let 
\[
E= \bigoplus_{j=0}^n \Lambda^jT^*M \otimes_{\R} \C
= \Lambda^*T^*M \otimes_{\R} \C
\] 
be the complexification of the sum of the form bundles over $M$.
The operator is given by $D=d+d^*$,
where $d$ denotes exterior differentiation and $d^*$ is its formal adjoint
with respect to the volume element of $g$.

Let $T$ be the interior unit normal vector field along the boundary $\dM$
and let $\tau$ be the associated unit conormal one-form.
Then $\sigma_D$ is symmetric with respect to $T$ along the boundary.
For each $x\in\dM$ and $j$ we have a canonical identification 
\[
\Lambda^jT^*_xM
= \Lambda^jT^*_x\dM \oplus \tau(x)\wedge\Lambda^{j-1}T^*_x\dM, 
\quad
\phi = (\phi)^{\tan} + \tau(x)\wedge (\phi)^\nor.
\]
The local boundary condition corresponding to the subbundle
$E':=\Lambda^*\dM\otimes_\R\C \subset E_{|\dM}$ is called
the {\em absolute boundary condition}, \index{absolute boundary condition}
\[
B_\abs 
= 
\{\phi\in H^{1/2}(\dM,E) \mid (\phi)^\nor =0 \} ,
\]
while $E'':=\tau\wedge\Lambda^*\dM\otimes_\R\C \subset E_{|\dM}$
yields the {\em relative boundary condition}\index{relative boundary condition},
\[
B_\rel
=
\{\phi\in H^{1/2}(\dM,E) \mid (\phi)^{\tan} =0 \} .
\]
Both boundary conditions are known to be elliptic in the classical sense,
see e.~g.\ \cite[Lemma~4.1.1]{Gi}.
Indeed, for $\xi\in T^*\dM$,
$\sigma_D(\xi)$ leaves the subbundles $E'$ and $E''$ invariant
while $\sigma_D(\tau)$ interchanges them.
Hence, by \eqref{eq:symbDT}, $\sigma_{A}(\xi)$ interchanges $E'$ and $E''$.
\end{example}

\begin{example}[Chirality conditions]\label{chirco}\index{chirality condition}
Let $M$ be a Riemannian spin manifold
and $D$ be the Dirac operator acting on spinor fields, i.e.,
sections of the spinor bundle $E=F=\Sigma M$.
We say that a morphism $G:\Sigma M|_{\dM} \to \Sigma M|_{\dM}$
is a \emph{boundary chirality operator}\index{boundary chirality operator} 
if $G$ is an orthogonal involution of $E$ such that
\begin{align*}
\sigma_D(\tau)\circ G &= G \circ \sigma_D(\tau) \mbox{ and }  \\
\sigma_D(\xi)\circ G &= - G \circ \sigma_D(\xi),\mbox{ for all }\xi\in T^*\dM .
\end{align*}
Now let $G$ be a chirality operator of $E$ as above
and $E'$ and $E''$ be the subbundles of $E$
given by the $\pm 1$-eigenspaces of $G$.
The displayed properties of $G$ show that
$\sigma_A(\xi) = \sigma_D(\tau)^{-1}\circ\sigma_D(\xi)$ interchanges the two subbundles.
\cref{cor:involution} then implies that both subbundles
give rise to local elliptic boundary conditions for $D$.

For instance, Clifford multiplication with $i$ times the exterior unit normal field
yields a chirality operator along $\dM$.
The resulting boundary condition is sometimes called
the \emph{MIT-bag condition}.\index{MIT-bag condition}

Chirality conditions have been used to show the positivity of the ADM mass
on asymptotically flat manifolds \cite{GHHP,He} (using an idea of Witten \cite{Wi}).
Eigenvalue estimates for the Dirac operator under selfadjoint chirality boundary conditions
have been derived in \cite{HMZ,HMR}.
\end{example}

We now discuss examples of pseudo-local boundary conditions.

\begin{example}[Generalized Atiyah-Patodi-Singer boundary conditions]\index{generalized Atiyah-Patodi-Singer boundary condition}\label{gaps}
In Example~\ref{aps} we have seen that the Atiyah-Patodi-Singer boundary conditions
are elliptic in the sense of Definition~\ref{defellbou}.
More generally, fix $a\in\R$
and put $V_-=L^2_{(-\infty,a)}(A)$, $V_+=L^2_{[a,\infty)}(A)$, 
$W_-=W_+=0$, and $g=0$.
Then we have
\[
  B = B(a) := H_{(-\infty,a)}^{1/2}(A) .
\]
These elliptic boundary conditions are known as {\em
generalized Atiyah-Patodi-Singer boundary conditions}.
As remarked in the proof of \tref{pseudo} these boundary conditions
are pseudo-local by \cite[p.~48]{APS} together with \cite{Se}
or by \cite[Prop.~14.2]{BW}.
\end{example}

\begin{example}[Transmission conditions]\label{ex:TransCond}\index{transmission condition}
Let $(M,\mu)$ be a measured manifold.
For the sake of simplicity, assume that the boundary of $M$ is empty,
even though this is not really necessary.
Let $N\subset M$ be a compact hypersurface with trivial normal bundle.
Cut $M$ along $N$ to obtain a measured manifold $M'$ with boundary.
The boundary $\dM'$ consists of two copies $N_1$ and $N_2$ of $N$.
We may write $M' = (M\setminus N)\sqcup N_1 \sqcup N_2$.

\begin{center}
\begin{pspicture}(0,-7.171081)(12.721025,2.1710808)
\psbezier[linewidth=0.04](12.501025,2.1510808)(11.741025,1.8310809)(6.443285,2.1186054)(5.4610248,2.111081)(4.478765,2.1035564)(0.0,0.99085987)(0.021024996,-0.008919082)(0.042049993,-1.008698)(5.102699,-2.1510808)(6.161025,-2.1089191)(7.219351,-2.0667572)(11.741025,-0.46891907)(12.701025,-0.68891907)
\psbezier[linewidth=0.04](12.381025,1.6910809)(12.041025,1.6110809)(7.0244503,1.4894793)(7.081025,0.4910809)(7.1376,-0.5073174)(11.701025,-0.028919082)(12.541025,-0.108919084)
\psbezier[linewidth=0.04](3.101025,0.4910809)(3.101025,-0.30891907)(5.621025,-0.3889191)(5.621025,0.41108093)
\psbezier[linewidth=0.04](3.301025,0.16832179)(3.661025,0.7910809)(4.9610248,0.7310809)(5.2583065,-0.028919082)
\psbezier[linewidth=0.06](4.021025,-0.108919084)(3.301025,-0.008919082)(3.341025,-1.8089191)(3.811025,-1.6756418)
\psbezier[linewidth=0.06,linestyle=dashed,dash=0.16cm 0.16cm](3.925706,-0.08891908)(4.781025,-0.39566067)(4.2703867,-1.9089191)(3.781025,-1.6839752)
\rput(6.302431,1.2360809){$M$}
\rput(3.1524312,-0.8439191){$N$}

\rput(0,-5){
\psbezier[linewidth=0.04](12.501025,2.1510808)(11.741025,1.8310809)(6.443285,2.1186054)(5.4610248,2.111081)(4.478765,2.1035564)(0.0,0.99085987)(0.021024996,-0.008919082)(0.042049993,-1.008698)(5.102699,-2.1510808)(6.161025,-2.1089191)(7.219351,-2.0667572)(11.741025,-0.46891907)(12.701025,-0.68891907)
\psbezier[linewidth=0.04](12.381025,1.6910809)(12.041025,1.6110809)(7.0244503,1.4894793)(7.081025,0.4910809)(7.1376,-0.5073174)(11.701025,-0.028919082)(12.541025,-0.108919084)
\psbezier[linewidth=0.04](3.101025,0.4910809)(3.101025,-0.30891907)(5.621025,-0.3889191)(5.621025,0.41108093)
\psbezier[linewidth=0.04](3.301025,0.16832179)(3.661025,0.7910809)(4.9610248,0.7310809)(5.2583065,-0.028919082)

\psdots[dotsize=0.2,linecolor=white](3.65,-1.7)(3.85,-0.1)

\psbezier[linewidth=0.06](4.021025,-0.108919084)(3.301025,-0.008919082)(3.341025,-1.8089191)(3.811025,-1.6756418)
\psbezier[linewidth=0.06,linestyle=dashed,dash=0.16cm 0.16cm](3.925706,-0.08891908)(4.781025,-0.39566067)(4.2703867,-1.9089191)(3.781025,-1.6839752)
\psbezier[linewidth=0.06](3.761025,-0.068919085)(3.041025,0.031080918)(3.081025,-1.7689191)(3.551025,-1.6356418)
\psbezier[linewidth=0.06,linestyle=dotted,dash=0.16cm 0.16cm](3.685706,-0.04891908)(4.541025,-0.35566065)(4.030387,-1.8689191)(3.541025,-1.6439753)

\rput(6.302431,1.2360809){$M'$}
\rput(3.9,-2.1){$N_2$}
\rput(3.4,-2.0){$N_1$}
}
\end{pspicture} 

\label{fig:transmission}
{\em Fig.~6}
\end{center}

Let $E,F\to M$ be Hermitian vector bundles
and $D:\Cu(M,E) \to \Cu(M,F)$  a linear elliptic differential operator of first order.
We get induced bundles $E'\to M'$ and $F'\to M'$
and an elliptic operator $D':\Cu(M',E') \to \Cu(M',F')$.
For $\Phi\in H^1_{\loc}(M,E)$ we get $\Phi'\in H^1_{\loc}(M',E')$
such that $\Phi'|_{N_1}=\Phi'|_{N_2}$.
We use this as a boundary condition for $D'$ on $M'$.
We set
\[
B := \left\{(\phi,\phi)\in H^{1/2}(N_1,E)\oplus H^{1/2}(N_2,E) \mid \phi\in H^{1/2}(N,E)\right\}.
\]
Here we used the canonical identification
\[
  H^{1/2}(N_1,E)=H^{1/2}(N_2,E)=H^{1/2}(N,E) .
\]
Now we assume that $D'$ is boundary symmetric
with respect to some interior vector field $T$ along $N=N_1$.
This corresponds to a condition on the principal symbol of $D$ along $N$.
It is satisfied e.g.\ if $D$ is of Dirac type.
Let $A=A_0 \oplus -A_0$ be an adapted boundary operator for $D'$.
Here $A_0$ is a selfadjoint elliptic operator on $\Cu(N,E)=\Cu(N_1,E')$
and similarly $-A_0$ on $\Cu(N,E)=\Cu(N_2,E')$.
The sign is due to the opposite relative orientations of $N_1$ and $N_2$ in $M'$.

To see that $B$ is an elliptic boundary condition, put 
\begin{align*}
V_+ &:= L^2_{(0,\infty)}(A_0\oplus -A_0) 
      = L^2_{(0,\infty)}(A_0) \oplus  L^2_{(-\infty,0)}(A_0) , \\
V_- &:= L^2_{(-\infty,0)}(A_0\oplus -A_0) 
      = L^2_{(-\infty,0)}(A_0) \oplus L^2_{(0,\infty)}(A_0)  , \\
W_+ &:= \{(\phi,\phi)\in\ker(A_0)\oplus\ker(A_0)\}  , \\
W_- &:= \{(\phi,-\phi)\in\ker(A_0)\oplus\ker(A_0)\} ,
\end{align*}
and
\[
g:V_-^{1/2} \to V_+^{1/2}, \quad g=\begin{pmatrix}
                                   0 & \id \cr 
                                   \id & 0
                                   \end{pmatrix} .
\]
With these choices $B$ is of the form required in \dref{defellbou}.
We call these boundary conditions {\em transmission conditions}.
Clearly, they are $\infty$-regular.

If $M$ has a nonempty boundary and $N$ is disjoint from $\dM$,
let us assume that we are given an elliptic boundary condition for $\dM$.
Then the same discussion applies if one keeps the boundary condition on $\dM$
and adapts $B$ on $\dM' = \dM \sqcup N_1 \sqcup N_2$ accordingly.
\end{example}

%%%%%%%%%%%%%%%%%%%%%%%%%%%%%%%%%%%%%%%%%%%%%%

\section{Index theory}
\label{inth}

%%%%%%%%%%%%%%%%%%%%%%%%%%%%%%%%%%%%%%%%%%%%%%

Throughout this section, assume the Standard Setup~\ref{stase}
and that $D$ and $D^*$ are complete.
In addition to completeness,
we introduce a second property of $D$ at infinity:
coercivity, see \dref{def:coerc} below.
This property will be crucial for the Fredholm property of $D$
so that we can speak of its index.

\begin{remark}\label{nomax}
Recall that we have $D_B=D_{B,\max}$ with domain \index{1HgDMEB@$H^1_D(M,E;B)$}
\[
  H^1_D(M,E;B) := \{\Phi\in H^1_D(M,E) \mid \mathcal R\Phi\in B\}
\]
if $B$ is an elliptic boundary condition for $D$, compare \dref{ellreg} and \lref{h1ngn}.
In particular, if $\dM$ is empty, then $D=D_{\max}$ with domain $H^1_D(M,E)$.
This explains the difference in notation between the introduction and this section.
\end{remark}

%%%%%%%%%%%%%%%%%%%%%%%%%%%%%%%%%%%%%%%%%%%%%%

\subsection{The Fredholm property}
\label{susefrepro}

%%%%%%%%%%%%%%%%%%%%%%%%%%%%%%%%%%%%%%%%%%%%%%
\begin{definition}\label{def:coerc}
We say that $D$ is {\em coercive at infinity}\index{coercive at infinity}
if there is a compact subset $K\subset M$ and a constant $C$ such that
\begin{equation}\label{coerc}
  \| \Phi \|_{L^2(M)}
  \le C \| D\Phi \|_{L^2(M)} ,
\end{equation}
for all smooth sections $\Phi$ of $E$
with compact support in $M\setminus K$.
\end{definition}

\begin{examples}\label{ex:coercive}
(a)
By definition, if $M$ is compact, then $D$ is coercive at infinity.

(b)
Coercivity at infinity of a Dirac type operator $D:\Cu(M,E) \to \Cu(M,F)$
can be shown using a \emph{Weitzenb\"ock formula}.\index{Weitzenb\"ock formula}
This is a formula of the form
$$
D^*D = \nabla^*\nabla + \mathcal{K}
$$
where $\nabla$ is a connection on $E$ and $\mathcal{K}$
is a symmetric endomorphism field on $E$.
Now if, outside a compact subset $K\subset M$, all eigenvalues of $\mathcal{K}$
are bounded below by a constant $c>0$, then we have, for all $\Phi\in\Cucc(M,E)$
with support disjoint from $K$,
$$
\LzM{D\Phi}^2 = \LzM{\nabla\Phi}^2 + (\mathcal{K}\Phi,\Phi)_{L^2(M)} 
\geq c\LzM{\Phi}^2 .
$$
Hence $D$ is coercive at infinity.
\end{examples}

\begin{lemma}\label{coerc2}
The operator $D$ is coercive at infinity if and only if there is no
sequence $(\Phi_n)$ in $\Cucc(M,E)$ such that
\[
  \| \Phi_n \|_{L^2(M)} = 1 
  \quad\text{and}\quad 
  \lim_{n\to\infty}\| D\Phi_n \|_{L^2(M)} = 0 
\]
and such that, for any compact subset $K\subset M$, we have 
\[
\supp\Phi_n\cap K = \emptyset
\] 
for all sufficiently large $n$.
\end{lemma}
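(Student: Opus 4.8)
The plan is to prove the stated equivalence in \lref{coerc2} directly from \dref{def:coerc}, unwinding the definition of coercivity into its contrapositive and then observing that the failure of the coercivity inequality produces exactly the kind of escaping sequence described, and conversely. I would phrase everything in terms of normalized sequences and use a diagonal/exhaustion argument to handle the quantifier over compact sets.

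First I would prove the ``only if'' direction by contraposition: suppose there is a sequence $(\Phi_n)$ in $\Cucc(M,E)$ with $\LzM{\Phi_n}=1$, $\LzM{D\Phi_n}\to0$, and with the escaping property that for every compact $K\subset M$ one has $\supp\Phi_n\cap K=\emptyset$ for all large $n$. Given any candidate compact set $K$ and constant $C$ in \dref{def:coerc}, the escaping property gives some $N$ such that $\supp\Phi_N\subset M\setminus K$, while $\LzM{\Phi_N}=1>C\LzM{D\Phi_N}$ once $n$ is large enough that $\LzM{D\Phi_N}<1/C$. Hence no pair $(K,C)$ can witness coercivity, so $D$ is not coercive at infinity.

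For the ``if'' direction, again by contraposition, assume $D$ is not coercive at infinity. Fix an exhaustion $K_1\subset K_2\subset\cdots$ of $M$ by compact sets with $K_j$ contained in the interior of $K_{j+1}$ and $\bigcup_j K_j=M$. Applying the negation of \dref{def:coerc} to the compact set $K_j$ and the constant $C=j$, we obtain, for each $j$, a section $\Psi_j\in\Cucc(M,E)$ with $\supp\Psi_j\subset M\setminus K_j$ and $\LzM{\Psi_j}>j\cdot\LzM{D\Psi_j}$. Normalizing, set $\Phi_j:=\Psi_j/\LzM{\Psi_j}$; then $\LzM{\Phi_j}=1$ and $\LzM{D\Phi_j}<1/j\to0$. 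The escaping property is automatic: any compact $K\subset M$ lies in some $K_{j_0}$, and then for all $j\ge j_0$ we have $\supp\Phi_j\subset M\setminus K_j\subset M\setminus K$, so $\supp\Phi_j\cap K=\emptyset$ for all $j\ge j_0$. This gives the desired sequence and completes the proof.

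The argument is essentially a routine unwinding of definitions together with a standard exhaustion; the only point requiring a little care is the quantifier order in the escaping condition, which is why the exhaustion $K_j$ and the pairing of $K_j$ with the constant $j$ is the natural device — it simultaneously forces $\LzM{D\Phi_j}\to0$ and pushes the supports out to infinity. I do not anticipate a genuine obstacle here; if anything, the subtlety is purely bookkeeping, namely making sure the chosen $\Psi_j$ is nonzero (which it must be, since $\LzM{\Psi_j}>j\LzM{D\Psi_j}\ge0$ forces $\LzM{\Psi_j}>0$) so that the normalization is legitimate.
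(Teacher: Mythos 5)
Your proof is correct and follows essentially the same argument as the paper: an exhaustion $K_1\subset K_2\subset\cdots$ of $M$ paired with constants $C=j$, normalization of the resulting sections, and the observation that an escaping sequence directly contradicts \dref{def:coerc}. The extra bookkeeping you supply (nonvanishing of $\Psi_j$, the quantifier order in the escaping condition) is sound and just makes explicit what the paper leaves implicit.
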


\begin{proof}
Choose an exhaustion of $M$ by compact sets $K_1 \subset K_2 \subset \cdots \subset M$. 
If $D$ is not coercive at infinity,
then for each $n$ there exists $\Phi_n$
with $\supp\Phi_n \cap K_n=\emptyset$ and $\LzM{\Phi_n} > n\LzM{D\Phi_n}$.
Normalizing $\Phi_n$, we obtain a sequence as in \lref{coerc2}.

Conversely, a sequence as in \lref{coerc2} directly violates the condition in \dref{def:coerc}.
\end{proof}

\begin{thm}\label{thmfred}
Assume the Standard Setup~\ref{stase} and that $D$ and $D^*$ are complete.
Let $B\subset H^{1/2}(\dM,E)$ be an elliptic boundary condition for $D$.

Then $D$ is coercive at infinity if and only if
\begin{align*}
  D_B&: H^1_D(M,E;B) \to L^2(M,F)
\end{align*}
has finite-dimensional kernel and closed image.
In this case
\[
  \ind D_B = \dim\ker D_B - \dim\ker (D^*)_{\Bad} \in \Z \cup \{-\infty\}.
\]
\end{thm}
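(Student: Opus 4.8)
The plan is to deduce Theorem~\ref{thmfred} from the boundary regularity results (\tref{tell}, \tref{treg}) together with a compactness argument over a large compact subset of $M$, following the classical scheme: a semi-Fredholm estimate plus elliptic regularity give finite-dimensional kernel and closed range, and coercivity at infinity is exactly what is needed to make the estimate work (and conversely is forced by it).

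\textbf{Step 1: Coercivity implies a global \emph{a priori} estimate.}
Fix the compact set $K\subset M$ and constant $C$ from \dref{def:coerc}. Using \lref{adapted}, enlarge $K$ to a compact set $K'$ whose complement is a collar $Z_{[R,\infty)}$-type region, and choose a smooth cut-off $\chi$ supported near $K'$ with $\chi\equiv 1$ on $K$. For $\Phi\in H^1_D(M,E;B)$ write $\Phi = \chi\Phi + (1-\chi)\Phi$. The second piece has support disjoint from $K$, so by \dref{def:coerc} (applied after approximating $(1-\chi)\Phi$ by compactly supported smooth sections in the graph norm, using completeness of $D$, i.e.\ \lref{density}) we get $\|(1-\chi)\Phi\|_{L^2(M)}\le C\|D((1-\chi)\Phi)\|_{L^2(M)}\le C(\|D\Phi\|_{L^2(M)} + \|\sigma_D(d\chi)\Phi\|_{L^2(M)})$ by \lref{product}. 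The term $\|\sigma_D(d\chi)\Phi\|_{L^2(M)}$ is bounded by $C'\|\Phi\|_{L^2(\supp d\chi)}$, which is an $L^2$-norm over a compact set. For $\chi\Phi$, which is supported in the compact set $K'$ and lies in $H^1_D(M,E;B)$, \lref{h1ngn} (ellipticity of $B$, so $D_B=D_{B,\max}$) and the local elliptic estimate give $\|\chi\Phi\|_{H^1(M)}\le C''(\|\Phi\|_{L^2(K')} + \|D\Phi\|_{L^2(K')})$. Combining, one obtains
\[
  \|\Phi\|_{H^1_D(M)} \le C_1\big(\|D\Phi\|_{L^2(M)} + \|\Phi\|_{L^2(K')}\big)
\]
for all $\Phi\in H^1_D(M,E;B)$, where the restriction-to-$K'$ norm is the error term.

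\textbf{Step 2: Finite-dimensional kernel and closed range.}
This is the standard functional-analytic consequence of an estimate of the form $\|\Phi\|_X \le C(\|D_B\Phi\|_Y + \|\Phi\|_Z)$ with $X=H^1_D(M,E;B)$ continuously and \emph{compactly} embedded into $Z=L^2(K')$ — the compactness coming from the Rellich embedding theorem over the compact set $K'$ (\lref{density} ensures $H^1_D$-sections restrict compactly). Indeed, the unit ball of $\ker D_B$ is then precompact, so $\ker D_B$ is finite-dimensional; and restricting $D_B$ to a closed complement of $\ker D_B$ in $H^1_D(M,E;B)$, the term $\|\Phi\|_Z$ can be absorbed (by the usual Peetre-type argument, cf.\ the lemma \texttt{HormPeet} invoked elsewhere), giving $\|\Phi\|_X\le C\|D_B\Phi\|_Y$ on that complement, whence $D_B$ has closed image.

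\textbf{Step 3: Converse.}
Suppose $D_B$ has finite-dimensional kernel and closed range, but $D$ is not coercive at infinity. By \lref{coerc2} there is a sequence $\Phi_n\in\Cucc(M,E)$ with $\|\Phi_n\|_{L^2(M)}=1$, $\|D\Phi_n\|_{L^2(M)}\to 0$, and supports escaping every compact set. Modulo the finite-dimensional kernel we may assume $\Phi_n$ lies in a fixed closed complement on which $\|\Phi_n\|_{H^1_D(M)}\le C\|D\Phi_n\|_{L^2(M)}\to 0$ (using closed range $\Rightarrow$ the estimate of Step~2), contradicting $\|\Phi_n\|_{L^2(M)}=1$. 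Here one must be slightly careful: $\Phi_n$ need not be orthogonal to $\ker D_B$, but since the $\Phi_n$ have support eventually disjoint from any compact set and $\ker D_B$ is finite-dimensional (hence its elements decay in $L^2$ off large compacta), the projection of $\Phi_n$ onto $\ker D_B$ tends to $0$ in $L^2$, so the argument goes through.

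\textbf{Step 4: The index formula.}
Applying the already-proven equivalence to $D^*$ with the adjoint boundary condition $\Bad$ (elliptic by \tref{ellbou}): if $D$ is coercive at infinity then $D_B$ is at least semi-Fredholm, and by \dref{dba}, $(D_B)^{\ad}=(D^*)_{\Bad,\max}$, whose kernel is $\ker(D^*)_{\Bad}$. The image of $D_B$ is closed, so $\coker D_B \cong \ker (D_B)^{\ad}$, giving
\[
  \ind D_B = \dim\ker D_B - \dim\ker(D^*)_{\Bad} \in \Z\cup\{-\infty\}.
\]
The value $-\infty$ occurs precisely when $D^*$ fails to be coercive at infinity, so that $\ker(D^*)_{\Bad}$ may be infinite-dimensional; when both $D$ and $D^*$ are coercive, both kernels are finite-dimensional and $D_B$ is genuinely Fredholm.

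\textbf{Main obstacle.} The delicate point is Step~1: organizing the cut-off so that the near-boundary part is handled by \lref{h1ngn}/local elliptic regularity while the interior-at-infinity part is handled by the coercivity inequality, and in particular checking that the commutator term $\sigma_D(d\chi)\Phi$ is controlled by an $L^2$-norm over a \emph{compact} set (so that it is a genuine error term for the compactness argument), which requires choosing $\chi$ with $d\chi$ supported in a compact collar — this uses completeness of $D$ crucially via \lref{product} and \lref{density}. The converse direction (Step~3) has the subtlety that the escaping sequence is not automatically transverse to the kernel, resolved by the decay of finite-dimensional-kernel elements.
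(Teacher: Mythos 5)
Your proof is correct and rests on the same core ingredients as the paper's: the coercivity estimate applied away from a compact set $K$, with the commutator term controlled by \lref{product}, Rellich compactness over a slightly larger compact $K'$, the equivalence of the $H^1_D$- and graph norms on $\dom D_B$ from \lref{h1ngn}, and \pref{HormPeet}, followed by the standard identification $\coker D_B\cong\ker(D^*)_{\Bad,\max}$ once the image is closed. The organization differs in two ways. In the forward direction you first write down an explicit a priori estimate $\|\Phi\|_{H^1_D(M)}\le C\big(\|D\Phi\|_{L^2(M)}+\|\Phi\|_{L^2(K')}\big)$ — i.e.\ you check criterion~(ii)/(iv) of \pref{HormPeet} — whereas the paper verifies criterion~(iii) directly: given a bounded sequence $(\Phi_n)$ in $\dom D_B$ with $(D\Phi_n)$ convergent, it first extracts an $L^2_{\loc}$-convergent subsequence by Rellich, applies the coercivity bound to the differences $(1-\chi)(\Phi_n-\Phi_m)$ to show $(\Phi_n)$ is Cauchy in $L^2(M,E)$, and then upgrades to $H^1_D$-convergence via \lref{h1ngn}. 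These are interchangeable, and the difference is cosmetic. In the converse direction your argument is heavier than necessary: you decompose the escaping sequence of \lref{coerc2} against $\ker D_B$ and appeal to the decay of elements of the finite-dimensional kernel to control the projection. The paper avoids all of this by observing that the escaping sequence $(\Phi_n)$ converges weakly to zero in $L^2(M,E)$ while $\|\Phi_n\|_{L^2(M)}=1$, so no subsequence converges strongly in $L^2$ and hence none in $H^1_D$ — criterion~(iii) of \pref{HormPeet} fails outright, with no need to decompose against the kernel, no decay lemma, and no need even to assume finite-dimensional kernel to start the contradiction. Your kernel-decay fix is correct, but worth noting that it is rendered unnecessary by phrasing the argument via the subsequence criterion rather than via the a priori estimate on a complement of the kernel.
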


\begin{proof}
Suppose first that $D$ is coercive at infinity.
We want to apply \pref{HormPeet}.
Let $(\Phi_n)$ be a bounded sequence in $H^1_D(M,E;B)$
such that $D\Phi_n\to\Psi\in L^2(M,F)$.
We have to show that $(\Phi_n)$ has a convergent subsequence in $H^1_D(M,E)$.

Passing to a subsequence if necessary, we can assume,
by the Rellich embedding theorem, that there is a section $\Phi\in L^2_{\loc}(M,E)$
such that  $\Phi_n\to\Phi$ in $L^2_{\loc}(M,E)$.
Choose a compact subset $K\subset M$ and a constant $C$ as in \eqref{coerc}.
Let $\chi\in\Cuc(M,\R)$ be a cut-off function which is equal to $1$ on $K$
and $0\leq \chi \le 1$ everywhere.
Set $K':=\supp(\chi)$.
By \eqref{coerc},
\begin{align*}
\LzM{\Phi_n-\Phi_m} 
&\le
\LzM{\chi(\Phi_n-\Phi_m)}+\LzM{(1-\chi)(\Phi_n-\Phi_m))} \\
&\le
\Lz{\Phi_n-\Phi_m}{K'} + C\LzM{D((1-\chi)(\Phi_n-\Phi_m)))} \\
&\le
\Lz{\Phi_n-\Phi_m}{K'} + C\LzM{-\sigma_D(d\chi)(\Phi_n-\Phi_m)}\\
&\phantom{==}+C\LzM{(1-\chi)(D\Phi_n-D\Phi_m)} \\
&\le
C'\Lz{\Phi_n-\Phi_m}{K'} + C\Lz{D\Phi_n-D\Phi_m}{M} \to 0.
\end{align*}
It follows that $(\Phi_n)$ is a Cauchy sequence in $L^2(M,E)$,
hence $\Phi\in L^2(M,E)$ and $\Phi_n\to\Phi$ in $L^2(M,E)$.

Since $B$ is an elliptic boundary condition, 
the $H^1_D$-norm and the graph norm of $D$ are equivalent on $H^1_D(M,E;B)$.
Now $(\Phi_n)$ and $(D\Phi_n)$ converge in $L^2(M,E)$ and $L^2(M,F)$, respectively.
Hence $(\Phi_n)$ converges in the graph norm of $D$, and therefore in $H^1_D(M,E)$.
By the implication $\eqref{HP3}\Rightarrow\eqref{HP1}$ of \pref{HormPeet}, 
$D_B$ has finite-dimensional kernel and closed image. 

Suppose now that $D$ is not coercive at infinity.
Let $(\Phi_n)$ be a sequence as in \lref{coerc2}.
By the assumption on the support of $\Phi_n$,
$\Phi_n\rightharpoonup 0$ in $L^2(M,E)$.
Since, on the other hand, $\LzM{\Phi_n}=1$ for all $n$,
no subsequence converges in $L^2(M,E)$.
In particular, no subsequence converges in $H^1_D(M,E)$.
Hence $D_B$ does not satisfy criterion \eqref{HP3} in \pref{HormPeet},
and hence $D_B$ has infinite-dimensional kernel or nonclosed image.
\end{proof}

\begin{cor}\label{corfred1}
Assume the Standard Setup~\ref{stase}
and that $D$ and $D^*$ are complete and coercive at infinity.
Let $B\subset H^{1/2}(\dM,E)$ be an elliptic boundary condition for $D$.
Then
\[
  D_B : H^1_D(M,E;B) \to L^2(M,F)
\]
is a Fredholm operator and
\[
  \hspace{2.3cm}
  \ind D_B = \dim\ker D_B - \dim\ker (D^*)_{\Bad} \in \Z .
  \hspace{2.3cm}\qed
\]
\end{cor}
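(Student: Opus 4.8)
The plan is to obtain \cref{corfred1} as an immediate consequence of \tref{thmfred} applied simultaneously to $D$ and to its formal adjoint $D^*$. By \tref{thmfred}, coercivity at infinity of $D$ is equivalent to $D_B$ having finite-dimensional kernel and closed image. By symmetry of the setup --- recall that $D$ is boundary symmetric if and only if $D^*$ is, that $\tilde A$ is an adapted operator for $D^*$, and that $\Bad$ is an elliptic boundary condition for $D^*$ (see \tref{ellbou}) --- the same theorem applied to $D^*$ with the boundary condition $\Bad$ shows that coercivity at infinity of $D^*$ is equivalent to $(D^*)_{\Bad}$ having finite-dimensional kernel and closed image.

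So, under the hypothesis that both $D$ and $D^*$ are complete and coercive at infinity, I would first record: $\dim\ker D_B<\infty$ and $\im D_B$ is closed; likewise $\dim\ker (D^*)_{\Bad}<\infty$ and $\im (D^*)_{\Bad}$ is closed. The next step is to identify the cokernel of $D_B$. Here I would invoke \eqref{dba}, namely $(D_{B,\max})^{\ad}=(D^*)_{\Bad,\max}$, together with \lref{h1ngn}: since $B$ is elliptic, $D_B=D_{B,\max}$ and $(D^*)_{\Bad}=(D^*)_{\Bad,\max}$. Because $D_B$ is densely defined with closed range, the orthogonal complement of $\im D_B$ in $L^2(M,F)$ is exactly $\ker\big((D_B)^{\ad}\big)=\ker (D^*)_{\Bad}$, which is finite-dimensional. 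Hence $D_B$ has finite-dimensional kernel, closed range, and finite-dimensional cokernel, i.e.\ $D_B$ is Fredholm, and $\ind D_B=\dim\ker D_B-\dim\coker D_B=\dim\ker D_B-\dim\ker (D^*)_{\Bad}\in\Z$.

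The only point requiring a little care --- and the step I would flag as the main (minor) obstacle --- is the passage from ``closed range plus $\ker(D_B)^{\ad}$ finite-dimensional'' to ``Fredholm with the stated index'': one must be sure the abstract functional-analytic fact ``for a densely defined closed operator $S$ with closed range, $\coker S\cong\ker S^{\ad}$'' is being applied to the correct adjoint, which is precisely the content of \eqref{dba}. Once that identification is in place, the index formula $\ind D_B=\dim\ker D_B-\dim\ker (D^*)_{\Bad}$ is just the definition of the index of a Fredholm operator, and the finiteness of $\ind D_B$ (as opposed to the value $-\infty$ allowed in \tref{thmfred}) follows because $\dim\ker (D^*)_{\Bad}<\infty$ is now guaranteed by coercivity of $D^*$. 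I would write the proof in two or three sentences: apply \tref{thmfred} to $D$; apply it to $D^*$ with $\Bad$; combine via \eqref{dba} and \lref{h1ngn} to conclude Fredholmness and read off the index.
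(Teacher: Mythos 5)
Your proposal is correct and follows exactly the route the paper intends: the corollary is stated with a \qed because it is immediate from \tref{thmfred} applied to $D$ and (via \tref{ellbou} and \eqref{dba}) to $D^*$ with the elliptic boundary condition $\Bad$, yielding finite-dimensionality of both $\ker D_B$ and $\ker(D^*)_{\Bad}\cong\coker D_B$. Your explicit invocation of \lref{h1ngn} and the closed-range/adjoint identification fills in precisely the details the paper leaves implicit.
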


The following corollary is immediate from \pref{propFredholm}.

\begin{cor}\label{corfred2}
Assume the Standard Setup~\ref{stase}
and that $D$ and $D^*$ are complete and coercive at infinity.
Let $B \subset H^{1/2}(\dM,E)$ be an elliptic boundary condition for $D$.
Let $\check C$ and $C$ be closed complements of $B$
in $\check H(A)$ and in $H^{1/2}(\dM,E)$, respectively.
Let $\check P:\check H(A)\to\check H(A)$ and $P:H^{1/2}(\dM,E)\to H^{1/2}(\dM,E)$
be the projections with kernel $B$ and image $\check C$ and $C$, respectively.
Then
\begin{align*}
\check L: \dom(D_{\max}) \to L^2(M,F)\oplus\check C,
  \quad &\check L\Phi = (D_{\max}\Phi,\check P\mathcal R\Phi),
\intertext{and}
   L: H^1_D(M,E) \to L^2(M,F)\oplus C,
  \quad &L\Phi = (D\Phi,P\mathcal R\Phi), 
\end{align*}
are Fredholm operators with the same index as $D_{B,\max}=D_B$.
\qed
\end{cor}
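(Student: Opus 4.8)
The plan is to read this off \pref{propFredholm} once the correct data are identified, so I would first set up the two instances of that proposition carefully. For $\check L$, equip $\dom(D_{\max})$ with the graph norm of $D_{\max}$, making it a Hilbert space on which $D_{\max}\colon\dom(D_{\max})\to L^2(M,F)$ is bounded and, by \tref{domdmax}~\eqref{trace}, the trace map $\mathcal R\colon\dom(D_{\max})\to\check H(A)$ is bounded and surjective with a bounded right inverse $\mathcal E$. Here $B\subset\check H(A)$ is closed, $\check C$ is a closed complement, and $\check P$ is the bounded projection along $B$ onto $\check C$, so that $\mathcal R^{-1}(B)=\{\Phi\mid\check P\mathcal R\Phi=0\}=\dom(D_{B,\max})$. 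The operator $\check L\Phi=(D_{\max}\Phi,\check P\mathcal R\Phi)$ is then exactly of the form covered by \pref{propFredholm}, whose conclusion I would invoke: $\check L$ is Fredholm if and only if $D_{B,\max}$ is, and in that case $\ind\check L=\ind D_{B,\max}$.

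For $L$ I would run the identical argument with $\dom(D_{\max})$ replaced by $H^1_D(M,E)$ carrying the $H^1_D$-norm, $\check H(A)$ replaced by $H^{1/2}(\dM,E)$, and $\mathcal R$ the restriction of the trace map. By \eqref{reg} of \tref{domdmax} one has $H^1_D(M,E)=\{\Phi\in\dom(D_{\max})\mid\mathcal R\Phi\in H^{1/2}(\dM,E)\}$, so $\mathcal R\colon H^1_D(M,E)\to H^{1/2}(\dM,E)$ is bounded (trace theorem) and surjective with bounded right inverse $\mathcal E$, since by \lref{extfin2} together with \eqref{reg} of \tref{domdmax} the extension operator $\mathcal E$ maps $H^{1/2}(\dM,E)$ boundedly into $H^1_D(M,E)$ and satisfies $\mathcal R\mathcal E=\id$. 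With $C$ a closed complement of $B$ in $H^{1/2}(\dM,E)$ and $P$ the projection along $B$, the subspace $\{\Phi\in H^1_D(M,E)\mid P\mathcal R\Phi=0\}$ is precisely $H^1_D(M,E;B)=\dom(D_B)$, so \pref{propFredholm} gives that $L$ is Fredholm iff $D_B$ is, with $\ind L=\ind D_B$. Since $B$ is an elliptic boundary condition, $D_B=D_{B,\max}$ by \lref{h1ngn}, and under the standing hypotheses $D$ and $D^*$ are complete and coercive at infinity, so $D_B$ is indeed Fredholm by \cref{corfred1} (equivalently, by \tref{thmfred} applied to $D$ and to $D^*$). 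Hence $\check L$ and $L$ are Fredholm with $\ind\check L=\ind L=\ind D_B=\ind D_{B,\max}$, as claimed.

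The only substantive point — and the content of \pref{propFredholm} that I am leaning on — is the cokernel bookkeeping: using the bounded right inverse $\mathcal E$ one rewrites any pair $(\Psi,\chi)$ in the target of $\check L$ as $(\Psi-D_{\max}\mathcal E\chi,0)+(D_{\max}\mathcal E\chi,\chi)$, so that $\check L\Phi=(\Psi,\chi)$ is solvable iff $D_{B,\max}\Phi_0=\Psi-D_{\max}\mathcal E\chi$ is solvable; this exhibits a bounded surjection from the target of $\check L$ onto $L^2(M,F)$ carrying $\im\check L$ onto $\im D_{B,\max}$ and inducing an isomorphism of cokernels, while $\ker\check L=\ker D_{B,\max}$ is immediate from $\ker\check P=B$. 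I do not expect a genuine obstacle: the argument is purely formal once \pref{propFredholm} is available, and the boundedness and surjectivity of $\mathcal R$ together with the bounded right inverse $\mathcal E$ are exactly what \tref{domdmax} and \lref{extfin2} supply in both the $\dom(D_{\max})$ and the $H^1_D(M,E)$ settings.
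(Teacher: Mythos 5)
Your proposal is correct and follows the same route as the paper, which simply invokes Proposition~\ref{propFredholm}: you spell out the two instantiations (with $H=\dom(D_{\max})$, $P=\check P\circ\mathcal R$ onto $\check C$, resp.\ $H=H^1_D(M,E)$, $P\circ\mathcal R$ onto $C$), verify surjectivity via $\mathcal R$ and $\mathcal E$ from Theorem~\ref{domdmax} and Lemma~\ref{extfin2}, identify $\ker(P\circ\mathcal R)$ with $\dom(D_{B,\max})$ resp.\ $\dom(D_B)$, and conclude by Corollary~\ref{corfred1} and Lemma~\ref{h1ngn}. This is exactly the intended argument, just written out in full.
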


\begin{cor}\label{cor:AgraDy}
Assume the Standard Setup~\ref{stase}
and that $D$ and $D^*$ are complete and coercive at infinity.
Let $B_1 \subset B_2 \subset H^{1/2}(\dM,E)$ be elliptic boundary conditions for $D$.

Then $\dim(B_2/B_1)$ is finite and
\[
\ind(D_{B_2}) = \ind(D_{B_1}) + \dim(B_2/B_1).
\]
\end{cor}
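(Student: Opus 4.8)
The statement asserts that enlarging an elliptic boundary condition from $B_1$ to $B_2$ changes the index by the (necessarily finite) dimension $\dim(B_2/B_1)$. First I would observe that since $B_1$ and $B_2$ are both elliptic, $D_{B_1}$ and $D_{B_2}$ are Fredholm by \cref{corfred1}, so both indices are integers, and $D_{B_1}$ is the restriction of $D_{B_2}$ to a subspace of its domain. The key point is to show that $\dom(D_{B_2})/\dom(D_{B_1})$ is finite-dimensional and that, identifying these domains as subspaces of $H^1_D(M,E)$ with the same target $L^2(M,F)$, one has a short exact sequence realizing the index jump.

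The cleanest route is via \cref{corfred2}. The plan is to pick a closed complement $C_2$ of $B_2$ in $H^{1/2}(\dM,E)$ and then choose a closed complement $C_1$ of $B_1$ of the form $C_1 = C_2 \oplus C'$, where $C'$ is a closed complement of $B_1$ inside $B_2$; since both $B_1$ and $B_2$ are closed in $H^{1/2}(\dM,E)$ (by \lref{lem:Bclosed}\eqref{Bc1}) and $B_1\subset B_2$, such a $C'$ exists, and I claim $C'$ is finite-dimensional — this is the crucial finiteness assertion. Granting finite-dimensionality of $C'$ for the moment, let $P_i:H^{1/2}(\dM,E)\to H^{1/2}(\dM,E)$ be the projection with kernel $B_i$ and image $C_i$, and form the Fredholm operators $L_i\Phi = (D\Phi, P_i\mathcal R\Phi)$ from $H^1_D(M,E)$ to $L^2(M,F)\oplus C_i$, which by \cref{corfred2} satisfy $\ind L_i = \ind D_{B_i}$. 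Now the obvious map $L^2(M,F)\oplus C_1 \to L^2(M,F)\oplus C_2$ obtained by composing with the projection $C_1 = C_2\oplus C' \to C_2$ is a Fredholm operator of index $-\dim C'$, and one checks directly that composing it with $L_1$ yields $L_2$ (up to the identification $P_2\mathcal R\Phi$ being the $C_2$-component of $P_1\mathcal R\Phi$ when $C_1\supset C_2$ is chosen compatibly). By additivity of the index under composition, $\ind L_2 = \ind L_1 - \dim C'$, hence $\ind D_{B_2} = \ind D_{B_1} - \dim C'$. Finally, since $C'$ is a complement of $B_1$ in $B_2$, we have $\dim C' = \dim(B_2/B_1)$... but the sign is wrong, so I would instead arrange the roles so that the map goes $L^2(M,F)\oplus C_2 \to L^2(M,F)\oplus C_1$ by \emph{including} $C_2\hookrightarrow C_2\oplus C' = C_1$, which has index $+\dim C'$, and deduce $\ind D_{B_2} = \ind D_{B_1} + \dim C'= \ind D_{B_1} + \dim(B_2/B_1)$ as claimed. (Alternatively, and perhaps more transparently, compare $\ker$ and $\coker$ directly: $\ker D_{B_1}\subset\ker D_{B_2}$ and $\coker D_{B_1}$ surjects onto $\coker D_{B_2}$ with kernel of dimension governed by $B_2/B_1$; but bookkeeping the cokernel via the adjoint condition $\Bad_1\supset\Bad_2$ and \lref{duality} is more delicate, so I prefer the $\cref{corfred2}$ argument.)

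\textbf{The main obstacle} is establishing that $C'$, equivalently $B_2/B_1$, is finite-dimensional; everything else is formal index algebra. Here is how I would do it. Since $B_1$ is elliptic, \tref{ellbou} gives $L^2$-orthogonal decompositions $L^2_{(-\infty,a)}(A) = V_-^{(1)}\oplus W_-^{(1)}$, $L^2_{[a,\infty)}(A) = V_+^{(1)}\oplus W_+^{(1)}$ and a bounded $g_1:V_-^{(1)}\to V_+^{(1)}$ with $B_1 = W_+^{(1)}\oplus\Gamma(g_1)^{1/2}$; similarly for $B_2$ with data $V_\pm^{(2)}, W_\pm^{(2)}, g_2$. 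The inclusion $B_1\subset B_2$ forces a strong rigidity between these two sets of data. Concretely, projecting to $L^2_{(-\infty,a)}(A)$ via $Q_{(-\infty,a)}$: the image $Q_{(-\infty,a)}(B_i)$ is a closed finite-codimension-or-so subspace, and $Q_{(-\infty,a)}(B_1)\subset Q_{(-\infty,a)}(B_2)$ while the kernels of $Q_{(-\infty,a)}|_{B_i}$ are exactly $W_+^{(i)}$. One shows that $Q_{(-\infty,a)}(B_1)$ has finite codimension in $Q_{(-\infty,a)}(B_2)$ — because the difference is controlled by the graphs of the $H^{1/2}$-bounded maps $g_1,g_2$, whose discrepancy lives in a space that, after a further spectral projection, is finite-dimensional, using the compactness of $H^{1/2}\hookrightarrow H^{-1/2}$ on $\dM$ exactly as in the proof of \lref{lemHhHc} and \lref{happylem}. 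Combining the finite codimension of $Q_{(-\infty,a)}(B_1)$ in $Q_{(-\infty,a)}(B_2)$ with the fact that $W_+^{(1)}$ and $W_+^{(2)}$ are finite-dimensional yields $\dim(B_2/B_1)<\infty$. I would expect to lean on \pref{HormPeet} (the abstract functional-analytic criterion used repeatedly in the excerpt) to package this: the map $B_2 \to Q_{(-\infty,a)}(B_2)/Q_{(-\infty,a)}(B_1)$ composed with the natural inclusion and the compact embedding into the $H^{-1/2}$-quotient has closed range and finite-dimensional kernel, giving finite-dimensionality of the relevant quotient. Once finiteness is in hand, the index identity follows from the composition argument above.
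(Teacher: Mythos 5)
Your overall strategy --- choosing compatible complements $C_2 \subset C_1$ of $B_2 \subset B_1$ in $H^{1/2}(\dM,E)$ and passing to the Fredholm operators of \cref{corfred2} --- is exactly the paper's, but there is a genuine gap in how you handle the finiteness of $\dim(B_2/B_1)$. You flag this as ``the main obstacle'' and propose to prove it directly from the spectral data $(V_\pm, W_\pm, g)$ of $B_1$ and $B_2$, leaning on \lref{lemHhHc}, \lref{happylem}, and \pref{HormPeet}; but the passage ``one shows that $Q_{(-\infty,a)}(B_1)$ has finite codimension in $Q_{(-\infty,a)}(B_2)$ \dots whose discrepancy lives in a space that, after a further spectral projection, is finite-dimensional'' is a hope, not an argument, and carrying it out would be harder than the rest of the proof. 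The paper avoids it entirely: with $L_i\Phi = (D\Phi, P_i\mathcal R\Phi)$ the Fredholm operators of \cref{corfred2} and $\pi\colon C_1\to C_2$ the projection with kernel $C_1\cap B_2$, one has $L_2 = (\id\oplus\pi)\circ L_1$. The map $\id\oplus\pi$ is surjective, and since both $L_1$ and $L_2$ are \emph{already known} to be Fredholm, $\id\oplus\pi$ is automatically Fredholm: $\ker(\id\oplus\pi)\cap\im(L_1)$ is contained in $L_1(\ker L_2)$, and the quotient $\ker(\id\oplus\pi)/\bigl(\ker(\id\oplus\pi)\cap\im(L_1)\bigr)$ injects into $\coker L_1$, both finite-dimensional. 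Hence $\dim(B_2/B_1) = \dim(C_1/C_2) = \dim\ker(\id\oplus\pi) < \infty$ for free. The finiteness you worried about is the \emph{easy} step, not the hard one; no spectral analysis is needed.

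A secondary slip: you assert that the projection $C_1 = C_2\oplus C' \to C_2$ has index $-\dim C'$ and then try to ``fix'' the sign by reversing the arrow. A surjection with kernel $C'$ has index $+\dim C'$, so your original composition already gives $\ind L_2 = \ind L_1 + \dim C'$, which is the stated formula. The reversal you propose (including $C_2\hookrightarrow C_1$) cannot make the triangle commute: if $P_1 = \iota\circ P_2$ then $\ker P_1 = \ker P_2 = B_2 \ne B_1$, a contradiction.
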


\begin{proof}
Choose closed complements $C_1$ and $C_2$ in $H^{1/2}(\dM,E)$ of $B_1$ and $B_2$, 
respectively, such that $C_2 \subset C_1$.
Then we have the following commutative diagram:
\[
\xymatrix{
 && L^2(M,E) \oplus C_1  \ar@{->>}[dd]^{\id\oplus \pi}\\
H^1_D(M,E) \ar[rru]^{(D,Q_1)} \ar[drr]_{(D,Q_2)} &&\\
&& L^2(M,E) \oplus C_2
}
\]
where $Q_j$ is $\mathcal R$ composed with the projection onto $C_j$
as in \cref{corfred2} and $\pi:C_1 \rightarrow C_2$ is the projection.
Since $(D,Q_1)$ and $(D,Q_2)$ are Fredholm operators, $\id\oplus \pi$ is one too.
In particular, 
\[
\dim(B_2/B_1) = \dim(C_1/C_2) =  \ind(\pi) =  \ind(\id\oplus \pi)
\]
is finite.
Moreover,
\begin{align*}
\ind(D_{B_1}) 
&=
\ind(D,Q_1) \\
&=
\ind(D,Q_2) - \ind(\id\oplus \pi) \\
&=
\ind(D_{B_2}) - \dim(B_2/B_1) .
\qedhere
\end{align*}
\end{proof}

\begin{example}\label{exaAgDy}
Assume the Standard Setup~\ref{stase}
and that $D$ and $D^*$ are complete and coercive at infinity.
Then we have, for generalized Atiyah-Patodi-Singer boundary conditions
$B(a)$ and $B(b)$ with $a<b$,
\[
  \ind D_{B(b)} = \ind D_{B(a)} +  \dim L^2_{[a,b)}(A) .
\]
\end{example}

%%%%%%%%%%%%%%%%%%%%%%%%%%%%%%%%%%%%%%%%%%%%%%

\subsection{Deformations of boundary conditions}
\label{deformbc}

%%%%%%%%%%%%%%%%%%%%%%%%%%%%%%%%%%%%%%%%%%%%%%
\begin{definition}\label{def:contfam}\index{continuous family of boundary conditions}
A family of boundary conditions $B_s\subset H^{1/2}(\dM,E)$, $0\le s\le 1$,
is called a \emph{continuous family of boundary conditions} if there exist isomorphisms
\[
k_s:B_0\to B_s ,
\]
where $k_0=\id$ and where the map
$[0,1]\to \mathcal L(B_0,H^{1/2}(\dM,E))$, $s\mapsto k_s$, is continuous.
Here $\mathcal L(B_0,H^{1/2}(\dM,E))$ denotes the Banach space of bounded
operators from $B_0$ to $H^{1/2}(\dM,E)$ equipped with the operator norm.
\end{definition}

The following auxiliary isomorphisms allow us to consider families of operators
between fixed spaces, i.e., spaces independent of the deformation parameter.
This will be useful since it will allow us to apply the standard fact that the index
of a continuous family of Fredholm operators has constant index.

\begin{lemma}\label{defolem}
The map
\begin{align*}
  J_E: H^1_D(M,E;0) &\oplus H^{1/2}(\dM,E) \to H^1_D(M,E) ,  \\
  & J_E(\Phi,\phi) = \Phi + \mathcal E\phi ,
\end{align*}
is an isomorphism.
If $B\subset H^{1/2}(\dM,E)$ is an elliptic boundary condition, then the restriction
\[
  J_{E,B}: H^1_D(M,E;0) \oplus B \to H^1_D(M,E;B)
\]
is again an isomorphism.
\end{lemma}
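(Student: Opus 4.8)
The plan is to show that $J_E$ is a bounded bijection with bounded inverse; boundedness of $J_E$ is immediate from the boundedness of $\mathcal E:H^{1/2}(\dM,E)\to\dom(D_{\max})$ (which by \tref{domdmax}~\eqref{reg} actually maps into $H^1_D(M,E)$ since $\mathcal E\phi$ is smooth near $\dM$ and its boundary value lies in $H^{1/2}$) together with the obvious inclusion $H^1_D(M,E;0)\hookrightarrow H^1_D(M,E)$.

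First I would construct the inverse explicitly. Given $\Psi\in H^1_D(M,E)$, set $\phi:=\mathcal R\Psi\in H^{1/2}(\dM,E)$ (well-defined by the standard trace theorem, since $\Psi\in H^1_{\loc}$ near the compact boundary) and $\Phi:=\Psi-\mathcal E\phi$. Then $\Phi\in H^1_D(M,E)$ because both $\Psi$ and $\mathcal E\phi$ are, and $\mathcal R\Phi=\mathcal R\Psi-\mathcal R(\mathcal E\phi)=\phi-\phi=0$ by the relation $\mathcal R\circ\mathcal E=\id$ established in the proof of \tref{domdmax}~\eqref{trace}. Hence $\Phi\in H^1_D(M,E;0)$ and $(\Phi,\phi)\mapsto\Psi$ under $J_E$, so $J_E$ is surjective. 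For injectivity, if $\Phi+\mathcal E\phi=0$ with $\mathcal R\Phi=0$, then applying $\mathcal R$ gives $0=\mathcal R\Phi+\mathcal R(\mathcal E\phi)=\phi$, whence $\mathcal E\phi=0$ and so $\Phi=0$. Thus $J_E$ is a continuous linear bijection between Banach spaces, and by the open mapping theorem its inverse $\Psi\mapsto(\Psi-\mathcal E\mathcal R\Psi,\mathcal R\Psi)$ is bounded; this also follows directly from boundedness of $\mathcal R:H^1_D(M,E)\to H^{1/2}(\dM,E)$ and of $\mathcal E$.

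For the restricted map $J_{E,B}$, note first that $J_E$ maps $H^1_D(M,E;0)\oplus B$ into $H^1_D(M,E;B)$: for $(\Phi,\phi)$ with $\phi\in B$ we have $\mathcal R(\Phi+\mathcal E\phi)=0+\phi=\phi\in B$. Conversely, if $\Psi\in H^1_D(M,E;B)$, then in the decomposition $\Psi=(\Psi-\mathcal E\mathcal R\Psi)+\mathcal E\mathcal R\Psi$ the first summand lies in $H^1_D(M,E;0)$ and the second has $\mathcal R\Psi\in B$, so $J_{E,B}$ is onto; injectivity is inherited from $J_E$. Since $J_{E,B}$ is the restriction of the isomorphism $J_E$ to a pair of closed subspaces (here we use that $B$ is closed in $H^{1/2}(\dM,E)$ and that $H^1_D(M,E;B)$ is closed in $H^1_D(M,E)$, as remarked in Subsection~\ref{suseboco}) which it maps bijectively onto each other, it is an isomorphism by the open mapping theorem again. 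The only mild subtlety — the point I would be most careful about — is verifying that $\mathcal E$ really lands in $H^1_D(M,E)$ rather than merely in $\dom(D_{\max})$; this is exactly the content of \tref{domdmax}~\eqref{reg} applied to $\phi\in H^{1/2}(\dM,E)$, together with $\mathcal R(\mathcal E\phi)=\phi\in H^{1/2}(\dM,E)$.
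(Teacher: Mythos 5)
Your proof is correct and takes essentially the same approach as the paper: you construct the same explicit inverse $K_E(\Psi)=(\Psi-\mathcal E\mathcal R\Psi,\mathcal R\Psi)$, rely on $\mathcal R\circ\mathcal E=\id$ and on \tref{domdmax}~\eqref{reg} to see that $\mathcal E$ lands in $H^1_D(M,E)$, and observe that $\mathcal R(J_E(\Phi,\phi))=\phi$ to get the restricted isomorphism. The paper states all this more tersely, leaving the verification of bijectivity and boundedness implicit.
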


\begin{proof}
For $\phi\in H^{1/2}(\dM,E)$,
we have $\mathcal E \phi\in \dom(D_{\max})$ by \lref{extfin2}.
By \tref{domdmax}~\eqref{reg}, $\mathcal E\phi\in H^1_D(M,E)$.
Thus $J_E$ maps indeed to $H^1_D(M,E)$.
The inverse map of $J_E$ is given by
\begin{align*}
K_E :  H^1_D(M,&E) \to  H^1_D(M,E;0) \oplus H^{1/2}(\dM,E), \\
&K_E(\Psi) = (\Psi-\mathcal{E}\mathcal{R}\Psi,\mathcal{R}\Psi).
\end{align*}
% Let $J_E(\Phi,\phi)=0$. 
% Then 
% \[
% 0 = \mathcal{R}(J_E(\Phi,\phi))
% = \mathcal{R}(\Phi+\mathcal{E}\phi)
% = \phi
% \]
% and hence $\Phi=J_E(\Phi,0)=J_E(\Phi,\phi)=0$.
% This shows that $J_E$ is injective.
% 
% For $\Psi\in H^1_D(M,E)$ put $\phi:= \mathcal{R}(\Psi) \in H^{1/2}(\dM,E)$
% and $\Phi:=\Psi-\mathcal{E}\phi \in H^1_D(M,E;0)$.
% Then $J_E(\Phi,\phi)=\Psi$ and surjectivity of $J_E$ is shown.
Clearly, $J_E(\Phi,\phi)\in H^1_D(M,E;B)$ if and only if 
$$
\phi=\mathcal{R}(J_E(\Phi,\phi)) \in B .
$$
Hence $J_E$ restricts to an isomorphism
between $H^1_D(M,E;0) \oplus B$ and $H^1_D(M,E;B)$.
\end{proof}

\begin{thm}\label{thmdefo2}
Assume the Standard Setup~\ref{stase} and
that $D$ and $D^*$ are complete and coercive at infinity.
Let $B_s$ be a continuous family of elliptic boundary conditions for $D$.

Then, for all $0\le s\le 1$,
\[
  \ind D_{B_s} = \ind D_{B_0} .
\]
\end{thm}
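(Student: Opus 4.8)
The plan is to reduce the statement to the standard fact that the index is locally constant on the space of Fredholm operators, by using the auxiliary isomorphisms $J_{E,B_s}$ from \lref{defolem} to transport everything onto fixed Hilbert spaces. First I would fix a continuous family $k_s:B_0\to B_s$ of isomorphisms as in \dref{def:contfam}, and observe that since each $B_s$ is an elliptic boundary condition, \cref{corfred1} (using completeness and coercivity at infinity of $D$ and $D^*$) guarantees that each $D_{B_s}:H^1_D(M,E;B_s)\to L^2(M,F)$ is a Fredholm operator. The goal is then to exhibit a norm-continuous family of Fredholm operators between two fixed Banach spaces whose index equals $\ind D_{B_s}$ for each $s$.

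Next I would define, for each $s$, the operator
\[
  T_s := D_{B_s}\circ J_{E,B_s}\circ(\id_{H^1_D(M,E;0)}\oplus k_s)
  : H^1_D(M,E;0)\oplus B_0 \to L^2(M,F).
\]
By \lref{defolem}, $J_{E,B_s}$ is an isomorphism onto $H^1_D(M,E;B_s)$, and $\id\oplus k_s$ is an isomorphism onto $H^1_D(M,E;0)\oplus B_s$; hence $T_s$ is Fredholm with $\ind T_s=\ind D_{B_s}$. Unwinding the definitions, $T_s(\Phi,\phi)=D(\Phi+\mathcal E(k_s\phi))$. Since $\mathcal R(\Phi+\mathcal E(k_s\phi))=k_s\phi$ by \tref{domdmax}\,\eqref{trace} and the identity $\mathcal R\circ\mathcal E=\id$, and since $D$ acts the same way on all these sections, the only $s$-dependence in $T_s$ enters through the bounded operator $\phi\mapsto \mathcal E(k_s\phi)$ from $B_0$ into $H^1_D(M,E)$. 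By \lref{extfin2}, $\mathcal E:\check H(A)\to\dom(D_{\max})$ is bounded, and by \tref{domdmax}\,\eqref{reg} it in fact maps $H^{1/2}(\dM,E)$ boundedly into $H^1_D(M,E)$; composing with the continuous family $s\mapsto k_s\in\mathcal L(B_0,H^{1/2}(\dM,E))$ shows that $s\mapsto T_s$ is continuous in the operator norm on $\mathcal L\big(H^1_D(M,E;0)\oplus B_0,\,L^2(M,F)\big)$.

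Finally I would invoke the standard fact that the index is a locally constant (indeed continuous, integer-valued) function on the space of Fredholm operators between two fixed Banach spaces with the norm topology; since $[0,1]$ is connected and $s\mapsto T_s$ is a norm-continuous path of Fredholm operators, $\ind T_s$ is independent of $s$. Therefore
\[
  \ind D_{B_s} = \ind T_s = \ind T_0 = \ind D_{B_0}
\]
for all $0\le s\le1$, as claimed.

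The main obstacle is the continuity claim for $s\mapsto T_s$: one must check carefully that $\phi\mapsto\mathcal E(k_s\phi)$ is continuous in $s$ as a map into $H^1_D(M,E)$ (not merely into $\dom(D_{\max})$ with the graph norm), which is exactly where \tref{domdmax}\,\eqref{reg} — giving $\mathcal E\big(H^{1/2}(\dM,E)\big)\subset H^1_D(M,E)$ with the corresponding norm estimate — is needed, together with the fact that the $H^1_D$-norm and graph norm are equivalent on $H^1_D(M,E;B_s)$ for elliptic $B_s$ (so that Fredholmness of $T_s$ is unambiguous). Once this bookkeeping is in place, the rest is routine.
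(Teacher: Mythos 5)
Your proof is correct and follows essentially the same strategy as the paper: both transport $D_{B_s}$ via the isomorphisms $k_s$ and $J_{E,B_s}$ of Lemma~\ref{defolem} to a norm-continuous family of Fredholm operators on a fixed Banach space and then invoke homotopy invariance of the index; the only cosmetic difference is that you work on $H^1_D(M,E;0)\oplus B_0$ while the paper conjugates further by $J_{E,B_0}$ to work on $H^1_D(M,E;B_0)$ directly. One minor remark on your final paragraph: for the continuity of $s\mapsto T_s$ in $\mathcal L\big(H^1_D(M,E;0)\oplus B_0,\,L^2(M,F)\big)$ you only need $\phi\mapsto D\mathcal E(k_s\phi)$ to vary continuously in $\mathcal L(B_0,L^2(M,F))$, which already follows from the boundedness of $\mathcal E:\check H(A)\to\dom(D_{\max})$ (graph norm) and the continuity of $s\mapsto k_s$ into $\mathcal L(B_0,H^{1/2}(\dM,E))\hookrightarrow\mathcal L(B_0,\check H(A))$; the stronger $H^1_D$-estimate is not needed for this step, though it (together with ellipticity of $B_s$) is of course what makes each $D_{B_s}$ Fredholm on its $H^1_D$-domain in the first place.
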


\begin{proof}
Let the $k_s:B_0 \to B_s$ be as in \dref{def:contfam}.
Consider the commutative diagram of isomophisms:
\[
\xymatrix{
H^1_D(M,E;0)  \oplus B_0 \ar[rr]^{J_{E,B_0}} \ar[d]_{\id\oplus k_s}
&& H^1_D(M,E;B_0) \ar[d]_{K_s}\\
H^1_D(M,E;0) \oplus B_s \ar[rr]^{J_{E,B_s}} && H^1_D(M,E;B_s) ,
} 
\]
where $K_s(\Psi)=\Psi+\mathcal{E}\big(k_s(\mathcal{R}(\Psi))-\mathcal{R}(\Psi)\big)$.
Then the composition
\[
  H^1_D(M,E;B_0) 
  \xrightarrow{K_s} H^1_D(M,E;B_s)
  \xrightarrow{D_{B_s}} L^2(M,F)
\]
is a continuous family of operators.
Here continuity refers to the operator norm in $\mathcal{L}(H^1_D(M,E;B_0),L^2(M,F))$.
Thus the index of $D_{B_s}\circ K_s$ is constant.
Since $K_s$ is an isomorphism, $\ind D_{B_s}=\ind (D_{B_s}\circ K_s)$.
\end{proof}

\begin{example}\label{defob}
Writing an elliptic boundary condition $B_1$ as in \dref{defellbou},
we let $B_0:=W_+\oplus V_-^{1/2}$ and obtain a continuous deformation
by keeping $V_\pm$ and $W_\pm$ constant
and replacing the map $g:V_-\to V_+$ by $sg$, $0\le s\le 1$.
In other words,
\[
  k_s: B_0 \to B_s , \quad k_s(\phi+\psi) = \phi + \psi + sg\psi ,
\]
where $\phi\in W_+$ and $\psi\in V_-$.
This allows us to reduce index computations to the case $g=0$.
\end{example}

%%%%%%%%%%%%%%%%%%%%%%%%%%%%%%%%%%%%%%%%%%%%%%

\subsection{Fredholm pairs}
\label{susefp}

%%%%%%%%%%%%%%%%%%%%%%%%%%%%%%%%%%%%%%%%%%%%%%
Recall the generalized Atiyah-Patodi-Singer boundary conditions $B(a)$, $a\in\R$,
from Example~\ref{exabc} (a) or Example~\ref{gaps}.
The next result illustrates the central role of this kind of boundary condition.

\begin{thm}\label{thmad}
Assume the Standard Setup~\ref{stase}
and that $D$ and $D^*$ are complete and coercive at infinity.

Let $B \subset H^{1/2}(\dM,E)$ be an elliptic boundary condition for $D$.
Let $V_\pm$ and $W_\pm$ be for $B$ as in \tref{ellbou}~\eqref{ellboua}.
In other words, $L^2_{(-\infty,a)}(A) = V_- \oplus W_-$
and $B=W_+ \oplus \Gamma(g)^{1/2}$, where $g:V_-\to V_+$.
Then we have
\[
  \ind D_B = \ind D_{B(a)} + \dim W_+ - \dim W_- .
\]
\end{thm}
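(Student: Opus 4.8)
The plan is to deduce the index formula for a general elliptic boundary condition $B$ from the already-established Fredholm theory (\tref{thmfred}, \cref{corfred1}) by two reductions: first deforming away the homomorphism $g$, and then comparing the resulting ``straightened'' boundary condition with the generalized Atiyah--Patodi--Singer condition $B(a)$ by adding and removing finitely many eigenvectors. Concretely, write $B = W_+ \oplus \Gamma(g)^{1/2}$ with the $L^2$-orthogonal decompositions $L^2_{(-\infty,a)}(A)=V_-\oplus W_-$ and $L^2_{[a,\infty)}(A)=V_+\oplus W_+$ as in \tref{ellbou}~\eqref{ellboua}. Using \eref{defob}, the family $B_s := W_+ \oplus \Gamma(sg)^{1/2}$ is a continuous family of elliptic boundary conditions interpolating between $B_1=B$ and $B_0 = W_+ \oplus V_-^{1/2}$, so \tref{thmdefo2} gives $\ind D_B = \ind D_{B_0}$. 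Thus it suffices to prove the formula for $B_0 = W_+ \oplus V_-^{1/2}$, which has no off-diagonal part.

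First I would handle $B_0$ by relating it to $B(a) = H^{1/2}_{(-\infty,a)}(A) = V_-^{1/2}\oplus W_-$. The idea is to introduce the auxiliary boundary condition $\tilde B := V_-^{1/2}\oplus W_- \oplus W_+ = B(a)\oplus W_+$, which sits between $B(a)$ and a larger one, and to compare using \cref{cor:AgraDy} (additivity of the index under inclusion of elliptic boundary conditions, with index jump equal to the dimension of the quotient). Since $B(a)\subset \tilde B$ with $\tilde B/B(a) \cong W_+$, \cref{cor:AgraDy} gives $\ind D_{\tilde B} = \ind D_{B(a)} + \dim W_+$. On the other hand $B_0 = W_+\oplus V_-^{1/2} \subset \tilde B = W_+\oplus V_-^{1/2}\oplus W_-$ with $\tilde B/B_0 \cong W_-$, so again by \cref{cor:AgraDy}, $\ind D_{\tilde B} = \ind D_{B_0} + \dim W_-$. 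Subtracting the two identities yields $\ind D_{B_0} = \ind D_{B(a)} + \dim W_+ - \dim W_-$, and combined with $\ind D_B = \ind D_{B_0}$ this is exactly the claimed formula. One should check that $\tilde B$ is genuinely an elliptic boundary condition in the sense of \dref{defellbou} — it is, since it corresponds to $g=0$ with the roles of the finite-dimensional summands rearranged, i.e. $V_- := V_-$, $W_-:=\{0\}$ absorbed appropriately, $V_+$ unchanged, and the finite-dimensional piece $W_-\oplus W_+$ split off — and that all relevant inclusions are between spaces contained in $H^{1/2}(\dM,E)$, which is automatic here.

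The main obstacle I anticipate is purely bookkeeping: making sure the decompositions needed to invoke \cref{cor:AgraDy} really satisfy the axioms of \dref{defellbou} at each stage (finite-dimensionality and $H^{1/2}$-regularity of the ``$W$'' pieces, the spectral containment conditions $V_-\oplus W_-\subset L^2_{(-\infty,a]}(A)$ etc., and that $g=0$ in the intermediate steps). There is also a small subtlety in that \cref{cor:AgraDy} requires $D$ and $D^*$ to be complete and coercive at infinity, which we have assumed; since $B$, $B_0$, $\tilde B$, $B(a)$ are all elliptic, $D^*$ with the adjoint conditions is automatically also well-behaved by \tref{ellbou}, so the hypotheses of \cref{corfred1} hold throughout and all indices in question are honest integers. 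Finally I would remark that the formula is independent of the choices of $V_\pm$, $W_\pm$, $g$ even though the individual terms $\dim W_\pm$ are not, because the left-hand side $\ind D_B$ manifestly is; this is consistent with \cref{cor:AgraDy} applied to two different representations of the same $B$.
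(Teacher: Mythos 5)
Correct, and this is essentially the paper's own proof: first reduce to $g=0$ via Example~\ref{defob} and Theorem~\ref{thmdefo2}, then introduce the auxiliary condition $B'=B(a)\oplus W_+=W_-\oplus W_+\oplus V_-^{1/2}$ and apply Corollary~\ref{cor:AgraDy} twice, once to the inclusion $B(a)\subset B'$ and once to $W_+\oplus V_-^{1/2}\subset B'$. Your extra bookkeeping remarks (checking that $B'$ is elliptic in the sense of Definition~\ref{defellbou}, and noting that the formula is choice-independent) are sound but not needed beyond what the cited results already guarantee.
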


\begin{proof}
As explained in \eref{defob}, we can assume without loss of generality that $g=0$,
i.e., $B=W_+ \oplus V_-^{1/2}$.
Consider one further boundary condition, 
\[
B' := W_- \oplus W_+ \oplus V_-^{1/2}
= H^{1/2}_{(-\infty,a)}(A) \oplus W_+ = B(a) \oplus W_+ .
\]
Applying \cref{cor:AgraDy} twice we conclude
\begin{align*}
\ind(D_B) 
&= 
\ind(D_{B'}) - \dim W_- \\
&=
\ind(D_{B(a)}) + \dim W_+ - \dim W_- . \qedhere
\end{align*}
\end{proof}

We recall the notion of Fredholm pairs of subspaces.
Let $H$ be a Hilbert space and let $X, Y \subset H$ be closed subspaces.
Then $(X,Y)$ is called a \emph{Fredholm pair}\index{Fredholm pair}
if $X\cap Y$ is finite-dimensional and $X+Y$ has finite codimension.
In particular, $X+Y$ is closed.\symbolfootnote[1]{Note that, in general, the sum of two closed subspaces in a Hilbert space need not be closed.}
Intuitively, constituting a Fredholm pair means that $X$ and $Y$
are complements in $H$ up to finite-dimensional errors.
The number
\begin{equation}
\ind(X,Y) := \dim(X\cap Y) - \dim(H/(X+Y)) \in\Z
\label{indpair}
\end{equation}
is called the \emph{index}\index{index of Fredholm pair} of the pair $(X,Y)$.
If $(X,Y)$ is a Fredholm pair, then the orthogonal complements
also form a Fredholm pair $(X^\perp,Y^\perp)$ and one has
\begin{equation}
\ind(X^\perp,Y^\perp) = -\ind(X,Y) .
\label{indopair}
\end{equation}
See \cite[Ch.~IV, \S~4]{Ka} for a detailed discussion.
Under the assumptions of \tref{thmad}, the pair $(L^2_{[a,\infty)},\bar B)$,
where $\bar B$ denotes the $L^2$-closure of $B$,
is a Fredholm pair and the index formula says that 
\begin{equation}
\ind D_B - \ind D_{B(a)}
= \ind (L^2_{[a,\infty)},\bar B)
= - \ind(L^2_{(-\infty,a)},\bar B^\perp)
 \label{indind}
\end{equation}

\begin{thm}\label{thm:FredholmPairs}
Assume the Standard Setup~\ref{stase} and
that $D$ and $D^*$ are complete and coercive at infinity.

Let $B_1,B_2 \subset H^{1/2}(\dM,E)$ be elliptic boundary conditions for $D$.
Let $g_1$ and $g_2$ be maps for $B_1$ and $B_2$ respectively,
as in \tref{ellbou}~\eqref{ellboua} for the same number $a\in\R$.

If the $L^2$-operator norms satisfy
\begin{equation}
\|g_1\|\cdot\|g_2\| < 1,
\label{eq:FredVor}
\end{equation}
then the $L^2$-closures $(\bar B_1,\bar B_2^\perp)$
form a Fredholm pair in $L^2(M,E)$ and
\begin{equation}
\ind(D_{B_1}) - \ind(D_{B_2}) = \ind(\bar B_1,\bar B_2^\perp) .
\label{eq:IndDiff}
\end{equation}
\end{thm}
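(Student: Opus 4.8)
The plan is to reduce everything to the model case $g_1 = g_2 = 0$ by the deformation machinery already in place, and then to compute the Fredholm pair index directly from the spectral subspaces. First I would fix the decompositions $L^2_{(-\infty,a)}(A) = V_-^{(i)} \oplus W_-^{(i)}$, $L^2_{[a,\infty)}(A) = V_+^{(i)} \oplus W_+^{(i)}$ and the maps $g_i : V_-^{(i)} \to V_+^{(i)}$ supplied by \tref{ellbou}~\eqref{ellboua}, for $i=1,2$, with the common parameter $a$. Using \eref{defob} I would deform $g_1 \rightsquigarrow s g_1$ and $g_2 \rightsquigarrow s g_2$ simultaneously; by \tref{thmdefo2} the indices $\ind(D_{B_1})$ and $\ind(D_{B_2})$ are unchanged along the deformation, so the left-hand side of \eqref{eq:IndDiff} is constant. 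The point is that the right-hand side $\ind(\bar B_1, \bar B_2^\perp)$ is also constant along this deformation, provided the Fredholm-pair property persists. The condition \eqref{eq:FredVor} is exactly what guarantees this: I would show that $\bar B_1 \cap \bar B_2^\perp = 0$ and that $\bar B_1 + \bar B_2^\perp$ has finite codimension whenever $\|g_1\|\cdot\|g_2\| < 1$, and the same estimate holds for $\|s g_1\|\cdot\|s g_2\| \le \|g_1\|\cdot\|g_2\| < 1$, so the pair stays Fredholm (with locally constant, hence constant, index) throughout $s \in [0,1]$.

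The heart of the argument is the case $g_1 = g_2 = 0$. Here $\bar B_i = \overline{W_+^{(i)} \oplus V_-^{(i)}}$ in $L^2$, and by \lref{lem:Bperp} (applied with $g=0$) we have $\bar B_2^\perp = \overline{W_-^{(2)} \oplus V_+^{(2)}}$. Using the adjoint formula in \tref{ellbou}, $\sigma_0^*(B_2^{\ad})$ is $W_-^{(2)} \oplus V_+^{1/2,(2)}$, which fits the picture. I would then compute $\ind(\bar B_1, \bar B_2^\perp)$ by splitting $L^2(\dM,E)$ along the finite-dimensional spaces $W_\pm^{(i)}$ and the spectral projection $Q_{[a,\infty)}$. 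After discarding the finite-dimensional pieces (which contribute $\dim W_+^{(1)} - \dim W_-^{(1)} - \dim W_+^{(2)} + \dim W_-^{(2)}$ to the index, by additivity of the Fredholm pair index under orthogonal direct sums), the remaining pair is $(L^2_{(-\infty,a)}(A),\, L^2_{[a,\infty)}(A))$ up to these finite corrections, which are genuine complements and contribute index $0$. Collecting the bookkeeping gives
\[
  \ind(\bar B_1, \bar B_2^\perp) = (\dim W_+^{(1)} - \dim W_-^{(1)}) - (\dim W_+^{(2)} - \dim W_-^{(2)}).
\]
On the other hand, \tref{thmad} gives $\ind D_{B_i} = \ind D_{B(a)} + \dim W_+^{(i)} - \dim W_-^{(i)}$ for $i=1,2$, so subtracting yields $\ind D_{B_1} - \ind D_{B_2} = (\dim W_+^{(1)} - \dim W_-^{(1)}) - (\dim W_+^{(2)} - \dim W_-^{(2)})$, which matches. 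This settles the model case and, by the deformation, the general case.

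The main obstacle I expect is the verification that $(\bar B_1, \bar B_2^\perp)$ is genuinely a Fredholm pair under \eqref{eq:FredVor} — in particular closedness of $\bar B_1 + \bar B_2^\perp$ — since the sum of two closed subspaces of a Hilbert space need not be closed. The key estimate is that, writing elements of $\bar B_1$ and $\bar B_2^\perp$ in terms of the graphs $\Gamma(g_1)$ and $\Gamma(-g_2^*)$ over the (finite-corrections of the) spectral subspaces, the "angle" between $\bar B_1$ and $\bar B_2$ is controlled from below by $1 - \|g_1\|\|g_2\| > 0$: if $v_1 + g_1 v_1 = -g_2^* v_2 + v_2$ with $v_1 \in V_-$, $v_2 \in V_+$, then comparing $V_\mp$-components gives $\|v_1\| \le \|g_2^*\|\|v_2\| \le \|g_2\|\|g_1\|\|v_1\|$, forcing $v_1 = v_2 = 0$; a quantitative version of this gives a lower bound on $\|x - y\|$ for unit vectors $x \in \bar B_1$, $y \in \bar B_2^\perp$ modulo the finite-dimensional summands, which yields both triviality of the intersection and closedness of the sum with finite codimension. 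Once this is in hand, the index formula \eqref{indind} and additivity of $\ind(\cdot,\cdot)$ over orthogonal splittings finish the computation, and \eqref{indopair} lets one rewrite $\ind(\bar B_1,\bar B_2^\perp)$ symmetrically if desired.
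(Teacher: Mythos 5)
Your proposal is correct and follows essentially the same route as the paper: establish the Fredholm pair property from the estimate $\|g_1g_2^*\|<1$ (via invertibility of $\id+g_1g_2^*$), deform $g_1,g_2$ simultaneously to zero noting both sides of \eqref{eq:IndDiff} are constant, compute the pair index in the model case $g_1=g_2=0$, and conclude with \tref{thmad}. Your model-case computation via additivity of the Fredholm pair index along the spectral splitting $L^2=L^2_{(-\infty,a)}(A)\oplus L^2_{[a,\infty)}(A)$ is in fact a mild streamlining of the paper's explicit dimension bookkeeping in \eqref{eq:nerv1}--\eqref{eq:nerv4}, but the underlying argument is the same.
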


\begin{proof}
In the notation of \tref{defellbou} we have
$$
\bar B_1 = W_{1,+} \oplus \Gamma(g_1)
$$
where $g_1:V_{1,-}\to V_{1,+}$ and, by \lref{lem:Bperp},
$$
\bar B_2^\perp = W_{2,-} \oplus \Gamma(-g_2^*).
$$
A general element of $\bar B_1 \cap \bar B_2^\perp$ is therefore of the form
\begin{equation}
w_{1,+} + v_{1,-} + g_1v_{1,-} = w_{2,-} + v_{2,+} - g_2^*v_{2,+}
\label{eq:FredPair1}
\end{equation}
where $w_{1,+}\in W_{1,+}$, $w_{2,-}\in W_{2,-}$, $v_{1,-}\in V_{1,-}$,
and $v_{2,+}\in V_{2,+}$.
Projecting to $L^2_{(-\infty,a)}(A)$ and to $L^2_{[a,\infty)}(A)$,
we see that \eqref{eq:FredPair1} is equivalent to the two equations
\begin{align}
v_{1,-} &= w_{2,-}  - g_2^*v_{2,+} ,\label{eq:FredPair2}\\
v_{2,+} &= w_{1,+}  + g_1v_{1,-},\label{eq:FredPair3}.
\end{align}
We claim that the linear map $\bar B_1 \cap \bar B_2^\perp \to  W_{1,+} \oplus W_{2,-}$
mapping the element in \eqref{eq:FredPair1} to $(w_{1,+},w_{2,-})$ is injective.
Namely, if $w_{1,+}=w_{2,-}=0$, then \eqref{eq:FredPair2} and \eqref{eq:FredPair3} say
$$
v_{2,+} = g_1v_{1,-} = - g_1g_2^*v_{2,+} ,
$$ 
hence
$$
(\id + g_1g_2^*)v_{2,+} = 0.
$$
Here we have extended $g_1$ to $L^2_{(-\infty,a)}(A)$ by setting it $=0$ on $W_{1,-}$
and similarly for $g_2^*$.
Since 
\begin{equation}
\|g_1g_2^*\| \leq \|g_1\|\cdot\|g_2^*\| = \|g_1\|\cdot\|g_2\| < 1,
\label{eq:FredPair4}
\end{equation}
we see that $\id+g_1g_2^*$ is an isomorphism on $L^2_{[a,\infty)}(A)$.
Hence $v_{2,+}=0$ and, by \eqref{eq:FredPair2}, also $ v_{1,-} = 0$.
This implies 
$$
\dim(\bar B_1 \cap \bar B_2^\perp) \le \dim(W_{1,+} \oplus W_{2,-}) < \infty.
$$
Next we show that $\bar B_1 + \bar B_2^\perp$ is closed in $L^2(M,E)$.
Since $W_{1,+}$ and $W_{2,-}$ are finite-dimensional,
it suffices to show that $\Gamma(g_1) + \Gamma(-g_2^*)$ is closed.
Let $u_k + g_1u_k + v_k - g_2^*v_k$ converge in $L^2(M,E)$ as $k\to\infty$
with $u_k \in V_{1,-}$ and $v_k \in V_{2,+}$.
Projecting onto $L^2_{(-\infty,a)}(A)$ and onto $L^2_{[a,\infty)}(A)$ we see that 
\begin{align*}
u_k - g_2^*v_k &\to x_- \in L^2_{(-\infty,a)}(A) \mbox{ and}\\
v_k + g_1u_k   &\to x_+ \in L^2_{[a,\infty)}(A) .
\end{align*}
From
$$
g_1(u_k - g_2^*v_k) = v_k + g_1u_k  - (\id + g_1g_2^*)v_k
$$
we have
$$
v_k = (\id + g_1g_2^*)^{-1}(v_k + g_1u_k  - g_1(u_k - g_2^*v_k)) .
$$
This shows that $(v_k)$ converges with 
$$
\lim_{k\to\infty} v_k = (\id + g_1g_2^*)^{-1}(x_+  - g_1x_-).
$$
Then $(u_k)$ also converges and $u_k + g_1u_k + v_k - g_2^*v_k$
converges to an element in $\Gamma(g_1) + \Gamma(-g_2^*)$.

Now that we know that $\bar B_1 + \bar B_2^\perp$ is closed, we compute
$$
\frac{L^2(M,E)}{\bar B_1 + \bar B_2^\perp} 
\cong 
(\bar B_1 + \bar B_2^\perp)^\perp 
=
\bar B_1^\perp \cap \bar B_2 
\hookrightarrow
W_{2,+} \oplus W_{1,-}.
$$
Hence $\dim(L^2(M,E)/(\bar B_1 + \bar B_2^\perp)) < \infty$
and $(\bar B_1,\bar B_2^\perp)$ is a Fredholm pair.

It remains to prove \eqref{eq:IndDiff}.
If we replace $g_1$ and $g_2$ by $sg_1$ and $sg_2$, respectively, with $s\in[0,1]$,
then \eqref{eq:FredVor} clearly remains valid,
so that the corresponding boundary conditions form Fredholm pairs for all~$s\in[0,1]$.
Since the index of $D$ subject to these boundary conditions remains constant
by \tref{thmdefo2} and the index of the Fredholm pairs by \cite[Thm.~4.30, p.~229]{Ka},
we may without loss of generality assume that $g_1=g_2=0$.

In this case \eqref{eq:FredPair2} and \eqref{eq:FredPair3} tell us that 
\begin{equation}
\bar B_1 \cap \bar B_2^\perp = (W_{1,+}\cap V_{2,+}) \oplus (W_{2,-}\cap V_{1,-})
\label{eq:nerv1} 
\end{equation}
and similarly
\begin{equation}
\bar B_2 \cap \bar B_1^\perp = (W_{2,+}\cap V_{1,+}) \oplus (W_{1,-}\cap V_{2,-}) .
\label{eq:nerv2}  
\end{equation}
Let $\tilde\perp$ denote the orthogonal complement in $L^2_{(\infty,a)}(A)$
rather than in $L^2(M,E)$.
Then
\begin{align}
\dim(W_{2,-}\cap V_{1,-})
&=
\dim\left(\left(W_{2,-}^{\tilde\perp}+ V_{1,-}^{\tilde\perp}\right)^{\tilde\perp}\right) \nonumber\\
&=
\dim\left(\left(V_{2,-}+ W_{1,-}\right)^{\tilde\perp}\right) \nonumber\\
&=
\dim\left(\left(V_{2,-}\oplus \frac{W_{1,-}}{W_{1,-}\cap V_{2,-}}\right)^{\tilde\perp}\right)
\nonumber\\
&=
\dim(W_{2,-}) - \dim(W_{1,-}) + \dim(W_{1,-}\cap V_{2,-}).
\label{eq:nerv3}
\end{align}
Similarly, one sees
\begin{equation}
\dim(W_{2,+}\cap V_{1,+}) - \dim(W_{1,+}\cap V_{2,+})= \dim(W_{2,+}) - \dim(W_{1,+}) .
\label{eq:nerv4} 
\end{equation}
Equations \eqref{eq:nerv1} - \eqref{eq:nerv4} combine to give
\begin{equation}
\ind(\bar B_1,\bar B_2^\perp)
= \dim(W_{1,+}) + \dim(W_{2,-}) - \dim(W_{1,-}) - \dim(W_{2,+}).
\label{eq:FredPair5}
\end{equation}
\tref{thmad} yields
\begin{equation}
\ind D_{B_1} = \ind D_{\BAPS} + \dim W_{1,+} - \dim W_{1,-}
\label{eq:FredPair6}
\end{equation}
and
\begin{equation}
\ind D_{B_2} = \ind D_{\BAPS} + \dim W_{2,+} - \dim W_{2,-} .
\label{eq:FredPair7}
\end{equation}
Subtracting \eqref{eq:FredPair6} and \eqref{eq:FredPair7}
and inserting \eqref{eq:FredPair5} concludes the proof.
\end{proof}

\begin{remark}
Assumption \eqref{eq:FredVor} is sharp.
For instance, we can choose $B_1$ and $B_2$ such that they have the same $W_\pm$ 
and $V_\pm$, namely $W_\pm=0$, $V_-=L^2_{(-\infty,0)}(A)$, and $V_+=L^2_{[0,\infty)}(A)$.
For $g_1$ we choose a unitary isomorphism $g_1:V_- \to V_+$ which is also continuous
with respect to the $H^{1/2}$-norm.
For $g_2$ we choose $g_2=-g_1$.
Then 
$$
\bar B_2^\perp = \Gamma(-g_2^*) = \Gamma(g_1^*)
= \Gamma(g_1^{-1}) = \Gamma(g_1) = \bar B_1 .
$$
Thus $(\bar B_1,\bar B_2^\perp)$ does not form a Fredholm pair.
In this case, $\|g_1\|=\|g_2\|=1$.

If one of the two boundary conditions is a generalized Atiyah-Patodi-Singer
boundary condition $B(b)$, $b\in\R$,
then the corresponding $g$ vanishes so that assumption \eqref{eq:FredVor}
holds without any restriction on the other boundary condition.
In this case, \tref{thm:FredholmPairs} reduces to \tref{thmad}.
\end{remark}

%%%%%%%%%%%%%%%%%%%%%%%%%%%%%%%%%%%%%%%%%%%%%%

\subsection{Relative index theory}
\label{suserelind}

%%%%%%%%%%%%%%%%%%%%%%%%%%%%%%%%%%%%%%%%%%%%%%
Throughout this subsection, assume the Standard Setup~\ref{stase}
and that $D$ and $D^*$ are complete and coercive at infinity.
For convenience assume also that $M$ is connected and $\dM=\emptyset$.

For what follows, compare Example~\ref{ex:TransCond}.
Let $N$ be a closed and two-sided hypersurface in $M$.
Cut $M$ along $N$ to obtain a manifold $M'$, possibly connected,
whose boundary $\dM'$ consists of two disjoint copies $N_1$ and $N_2$ of $N$,
see Figure~6 on page~\pageref{fig:transmission}.
There are natural pull-backs $\mu'$, $E'$, $F'$, and $D'$
of $\mu$, $E$, $F$, and $D$ from $M$ to $M'$.
Assume that there is an interior vector field $T$ along $N=N_1$
such that $D'$ is boundary symmetric with respect to $T$ along $N_1$.
Then $D'$ is also boundary symmetric with respect to the interior
vector field $-T$ of $M'$ along $N=N_2$.
Choose an adapted operator $A$ for $D'$ along $N_1$
as in the Standard Setup~\ref{stase}.
Then $-A$ is an adapted operator for $D'$ along $N_2$
as in the Standard Setup~\ref{stase}
and will be used in what follows.

\begin{thm}[Splitting Theorem]\label{split}\index{splitting theorem}
Let $M$, $M'$, and notation be as above.
Assume that $D'$ is  boundary symmetric
with respect to an interior vector field $T$ of $M'$ along $N$
and choose an adapted operator $A$ as above.

Then, $D$ and $D^*$ are complete and coercive at infinity
if and only if $D'$ and $(D')^*$ are complete and coercive at infinity.
In this case, $D$ and $D'_{B_1\oplus B_2}$ are Fredholm operators with
\[
  \ind D = \ind D'_{B_1\oplus B_2} ,
\]
where $B_1=B(a)=H^{1/2}_{(-\infty,a)}(A)$ and $B_2=H^{1/2}_{[a,\infty)}(A)$,
considered as boundary conditions along $N_1$ and $N_2$, respectively.
More generally,
we may choose any elliptic boundary condition $B_1\subset H^{1/2}(N,E)$
and its $L^2$-orthogonal complement $B_2\subset H^{1/2}(N,E)$.
\end{thm}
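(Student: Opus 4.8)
The plan is to run everything through the transmission boundary condition of \eref{ex:TransCond}, viewing $M$ as $M'$ reglued along $N$. It suffices to prove the general assertion, in which $B_1\subset H^{1/2}(N,E)$ is an arbitrary elliptic boundary condition for the operator $A$ adapted to $D'$ along $N_1$, and $B_2\subset H^{1/2}(N,E)$ is its $L^2$-orthogonal complement, regarded as a boundary condition along $N_2$ where $-A$ is adapted; the special choice $B_1=B(a)=H^{1/2}_{(-\infty,a)}(A)$, $B_2=H^{1/2}_{[a,\infty)}(A)$ is one instance. Writing $B_1=W_+\oplus\Gamma(g)^{1/2}$ as in \tref{ellbou}, \lref{lem:Bperp} gives $\overline{B_1}=W_+\oplus\Gamma(g)$ and $(\overline{B_1})^{\perp}=W_-\oplus\Gamma(-g^*)$ in $L^2(N,E)$, so $B_2=W_-\oplus\Gamma(-g^*)^{1/2}$; since passing to $-A$ interchanges the two spectral halves, $B_2$ is an elliptic boundary condition for $-A$, hence $B_1\oplus B_2$ is an elliptic boundary condition for $D'$ on $\partial M'=N_1\sqcup N_2$, with adapted operator $A\sqcup(-A)$.

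First I would transfer the hypotheses. Completeness and coercivity at infinity are properties ``at infinity'', and the data on $M$ and $M'$ (and the operators $D,D'$) coincide outside a compact neighbourhood $\mathcal N$ of $N$; transferring both in either direction is a cutting-off argument of the kind used in the proofs of \tref{comsup} and \lref{density}, the one point needing care being that a section supported away from $N$ (resp.\ $N_1\cup N_2$) is canonically a section on the other manifold, with the same graph norm, so one first multiplies by a cut-off which is $1$ near $\mathcal N$, disposes of the compactly supported part, and treats the remainder on the common region. The same applies to $D^*$ and $(D')^*$. Consequently $D'$ and $(D')^*$ are complete and coercive at infinity, so \cref{corfred1} and \tref{thm:FredholmPairs} apply on $M'$.

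Next comes the bridge. By \eref{ex:TransCond}, $B_{\mathrm{tr}}:=\{(\phi,\phi)\mid\phi\in H^{1/2}(N,E)\}$ is an $\infty$-regular, hence elliptic, boundary condition for $D'$; and a short computation with the pairing $\beta$ of \lref{duality}, using that the symbol $\sigma_0$ along $N_2$ equals $-\sigma_0$ because $T$ is replaced by $-T$, identifies $B_{\mathrm{tr}}^{\mathrm{ad}}$ with the transmission condition for $(D')^*$. Since $\partial M=\emptyset$ we have $D=D_{\max}$ with domain $H^1_D(M,E)=\dom(D_{\max})$, and the pull-back $\Phi\mapsto\Phi'$ is a bijection from $\dom(D_{\max})$ onto $H^1_{D'}(M',E';B_{\mathrm{tr}})$, since an $H^1_{\loc}$-section across the two-sided hypersurface $N$ is exactly a pair of one-sided $H^1_{\loc}$-sections whose traces match. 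Interior elliptic regularity of $D$ along $N$ (which lies in the interior of $M$), together with \tref{domdmax}, makes this bijection and its inverse bounded for the respective $H^1_D$-norms; it is the identity on $L^2(M,F)=L^2(M',F')$ and intertwines $D$ with $D'_{B_{\mathrm{tr}}}$. Hence $D$ is Fredholm if and only if $D'_{B_{\mathrm{tr}}}$ is --- which holds by \cref{corfred1} --- and $\ind D=\ind D'_{B_{\mathrm{tr}}}$. It remains to compare $B_{\mathrm{tr}}$ and $B_1\oplus B_2$ with a generalized Atiyah--Patodi--Singer condition. Fix $a\in\R$ which is not an eigenvalue of $A$, and let $B^{(a)}$ be the boundary condition $H^{1/2}_{(-\infty,a)}(A)$ along $N_1$ and $H^{1/2}_{[a,\infty)}(A)=H^{1/2}_{(-\infty,-a)}(-A)$ along $N_2$; this is of the type in \eref{gaps} for $D'$, so the map in its presentation as in \tref{ellbou} is $0$. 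Applying \tref{thm:FredholmPairs} to the pairs $\{B_{\mathrm{tr}},B^{(a)}\}$ and $\{B_1\oplus B_2,B^{(a)}\}$ (the product of the norms of the defining maps being $0$ in each case) gives
\[
  \ind D'_{B_{\mathrm{tr}}}-\ind D'_{B^{(a)}}=\ind\bigl(\overline{B_{\mathrm{tr}}},(\overline{B^{(a)}})^{\perp}\bigr),\qquad
  \ind D'_{B_1\oplus B_2}-\ind D'_{B^{(a)}}=\ind\bigl(\overline{B_1\oplus B_2},(\overline{B^{(a)}})^{\perp}\bigr),
\]
where $(\overline{B^{(a)}})^{\perp}$ is $L^2_{[a,\infty)}(A)$ along $N_1$ and $L^2_{(-\infty,a)}(A)$ along $N_2$. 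For the first Fredholm pair, $\overline{B_{\mathrm{tr}}}=\{(\phi,\phi)\}$ and the orthogonal splitting $L^2(N,E)=L^2_{(-\infty,a)}(A)\oplus L^2_{[a,\infty)}(A)$ make the intersection trivial and the sum everything, so its index is $0$. In the second, $\overline{B_1\oplus B_2}=\overline{B_1}$ along $N_1$ and $(\overline{B_1})^{\perp}$ along $N_2$ by \lref{lem:Bperp}, so the pair splits over $N_1,N_2$ and, by the relation between the indices of a Fredholm pair and of its orthogonal complement,
\[
  \ind\bigl(\overline{B_1\oplus B_2},(\overline{B^{(a)}})^{\perp}\bigr)
  =\ind\bigl(\overline{B_1},L^2_{[a,\infty)}(A)\bigr)+\ind\bigl((\overline{B_1})^{\perp},L^2_{(-\infty,a)}(A)\bigr)=0 .
\]
Thus $\ind D'_{B_{\mathrm{tr}}}=\ind D'_{B^{(a)}}=\ind D'_{B_1\oplus B_2}$, and with the bridge $\ind D=\ind D'_{B_1\oplus B_2}$; Fredholmness of $D'_{B_1\oplus B_2}$ is again \cref{corfred1}.

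The step I expect to be the main obstacle is the bridge: proving that the pull-back is a topological isomorphism of the two $H^1_D$-spaces intertwining the operators --- this rests on interior elliptic regularity of $D$ across the interior hypersurface $N$ and on \tref{domdmax} --- and correctly matching $B_{\mathrm{tr}}^{\mathrm{ad}}$ with the transmission condition for $(D')^*$ despite the orientation reversal along $N_2$. The transfer of completeness and coercivity at infinity is a secondary technical point, since a section over $M'$ need not restrict from $M$ and one must localize near $N$ before invoking the corresponding property of $D$ and $D^*$.
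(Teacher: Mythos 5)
Your proof is correct, and the overall skeleton — transfer completeness and coercivity, bridge $\ind D=\ind D'_{B_{\mathrm{tr}}}$ via the transmission condition of \eref{ex:TransCond}, then compare boundary conditions against a generalized Atiyah--Patodi--Singer condition — coincides with the paper's. The difference lies in the tool used for the comparison of boundary conditions: the paper deforms $g$ to $0$ (\eref{defob}) and applies \cref{cor:AgraDy} twice to each of $B_{\mathrm{tr}}$ and $B_1\oplus B_2$, computing the finite-dimensional corrections $\dim W_\pm$ directly, whereas you route everything through \tref{thm:FredholmPairs} and the Fredholm-pair index, exploiting the clean cancellation $\ind(X,Y)+\ind(X^\perp,Y^\perp)=0$ from \eqref{indopair} together with the additivity of the pair index over the two components $N_1,N_2$. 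Your approach is a little slicker at the computational level (no explicit $\dim W_\pm$ bookkeeping), but it relies on the later and stronger \tref{thm:FredholmPairs}, whose proof itself invokes the deformation argument \tref{thmdefo2} and \tref{thmad}, so the paper's version stays closer to first principles. Both arguments are correct; they simply pick different entry points into the paper's own index-comparison machinery. One minor remark: your verification that $\overline{B_{\mathrm{tr}}}$ and $(\overline{B^{(a)}})^\perp$ sum to all of $L^2(N_1,E)\oplus L^2(N_2,E)$ is right (take $\phi=Q_{(-\infty,a)}\psi_1+Q_{[a,\infty)}\psi_2$), and the component-by-component decomposition of the second Fredholm pair is legitimate because $\overline{B_1\oplus B_2}$ and $(\overline{B^{(a)}})^\perp$ both split as orthogonal sums over $N_1$ and $N_2$, so intersection and codimension of the sum each split accordingly.
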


\begin{remarks}
(a)
If $M$ is a complete Riemannian manifold and $D$ is of Dirac type, then $M'$
is also a complete Riemannian manifold and $D'$ is of Dirac type as well.
Completeness of the metrics implies completeness of $D$ and $D'$, see \eref{diracom}.
Moreover, $D'$ is boundary symmetric
with respect to the interior unit normal field $T$ of $M'$ along $N$.
Finally, if a curvature condition as in Example~\ref{ex:coercive} (b)
ensures coercivity at infinity of $D$ and $D'$,
then the index formula of \tref{split} applies.

(b)
The Decomposition Theorem~\ref{deco} is the special case of \tref{split}
where $N$ decomposes $M$ into two components, compare Remark~\ref{nomax}.
\end{remarks}

\begin{proof}[Proof of the Splitting Theorem~\ref{split}]
The first assertion,
namely that $D$ and $D^*$ are complete and coercive at infinity if and only if
$D'$ and $(D')^*$ are complete and coercive at infinity,
is immediate from \dref{complete}, \dref{coer} (resp.\ \ref{def:coerc}),
and the compactness of $N$.

Assume now that $D$ and $D^*$ are complete and coercive at infinity.
Then $D$ and $D'_{B_1\oplus B_2}$ are Fredholm operators,
by what we just said and \tref{fred}.

Under the canonical identifications $E|_{N_1} = E|_{N} = E|_{N_2}$,
the transmission condition from Example~\ref{ex:TransCond} reads
\begin{equation*}
  B = \{ (\phi,\psi) \in H^{1/2}(N_1,E) \oplus H^{1/2}(N_2,E) \mid \phi = \psi \} .
\end{equation*}
Recall that $B$ is an elliptic boundary condition.
Furthermore,
with respect to the canonical pull-back of sections from $E$ to $E'$, we have
\begin{equation*}
  \dom D = H^1_D(M,E) = H^1(M',E';B) = \dom D'_B
\end{equation*}
and
\begin{equation}
  \ind D = \ind D'_{B} ,
  \label{one}
\end{equation}
It follows from the discussion in Example~\ref{ex:TransCond} that $B$ is homotopic
to the boundary condition $W_+\oplus V_-^{1/2}$ (that is, $g=0$) with
\begin{align*}
  V_- &= L^2_{(-\infty,0)}(A)\oplus L^2_{(-\infty,0)}(-A)
  = L^2_{(-\infty,0)}(A)\oplus L^2_{(0,\infty)}(A) \\
  W_+ &= \{ (\phi,\phi) \mid \phi \in \ker A \} .
\end{align*}
We note that here $V_-^{1/2}=B(0)=H^{1/2}_{(-\infty,0)}(A)\oplus H^{1/2}_{(0,\infty)}(A)$,
hence
\begin{equation}
\begin{split}
  \ind D'_{B} &= \ind D'_{B(0)} + \dim W_+ \\
  &=  \ind D'_{B(0)} + \dim\ker A \\
  &= \ind D'_{H^{1/2}_{(-\infty,0)}(A)\oplus H^{1/2}_{[0,\infty)}(A)} ,
  \label{two}
\end{split}
\end{equation}
by applying \cref{cor:AgraDy} twice.
If $B_1$ is an elliptic boundary condition,
\begin{equation*}
  B_1 = W_{1,+} \oplus \Gamma(g)^{1/2} ,
\end{equation*}
where the notation is as in \tref{tell} with $a=0$, then
\begin{equation*}
  B_2 = W_{1,-} \oplus \Gamma(-g^*)^{1/2}
\end{equation*}
is the $L^2$-orthogonal complement of $B_1$ in $H^{1/2}(N,E)$.
By \eref{defob}, we may assume that $g=0$.
Then
\begin{equation}\label{aaa}
  B_1=W_{1,+}\oplus V_{1,-}^{1/2}
  \quad\text{and}\quad
  B_2=W_{1,-}\oplus V_{1,+}^{1/2}
\end{equation}
with
\begin{equation}\label{bbb}
  V_{1,-}^{1/2} \oplus W_{1,-} = H^{1/2}_{(-\infty,0)}(A)
  \quad\text{and}\quad
  V_{1,+}^{1/2} \oplus W_{1,+} = H^{1/2}_{[0,\infty)}(A) .
\end{equation}
Applying \cref{cor:AgraDy} to \eqref{aaa} and \eqref{bbb}, respectively,
we get
\begin{equation}\label{three}
\begin{split}
  \ind D'_{B_1\oplus B_2}
  &= \ind D'_{V_{1,-}^{1/2}\oplus V_{1,+}^{1/2}} + \dim W_{1,+} + \dim W_{1,-} \\
  &= \ind D'_{H^{1/2}_{(-\infty,0)}(A)\oplus H^{1/2}_{[0,\infty)}(A)} .
\end{split}
\end{equation}
The claimed index formula is now immediate
from \eqref{one}, \eqref{two}, and \eqref{three}.
\end{proof}

Let $M_1$ and $M_2$ be complete Riemannian manifolds without boundary.
Let $D_i: \Cu(M_i,E_i) \to \Cu(M_i,F_i)$ be Dirac type operators which agree
outside closed subsets $K_i\subset M_i$
and choose $f$ and $\mathcal I_E$ as in \dref{def:agree}.
For $i=1,2$, choose a decomposition $M_i=M_i'\cup M_i''$ such that
$N_i=M_i'\cap M_i''$ is a compact hypersurface in $M_i$,
$K_i$ is contained in the interior of $M_i'$, $f(M_1'')=M_2''$, and $f(N_1)=N_2$. 
Denote the restriction of $D_i$ to $M_i'$ by $D_i'$.
The following result is a general version of the $\Phi$-relative index theorem
of Gromov and Lawson \cite[Thm. 4.35]{GL}.

\begin{thm}\label{firelind}
Under the above assumptions,
let $B_1\subset H^{1/2}(N_1,E_1)$ and $B_2\subset H^{1/2}(N_2,E_2)$
be elliptic boundary conditions which correspond to each other
under the identifications given by $f$ and $\mathcal I_E$ from \dref{def:agree}.
Assume that $D_1$ and $D_2$ and their formal adjoints are coercive at infinity.

Then $D_1$, $D_2$, $D'_{1,B_1}$, and $D'_{2,B_2}$
are Fredholm operators such that
\[
  \ind D_{1} - \ind D_{2}
  = \ind D'_{1,B_1} - \ind D'_{2,B_2} .
\]
\end{thm}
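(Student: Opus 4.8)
The plan is to deduce the statement from the Splitting Theorem~\tref{split}, applied separately to the two decompositions $M_i=M_i'\cup M_i''$, together with the observation that the ``exterior'' operators $D_1''$ on $M_1''$ and $D_2''$ on $M_2''$ are literally conjugate to one another.

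First I would assemble the hypotheses needed to invoke \tref{split}. Since each $M_i$ is a complete Riemannian manifold and $D_i$ is of Dirac type, $D_i$ and $D_i^*$ are complete by \eref{diracom}, and $D_i$ is boundary symmetric along $N_i$ with respect to the interior unit normal field of $M_i'$; thus the Standard Setup~\ref{stase} is available for $D_i'$ on $M_i'$ and for $D_i''$ on $M_i''$. Because $K_i$ lies in the interior of $M_i'$, the operators $D_1$ and $D_2$ coincide, via $f$, $\mathcal I_E$, $\mathcal I_F$, on a full two-sided neighbourhood of $N_i$; hence I may choose adapted operators $A_i$ for $D_i'$ along $N_i$ so that $\mathcal I_E$ intertwines $A_1$ with $A_2$. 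I will also use that $f$ is necessarily a Riemannian isometry (the principal symbol of a Dirac type operator determines the metric via the Clifford relations), so that $f$ preserves the volume elements and $f|_{N_1}:N_1\to N_2$ preserves the induced volume elements.

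Next I would apply \tref{split} to each $M_i=M_i'\cup M_i''$, using $B_i\subset H^{1/2}(N_i,E_i)$ as boundary condition along $\partial M_i'$ and its $L^2$-orthogonal complement $C_i\subset H^{1/2}(N_i,E_i)$ along $\partial M_i''$. Since $N_i$ separates $M_i$, the manifold obtained by cutting is the disjoint union $M_i'\sqcup M_i''$, the corresponding operator is the direct sum $D'_{i,B_i}\oplus D''_{i,C_i}$, and completeness and coercivity at infinity of the cut operators and their formal adjoints are automatic from the corresponding properties of $D_i$ and $D_i^*$ by compactness of $N_i$. Hence $D_i$, $D'_{i,B_i}$ and $D''_{i,C_i}$ are all Fredholm and
\[
  \ind D_i = \ind D'_{i,B_i} + \ind D''_{i,C_i} , \qquad i=1,2 .
\]

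Finally I would identify the exterior contributions. The map $f$ restricts to a diffeomorphism $M_1''\to M_2''$ and $\mathcal I_E$, $\mathcal I_F$ restrict to fibrewise isometries conjugating $D_1''$ into $D_2''$; as $f$ is volume preserving, $\Phi\mapsto\mathcal I_E\circ\Phi\circ f^{-1}$ and $\Psi\mapsto\mathcal I_F\circ\Psi\circ f^{-1}$ are unitary isomorphisms $L^2(M_1'',E_1'')\to L^2(M_2'',E_2'')$ and $L^2(M_1'',F_1'')\to L^2(M_2'',F_2'')$, while along the boundary $\mathcal I_E$ induces a unitary map $L^2(N_1,E_1)\to L^2(N_2,E_2)$ which, intertwining $A_1$ and $A_2$, restricts to an isometry $H^{1/2}(N_1,E_1)\to H^{1/2}(N_2,E_2)$ carrying each $H^s_I(A_1)$ onto $H^s_I(A_2)$. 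Hence it sends $B_1$ to $B_2$, and therefore their $L^2$-orthogonal complements $C_1$ to $C_2$, so it conjugates $D''_{1,C_1}$ into $D''_{2,C_2}$ and $\ind D''_{1,C_1}=\ind D''_{2,C_2}$. Subtracting the two displayed index identities then yields $\ind D_1-\ind D_2=\ind D'_{1,B_1}-\ind D'_{2,B_2}$, which is the assertion. The step I expect to be most delicate is precisely this last identification: one must check that passing to the $L^2$-orthogonal complement, the choice of the adapted operators $A_i$, and the attendant Sobolev scales and spectral projections $Q_I$ are all transported coherently by $\mathcal I_E$ — which ultimately rests on $D_1$ and $D_2$ agreeing on an entire neighbourhood of the cutting hypersurfaces, not merely outside the compact sets $K_i$.
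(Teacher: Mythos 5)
Your proposal is correct and follows essentially the same route as the paper: apply the Decomposition/Splitting Theorem to each $M_i=M_i'\cup M_i''$, identify $N_1,M_1''$ with $N_2,M_2''$ via $f$ and $\mathcal I_E$ so that the same $T$ and adapted operator $A$ can be used on both sides, and subtract after noting that the exterior operators with the transported complementary boundary conditions have equal index. The only difference is that you unfold the final identification (unitary conjugation preserving the Sobolev scales, spectral projections and $L^2$-complements) in more detail than the paper, which simply asserts $\ind D''_{1,B''}=\ind D''_{2,B''}$ because the operators agree outside $K_i$.
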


\begin{proof}
The Decomposition Theorem implies that $D_1$, $D_2$, $D'_{1,B_1}$,
and $D'_{2,B_2}$ are Fredholm operators.

Let $T$ be the normal vector field along $N_1$ pointing into $M_1'$.
Since $D_1$ is of Dirac type, $D_1$ is boundary symmetric with respect to $T$.
Let $A$ be an adapted operator on $E_1|_{N}$.
By the Decomposition Theorem, we have
\begin{equation}
  \ind D_{1} = \ind D'_{1,B'} + \ind D''_{1,B''} ,
  \label{x}
\end{equation}
where $B'=B_1$ and $B''$ is the $L^2$-orthogonal complement of $B_1$ in $H^{1/2}(\dM,E_1)$.

Use $f$ and $\mathcal{I}_{E}$ from \dref{def:agree}
to identify $M_1\setminus K_1$ with $M_2\setminus K_2$
and $E_1$ over $M_1\setminus K_1$ with $E_2$ over $M_2\setminus K_2$.
Identify $N_1$ and $M_1''$ with their images $N_2$ and $M_2''$ under $f$.
Then, with the same choice of $T$ and $A$ as above,
we obtain a corresponding index formula
\begin{equation}
  \ind D_{2} = \ind D'_{2,B'} + \ind D''_{2,B''} ,
  \label{y}
\end{equation}
where $B'=B_2$ and $B''$ correspond to the above $B'$ and $B''$
under the chosen identifications.
Now $\ind D''_{1,B''} = \ind D''_{2,B''}$ since $D_1$ outside $K_1$
agrees with $D_2$ outside $K_2$,
and therefore the asserted formula follows by subtracting \eqref{y} from \eqref{x}.
\end{proof}

\begin{proof}[Proof of the Relative Index Theorem~\ref{relind}]
The first assertion of \tref{relind}, 
namely that $D_{1}$ is a Fredholm operator if and only if $D_{2}$ is a Fredholm operator,
is immediate from \tref{fred} and the compactness of $K_1$ and $K_2$.

Assume now that $D_1$ and $D_2$ are Fredholm operators.
Choose a compact hypersurface $N=N_1$ in $M_1\setminus K_1$
which decomposes $M_1$ into $M_1=M_1'\cup M_1''$,
where $M_1'$ is compact and $K_1$ is contained in the interior of $M_1'$,
and choose a corresponding decomposition of $M_2$.
Then we get
\begin{equation}
\ind D_{1} - \ind D_{2} = \ind D'_{1,B_1} - \ind D'_{2,B_2} ,
\label{eq:praeGL}
\end{equation}
in the setup of and the index fromula in by \tref{firelind}.

Now choose a compact Riemannian manifold $X$ with boundary $N$
equipped with a Dirac type operator such that gluing along $N$
yields a smooth closed Riemannian manifold $\tilde M_1=M_1' \cup_{N} X$
together with a smooth extension\symbolfootnote[2]{Such an extension exists. The manifold $X$ can be chosen to be diffeomorphic to $M_1'$, for instance.} $\tilde D_1$ of $D_1$.
Since $D_1$ outside $K_1$ agrees with $D_2$ outside $K_2$,
$\tilde M_2=M_2' \cup_N X$ is also a smooth closed Riemannian manifold
and comes with a smooth extension $\tilde D_2$ of $D_2$.
As above, we get
\begin{equation}
\ind \tilde D_{1} - \ind \tilde D_{2} = \ind D'_{1,B_1} - \ind D'_{2,B_2} .
\label{eq:praeGL2}
\end{equation}
Now, the Atiyah-Singer index theorem for Dirac type operators on closed manifolds gives
\begin{equation}
  \ind \tilde D_i
  = \int_{\tilde M_i}\alpha_{\tilde D_i} ,
  \label{eq:praeGL3}
\end{equation}
where $\alpha_{\tilde D_i}$ denotes the index density of $\tilde D_i$.
We apply \eqref{eq:praeGL} -- \eqref{eq:praeGL3},
the fact that $\alpha_{\tilde D_1}=\alpha_{\tilde D_2}$
on $M_1\setminus K_1 = M_2\setminus K_2$,
and that $\alpha_{\tilde D_i} = \alpha_{D_i}$ on $K_i$, and obtain
\begin{align*}
\ind D_{1} - \ind D_{2}
&= 
\ind \tilde D_{1} - \ind \tilde D_{2}\\
&=
\int_{\tilde M_1}\alpha_{\tilde D_1} - \int_{\tilde M_2}\alpha_{\tilde D_2} \\
&=
\int_{K_1}\alpha_{D_1} - \int_{K_2}\alpha_{D_2}.
\qedhere
\end{align*}
\end{proof}

\begin{remark}
In Theorems~\ref{firelind} and \ref{relind},
it is also possible to deal with the situation that $M_1$ and $M_2$
have compact boundary and elliptic boundary conditions $B_1$ and $B_2$
along their boundaries are given.
One then chooses the hypersurface $N=N_i$ such that it does not intersect
the boundary of $M_i$ and such that the boundary of $M_i$ is contained in $M_i'$.
The same arguments as above yield
$$
\ind D_{1,B_1} - \ind D_{2,B_2}
= 
\ind D'_{1,B_1\oplus B_1'} - \ind D'_{2,B_2\oplus B_2'} ,
$$
where $B_i'$ and $B_2'$are elliptic boundary condition along $N_1$ and $N_2$
which correspond to each other under the identifications
given by $f$ and $\mathcal I_E$ as in \dref{def:agree}.

Using \tref{thmad},
one can reduce to the case of Atiyah-Patodi-Singer boundary conditions.
If the domains $M_i'$ are compact,
one can then express the indices on the right hand side
as integrals over the index densities plus boundary contributions.
There are two kinds of boundary contribution:
the eta invariant of the adapted boundary operator
and the boundary integral of the transgression form,
compare \cite[Thm.~3.10]{APS} and \cite[Cor.~5.3]{G1}.

Since the boundary contributions along $N_1$ and $N_2$ agree,
they cancel each other when taking the difference
$\ind D'_{1,B_1\oplus B_1'} - \ind D'_{2,B_2\oplus B_2'}$.
This observation leads to another proof of the relative index formula
(in the case where $\dM_i=\emptyset$),
not using the auxiliary manifold $X$,
but the local index theorem for compact manifolds with boundary.
\end{remark}

%%%%%%%%%%%%%%%%%%%%%%%%%%%%%%%%%%%%%%%%%%%%%%

\subsection{The cobordism theorem}\label{coth}

%%%%%%%%%%%%%%%%%%%%%%%%%%%%%%%%%%%%%%%%%%%%%%

Assume that $D:\Cu(M,E)\to\Cu(M,E)$ is a formally selfadjoint operator of Dirac type
over a complete Riemannian manifold $M$ with compact boundary $\dM$.
Then $\sigma_D(\xi)$ is skew-Hermitian, for any $\xi\in T^*M$.

Let $\tau$ be the interior unit conormal field along $\dM$.
Then, by the Clifford relations \eqref{Cliff1} and \eqref{Cliff2},
$i\sigma_0=i\sigma_D(\tau)$ is a field of unitary involutions of $E|_{\dM}$
which anticommutes with $\sigma_A(\xi)=\sigma_0^{-1}\circ\sigma_D(\xi)$,
for all $\xi\in T^*\dM$.
The eigenspaces of $i\sigma_0$ for the eigenvalues $\pm1$
split $E|_{\dM}$ into a sum of fiberwise perpendicular subbundles $E^\pm$.
By \cref{cor:involution} and \pref{prop:localregular},
$B^\pm:=H^{1/2}(\dM,E^\pm)$ are $\infty$-regular elliptic boundary conditions for $D$. 

With respect to the splitting $E|_{\dM}=E^+\oplus E^-$,
any adapted boundary operator takes the form
$$
A = 
\begin{pmatrix}
A_{++} & A^{-} \cr A^{+} & A_{--}
\end{pmatrix} .
$$
Since the differential operators $A^\pm$ are elliptic of order one,
they define Fredholm operators $A^\pm:H^{1}(\dM,E^\pm) \to L^2(\dM,E^\mp)$.
The cobordism theorem is concerned with the index of theses operators.

Since $\sigma_A(\xi)$ interchanges the bundles $E^+$ and $E^-$,
the operators $A_{++}$ and $A_{--}$ are of order zero.
Thus we may, without loss of generality, assume that $A_{++} = A_{--} = 0$.
Moreover, $Z=(A^{-})^*-A^{+}$ is also of order zero.
Since the addition of a zero order term
does not change the index of an operator of order one,
we may replace $A^{+}$ by $A^{+}+Z$
and arrive at the normal form for $A$ which we use from now on,
\begin{equation}
A = 
\begin{pmatrix}
0 & A^- \cr A^+ & 0
\end{pmatrix} ,
\label{nfa}
\end{equation}
where $(A^+)^*=A^-$.
The adapted operator $A$ is still not uniquely determined by these requirements; 
we have the freedom to replace $A^+$ by $A^+ + V$ and $A^-$ by $A^- + V^*$,
where $V:E^+ \to E^-$ is any zero-order term.

In the notation of Theorems~\ref{tell} and \ref{ellbou},
the boundary conditions $B^+$ and $B^-$ can be written as
\begin{equation*}
  B^+=W_+\oplus\Gamma(g)^{1/2}
  \quad\text{and}\quad
  B^-=W_-\oplus\Gamma(-g)^{1/2} ,
\end{equation*}
where
\begin{equation*}
  W_\pm = \ker A^\pm , \quad
  V_- = L^2_{(-\infty,0)} (A) , \quad
  V_+ = L^2_{(0,\infty)} (A) , \quad
\end{equation*}
and $gx = i\sigma_0x$.
Since $\sigma_0^*=\sigma_0^{-1}=-\sigma_0$, we have $g^*=g$ and hence
\begin{equation*}
  B^- = \sigma_0(B^-) = (\sigma_0^{-1})^*(B^-)
\end{equation*}
is the adjoint boundary condition of $B^+$.

\begin{lemma}\label{cobolem}
Let $A$, $B^+$, and $B^-$ be chosen as above and assume that each connected component of $M$ has a nonempty boundary.
Then
\[
  \ker D_{B^+} = \ker D_{B^-} = 0 .
\]
\end{lemma}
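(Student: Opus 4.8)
The plan is to show that any $\Phi \in \ker D_{B^+}$ must vanish by a unique-continuation argument based on the normal form of $D$ near $\dM$, exploiting the fact that the boundary condition $B^+$ together with formal selfadjointness forces the boundary integral in Green's formula to vanish with a definite sign, and then propagating the vanishing into the interior. First I would recall that since $D$ is of Dirac type and formally selfadjoint, $D = D^*$ and we have the normal form $D = \sigma_t(\partial_t + A + R_t)$ near $\dM$, with $\sigma_0$ skew-Hermitian and $i\sigma_0$ the involution whose $\pm 1$ eigenbundles are $E^\pm$. The boundary condition $B^+ = H^{1/2}(\dM, E^+)$ selects the $+1$-eigenspace of $i\sigma_0$. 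For $\Phi \in \ker D_{B^+}$ (which is smooth up to the boundary by the Corollary to Theorem~\ref{thm:regvc}, since $B^+$ is $\infty$-regular), the key computation is that $\mathcal R\Phi \in B^+$ implies
\[
  (\sigma_0 \mathcal R\Phi, \mathcal R\Phi)_{L^2(\dM)} = (\sigma_0 \mathcal R\Phi, \mathcal R\Phi)_{L^2(\dM)},
\]
and since $\sigma_0 = -i(i\sigma_0)$ acts as $-i \cdot \id$ on $E^+$, this quantity is $-i\|\mathcal R\Phi\|_{L^2(\dM)}^2$, which is purely imaginary; but Green's formula \eqref{eq:ParInt} applied to $\Phi$ and $\Phi$ (using $D = D^*$) gives $(D\Phi, \Phi) - (\Phi, D\Phi) = -(\sigma_0\mathcal R\Phi, \mathcal R\Phi)$, and the left side is $2i\,\mathrm{Im}(D\Phi, \Phi) = 0$ since $D\Phi = 0$. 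Hence $\mathcal R\Phi = 0$ on $\dM$.

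Once $\mathcal R\Phi = 0$, I would argue that $\Phi \in \dom(D_{\min})$ by Corollary~\ref{domdmin}, so $\Phi$ lies in the closure of $\Cucc(M,E)$ in the graph norm. At this point I would invoke a unique continuation / Carleman-type argument for the selfadjoint elliptic operator $D$: a section in $\ker D_{\max}$ that vanishes to infinite order along the compact hypersurface $\dM$ (which it does, being in $H^1_D$ with zero trace, hence by bootstrapping in $H^k_{\loc}$ for all $k$ and vanishing on $\dM$) must vanish identically on each connected component meeting $\dM$. Concretely, near $\dM$ one uses the normal form: writing $\Phi(t) = \sum a_j(t)\phi_j$, the equation $D\Phi = 0$ together with $\Phi(0) = 0$ is a first-order ODE system in $t$ with the zero initial condition, so $\Phi \equiv 0$ in the collar $Z_{[0,r)}$; then interior unique continuation for $D$ (which holds for Dirac type operators) extends this to all of the connected component, which by hypothesis has nonempty boundary. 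The argument for $\ker D_{B^-}$ is identical: $\sigma_0$ acts as $+i\cdot\id$ on $E^-$, so the boundary term is again purely imaginary and the same reasoning forces $\mathcal R\Phi = 0$.

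The main obstacle I anticipate is justifying the passage from ``$\Phi$ vanishes on $\dM$ and solves $D\Phi = 0$'' to ``$\Phi$ vanishes on the whole connected component.'' The collar part is clean because the normal form turns it into an ODE with trivial initial data (here one must be slightly careful: the normal form is $\sigma_t(\partial_t + A + R_t)$, so $D\Phi = 0$ gives $\partial_t \Phi = -(A + R_t)\Phi$, a linear first-order evolution equation in the Hilbert space $L^2(\dM, E)$ with $\Phi(0) = 0$, whose only solution is $0$ by a Gronwall estimate — this needs the smoothness of $\Phi$ up to the boundary, already available). Extending past the collar requires interior weak unique continuation for $D$, which is standard for Dirac type operators (via the associated Laplace-type operator $D^*D = \nabla^*\nabla + \mathcal K$ and Aronszajn's theorem); I would cite this rather than prove it. If one wanted to avoid invoking unique continuation, an alternative is to note that $\Phi \in \ker D_{\min}$ and separately analyze $\ker D_{\min}$, but the cleanest route for the write-up is the normal-form ODE argument in the collar plus a citation for interior unique continuation.
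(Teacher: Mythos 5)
Your first step (Green's formula forces $\mathcal R\Phi=0$) is exactly the paper's argument and is fine — modulo the typo where you wrote a trivially true identity instead of the intended one. The gap is in the collar step. You claim that $D\Phi=0$ with $\Phi|_{t=0}=0$ reduces to a first-order evolution equation $\partial_t\Phi=-(A+R_t)\Phi$ in $L^2(\dM,E)$ with zero initial data, whose only solution is $0$ ``by a Gronwall estimate.'' Gronwall applies to $\|\Phi(t)\|\le\|\Phi(0)\|+\int_0^t C\|\Phi(s)\|\,ds$, which requires the generator to be bounded (or at least one-sided in a useful sense). Here $A+R_t$ is an unbounded first-order operator on $\dM$, and $A$ is selfadjoint with spectrum unbounded in both directions, so the evolution is backward-parabolic on the negative spectral subspace. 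The quantity $\tfrac{d}{dt}\|\Phi(t)\|^2_{L^2(\dM)}=-2\Re\big((A+R_t)\Phi,\Phi\big)_{L^2(\dM)}$ contains the term $-2(A\Phi,\Phi)$, which can be arbitrarily positive, so there is no differential inequality of Gronwall type for the $L^2$-norm (nor for any fixed Sobolev norm). The statement you want — that a solution of $D\Phi=0$ with vanishing Cauchy data on a hypersurface vanishes in a neighborhood — is itself a (strong) unique continuation theorem, not an elementary ODE fact, and it is the non-trivial content here, not merely a clean preliminary to the ``real'' obstacle you flag.

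The paper sidesteps this with a cleaner trick that you might prefer. Having shown $\mathcal R\Phi=0$, extend $M$, $E$, and $D$ smoothly to a boundaryless $\tilde M \supset M$ (as already done in the proof of Theorem~\ref{domdmax}), and extend $\Phi$ by zero to a section $\tilde\Phi$ of $\tilde E$. Then for every test section $\Psi\in\Cucc(\tilde M,\tilde E)$, Green's formula \eqref{eq:ParInt} together with $D\Phi=0$ and $\mathcal R\Phi=0$ gives $(\tilde\Phi,\tilde D^*\Psi)_{L^2(\tilde M)}=0$, so $\tilde\Phi$ is a \emph{weak} solution of $\tilde D\tilde\Phi=0$ on all of $\tilde M$, hence smooth by elliptic regularity. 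Now $\tilde\Phi$ vanishes on the \emph{open} set $\tilde M\setminus M$, and only the standard weak unique continuation theorem (Aronszajn, applied to $\tilde D^2$, a Laplace-type operator) is needed to conclude $\tilde\Phi\equiv 0$ on each connected component meeting $\tilde M\setminus M$, i.e.\ on each component of $M$ with nonempty boundary. This avoids any Carleman/strong-unique-continuation argument across a hypersurface: the extension-by-zero converts boundary vanishing into vanishing on an open set, where the much softer theorem suffices. You should either replace your collar-Gronwall claim with this extension argument, or, if you want to stay in the collar, prove infinite-order vanishing of $\Phi$ at $\dM$ (which does follow from $D\Phi=0$ and $\Phi|_{t=0}=0$ by differentiating the normal form) and then invoke \emph{strong} unique continuation — but this is a genuinely harder theorem and the extension trick is preferable.
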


\begin{proof}
Let $\Phi\in\ker D_{B^\pm,\max}$.
By \eqref{eq:ParInt}, we have
\begin{align*}
  0
  &= (D_{\max}\Phi,\Phi)_{L^2(M)} - (\Phi,D_{\max}\Phi)_{L^2(M)} \\
  &= - (\sigma_0\mathcal R\Phi,\mathcal R\Phi)_{L^2(\dM)} \\
  &= \pm i \|\mathcal R\Phi \|_{L^2(\dM)}^2 ,
\end{align*}
and hence $\mathcal R\Phi=0$.

Now extend $M$, $E$, and $D$ beyond the boundary $\dM$
to a larger manifold $\tilde M$ (without boundary) equipped with a bundle $\tilde E$
and Dirac type operator $\tilde D$ as in the proof of \tref{domdmax}.
Moreover, extend $\Phi$ to a section of $\tilde E$
by setting $\tilde\Phi=0$ on $\tilde M \setminus M$.
For any test section $\Psi\in\Cucc(\tilde M,\tilde E)$ we have, again by \eqref{eq:ParInt},
\begin{align*}
(\tilde\Phi,\tilde D^*\Psi)_{L^2(\tilde M)}
&=
(\Phi,D^*\Psi)_{L^2(M)} \\
&=
(D_{\max}\Phi,\Psi)_{L^2(M)} + (\sigma_0\mathcal R\Phi,\mathcal R\Psi)_{L^2(\dM)} \\
&= 0.
\end{align*}
Thus $\tilde\Phi$ is a weak (and, by elliptic regularity theory, smooth) solution of $\tilde D\tilde\Phi=0$.
Hence $\tilde\Phi$ is a solution to the Laplace type equation $\tilde D^2\tilde\Phi=0$
which vanishes on an nonempty open subset of $\tilde M$.
The unique continuation theorem of Aronszajn \cite{Ar} implies $\tilde\Phi=0$
and hence $\Phi=0$.
\end{proof}

We have
\begin{equation}
  \ind A^+ = \dim W_+ - \dim W_- = - \ind A^-
\label{eq:indA}
\end{equation}
since $A^-$ is the adjoint operator of $A^+$.

\begin{proof}[Proof of \tref{cobothm}]
By \eqref{dba}, $D_{B^-}$ is the adjoint of $D_{B^+}$.
\lref{cobolem} therefore yields
\begin{equation*}
  - \ind D_{B^-}
  = \ind D_{B^+}
  = \dim \ker D_{B^+} - \dim \ker D_{B^-}
  = 0 .
\end{equation*}
Without loss of generality we may assume $g=0$, see Example~\ref{defob}.
Then \cref{cor:AgraDy} yields
%From \tref{fred}, we get
\begin{align*}
  \ind D_{B^+}
  &= \ind D_{\BAPS} + \dim W_+ , \\
  \ind D_{B^-}
  &= \ind D_{\BAPS} + \dim W_- .
\end{align*}
We obtain that $\dim W_+ = \dim W_-$.
Equation \eqref{eq:indA} concludes the proof.
\end{proof}

%%%%%%%%%%%%%%%%%%%%%%%%%%%%%%%%%%%%%%%%%%%%%%

\appendix

%%%%%%%%%%%%%%%%%%%%%%%%%%%%%%%%%%%%%%%%%%%%%%

\section{Some functional analytic facts}
\label{secFA}

%%%%%%%%%%%%%%%%%%%%%%%%%%%%%%%%%%%%%%%%%%%%%%

We collect some functional analytic facts, which have been used in the main body of the text, 
and which are not easily found in the standard literature.

\begin{prop}
\label{propFredholm}
Let $H$ be a Hilbert space, let $E$ and $F$ be Banach spaces and let 
$L:H \to E$ and $P:H \to F$ be bounded linear maps.
We assume that $P:H \to F$ is onto.

Then the following hold:
\begin{enumerate}[(i)]
\item 
The kernel of $L_{|\ker(P)}: \ker(P) \to E$ and the kernel
of $L\oplus P: H \to E\oplus F$ have equal dimension.
\item
The range of $L_{|\ker(P)}: \ker(P) \to E$ is closed if and only if
the range of $L\oplus P: H \to E\oplus F$ is closed.
\item
The cokernel of $L_{|\ker(P)}: \ker(P) \to E$ and the cokernel
of $L\oplus P: H \to E\oplus F$ have equal dimension.
\item
$L_{|\ker(P)}: \ker(P) \to E$ is Fredholm of index $k$ if and only
if $L\oplus P: H \to E\oplus F$ is Fredholm of index $k$.
\end{enumerate}
\end{prop}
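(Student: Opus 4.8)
The plan is to reduce all four assertions to a single normal-form computation. Since $P\colon H\to F$ is a bounded surjection between Banach spaces, the open mapping theorem shows that $\ker(P)$ is closed, so that $H=\ker(P)\oplus(\ker P)^{\perp}$ orthogonally, and that the restriction $P_{1}:=P|_{(\ker P)^{\perp}}\colon(\ker P)^{\perp}\to F$ is a topological isomorphism. Writing $S:=P_{1}^{-1}\colon F\to(\ker P)^{\perp}\subset H$, we obtain a bounded right inverse of $P$.

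Next I would introduce the bounded linear automorphism $\Psi\colon E\oplus F\to E\oplus F$, $\Psi(e,f):=(e-LSf,\,f)$, with inverse $(e,f)\mapsto(e+LSf,\,f)$. Given $h\in H$, decompose $h=h_{0}+h_{1}$ with $h_{0}\in\ker(P)$ and $h_{1}\in(\ker P)^{\perp}$; then $Ph=Ph_{1}$ and $S(Ph)=h_{1}$, so $\Psi(Lh,Ph)=(Lh-LS(Ph),\,Ph)=(Lh_{0},\,Ph_{1})$. As $h$ runs over $H$, the component $h_{0}$ runs over all of $\ker(P)$ and, $P_{1}$ being onto, $Ph_{1}$ runs independently over all of $F$. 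Hence $\Psi$ maps $\im(L\oplus P)$ onto $\im(L_{|\ker(P)})\times F\subset E\times F$.

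From here the four claims drop out. Assertion (i) needs no isomorphism at all, since $\ker(L\oplus P)=\ker(L)\cap\ker(P)=\ker(L_{|\ker(P)})$. For (ii), as $\Psi$ is a homeomorphism, $\im(L\oplus P)$ is closed if and only if $\im(L_{|\ker(P)})\times F$ is closed in $E\times F$, which holds exactly when $\im(L_{|\ker(P)})$ is closed in $E$. For (iii), $\Psi$ descends to a linear isomorphism between $(E\oplus F)/\im(L\oplus P)$ and $(E\times F)/(\im(L_{|\ker(P)})\times F)$, and the latter is canonically isomorphic to $E/\im(L_{|\ker(P)})$; hence the two cokernels have equal dimension. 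Finally (iv) is the conjunction of (i)--(iii) together with the definition of a Fredholm operator (finite-dimensional kernel, closed range, finite-dimensional cokernel) and of its index as $\dim\ker-\dim\coker$.

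There is no serious obstacle here; the argument is elementary functional analysis. The only points requiring a little care are verifying that $\Psi$ is genuinely a topological isomorphism of $E\oplus F$ — so that it preserves closedness of subspaces and induces isomorphisms on the relevant quotients — and invoking the open mapping theorem in exactly the right place to produce the splitting of $H$ and the bounded right inverse $S$.
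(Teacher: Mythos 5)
Your proof is correct and takes essentially the same route as the paper's: the automorphism $\Psi(e,f)=(e-LSf,f)$ is precisely the paper's left conjugation factor $\begin{pmatrix} \id_E & -L_2 P_2^{-1} \cr 0 & \id_F\end{pmatrix}$ (since $L_2P_2^{-1}=LS$), and your direct computation of $\Psi(\mathrm{im}(L\oplus P))=\mathrm{im}(L_1)\times F$ replaces the paper's additional right conjugation that brings the operator into block-diagonal form $L_1\oplus\id_F$. The only mild advantage of your version is that it spells out assertions (i)--(iv) explicitly where the paper waves at ``the other statements follow similarly,'' but the underlying idea — decouple $L\oplus P$ by a codomain isomorphism built from a bounded right inverse $S$ of $P$ — is identical.
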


\begin{proof}
Write $L_1 := L_{|\ker(P)}$, $L_2 := L_{|\ker(P)^\perp}$, and 
$P_2 := P_{|\ker(P)^\perp}$.
With respect to the splittings $H = \ker(P) \oplus \ker(P)^\perp$ and
$E\oplus F$ the operator $L \oplus P$ takes the matrix form
$$
L \oplus P = 
\begin{pmatrix}
  L_1 & L_2 \cr 
  0 & P_2
\end{pmatrix}.
$$
By the open mapping theorem $P_2 : \ker(P)^\perp \to F$ is an isomorphism
(with a bounded inverse), so that $\begin{pmatrix} \id_{\ker(P)} & 0 \cr 
0 & P_2\m\end{pmatrix}$ is an isomorphism. 
Moreover, $\begin{pmatrix} \id_E & -L_2 P_2\m \cr 0 & \id_F\end{pmatrix}$ is an
isomorphism with inverse $\begin{pmatrix} \id_E & L_2 P_2\m \cr 0 & \id_F
\end{pmatrix}$.
Therefore the kernel of $L\oplus P$ has the same dimension as the kernel of 
$$
\begin{pmatrix} \id_E & -L_2 P_2\m \cr 0 & \id_F\end{pmatrix}
\begin{pmatrix} L_1 & L_2 \cr 0 & P_2 \end{pmatrix}
\begin{pmatrix} \id_{\ker(P)} & 0 \cr 0 & P_2\m\end{pmatrix}
=
\begin{pmatrix} L_1 & 0 \cr 0 & \id_F \end{pmatrix} 
$$
which is the same as the dimension of the kernel of $L_1$.
The other statements follow similarly.
\end{proof}

\begin{remark}
The proposition also holds and the proof works without change if $H$ is only a 
Banach space and one assumes that $\ker(P)$ has a closed complement.
\end{remark}

\begin{prop}\label{HormPeet}
Let $X$ and $Y$ be Banach spaces and $L:X \to Y$ be a bounded linear map.
Then the following are equivalent:
\begin{enumerate}[(i)]
\item \label{HP1}
The operator $L$ has finite-dimensional kernel and closed image.
\item\label{HP2}
There is a Banach space $Z$, a compact linear map $K:X\to Z$,
and a constant $C$ such that
\[
  \|x\|_X \leq C\cdot \left(\|Kx\|_Z + \|Lx\|_Y \right) ,
\]
for all $x \in X$. 
In particular, $\ker K\cap \ker L=\{0\}$.
\item\label{HP3}
Every bounded sequence $(x_n) $ in $X$ such that $(Lx_n)$ converges in $Y$
has a convergent subsequence in $X$.
\end{enumerate}
Moreover, these equivalent conditions imply
\begin{enumerate}[(iv)]
\item 
For any Banach space $Z$ and compact linear map $K:X \to Z$
such that $\ker K\cap\ker L=\{0\}$, there is a constant $C$ such that
\[  \| x\|_X \le C \big(\|Kx\|_Z + \|Lx\|_Y \big)  \]
for all $x \in X$. 
\end{enumerate}
\end{prop}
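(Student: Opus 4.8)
The plan is to prove the cycle of implications $\eqref{HP1}\Rightarrow\eqref{HP2}\Rightarrow\eqref{HP3}\Rightarrow\eqref{HP1}$ and then to deduce statement~(iv) from \eqref{HP3} together with the injectivity hypothesis $\ker K\cap\ker L=\{0\}$. For $\eqref{HP1}\Rightarrow\eqref{HP2}$ I would exploit that a finite-dimensional subspace of a Banach space is topologically complemented: write $X=\ker L\oplus X_0$ with $X_0$ closed and let $\pi\colon X\to\ker L$ be the associated projection. Since $\pi$ has finite rank it is compact, so I take $Z:=\ker L$ and $K:=\pi$. As $\im L$ is closed, $L|_{X_0}\colon X_0\to\im L$ is a continuous bijection of Banach spaces, and the open mapping theorem produces a constant $c$ with $\|x_0\|_X\le c\,\|Lx_0\|_Y$ for $x_0\in X_0$; combining this with $\|x\|_X\le\|\pi x\|_X+\|x-\pi x\|_X$ and $Lx=L(x-\pi x)$ gives the estimate in \eqref{HP2} with $C:=\max\{1,c\}$, and the addendum $\ker K\cap\ker L=\{0\}$ then falls out of that estimate.

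For $\eqref{HP2}\Rightarrow\eqref{HP3}$, given a bounded sequence $(x_n)$ with $(Lx_n)$ convergent, compactness of $K$ lets me pass to a subsequence along which $(Kx_n)$ converges as well; the inequality in \eqref{HP2} then shows $(x_n)$ is Cauchy, hence convergent by completeness of $X$. For $\eqref{HP3}\Rightarrow\eqref{HP1}$ I would first argue that $\ker L$ is finite-dimensional: if not, Riesz's theorem gives a bounded sequence in $\ker L$ with no convergent subsequence, on which $L$ vanishes, contradicting \eqref{HP3}. Then I pick a closed complement $X_0$ of $\ker L$ and show $\im L=L(X_0)$ is closed: if $x_n\in X_0$ with $Lx_n\to y$, then $(x_n)$ must be bounded, because if $\|x_n\|_X\to\infty$ along a subsequence the normalized vectors $u_n:=x_n/\|x_n\|_X\in X_0$ satisfy $\|u_n\|_X=1$ and $Lu_n\to0$, so by \eqref{HP3} they have a subsequential limit $u\in X_0$ with $\|u\|_X=1$ and $Lu=0$, i.e.\ $u\in X_0\cap\ker L=\{0\}$, a contradiction; and once $(x_n)$ is bounded, \eqref{HP3} provides a subsequential limit $x\in X_0$ with $Lx=y$.

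Finally, for (iv) I would argue by contradiction: if the inequality failed for every $C$ there would be $x_n\in X$ with $\|x_n\|_X=1$ and $\|Kx_n\|_Z+\|Lx_n\|_Y\to0$; since $(Lx_n)$ converges, \eqref{HP3} yields a subsequential limit $x$ with $\|x\|_X=1$, while continuity of $L$ and $K$ forces $Lx=0$ and $Kx=0$, so $x\in\ker K\cap\ker L=\{0\}$, contradicting $\|x\|_X=1$.

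The step I expect to demand the most care is the closed-image assertion in $\eqref{HP3}\Rightarrow\eqref{HP1}$: one must pass to the complement $X_0$ before normalizing, so that the limit of the normalized sequence lands in $X_0\cap\ker L$ and can be forced to vanish. Everything else reduces to standard tools — the open mapping theorem, Riesz's characterization of finite-dimensionality via compactness of the unit ball, completeness, and complementedness of finite-dimensional subspaces — so no serious difficulty is anticipated there.
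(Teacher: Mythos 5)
Your proof is correct and, for the cycle $\eqref{HP1}\Rightarrow\eqref{HP2}\Rightarrow\eqref{HP3}\Rightarrow\eqref{HP1}$, essentially identical to the paper's: the same decomposition $X=\ker L\oplus X_0$, the same finite-rank projection as the compact operator $K$, the same open-mapping estimate on the complement, the same Riesz argument for finite-dimensionality of $\ker L$, and the same normalization trick (inside the complement) to get closedness of the image. The only real difference is in the final item: you deduce (iv) directly from \eqref{HP3} by extracting a convergent subsequence from a normalized ``bad'' sequence, whereas the paper deduces it from \eqref{HP2} by first bringing in the auxiliary compact operator $K_0$ guaranteed by \eqref{HP2} and showing $(x_n)$ is Cauchy via that estimate; your route is a bit more economical and bypasses the paper's (unused) side remark that $\|K_0 x_n\|$ stays bounded below.
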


The equivalence of \eqref{HP1} and \eqref{HP3} is Proposition 19.1.3 in \cite{Ho}.

\begin{proof}[Proof of \pref{HormPeet}]
We show $\eqref{HP1}\Rightarrow\eqref{HP2}$.
Assume \eqref{HP1}.
Since $\ker L$ is finite-dimensional, it has a closed complement $U$ in $X$.
Since the image of $L$ is closed, $L|_U:U\to\im L$ is an isomorphism of Banach spaces.
Moreover, since $U$ is closed and $\ker L$ is finite-dimensional,
the projection $K:X\to\ker L$ along $U$ is a compact operator.
It follows that
\[
  F: X \to \ker L \oplus \im L , \quad Fx=(Kx,Lx) ,
\]
is an isomorphism of topological vector spaces, hence we have \eqref{HP2} with $Z=\ker L$.

Next we show $\eqref{HP2}\Rightarrow\eqref{HP3}$.
Assume \eqref{HP2}, and let $K$ be a compact linear map as assumed in \eqref{HP2}.
Let $(x_n)$ be a bounded sequence in $X$ such that $(Lx_n)$ converges in $Y$.
Since $K$ is compact, we may assume, by passing to a subsequence if necessary,
that $(Kx_n)$ converges in $Z$.
The estimate in \eqref{HP2} then implies that $(x_n)$ is a Cauchy sequence in $X$,
hence \eqref{HP3}.

Now we show $\eqref{HP3}\Rightarrow\eqref{HP1}$.
Assume \eqref{HP3}, and let $(x_n)$ be a bounded sequence in $\ker L$.
Then $(x_n)$ subconverges in $\ker L$, by \eqref{HP3}.
It follows that $\dim\ker L<\infty$.

Let $U$ be a complement of $\ker L$ in $X$.
Let $(x_n)$ be a sequence in $X$ such that $Lx_n\to y\in Y$.
We have to show that $y$ lies in the image of $L$.
For this, it is sufficient to show that $(x_n)$ subconverges.
Without loss of generality we assume that $x_n\in U$, for all $n$.

If $\|x_{n_k}\|_X\to\infty$ for some subsequence $(x_{n_k})$,
then $u_k:=x_{n_k}/\|x_{n_k}\|_X$ has norm $1$ and $Lu_k\to0$.
By \eqref{HP3}, the sequence of $u_k$ has a convergent subsequence.
The limit $u$ is a unit vector in $U$ with $Lu=0$.
This is a contradiction, because $U$ is complementary to $\ker L$.
Hence we may assume that the sequence of $(x_n)$ is bounded.
But then it subconverges, by \eqref{HP3}.

Finally, we show $\eqref{HP2}\Rightarrow \mbox{(iv)}$.
Assume \eqref{HP2} and let $K_0:X\to Z_0$ be a compact linear map as in \eqref{HP2}.
Let $K:X\to Z$ be any compact linear map such that $\ker K\cap \ker L=\{0\}$.
If the assertion does not hold, then there is a sequence $(x_n)$ of unit vectors
in $X$ such that $\|Kx_n\| + \|Lx_n\| \to 0$.
This implies, in particular, that $\|Lx_n\| \to 0$ and hence that $\|K_0x_n\|\ge\delta$
for some $\delta>0$.
Now $K_0$ is compact, hence, up to passing to a subsequence, $(K_0x_n)$ is a
Cauchy sequence.
Since $Lx_n\to0$, this implies that $(x_n)$ is a Cauchy sequence, by \eqref{HP2}.
If $x:=\lim x_n\in X$, then $x$ is a unit vector with $Kx=Lx=0$.
This contradicts $\ker K\cap\ker L=\{0\}$.
\end{proof}

\begin{lemma}\label{lem:Triple}
Let $B_+$ and $B_-$ be Banach spaces, let $(\cdot,\cdot): B_+ \times B_- \to \C$ be a perfect pairing.
Let $H$ be a Hilbert space
and let there be continuous imbeddings $B_+ \subset H \subset B_-$.
Assume that the restriction of $(\cdot,\cdot)$ to $B_+ \times H$ coincides
with the restriction of the scalar product on $H$.

Let $W\subset B_+$ be a finite-dimensional subspace and let $V_-$
be the annihilator of $W$ in $B_-$, i.~e.\ $V_- = \{x\in B_- \mid (w,x)=0 \,\,\forall x\in W\}$.
Put $V_0 := V_- \cap H$ and $V_+ := V_- \cap B_+$.

Then
$$
B_- = W \oplus V_-, \quad
H = W \oplus V_0, \quad
B_+ = W \oplus V_+.
$$
Moreover, the second decomposition is orthogonal
and $W$ is the annihilator of $V_+$ in $B_-$.
The pairing $(\cdot,\cdot)$ restricts to a perfect pairing of $V_+$ and $V_-$.
\end{lemma}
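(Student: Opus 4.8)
The plan is to prove the three direct-sum decompositions first, then derive the orthogonality, the annihilator statement, and the perfect pairing as consequences. Throughout I will use that $W$ is finite-dimensional, so it is automatically closed in each of the three spaces $B_+, H, B_-$, and that the pairing $(\cdot,\cdot)$ is perfect, meaning it induces isomorphisms $B_+ \cong B_-^*$ and $B_- \cong B_+^*$ (with continuous inverses, by the open mapping theorem).

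First I would establish $B_- = W \oplus V_-$. Since $W \subset B_+ \subset H$, the elements of $W$ pair with elements of $B_-$ via $(\cdot,\cdot)$, and because the pairing restricted to $B_+ \times H$ is the Hilbert scalar product, a choice of $L^2$-orthonormal basis $w_1, \dots, w_k$ of $W$ lets me define the projection $\pi_W : B_- \to W$, $\pi_W(x) = \sum_j (w_j, x)\, w_j$. This is continuous on $B_-$ because each functional $x \mapsto (w_j, x)$ is continuous there (perfectness of the pairing), it restricts to the identity on $W$ (using orthonormality and that on $W \times W$ the pairing is the scalar product), and its kernel is exactly $V_-$ by definition of the annihilator. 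Hence $B_- = W \oplus V_-$ topologically. Restricting $\pi_W$ to $H$ and to $B_+$ gives the other two splittings $H = W \oplus V_0$ and $B_+ = W \oplus V_+$, since $\pi_W$ maps $H$ into $W \subset H$ and $B_+$ into $W \subset B_+$, and its kernel intersected with $H$ (resp. $B_+$) is $V_0$ (resp. $V_+$).

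Next, the middle decomposition is $L^2$-orthogonal: for $w \in W$ and $x \in V_0 = V_- \cap H$ we have $(w,x) = 0$ by definition of $V_-$, and since $x \in H$ the pairing $(w,x)$ equals the Hilbert scalar product $\langle w, x\rangle_H$; thus $W \perp V_0$ in $H$. For the annihilator claim, I must show $W = \{x \in B_- \mid (v, x) = 0 \ \forall v \in V_+\}$; call the right-hand side $W'$. The inclusion $W \subset W'$ holds because for $w \in W$ and $v \in V_+ \subset V_- = \ker \pi_W$ we get $(v, w) = \overline{(w,v)} = 0$ using that $v \in H$ and $w \in W$ are $L^2$-orthogonal (as just shown, $V_+ \subset V_0$). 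For the reverse inclusion, note $B_+ = W \oplus V_+$, so any functional on $B_+$ vanishing on $V_+$ is determined by its values on the finite-dimensional $W$; an element $x \in W'$, viewed via perfectness as a functional on $B_+$, vanishes on $V_+$, hence lies in the $k$-dimensional annihilator of $V_+$ in $B_-$, which contains $W$ and (by dimension count, since $B_+/V_+ \cong W$ has dimension $k$) equals $W$. Finally, for the perfect pairing between $V_+$ and $V_-$: the decompositions $B_+ = W \oplus V_+$ and $B_- = W \oplus V_-$ are both $\pi_W$-compatible, $W$ annihilates $V_-$ and (just shown) is annihilated by $V_+$, so the pairing $B_+ \times B_- \to \C$ block-diagonalizes, and its restriction to $V_+ \times V_-$ must be perfect because the original pairing is and the $W \times W$ block is perfect on its own.

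The main obstacle I anticipate is keeping the continuity bookkeeping honest across the three nested spaces, in particular verifying that $\pi_W$ is genuinely bounded on $B_-$ (this rests on perfectness giving $B_- \cong B_+^*$ with continuous inverse, so that each coordinate functional $(w_j, \cdot)$ is bounded on $B_-$) and that the topological direct sum claims follow, rather than just the algebraic ones. The dimension-counting argument for $W' = W$ is the only genuinely non-formal point; everything else is a matter of unwinding definitions and using that the pairing agrees with the scalar product on the appropriate subspaces.
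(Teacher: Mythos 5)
Your proof is correct and takes essentially the same route as the paper, relying on the agreement of the pairing with the Hilbert scalar product on $B_+\times H$, the finite-dimensionality of $W$, and the block-diagonalization of the pairing under the three decompositions. The only differences are cosmetic: you build the bounded projection $\pi_W$ explicitly where the paper shows $W\cap V_-=\{0\}$ (via $(w,w)=\|w\|_H^2$) and appeals to a codimension bound, and you establish $W'=W$ by a dimension count where the paper splits $v=w+v_-\in B_-=W\oplus V_-$ and concludes $v_-=0$ from nondegeneracy.
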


\begin{proof}
For $w\in W \cap V_-$ we have $0=(w,w)=\|w\|_H^2$, thus $w=0$.
This shows $W \cap V_- = 0$ and hence also $W \cap H = W \cap V_+ = 0$.
Since the codimension of the annihilator of $W$ in any space can at most be $\dim(W)$
we conclude $B_- = W \oplus V_-$ and similarly for $H$ and $B_+$.

Orthogonality of the second decomposition is clear because the restrictions
of the pairing and of the scalar product coincide.
Clearly, $W$ is contained in the annihilator of $V_+$ in $B_-$.
Conversely, let $v=w+v_- \in B_- = W \oplus V_-$ be in the annihilator of $V_+$.
Then for all $v_+ \in V_+$ we have $0= (v_+,w+v_-) = (v_+,v_-)$, 
thus $v_-=0$, i.~e.\ $v=w\in W$.

Nondegeneracy of the pairing of $V_+$ and $V_-$ also follows easily.
\end{proof}

%%%%%%%%%%%%%%%%%%%%%%%%%%%%%%%%%%%%%%%%%%%%%%

%%%%%%%%%%%%%%%%%%%%%%%%%%%%%%%%%%%%%%%%%%%%%%

\printindex

%%%%%%%%%%%%%%%%%%%%%%%%%%%%%%%%%%%%%%%%%%%%%%


\begin{thebibliography}{Am56}

%%%%%%%%%%%%%%%%%%%%%%%%%%%%%%%%%%%%%%%%%%%%%%

\bibitem[Ad]{Ad}
R.~Adams:
{\em Sobolev spaces}.
Academic Press, New York, 1975

\bibitem[An]{An}  
N.~Anghel:
An abstract index theorem on non-compact Riemannian manifolds.
{\em Houston J. Math.} 19 (1993), 223--237.

\bibitem[Ar]{Ar}
N.~Aronszajn:
A unique continuation theorem for solutions of elliptic partial differential equations or inequalities of second order.
{\em J.\ Math.\ pur.\ appl., IX.~S\'er.}~36 (1957), 235--249. 

\bibitem[APS]{APS}  
M.~Atiyah, V.~Patodi, and I.~Singer:
Spectral asymmetry and {R}iemannian Geometry. {I}.
{\em Math. Proc. Cambridge Philos. Soc.} 77 (1975), 43--69.

\bibitem[Au]{Au}
T.~Aubin:
{\em Some nonlinear problems in Riemannian geometry}.
Springer-Verlag, Berlin, 1998

\bibitem[BB1]{BB1}
W.~Ballmann and J.~Br\"uning:
On the spectral theory of surfaces with cusps.
In: {\em Geometric analysis and partial differential equations},
13--37, Springer, Berlin, 2003.

\bibitem[BB2]{BB2}
W.~Ballmann and J.~Br\"uning:
On the spectral theory of manifolds with cusps.
{\em J. Math. Pures Appl.}~80 (2001), 593--625.

\bibitem[BBC]{BBC}
W.~Ballmann, J.~Br\"uning, and G.~Carron:
Regularity and index theory for Dirac-Schr\"odinger systems
with Lipschitz coefficients.
{\em J. Math. Pures Appl.}~89  (2008), 429--476.

\bibitem[BC]{BC}
R.~Bartnik and P.~Chru\'sciel:
Boundary value problems for Dirac-type equations.
{\em J. Reine Angew. Math.}~579 (2005), 13Ð-73.

\bibitem[BGV]{BGV}
N.~Berline, E.~Getzler, and M.~Vergne:
{\em Heat kernels and Dirac operators}. 
Springer-Verlag, Berlin, 1992.

\bibitem[BF1]{BF1}
B.~Booss-Bavnbek and K.~Furutani:
The Maslov index: a functional analytical definition and the spectral flow formula.
{\em Tokyo J. Math.}~21 (1998), 1--34.

\bibitem[BF2]{BF2}
B.~Booss-Bavnbek and K.~Furutani:
Symplectic functional analysis and spectral invariants.
Geometric aspects of partial differential equations (Roskilde, 1998), 53Ð-83,
{\em Contemp. Math.}~242, Amer. Math. Soc., Providence, RI, 1999.

\bibitem[BFO]{BFO}
B.~Booss-Bavnbek, K.~Furutani, and N.~Otsuki:
Criss-cross reduction of the Maslov index and a proof of the Yoshida-Nicolaescu theorem.
{\em Tokyo J. Math.}~24 (2001), 113--128. 

\bibitem[BLZ]{BLZ}
B.~Booss-Bavnbek, M.~Lesch, and C.~Zhu:
The Calder\'on projection: New definition and applications. 
{\em J.\ Geom.\ Phys.}~59 (2009), 784--826.

\bibitem[BW]{BW}    
B.~Booss--Bavnbek and K.~Wojciechowski:
{\em Elliptic Boundary Problems for Dirac Operators}.
Birkh\"auser 1993.

\bibitem[B]{B}
L.~Boutet de Monvel:
Boundary problems for pseudo-differential operators. 
{\em Acta Math.} 126 (1971), 11--51.

\bibitem[BL1]{BL1}
J.~Br\"uning and M.~Lesch:
Spectral theory of boundary value problems for Dirac type operators.
{\em Geometric aspects of partial differential equations} (Roskilde, 1998), 203--215,
Contemp. Math., 242, Amer. Math. Soc., Providence, RI, 1999.

\bibitem[BL2]{BL2}        
J.~Br\"uning and M.~Lesch:
On boundary value problems for Dirac type operators.~1. 
Regularity and self-adjointness.
{\em J. Funct. Anal.} 185 (2001), 1--62.

\bibitem[Ca]{Ca}
G.~Carron:
Un Th\'eor\`eme de l'indice relatif. (French)
{\em Pacific J. Math.} 198 (2001), 8--107.

\bibitem[Ch]{Ch}        
P.~Chernoff:
Essential Self-Adjointness of Powers of Generators of Hyperbolic Equations.
{\em J. Funct. Anal.} 12 (1973), 401--414.

\bibitem[FS]{FS}
S.~Farinelli and G.~Schwarz:
On the spectrum of the Dirac operator under boundary conditions.
{\em J. Geom. Phys.} 28 (1998), 67--84.        

\bibitem[GHHP]{GHHP}
G.~W. Gibbons, S.~W.~Hawking, G.~T.~Horowitz, and M.~J.~Perry:
Positive mass theorems for black holes.
{\em Commun.\ Math.\ Phys.}~88 (1983), 295--308. 

\bibitem[Gi]{Gi}
P.~Gilkey:
{\em  Invariance theory, the heat equation, and the Atiyah-Singer index theorem.} 
Publish or Perish, Wilmington, USA, 1984.

\bibitem[GL]{GL}
M.~Gromov and H.B.~Lawson:
Positive scalar curvature and the Dirac operator on complete Riemannian manifolds.
{\em Inst. Hautes \'Etudes Sci. Publ. Math.} 58 (1983), 83--196 (1984). 

\bibitem[G1]{G1}
G.~Grubb:
Heat operator trace expansions and index for general Atiyah-Patodi-Singer boundary problems.
{\em Commun.\ Part.\ Diff.\ Eq.}~17 (1992), 2031--2077

\bibitem[G2]{G2}
G.~Grubb:
{\em Functional calculus of pseudodifferential boundary problems}. 
Second edition. 
Progress in Mathematics, 65. 
Birkh\"auser, Boston, 1996.

\bibitem[He]{He}
M.~Herzlich:
The positive mass theorem for black holes revisited.
{\em J.~Geom.\ Phys.}~26 (1998), 97--111.

\bibitem[Hi]{Hi}
N.~Higson:
A note on the cobordism invariance of the index.
{\em Topology}~30 (1991), 439--443. 

\bibitem[HMR]{HMR}
O.~Hijazi, S.~Montiel, and A.~Rold\'an:
Eigenvalue Boundary Problems for the Dirac Operator.
{\em Commun.\ Math.\ Phys.}~231 (2002), 375--390.

\bibitem[HMZ]{HMZ}
O.~Hijazi, S.~Montiel, and X.~Zhang:
Eigenvalues of the Dirac Operator on Manifolds with Boundary.
{\em Commun.\ Math.\ Phys.}~221 (2001), 255--265.

\bibitem[H\"o]{Ho}  
L.~H\"ormander:
{\em The analysis of linear partial differential operators. III
Pseudodifferential operators.} 
Springer-Verlag, Berlin, 1985.

\bibitem[Ka]{Ka}    
T.~Kato:
{\em Perturbation Theory for Linear Operators}. 
Second Ed., Springer-Verlag, 1986.

\bibitem[LM]{LM}    
H.~B.~Lawson and M.-L.~Michelsohn:
{\em Spin Geometry}. 
Princeton University Press, Princeton, USA, 1989.

\bibitem[Pa]{Pa}
R.~S.~Palais:
{\em Seminar on the Atiyah-Singer index theorem}. 
With contributions by M.~F.~Atiyah, A.~Borel, E.~E.~Floyd, R.~T.~Seeley, W.~Shih, and R.~Solovay.
Princeton University Press, Princeton, 1965. 

\bibitem[RS]{RS}
S.~Rempel and B.-W.~Schulze:
{\em Index theory of elliptic boundary problems}. 
Reprint of the 1982 edition, 
North Oxford Academic Publishing, London, 1985.
 
\bibitem[R]{R}
J.~Roe:
Partitioning noncompact manifolds and the dual Toeplitz problem.
{\em Operator algebras and applications}, 
Vol.~1, 187--228, London Math.\ Soc.\ Lecture Note Ser., 135, 
Cambridge Univ.\ Press, Cambridge, 1988.

\bibitem[S1]{S1}
B.-W.~Schulze:
An algebra of boundary value problems not requiring Shapiro-Lopatinskij conditions. {\em J.~Funct.~Anal.} 179 (2001), 374--408.

\bibitem[S2]{S2}
B.-W.~Schulze:
Toeplitz operators, and ellipticity of boundary value problems with global projection conditions. 
{\em Aspects of boundary problems in analysis and geometry}, 
342--429, Oper.\ Theory Adv.\ Appl.~151, Birkh\"auser, Basel, 2004.

\bibitem[Se]{Se}
R.T.~Seeley: 
Complex powers of an elliptic operator.
{\em Proc. Sympos. Pure Math.} 10 (1967), 288--307.
 
\bibitem[Ta]{Ta}    
M.~E.~Taylor:
{\em Partial differential equations. I. Basic theory}. 
Springer-Verlag, New York, 1996. 

\bibitem[Wi]{Wi}
E.~Witten:
A new proof of the positive energy theorem.
{\em Commun.\ Math.\ Phys.} 80 (1981), 381--402

\bibitem[Wo]{Wo}
J.~A.~Wolf:
Essential self-adjointness for the Dirac operator and its square.
{\em Indiana Univ. Math. J.} 22 (1972/73), 611--640. 

%%%%%%%%%%%%%%%%%%%%%%%%%%%%%%%%%%%%%%%%%%%%%%

\end{thebibliography}
\end{document}